\documentclass[10pt,reqno]{amsart}

\usepackage[affil-it]{authblk}
\usepackage{titling}
\usepackage[latin1]{inputenc}
\usepackage[english]{babel}
\usepackage{calligra}
\usepackage[OT1]{fontenc}
\usepackage{amsfonts}
\usepackage{amsmath}
\usepackage{amssymb}
\usepackage{amsthm}
\usepackage{faktor}
\usepackage{float}
\usepackage{enumerate}
\usepackage{color}
\usepackage{pdfpages}
\usepackage{esint}
\usepackage{hyperref}
\usepackage{cite}
\usepackage{fancyhdr}
\usepackage{ulem}
\usepackage{mathtools}
\usepackage{mathrsfs}

\usepackage{etoolbox}
\patchcmd{\section}{\scshape}{\bfseries}{}{}
\makeatletter
\renewcommand{\@secnumfont}{\bfseries}
\makeatother
\newcommand\testname{\textbf{ Abstract}}
\makeatletter
\newenvironment{abs}{%
    \small
    \begin{center}%
        {\textsc \testname\vspace{-.2em}\vspace{\z@}}%
    \end{center}%
    \quote
    }
   {\endquote}
\makeatother

\addtolength{\oddsidemargin}{-3cm}
\addtolength{\evensidemargin}{-3cm}
\addtolength{\textwidth}{5cm}
\textheight 25cm \topmargin -1cm \leftmargin -4cm \marginparwidth
0mm \textwidth 16.7cm 
\advance\hoffset by 5mm

\allowdisplaybreaks[4]

\DeclareMathOperator*{\Id}{Id}

\newcommand{\tr}{{}^\mathrm{t} }
\newcommand{\Div}{\mathrm{div}}
\newcommand{\trc}{\mathrm{tr}}

\newcommand{\ee}{\varepsilon}

\newcommand{\Aa}{\mathcal{A}}
\newcommand{\CC}{\mathcal{C}}
\newcommand{\Bb}{\mathcal{B}}
\newcommand{\dd}{\mathrm{d}}

\newcommand{\J}{\mathcal{J}}
\newcommand{\I}{\mathcal{I}}

\newcommand{\EE}{\mathcal{E}}

\newcommand{\FF}{\mathcal{F}}

\newcommand{\RR}{\mathbb{R}}
\newcommand{\ZZ}{\mathbb{Z}}

\newcommand{\BB}{\dot{B}}
\newcommand{\Hh}{\dot{H}}
\newcommand{\Dd}{\dot{\Delta}}
\newcommand{\Sd}{\dot{S}}
\newcommand{\Rd}{\dot{R}}
\newcommand{\R}{\mathbb{R}}
\newcommand{\non}{\nonumber}

\newtheorem{theorem}{Theorem}[section]

\newtheorem{proposition}[theorem]{Proposition}
\newtheorem{lemma}[theorem]{Lemma}

\newtheorem{remark}[theorem]{Remark}

\definecolor{grey}{rgb}{0.85,0.85,0.85}
\date{}

\title{\Large 
	\textbf{\uppercase{Uniqueness of weak solutions of the full\\coupled Navier-Stokes and Q-tensor system in 2D}}}

\author{Francesco De Anna$^1\quad$}
\author{$\quad$Arghir Zarnescu$^2,^3$}

\affil{	\textsc{$\,^1$Universit\'e de Bordeaux} \\ 
		\small\textsc{Institut de Math\'ematiques de Bordeaux}\\ 
		\small{F-33405 Talence Cedex, France}\\
		\small\textnormal{Francesco.Deanna@math.u-bordeaux1.fr}
		\vspace{0.3cm}}

\affil{	\textsc{$\,^2$University of Sussex} \\ 
		\small\textsc{Pevensey 3 5C17}\\ 
		\small{Brighton BN1 9QF, UK}\\
		\small\textnormal{A.Zarnescu@sussex.ac.uk}
		}
\affil{\textsc{$\,^3$ ``Simion Stoilow" Institute of Mathematics}\\
     \textsc{ of the Romanian Academy}\\
        \small\textnormal{ 21 Calea Grivitei Street}\\
        \small\textnormal{  010702 Bucharest, Romania}
}		
		
\date{\today}		
 
\lhead{M Python}
\rhead{Owl stretching time}
\markleft{Francesco De Anna $\quad$ Arghir  Zarnescu}
\markright{Uniqueness of  the Q-tensor systems}
\keywords {nematic liquid crystal fluids, Navier-Stokes equations, global wellposedness.}
\begin{document}

\maketitle
\begin{abs}
	This paper is devoted to the full system of incompressible liquid crystals, as modeled in the Q-tensor framework. The main purpose is to establish the uniqueness of weak solutions 
	in a two dimensional setting, without imposing an extra regularity on the solutions themselves. 
	This result only requires the initial data to fulfill the features which allow the existence of a weak solution. 
	Thus, we also present a revisit of the global existence result in dimension two and three.  
\end{abs}

\section{Introduction}
The main aim of this article is to prove the uniqueness of weak solutions for a type of  coupled Navier-Stokes and Q-tensor systems proposed in \cite{berised} and studied numerically and analytically in \cite{pz2,equationsELreduction, abels, gonzalez, gonzalez2, chineseseveral}. This type of system models nematic liquid crystals and provides in a certain sense an extension of the classical Ericksen-Leslie model \cite{equationsELreduction}, whose uniqueness of weak solutions was proved in \cite{Wang2016919}. In the remainder of this introduction we will briefly present the equations and state our main  result. 

\bigskip
The system models the evolution of liquid crystal molecules together with the underlying flow, through a parabolic-type system coupling an incompressible Navier-Stokes system with a nonlinear convection-diffusion system. The  local orientation of the molecules is described through a function $Q$ taking values from  $\R_+\times\Omega\subset\R_+\times\R^d$,$d=2,3$ into the set of so-called $d$-dimensional $Q$-tensors that is  $$S_0^{(d)}\stackrel{\rm{def}}{=} \left\{Q \in \mathbb{M}^{d\times d};
Q_{ij}=Q_{ji},\textrm{tr}(Q) = 0, i,j=1,\dots,d \right\}$$ (the most relevant physical situations being $d=2,3$). The evolution of the $Q$'s is driven by  a gradient flow of the free energy of the molecules as well as the transport, distortion and alignment effects caused by the flow. 
 The flow field  $u:\R_+\times\Omega\to\mathbb{R}^{d}$  satisfies a forced incompressible Navier-Stokes system, with the forcing provided by the additional, non-Newtonian stress caused by the molecules orientations, thus expressed in terms of $Q$. 
We restrict ourselves to the case $\Omega=\R^d$ and work with non-dimensional quantities. The evolution of $Q$ is given by:

\begin{equation}
\partial_t Q+u\cdot \nabla Q-S(\nabla u,Q)=-\Gamma \frac{\partial \mathcal{F}_e}{\partial Q}
\end{equation} with $\Gamma>0$. Here 
\begin{equation}
\mathcal{F}_e(Q)=\int_{\mathbb{R}^d} \frac{L}{2}|\nabla Q|^2+\big(\frac{a}{2}\textrm{tr}(Q^2)-\frac{b}{3}\textrm{tr}(Q^3)+\frac{c}{4}\textrm{tr}^2(Q^2))\,dx
\label{freeenergy}
\end{equation} is the free energy of the liquid crystal molecules and $ \frac{\partial \mathcal{F}_e}{\partial Q}$ denotes the variational derivative. The $L,a,b,c$ constants are specific to the material with:
\begin{equation}\label{c+}
L>0\textrm{ and }a,b,c\in \mathbb{R}, \,c>0
\end{equation}

  If $u=0$ the $Q$-tensor equation would simply be a gradient flow of the free energy. For $u\not=0$ the molecules are transported by the flow (as indicated by the convective derivative $\partial_t+u\cdot\nabla$) as well as being tumbled and aligned by the flow, fact described by the term 

\begin{equation}
 S(\nabla u,Q)\stackrel{\rm{def}}{=}(\xi D+\Omega)(Q+\frac{1}{d}Id)+(Q+\frac{1}{d}Id)(\xi D-\Omega)-2\xi (Q+\frac{1}{d}Id)\textrm{tr}(Q\nabla u)
 \label{rel:defS}
\end{equation} where $D\stackrel{def}{=}\frac{1}{2}\left(\nabla u+(\nabla u)^T\right)$ and $\Omega\stackrel{def}{=}\frac{1}{2}
\left(\nabla u-(\nabla u)^T\right)$ are, respectively, the symmetric part and the antisymmetric part, of the velocity gradient matrix $\nabla u$.  The constant $\xi$ is specific to the liquid crystal material.

The flow satisfies the forced Navier-Stokes system:

\begin{align*}
\partial_t u+(u\cdot\nabla)u&=\nu   \Delta u+\nabla p+ \lambda\nabla\cdot(\tau+\sigma)\\
          \nabla\cdot u&=0
\end{align*} where $\nu,\lambda>0$ with $\lambda$ measuring the ratio of the elastic effects (produced by the liquid crystal molecules) to that of the diffusive effects. The forcing is provided by the additional stress caused by the presence of the liquid crystal molecules,  more specifically we have the symmetric part of the additional stress tensor:
\begin{equation}
\begin{aligned}
\tau\stackrel{\rm{def}}{=}\Big[-\xi \Big(Q+\frac{1}{d}Id\big)H-\xi H\big(Q+\frac{1}{d}Id\big)+2\xi \big(Q&+\frac{1}{d}Id\big)QH
-L \nabla  Q\odot  \nabla Q\Big]
\end{aligned}
\end{equation} 
and the antisymmetric part:
\begin{equation}
\sigma\stackrel{\rm{def}}{=}QH-HQ
\end{equation}  where we denoted
\begin{equation}
 H\stackrel{\rm{def}}{=}-\frac{\partial \mathcal{F}_e}{\partial Q}=L\Delta Q-aQ+b[Q^2-\frac{\textrm{tr}(Q^2)}{d}Id]-cQ\textrm{tr}(Q^2)
\label{rel:defH}
\end{equation}

Summarising we have the coupled system:

\begin{align}      (\partial_t+u\cdot \nabla)Q-S(\nabla u,Q)=&\Gamma(L\Delta Q-aQ+b[Q^2-\frac{\textrm{tr}(Q^2)}{d}Id]-cQ\textrm{tr}(Q^2))\non \\
         \partial_t u+(u\cdot\nabla)u=&\nu   \Delta u+\nabla p+ \lambda\nabla\cdot(QH-HQ)\non\\
            &+\lambda\nabla\cdot \Big[-\xi \Big(Q+\frac{1}{d}Id\big)H-\xi H\big(Q+\frac{1}{d}Id\big)+2\xi \big(Q+\frac{1}{d}Id\big)QH
-L \nabla  Q\odot \nabla Q\Big]\non\\
          \nabla\cdot u=&0
\label{system}          
     \end{align}

where $\Gamma, L,\nu,c>0$, $a,b\in\R$. Let us observe that this is a slight extension of the system considered in \cite{pz2}, where $\lambda=1$. However, this does not create any major difficulties compared to equations in \cite{pz2} but it is more relevant from a physical point of view. 

The main result of the paper is the uniqueness of weak solutions, which are defined in a rather standard manner:

\bigskip
\noindent\par{\bf Definition of weak solutions} { \it    A pair $(Q,u)$ is called a weak solution  of the system (\ref{system}), subject to initial data
\begin{equation}
Q(0,x)=\bar Q(x)\in H^1(\mathbb{R}^d;S_0^{(d)}),\, u(0,x)=\bar u(x)\in L^2(\mathbb{R}^d;\R^d), \nabla\cdot \bar u=0\,\textrm{ in }\mathcal{D}'(\mathbb{R}^d)
\label{initialdata}
\end{equation}
if   $Q\in L^\infty_{loc}(\mathbb{R}_+;H^1)\cap L^2_{loc}(\mathbb{R}_+;H^2)$, $u\in L^\infty_{loc}(\mathbb{R}_+;L^2)\cap L^2_{loc}(\mathbb{R}_+;H^1)$  and for every compactly supported $\varphi\in C^\infty([0,\infty)\times \mathbb{R}^d;  S_0^{(d)})$, $\psi\in C^\infty ([0,\infty)\times\mathbb{R}^d;\mathbb{R}^d)$ with $\nabla\cdot\psi=0$ we have
\begin{eqnarray}\int_0^\infty\int_{\mathbb{R}^d}(-Q\cdot    \partial_t\varphi-\Gamma L\Delta Q\cdot \varphi)- Q\cdot u\nabla_x \varphi dx\,dt\nonumber\\-\int_0^\infty\int_{\mathbb{R}^d}(\xi D+\Omega)(Q+\frac{1}{d}Id)\cdot\varphi+(Q+\frac{1}{d}Id)(\xi D-\Omega)\cdot\varphi-2\xi (Q+\frac{1}{d}Id)\textrm{tr}(Q\nabla u)\cdot\varphi\,dx\,dt\nonumber\\
=\int_{\mathbb{R}^d}\bar Q(x)\cdot \varphi(0,x)\,dx+\Gamma\int_0^\infty\int_{\mathbb{R}^d}\Big\{-aQ+b[Q^2-\frac{\textrm{tr}(Q^2)}{d}Id]-cQ\textrm{tr}(Q^2)\Big\}\cdot\varphi\,\,dx\,dt
\label{weaksol1}
\end{eqnarray} 
and
\begin{eqnarray}\int_0^\infty\int_{\mathbb{R}^d}-u   \partial_t\psi-u_\alpha u_\beta   \partial_\alpha\psi_\beta+\nu\nabla u\nabla\psi\,\, dt\,dx-\int_{\mathbb{R}^d}\bar u(x)\psi(0,x)\,dx\nonumber\\
=L\lambda\int_0^\infty\int_{\mathbb{R}^d} 
 Q_{\gamma\delta,\alpha} Q_{\gamma\delta,\beta}\psi_{\alpha,\beta} -Q_{\alpha\gamma}\Delta Q_{\gamma\beta}\psi_{\alpha,\beta}+\Delta Q_{\alpha\gamma}Q_{\gamma\beta}\psi_{\alpha,\beta}\,\,dx\,dt\nonumber\\
+\xi\lambda\int_0^\infty\int_{\mathbb{R}^d}\left(Q_{\alpha\gamma}+\frac{\delta_{\alpha\gamma}}{d}\right)H_{\gamma\beta}\psi_{\alpha,\beta}+H_{\alpha\gamma}\left(Q_{\gamma\beta}+\frac{\delta_{\gamma\beta}}{d}\right)\psi_{\alpha,\beta}-2(Q_{\alpha\beta}+\frac{\delta_{\alpha\beta}}{d})Q_{\gamma\delta}H_{\gamma\delta}\psi_{\alpha,\beta}\,dx\,dt
\label{weaksol2}
\end{eqnarray} 
}
We can now state our main result, which is the existence and uniqueness of weak solutions:

\begin{theorem}\label{thm: uniqueness} Let $d=2, 3$ and take 
$$Q(0,x)=\bar Q(x)\in H^1(\mathbb{R}^d;S_0^{(d)}),\, u(0,x)=\bar u(x)\in L^2(\mathbb{R}^d), \nabla\cdot \bar u=0\,\textrm{ in }\mathcal{D}'(\mathbb{R}^d)$$
Then the system \eqref{system} admits a global weak solution. Moreover if $d=2$, then uniqueness holds.
\end{theorem}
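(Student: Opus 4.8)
\textit{Global existence ($d=2,3$).} We only outline the standard compactness argument. Regularize \eqref{system} --- say by mollifying the quadratic and cubic nonlinearities and adding a vanishing viscosity, or by a Galerkin truncation --- so as to get global smooth approximations $(Q_\ee,u_\ee)$. The source of all a priori bounds is the free-energy dissipation law: testing the $Q$-equation against $H(Q_\ee):=-\partial\mathcal F_e/\partial Q$ and the momentum equation against $u_\ee$, the precise algebraic form of $S(\nabla u,Q)$ and of $\tau+\sigma$ is exactly the one for which the coupling contributions cancel, leaving
\[
\tfrac{\dd}{\dd t}\Big(\tfrac12\|u_\ee\|_{L^2}^2+\lambda\mathcal F_e(Q_\ee)\Big)+\nu\|\nabla u_\ee\|_{L^2}^2+\lambda\Gamma\|H(Q_\ee)\|_{L^2}^2\le C\big(1+\|Q_\ee\|_{L^2}^2\big),
\]
where the hypothesis $c>0$ is used to bound $\mathcal F_e$ from below modulo an $L^2$ term and the right-hand side is then absorbed by Gronwall. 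This yields, uniformly in $\ee$, that $Q_\ee\in L^\infty_{loc}(\R_+;H^1)\cap L^2_{loc}(\R_+;H^2)$ and $u_\ee\in L^\infty_{loc}(\R_+;L^2)\cap L^2_{loc}(\R_+;H^1)$; estimating $\partial_t Q_\ee$ and $\partial_t u_\ee$ in negative-index spaces and invoking the Aubin--Lions--Simon lemma provides strong $L^2_{loc}$ convergence of $Q_\ee$, $\nabla Q_\ee$ and $u_\ee$, which suffices to pass to the limit in every nonlinear term (each at most quadratic in the top-order unknowns) and to recover \eqref{weaksol1}--\eqref{weaksol2}.

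\textit{Uniqueness ($d=2$).} Let $(Q_1,u_1),(Q_2,u_2)$ be two weak solutions with the same data and write $\delta Q:=Q_1-Q_2$, $\delta u:=u_1-u_2$, $\delta H:=H(Q_1)-H(Q_2)$. Since $S(\cdot,Q)$ and the stress are linear in their highest-order argument, $(\delta Q,\delta u)$ solves a system that is linear in the unknowns with coefficients built from the two solutions; the forcing splits into transport remainders ($\delta u\cdot\nabla Q_2$ and $\delta u\cdot\nabla u_2$), ``diagonal'' couplings ($S(\nabla\delta u,Q_1)$ in the $Q$-equation and $\nabla\!\cdot(\text{stress}[Q_1,\delta H])$ in the momentum equation), and ``commutator'' terms in which a difference of $Q$'s multiplies a quantity of a single fixed solution. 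We test the momentum equation with $\delta u$ and the $Q$-equation with $-\lambda(L\Delta\delta Q-a\delta Q)$, so that the resulting energy functional is, up to fixed positive constants, $E(t):=\|\delta u(t)\|_{L^2}^2+\|\delta Q(t)\|_{H^1}^2$, with dissipation $\nu\|\nabla\delta u\|_{L^2}^2+\lambda\Gamma L\|\Delta\delta Q\|_{L^2}^2$. The same structural identity used in the energy law kills the whole of the diagonal part --- which is precisely where the products of $\nabla\delta u$ and $\Delta\delta Q$ would otherwise sit --- and the transport remainders are controlled in two dimensions by Ladyzhenskaya's inequality $\|\delta u\|_{L^4}^2\lesssim\|\delta u\|_{L^2}\|\nabla\delta u\|_{L^2}$, with $\|\nabla u_2\|_{L^2}^2,\|\nabla Q_2\|_{L^\infty}^2\in L^1_{loc}$ as Gronwall multipliers.

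\textit{The decisive estimate.} What remain are the commutator terms. The benign ones --- e.g.\ $\int(\nabla\delta Q\odot\nabla Q_1):\nabla\delta u$ coming from $-L\nabla Q\odot\nabla Q$, and the polynomial contributions of $H$ --- close by Young's inequality combined with the $2$D interpolation bounds $\|\nabla\delta Q\|_{L^4}^2\lesssim\|\nabla\delta Q\|_{L^2}\|\delta Q\|_{H^2}$, $\|\nabla Q_i\|_{L^4}^2\lesssim\|\nabla Q_i\|_{L^2}\|Q_i\|_{H^2}$ and $\|Q_i\|_{L^8}\lesssim\|Q_i\|_{H^1}$, leaving only $L^1_{loc}$ multipliers in front of $E$. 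The genuinely borderline term is the stretching coupling $S(\nabla u_2,Q_1)-S(\nabla u_2,Q_2)$ tested against $\Delta\delta Q$, i.e.\ an expression of the form $\int(\nabla u_2)\,\delta Q:\Delta\delta Q$: since $\nabla u_2$ only lies in $L^2_{t,x}$ one is forced to put $\delta Q$ in $L^\infty$, and the plain Sobolev bound $\|\delta Q\|_{L^\infty}\lesssim\|\delta Q\|_{H^1}^{1/2}\|\delta Q\|_{H^2}^{1/2}$ produces, after Young's inequality, a multiplier $\|\nabla u_2\|_{L^2}^4$ that is \textit{not} locally integrable in time --- this is the main obstacle. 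It is removed by the two-dimensional logarithmic Brezis--Gallou\"et inequality
\[
\|f\|_{L^\infty(\R^2)}\lesssim\|f\|_{H^1}\Big(1+\log\big(e+\|f\|_{H^2}/\|f\|_{H^1}\big)\Big)^{1/2},
\]
which trades the bad power for a logarithm, so that the term in question contributes at most $\eta\,\|\Delta\delta Q\|_{L^2}^2+C\,\|\nabla u_2\|_{L^2}^2\,E(t)\big(1+\log(e+\|\delta Q\|_{H^2}^2/E(t))\big)$ with $\|\nabla u_2\|_{L^2}^2\in L^1_{loc}$. Collecting all contributions and absorbing the small multiples of the dissipation, one reaches a differential inequality of Osgood type,
\[
\tfrac{\dd}{\dd t}E(t)\le g(t)\,\Phi\big(E(t)\big),\qquad g\in L^1_{loc}(\R_+),\quad \Phi(s)=s\big(1+\log(1/s)\big)\ \text{for small }s,\quad\int_{0^+}\!\frac{\dd s}{\Phi(s)}=+\infty,
\]
the dissipation being spent to tame the $\log$ of $\|\delta Q\|_{H^2}$, together with $E(0)=0$; Osgood's lemma then forces $E\equiv0$ on every finite interval, hence $Q_1\equiv Q_2$, $u_1\equiv u_2$, which is the asserted uniqueness for $d=2$. (As an alternative to Brezis--Gallou\"et, the same borderline term can be handled through a Littlewood--Paley/paraproduct decomposition, estimating $(\delta u,\delta Q)$ in $\dot B^0_{2,1}\times\dot B^2_{2,1}$ and summing the dyadic estimates with a logarithmic loss --- which is the reason for the Littlewood--Paley notation fixed above.)
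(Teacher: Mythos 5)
Your existence sketch follows essentially the paper's route (regularization, Friedrichs/Galerkin approximation, the energy-law cancellations, Aubin--Lions compactness) and is fine as an outline. The gap is in the uniqueness argument, and it is exactly the obstruction the paper is designed to avoid: you measure the difference at the energy level, $E(t)=\|\delta u\|_{L^2}^2+\|\delta Q\|_{H^1}^2$, and this cannot be closed for two \emph{weak} solutions. The borderline terms you identify (e.g.\ $\int \delta Q\,\trc(Q_2\nabla u_2):\Delta\delta Q$, and equally $\int \delta Q\,\Delta Q_2:\nabla\delta u$ coming from the stress) force $\|\delta Q\|_{L^\infty}$, because the remaining factors $\nabla u_2$, $\Delta Q_2$, $\Delta\delta Q$, $\nabla\delta u$ are all only $L^2$ in space; in $2$D this is precisely the failure of $H^1\hookrightarrow L^\infty$, and the paper states explicitly that at regularity level $s=0$ one only obtains weak--strong uniqueness. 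Your proposed repair by Brezis--Gallou\"et does not remove the obstruction: it yields a term of the form $g(t)\,E\,\bigl(1+\log\bigl(e+\|\delta Q\|_{H^2}^2/E\bigr)\bigr)$ with $g=\|\nabla u_2\|_{L^2}^2$ or $\|\Delta Q_i\|_{L^2}^2$, i.e.\ the logarithm contains a \emph{time-dependent} quantity that is only $L^2_t$. Splitting the log gives a Gronwall coefficient $g(t)\log\bigl(e+\|\delta Q(t)\|_{H^2}^2\bigr)$, and "spending the dissipation" to absorb the log (optimize $g E\log(e+D/E)-\tfrac12 D$ over $D$) leaves a coefficient of size $g\log(e+g)$; in both cases you need an $L\log L$-in-time bound on dissipation-level norms of the given weak solutions, which $g\in L^1_{loc}$ does not provide. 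So the Osgood inequality you write down, with $\chi\in L^1_{loc}$ and a modulus depending only on $E$, is not actually reached, and the argument does not close.

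This is why the paper estimates the difference strictly \emph{below} the energy space, in $\Phi(t)=\tfrac{1}{2\lambda}\|\delta u\|_{\Hh^{-1/2}}^2+L\|\nabla\delta Q\|_{\Hh^{-1/2}}^2$. There the product law $\Hh^{1/2}\times L^2\to\Hh^{-1/2}$ (Theorem~\ref{theorem_product_homogeneous_sobolev_spaces}) replaces the forbidden $L^\infty$ embedding, so terms like $\langle\delta Q\,\Delta Q_2,\nabla\delta u\rangle_{\Hh^{-1/2}}$ close with the $L^1_{loc}$ coefficient $\|\Delta Q_2\|_{L^2}^2$; the genuinely critical terms are treated by frequency splitting with $\|\Sd_N Q_2\|_{L^\infty}\lesssim(1+\sqrt N)\|(Q_2,\nabla Q_2)\|_{L^2}$ and the choice $N\sim\ln(1+e+1/\Phi)$, so that the resulting logarithms are of $1/\Phi$ only (single and double logs), with all time-dependent factors in $L^1_{loc}$ -- and the worst pair ($\EE_1+\EE_2$), which cancels identically in $L^2$ by cyclicity of the trace, must instead be decomposed paradifferentially because the $\Hh^{-1/2}$ weights destroy the exact cancellation. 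Note also that your parenthetical alternative ($\dot B^0_{2,1}\times\dot B^2_{2,1}$ estimates for the difference) sits at or above the energy level and runs into the same wall; if you want to keep a Littlewood--Paley formulation, the decisive idea you are missing is to lower the norm of the difference (any $\Hh^s$, $s\in(-1,0)$, works) so that the Osgood modulus involves only $\Phi$ itself.
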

\begin{remark}
With minor modifications to the proof, that are left to the interested reader, the result also holds when the system is $d=2$ in the domain but $d=3$ in the target, which physically corresponds to a situation where there is no dependence in one of the three spatial directions.  
\end{remark}

The main part of the theorem is about uniqueness, as the existence part is just a fairly straightforward revisit of the arguments in \cite{pz2}. The main difficulties associated with treating the system \eqref{system} are related to the presence of the Navier-Stokes part. One can essentially think of the system as a highly non-trivial perturbation of a Navier-Stokes system. It is known that for Navier-Stokes alone the uniqueness of weak solutions in $2D$ can be achieved through rather standard arguments, while in $3D$ it is a major open problem.

The extended system that we deal with has an intermediary position, as the perturbation produced by the presence of the additional stress-tensor generates significant technical difficulties related in the first place to the weak norms available for the $u$ term. A rather common way of dealing with this issue is by using a weak norm for estimating the difference between the two weak solutions, a norm that is  below the natural spaces in which the weak solutions are defined. This approach was used before in the context of the related Leslie-Ericksen model \cite{titiuniqueness} as well as for the usual Navier-Stokes system in \cite{MR1813331} and \cite{MR2309504}.

 In our case, for technical convenience we  use a homogeneous Sobolev space, namely $\dot H^{-\frac{1}{2}}$. The fact that the initial data for the difference is zero (i.e. $(\delta u,\,\delta Q)_{t=0}=0$) helps in controlling the difference in such a low regularity space. However, one of the main reasons for chosing the homogeneous setting is a specific product law, see Theorem~\ref{theorem_product_homogeneous_sobolev_spaces} in the Appendix. The mentioned theorem shows that the product is a bounded operator from $\Hh^s(\RR^2)\times \Hh^t(\RR^2)$ into $\Hh^{s+t-1}(\RR^2)$, for any $|s|,\,|t|\leq 1$ such that  $s+t$ is positive. We note that evaluating the difference at regularity level $s=0$ i.e. in $L^2$, would only allow to prove a weak-strong uniqueness result, along the lines of \cite{MR2864407}. Working in a negative Sobolev space, $
\dot H^s$ with $s\in (-1,0)$ allows to capture the uniqueness of weak solutions.
 We expect that a similar proof would work in any $\dot H^s$ with $s\in (-1,0)$ and our choice $s=-\frac{1}{2}$ is just for convenience.
 
 Our main work is to obtain the delicate double-logarithmic type estimates that lead to an Osgood lemma, a generalization of the 
Gronwall inequality (see \cite{MR2768550}, Lemma 3.4). Indeed the uniqueness reduces to an estimate of the following type:

\begin{equation*}
	\Phi'(t) \leq \chi(t) \Big\{ \Phi(t) + \Phi(t)\ln\Big( 1+e +\frac{1}{\Phi(t)}\Big)
	+\Phi(t)
	\ln\Big( 1+e+  \frac{1}{\Phi(t)}\Big)
	 \ln \ln\Big( 1+e+ \frac{1}{\Phi(t)}\Big)\Big\} ,
\end{equation*}
where $\Phi(t)$ is a norm of the difference between two solutions and $\chi$ is apriori in $L^1_{loc}$.

\smallskip
In addition to these there are some difficulties that are specific to this system. These are  of two different types, being related to:

\begin{itemize}
\item controlling the ``extraneous" maximal derivatives: that is the highest derivatives in $u$ that appear in the $Q$ equation and the highest derivatives in $Q$ that appear in the $u$ equation,
\item controlling the high powers of $Q$ , such as $Q\textrm{tr}(Q^2)$ in particular those that interact with $u$ terms (such as $Q\textrm{tr}(Q\nabla u)$). 
\end{itemize}

\medskip
The first difficulty is dealt with  by taking into account the specific feature of the coupling that allows for the {\it cancellation of the worst terms},  when considering certain physically meaningful combinations of terms. This feature is explored in the next section where we revisit and revise the existence proof from \cite{pz2}. In what concerns the second difficulty, this is overcome by delicate harmonic analysis arguments leading to the double logarithmic estimates mentioned before. 

\medskip The paper is organised as follows: in the next section we revisit the existence arguments done in cite \cite{pz2}, providing a slight adaptation to our case and a minor correction to one of the estimates used there. The main benefit of this section is that it exhibits in a simple setting a number of cancellations that are later-on crucial for the uniqueness argument. In the third section we start by introducing a number of technical harmonic analysis tools related to the Littlewood-Paley theory and then use them in the proof of our main result. Some standard but perhaps less-known tools, toghether with some more technical estimates are postponed into the appendices.

\subsection*{Notations and conventions}  Let
$S_0^{(d)}\subset \mathbb{M}^{d\times d}$ denote the space of Q-tensors in dimension $d$,  i.e.
 $$S_0^{(d)}\stackrel{\rm{def}}{=} \left\{Q \in \mathbb{M}^{d\times d};
Q_{ij}=Q_{ji},\textrm{tr}(Q) = 0, i,j=1,\dots,d \right\}$$
We  use the Einstein summation convention, that is we  assume summation over repeated  indices. 

      We define the Frobenius norm of a matrix $
 \left| Q \right|\stackrel{\rm{def}}{=}\sqrt{\textrm{tr}Q^2} =\sqrt{ Q_{\alpha\beta}
Q_{\alpha\beta}}$ and define Sobolev spaces of $Q$-tensors in terms of this norm. For instance $H^1(\mathbb{R}^d,S_0^{(d)})\stackrel{\rm{def}}{=}\{Q:\mathbb{R}^d\to S_0^{(d)}, \int_{\mathbb{R}^d} |\nabla Q(x)|^2+|Q(x)|^2\,dx<\infty\}$ where  $|\nabla Q|^2(x)\stackrel{\rm{def}}{=}Q_{\alpha\beta,\gamma}(x)Q_{\alpha\beta,\gamma}(x)$ with $Q_{\alpha\beta,\gamma}\stackrel{\rm{def}}{=}   \partial_\gamma Q_{\alpha\beta}$. For $A,B\in S_0^{(d)}$ we denote $A:B=\textrm{tr}(AB)$, $|A|=\sqrt{\textrm{tr}(A^2)}$ and $\| (A,\,B) \|_X = \| A \|_X + \| B \|_X$, for any suitable Banach space $X$. We also denote $\Omega_{\alpha\beta}\stackrel{\rm{def}}{=}\frac{1}{2}\left(   \partial_\beta u_\alpha-   \partial_\alpha u_\beta\right)$,$u_{\alpha,\beta}\stackrel{\rm{def}}{=}   \partial_\beta u_\alpha$ and $(\nabla Q\odot \nabla Q)_{ij}=Q_{\alpha\beta,i}Q_{\alpha\beta,j}$.
\section{The  energy decay, apriori estimates { and scaling}}

      In the absence of the flow, when $u=0$ in the equations (\ref{system}), the free energy is a Lyapunov functional of the system. If $u\not=0$ we still have a Lyapunov functional for (\ref{system}) but this time one that includes the kinetic energy of the system. These estimates provide as usually the basis for obtaining apriori estimates for the system. The propositions in this section show this and their proofs follow closely the ones of the similar propositions in \cite{pz2} where they were  done for the case $\lambda=1$. The reason for including them is to display in relatively simple setting  the cancellations that will appear again in the proof of the uniqueness theorem but in a much more complicated framework.  We have:

\begin{proposition}
The system \eqref{system} has a Lyapunov functional:
\begin{equation}
E(t)\stackrel{\rm{def}}{=}\frac{1}{2}\int_{\mathbb{R}^d}|u|^2(t,x)\,dx+\int_{\mathbb{R}^d}\frac{L\lambda}{2} |\nabla Q|^2(t,x)+\lambda(\frac{a}{2}\textrm{tr}(Q^2(t,x))-\frac{b}{3}\textrm{tr}(Q^3(t,x))+\frac{c}{4}\textrm{tr}^2(Q^2(t,x)))\,dx
\end{equation}
      If $d=2,3$ and $(Q,u)$ is a smooth solution of (\ref{system}) such that $Q\in L^\infty(0,T; H^1(\mathbb{R}^d))\cap L^2(0,T;H^2(\mathbb{R}^d))$  and $u\in L^\infty(0,T;L^2(\mathbb{R}^d))\cap L^2(0,T;H^1(\mathbb{R}^d))$ then, for all $t<T$, we have:
\begin{equation}
\frac{d}{dt}E(t)=-\nu\int_{\mathbb{R}^d}|\nabla u|^2\,dx-\Gamma \lambda\int_{\mathbb{R}^d} \textrm{tr}\left(L\Delta Q-aQ+b[Q^2-\frac{\textrm{tr}(Q^2)}{d}Id]-cQ\textrm{tr}(Q^2)\right)^2\,dx\le 0
\label{energydecay}
\end{equation}
\label{prop:Lyapunov}
\end{proposition}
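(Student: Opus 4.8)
\textbf{Proof proposal for Proposition~\ref{prop:Lyapunov}.}

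The plan is to test the two evolution equations in \eqref{system} against the natural multipliers dictated by the structure of $E(t)$, namely $u$ for the momentum equation and $-\lambda H = \lambda\frac{\partial\mathcal F_e}{\partial Q}$ for the $Q$-equation, add the resulting identities, and watch the cross terms cancel. First I would differentiate $E(t)$ in time; the kinetic part produces $\int u\cdot\partial_t u$ and the elastic part produces, by the chain rule and the definition \eqref{rel:defH} of $H$, exactly $-\lambda\int H:\partial_t Q$. Next I would substitute $\partial_t u$ from the Navier--Stokes equation and $\partial_t Q$ from the $Q$-equation. For the momentum equation, testing against $u$ and integrating by parts: the convective term $(u\cdot\nabla)u\cdot u$ vanishes by $\nabla\cdot u=0$, the pressure term vanishes likewise, the viscous term gives $-\nu\int|\nabla u|^2$, and the stress term gives $-\lambda\int(\tau+\sigma):\nabla u$. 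For the $Q$-equation, testing against $-\lambda H$: the gradient-flow part $\Gamma H$ on the right-hand side produces $-\Gamma\lambda\int|H|^2 = -\Gamma\lambda\int\textrm{tr}(L\Delta Q - aQ + b[\cdots] - cQ\,\textrm{tr}(Q^2))^2$, which is the second dissipation term in \eqref{energydecay}.

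The heart of the argument is then the cancellation of everything else: the transport term $-\lambda\int (u\cdot\nabla Q):H$, the stretching term $\lambda\int S(\nabla u,Q):H$ coming from the $Q$-equation, and the stress term $-\lambda\int(\tau+\sigma):\nabla u$ coming from the momentum equation must sum to zero. This is where the specific algebraic form of $S$, $\tau$ and $\sigma$ matters. I would split $\tau = \tau^{sym}$ into the part $-L\nabla Q\odot\nabla Q$ and the $\xi$-dependent part. The term $-L\int(\nabla Q\odot\nabla Q):\nabla u$, after integration by parts, combines with $-L\lambda\int(u\cdot\nabla Q):\Delta Q$-type contributions arising from the transport term paired with the $L\Delta Q$ piece of $H$; using $\nabla\cdot u = 0$ one checks these cancel (this is the standard Ericksen-stress/transport cancellation). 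Separately, the $\xi$- and $\Omega$-dependent pieces of $S(\nabla u,Q):H$ must match, term by term, the $\xi$-dependent part of $\tau:\nabla u$ together with $\sigma:\nabla u = (QH-HQ):\nabla u = (QH-HQ):\Omega$; here one uses repeatedly the symmetry of $D$, $H$, $Q$ and the antisymmetry of $\Omega$, plus the cyclicity of the trace, to see that $(\xi D+\Omega)(Q+\frac1d\mathrm{Id}):H + \ldots$ is the pairing-dual of the stress. The identity $A:(BC) = (AB):C$ under the trace and the splitting $\nabla u = D + \Omega$ are the workhorses.

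The main obstacle I anticipate is precisely the bookkeeping in this cancellation: one must carefully track the quadratic-in-$Q$ stress term $2\xi(Q+\frac1d\mathrm{Id})\,\textrm{tr}(QH)$ against the $-2\xi(Q+\frac1d\mathrm{Id})\,\textrm{tr}(Q\nabla u)$ appearing inside $S$ (note $\textrm{tr}(Q\nabla u) = \textrm{tr}(QD)$ since $Q$ is symmetric), and ensure that the coupling constant $\lambda$, which here is general rather than $1$ as in \cite{pz2}, factors out cleanly from both the $Q$-equation contribution and the momentum contribution so that the cancellation is unaffected. Once all non-dissipative terms are shown to cancel, one is left with $\frac{d}{dt}E(t) = -\nu\int|\nabla u|^2 - \Gamma\lambda\int|H|^2 \le 0$, which is \eqref{energydecay}. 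The regularity hypotheses $Q\in L^\infty_tH^1\cap L^2_tH^2$, $u\in L^\infty_tL^2\cap L^2_tH^1$ together with the smoothness assumption justify every integration by parts and the absolute integrability of each term (e.g. the cubic and quartic terms in $Q$ are controlled via Ladyzhenskaya/Gagliardo--Nirenberg in $d=2,3$), so no boundary terms appear on $\mathbb{R}^d$.
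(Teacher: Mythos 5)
Your proposal is correct and follows essentially the same route as the paper: test the $Q$-equation with $-\lambda H$, the momentum equation with $u$, integrate by parts, and verify that the transport/Ericksen-stress terms, the co-rotational terms against the antisymmetric stress $\sigma$, and the $\xi$-dependent terms of $S$ against the $\xi$-part of $\tau$ cancel in pairs (with the bulk-potential pieces of the transport and rotation terms vanishing separately by $\nabla\cdot u=0$ and the symmetry of $Q$), leaving exactly the two dissipation terms. The cancellations you anticipate are precisely the identities $\mathcal{A}+\mathcal{AA}=0$, $\mathcal{J}_1+\mathcal{J}_2=\mathcal{JJ}_1+\mathcal{JJ}_2$, $\mathcal{J}_3=\mathcal{JJ}_3$ and $2\mathcal{B}+\mathcal{BB}=2\mathcal{C}+\mathcal{CC}=0$ carried out in the paper, and the uniform factor $\lambda$ indeed plays no role in them.
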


\smallskip     {\bf Proof.} We multiply the first equation in (\ref{system}) to the right by $-\lambda H$, take the trace, integrate over $\mathbb{R}^d$ and by parts and sum with the second equation multiplied by $u$ and integrated over $\mathbb{R}^d$ and by parts (let us observe that because of our assumptions on $Q$ and $u$ we do not have boundary terms, when integrating by parts). We obtain:

\begin{eqnarray}
\frac{d}{dt}\int_{\mathbb{R}^d}\frac{1}{2}|u|^2+\frac{L\lambda}{2}|\nabla Q|^2+\lambda(\frac{a}{2}\textrm{tr}(Q^2)-\frac{b}{3}\textrm{tr}(Q^3)+\frac{c}{4}\textrm{tr}^2(Q^2))\,dx\nonumber\\+\nu\int_{\mathbb{R}^d}|\nabla u|^2\,dx+\Gamma\lambda\int_{\mathbb{R}^d}\textrm{tr}\left(L\Delta Q L-aQ+b[Q^2-\frac{\textrm{tr}(Q^2)}{d}Id]-cQ\textrm{tr}(Q^2)\right)^2\,dx\nonumber\\=\underbrace{\lambda\int_{\mathbb{R}^d}u\cdot\nabla Q_{\alpha\beta}\left(-aQ_{\alpha\beta}+b[Q_{\alpha\gamma}Q_{\gamma\beta}-\frac{\delta_{\alpha\beta}}{d}\textrm{tr}(Q^2)]-cQ_{\alpha\beta}\textrm{tr}(Q^2))\right)\,dx}_{\stackrel{\rm{def}}{=}\mathcal{I}}\nonumber\\+\underbrace{\lambda\int_{\mathbb{R}^d}\left(-\Omega_{\alpha\gamma} Q_{\gamma\beta}+Q_{\alpha\gamma}\Omega_{\gamma\beta}\right)\left(-aQ_{\alpha\beta}+b[Q_{\alpha\delta}Q_{\delta\beta}-\frac{\delta_{\alpha\beta}}{d}\textrm{tr}(Q^2)]-cQ_{\alpha\beta}\textrm{tr}(Q^2))\right)\,dx}_{\stackrel{\rm{def}}{=}\mathcal{II}}
\nonumber
\end{eqnarray}
\begin{eqnarray}-\lambda\xi\underbrace{\int_{\mathbb{R}^d} \big(Q_{\alpha\gamma}+\frac{\delta_{\alpha\gamma}}{d}\big)D_{\gamma\beta}H_{\alpha\beta}dx}_{\stackrel{\rm{def}}{=}\mathcal{J}_1}-\lambda\xi\underbrace{\int_{\mathbb{R}^d} D_{\alpha\gamma}\big(Q_{\gamma\beta}+\frac{\delta_{\gamma\beta}}{d}\big)H_{\alpha\beta}dx}_{\stackrel{\rm{def}}{=}\mathcal{J}_2}
+2\lambda\xi\underbrace{\int_{\mathbb{R}^d}\big(Q_{\alpha\beta}+\frac{\delta_{\alpha\beta}}{d}\big)H_{\alpha\beta}\textrm{tr}(Q\nabla u)dx}_{\stackrel{\rm{def}}{=}\mathcal{J}_3}\nonumber\\
+L\lambda\underbrace{\int_{\mathbb{R}^d}u_{\gamma} Q_{\alpha\beta,\gamma}\Delta Q_{\alpha\beta}\,dx}_{\stackrel{\rm{def}}{=}\mathcal{A}}\underbrace{
-\frac{L\lambda}{2}\int_{\mathbb{R}^d} u_{\alpha,\gamma}Q_{\gamma\beta}\Delta Q_{\alpha\beta}\,dx}_{\stackrel{\rm{def}}{=}\mathcal{B}}
\nonumber
\end{eqnarray}
\begin{eqnarray}
\underbrace{+\frac{L\lambda}{2}\int_{\mathbb{R}^d}u_{\gamma,\alpha}Q_{\gamma\beta}\Delta Q_{\alpha\beta}}_{\stackrel{\rm{def}}{=}\mathcal{C}}\,dx
\underbrace{+\frac{L\lambda}{2}\int_{\mathbb{R}^d}Q_{\alpha\gamma}u_{\gamma,\beta}\Delta Q_{\alpha\beta}\,dx}_{\mathcal{C}}\underbrace{-\frac{L\lambda}{2}\int_{\mathbb{R}^d}Q_{\alpha\gamma}u_{\beta,\gamma}\Delta Q_{\alpha\beta}\,dx}_{\mathcal{B}}\nonumber\\
+L\lambda\underbrace{\int_{\mathbb{R}^d}Q_{\gamma\delta,\alpha}Q_{\gamma\delta,\beta}u_{\alpha,\beta}\,dx}_{\stackrel{\rm{def}}{=}\mathcal{AA}}\underbrace{-L\lambda\int_{\mathbb{R}^d} Q_{\alpha\gamma}\Delta Q_{\gamma\beta}u_{\alpha,\beta}\,dx}_{\stackrel{\rm{def}}{=}\mathcal{CC}}\underbrace{+L\lambda\int_{\mathbb{R}^d}\Delta Q_{\alpha\gamma}Q_{\gamma\beta}u_{\alpha,\beta}\,dx}_{\stackrel{\rm{def}}{=}\mathcal{BB}}\nonumber\\
+\lambda\xi\underbrace{\int_{\mathbb{R}^d} \big(Q_{\alpha\gamma}+\frac{\delta_{\alpha\gamma}}{d}\big)H_{\gamma\beta}u_{\alpha,\beta}\,dx}_{\stackrel{\rm{def}}{=}\mathcal{JJ}_1}+\lambda\xi\underbrace{\int_{\mathbb{R}^d}H_{\alpha\gamma}\big(Q_{\gamma\beta}+\frac{\delta_{\gamma\beta}}{d}\big)u_{\alpha,\beta}\,dx}_{\stackrel{\rm{def}}{=}\mathcal{JJ}_2}-2\lambda\xi\underbrace{\int_{\mathbb{R}^d} \big(Q_{\alpha\beta}+\frac{\delta_{\alpha\beta}}{d}\big)u_{\alpha,\beta}\textrm{tr}(QH)\,dx}_{\stackrel{\rm{def}}{=}\mathcal{JJ}_3}\nonumber
\end{eqnarray}
\begin{eqnarray}
=\underbrace{-L\lambda\int_{\mathbb{R}^d} u_{\alpha,\gamma}Q_{\gamma\beta}\Delta Q_{\alpha\beta}\,dx}_{2\mathcal{B}}\underbrace{+L\lambda\int_{\mathbb{R}^d}u_{\gamma,\alpha}Q_{\gamma\beta}\Delta Q_{\alpha\beta}\,dx}_{2\mathcal{C}}\nonumber\\ \underbrace{-L\lambda\int_{\mathbb{R}^d} Q_{\alpha\gamma}\Delta Q_{\gamma\beta}u_{\alpha,\beta}\,dx}_{\mathcal{CC}}\underbrace{+L\lambda\int_{\mathbb{R}^d}\Delta Q_{\alpha\gamma}Q_{\gamma\beta}u_{\alpha,\beta}\,dx}_{\mathcal{BB}}=0
\label{Lyapunovcancellation}
\end{eqnarray} where $\mathcal{I}=0$ (since $\nabla\cdot u=0$), $\mathcal{II}=0$ (since $Q_{\alpha\beta}=Q_{\beta\alpha}$) and for the second equality we used

\begin{eqnarray}\underbrace{\int_{\mathbb{R}^d}u_\gamma Q_{\alpha\beta,\gamma}\Delta Q_{\alpha\beta}\,dx}_{\mathcal{A}}\underbrace{+\int_{\mathbb{R}^d}Q_{\gamma\delta,\alpha}Q_{\gamma\delta,\beta}u_{\alpha,\beta}\,dx}_{\mathcal{AA}}=\int_{\mathbb{R}^d}u_\gamma Q_{\alpha\beta,\gamma}\Delta Q_{\alpha\beta}\,dx\nonumber\\-\int_{\mathbb{R}^d}Q_{\gamma\delta,\alpha}Q_{\gamma\delta,\beta\beta}u_\alpha\,dx-
\int_{\mathbb{R}^d}Q_{\gamma\delta,\alpha\beta}Q_{\gamma\delta,\beta}u_\alpha\,dx=\int_{\mathbb{R}^d}\frac{1}{2}Q_{\gamma\delta,\beta}Q_{\gamma\delta,\beta}u_{\alpha,\alpha}\,dx=0\nonumber
\end{eqnarray} together with $Q_{\alpha\alpha}=H_{\alpha\alpha}=u_{\alpha,\alpha}=0$, $\mathcal{J}_3=\mathcal{JJ}_3$ and

\begin{eqnarray}
\mathcal{J}_1+\mathcal{J}_2= \int_{\mathbb{R}^d} \frac{1}{2}Q_{\alpha\gamma}u_{\gamma,\beta}H_{\alpha\beta}+\frac{1}{2}Q_{\alpha\gamma}u_{\beta,\gamma}H_{\alpha\beta}+\frac{1}{2}u_{\alpha,\gamma}Q_{\gamma\beta}H_{\alpha\beta}+\frac{1}{2}u_{\gamma,\alpha}Q_{\gamma\beta}H_{\alpha\beta}\,dx\nonumber\\+\frac{2}{d}\int_{\mathbb{R}^d}D_{\alpha\beta}H_{\alpha\beta}
=\int_{\mathbb{R}^d}\frac{1}{2}\big(Q_{\alpha\gamma}u_{\gamma,\beta}H_{\alpha\beta}+u_{\gamma,\alpha}Q_{\gamma\beta}H_{\alpha\beta}\big)+\frac{1}{2}\big(Q_{\alpha\gamma}u_{\beta,\gamma}H_{\alpha\beta}+u_{\alpha,\gamma}Q_{\gamma\beta}H_{\alpha\beta}\big)\,dx\nonumber\\+\frac{1}{d}\int_{\mathbb{R}^d}(u_{\alpha,\beta}+u_{\beta,\alpha})H_{\alpha\beta}\,dx
=\int_{\mathbb{R}^d} H_{\beta\alpha}Q_{\alpha\gamma}u_{\gamma,\beta}+ Q_{\gamma\alpha}H_{\alpha\beta}u_{\beta,\gamma}\,dx+\frac{2}{d}\int_{\mathbb{R}^d} u_{\alpha,\beta}H_{\alpha\beta}\,dx=\mathcal{JJ}_1+\mathcal{JJ}_2.\nonumber
\end{eqnarray} 
\par Finally,  the last equality in (\ref{Lyapunovcancellation}) is a consequence of the straightforward identities $2\mathcal{B}+\mathcal{BB}=2\mathcal{C}+\mathcal{CC}=0.$ 
$\Box$

{ It can be easily checked that the system has a scaling, namely we have:

\begin{lemma} Let $(Q,u,p)$ be a solution of (\ref{system}). Then letting 
\begin{equation}
u_\delta\stackrel{\rm def}{=}\delta u(\delta x,\delta^2 t),\,Q_\delta\stackrel{\rm def}{=} Q(\delta x,\delta^2 t), p_\delta(x,t)\stackrel{\rm def}{=}\delta^2 p(\delta x,\delta^2)
\end{equation} we have that $(Q_\delta,u_\delta,p_\delta)$ satisfy (\ref{system}) with $F(Q)=-aQ+b[Q^2-\frac{\textrm{tr}(Q^2)}{3}Id]-cQ\textrm{tr}(Q^2)$ replaced by $F_\delta(Q_\delta)=\delta^2\left[ -aQ_\delta+b[(Q_\delta)^2-\frac{\textrm{tr}(Q_\delta^2)}{3}-cQ_\delta\textrm{tr}(Q_\delta)^2\right]$. We note that, in dimension two, the space $\dot H^1(\RR^2)\times L^2(\RR^2)$ is invariant by the scaling. 
\end{lemma}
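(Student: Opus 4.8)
\emph{Proof proposal.} The plan is a direct verification: substitute the rescaled unknowns into \eqref{system} and track how each differential operator and each nonlinear term transforms under the change of variables $(x,t)\mapsto(y,s)=(\delta x,\delta^2 t)$. First I would record the elementary identities
$\partial_t Q_\delta=\delta^2(\partial_s Q)(\delta x,\delta^2 t)$, $\nabla_x Q_\delta=\delta(\nabla_y Q)(\delta x,\delta^2 t)$, $\Delta_x Q_\delta=\delta^2(\Delta_y Q)(\delta x,\delta^2 t)$, and likewise
$\nabla_x u_\delta=\delta^2(\nabla_y u)(\delta x,\delta^2 t)$, $\partial_t u_\delta=\delta^3(\partial_s u)(\delta x,\delta^2 t)$, $\Delta_x u_\delta=\delta^3(\Delta_y u)(\delta x,\delta^2 t)$, $\nabla_x p_\delta=\delta^3(\nabla_y p)(\delta x,\delta^2 t)$. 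In particular $D_\delta=\delta^2 D$, $\Omega_\delta=\delta^2\Omega$, and $\nabla_x\cdot u_\delta=\delta^2(\nabla_y\cdot u)=0$, so the incompressibility constraint is preserved for free.

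For the $Q$-equation, since $S(\nabla u,Q)$ is linear in $\nabla u$ (see \eqref{rel:defS}) one has $S(\nabla u_\delta,Q_\delta)=\delta^2 S(\nabla_y u,Q)(\delta x,\delta^2 t)$, and the convective term $u_\delta\cdot\nabla_x Q_\delta$ as well as the elastic term $L\Delta_x Q_\delta$ also carry exactly the factor $\delta^2$. The only terms that do not carry this factor are the bulk (non-derivative) terms $-aQ+b[Q^2-\frac{\textrm{tr}(Q^2)}{d}Id]-cQ\,\textrm{tr}(Q^2)$, which are merely evaluated at $(\delta x,\delta^2 t)$ and hence are off by a factor $\delta^2$; multiplying the whole equation by $\delta^2$ therefore reproduces the first line of \eqref{system} with $F$ replaced by $\delta^2 F$, i.e. by $F_\delta$. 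Consequently the scaled molecular field is $H_\delta:=L\Delta_x Q_\delta+F_\delta(Q_\delta)=\delta^2 H(\delta x,\delta^2 t)$ (cf. \eqref{rel:defH}), whence $\sigma_\delta=Q_\delta H_\delta-H_\delta Q_\delta=\delta^2\sigma$ and, term by term in \eqref{rel:defS} and the definition of $\tau$ (the $\nabla Q\odot\nabla Q$ contribution scaling like $\delta^2$ as well), $\tau_\delta=\delta^2\tau$. For the $u$-equation all five groups of terms — time derivative, convection, viscous term, pressure gradient and $\lambda\nabla\cdot(\tau+\sigma)$ — then carry the common factor $\delta^3$, so after dividing by $\delta^3$ one recovers exactly the second line of \eqref{system}.

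Finally, for the scaling invariance of $\dot H^1(\RR^2)\times L^2(\RR^2)$ I would compute, using the substitution $y=\delta x$ in dimension $d$, that $\|\nabla_x Q_\delta\|_{L^2(\RR^d)}^2=\delta^{2-d}\|\nabla Q\|_{L^2(\RR^d)}^2$ and $\|u_\delta\|_{L^2(\RR^d)}^2=\delta^{2-d}\|u\|_{L^2(\RR^d)}^2$, the power of $\delta$ coming from the amplitude factors being cancelled by the power from the Jacobian precisely when $d=2$.

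There is essentially no obstacle here; the lemma is a bookkeeping exercise. The one point deserving care is the treatment of the bulk potential terms: they break the exact scaling of the full nonlinear system, which is exactly why the statement must replace $F$ by the $\delta$-dependent $F_\delta$, and why one should not expect an exact self-similar structure — all the derivative terms, by contrast, scale homogeneously.
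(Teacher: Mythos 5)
Your verification is correct: the paper offers no proof of this lemma (it is stated as something that "can be easily checked"), and your direct substitution argument, with the correct bookkeeping of the factors $\delta^2$ in the $Q$-equation and $\delta^3$ in the $u$-equation and the observation that only the bulk terms fail to scale homogeneously (hence $F\mapsto F_\delta=\delta^2 F$), is exactly the intended routine check. The computation $\|\nabla_x Q_\delta\|_{L^2}^2=\delta^{2-d}\|\nabla Q\|_{L^2}^2$, $\|u_\delta\|_{L^2}^2=\delta^{2-d}\|u\|_{L^2}^2$ likewise settles the invariance of $\dot H^1(\RR^2)\times L^2(\RR^2)$ when $d=2$.
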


\par In the following we assume that there exists a smooth solution of (\ref{system}) and  obtain estimates on the behaviour of various norms. 

\begin{proposition} Let $(Q,u)$ be a smooth solution of \eqref{system} in dimension $d=2$ or $d=3$, with restriction \eqref{c+}, and smooth initial data $(\bar Q(x),\bar u(x))$, that decays fast enough at infinity so that we can integrate by parts in space (for any $t\ge 0$) without boundary terms. We assume that $|\xi|<\xi_0$ where $\xi_0$  is an explicitly computable constant, scale invariant, depending on $a,b,c,d,\Gamma,\nu,\lambda$.

 \par    For $(\bar Q,\bar u)\in H^1\times L^2_x$,we have
\begin{equation}
\|Q(t,\cdot)\|_{H^1}\le C_1+\bar C_1 e^{\bar C_1t}\|\bar Q\|_{H^1}, \forall t\ge 0
\label{apriorih1}
\end{equation} with $C_1,\bar C_1$ depending on $(a,b,c,d,\Gamma,L, \nu,\bar Q,\bar u)$. Moreover

\begin{equation}
\|u(t,\cdot)\|_{L^2_x}^2+\nu\int_0^t\|\nabla u\|_{L^2_x}^2\le C_1
\label{apriorih2}
\end{equation}

\label{prop:apriorismallstrain}
 \end{proposition}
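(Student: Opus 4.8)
The plan is to derive \eqref{apriorih1} and \eqref{apriorih2} from the energy estimate of Proposition~\ref{prop:Lyapunov} combined with a differential inequality for $\|Q\|_{H^1}$. First I would revisit the Lyapunov identity \eqref{energydecay}: the term $\Gamma\lambda\int_{\R^d}\textrm{tr}(L\Delta Q - aQ + b[Q^2 - \frac{\textrm{tr}(Q^2)}{d}Id] - cQ\,\textrm{tr}(Q^2))^2$ is not quite a coercive good term because, after expanding the square, the cross term $-2\Gamma\lambda L\int \textrm{tr}(\Delta Q\,(aQ - \dots))$ integrates by parts to $2\Gamma\lambda L\int(a|\nabla Q|^2 + \dots)$ which is of the wrong sign only through the cubic piece; one extracts $\Gamma\lambda L^2\|\Delta Q\|_{L^2}^2$ as the leading dissipation and absorbs the lower-order polynomial terms in $Q$ (controlled by the $L^\infty(H^1)$ bound one is about to prove, via interpolation and Sobolev embedding in $d\le 3$). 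This already yields \eqref{apriorih2} once $\|Q(t)\|_{H^1}$ is shown to grow at most exponentially, since $E(t)\le E(0)$ gives control of $\|u\|_{L^2}^2$ and of $\|\nabla Q\|_{L^2}^2$ up to the (sign-indefinite) $\frac{b}{3}\textrm{tr}(Q^3)$ term, which is dominated by $\frac{c}{8}\textrm{tr}^2(Q^2) + C$ pointwise; the time-integrated $\nu\int_0^t\|\nabla u\|_{L^2}^2$ and $\Gamma\lambda L^2\int_0^t\|\Delta Q\|_{L^2}^2$ come out of the same inequality.

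Next I would establish \eqref{apriorih1} directly. Testing the $Q$-equation with $Q$ and with $-\Delta Q$ (equivalently, differentiating $\|Q\|_{H^1}^2$ in time along the flow), the parabolic term $\Gamma L\Delta Q$ gives $-\Gamma L\|\nabla Q\|_{L^2}^2$ and $-\Gamma L\|\Delta Q\|_{L^2}^2$; the transport term $u\cdot\nabla Q$ contributes nothing to $\frac{d}{dt}\|Q\|_{L^2}^2$ (divergence-free $u$) and after integration by parts contributes $\int \nabla u : (\nabla Q \odot \nabla Q)$-type terms to $\frac{d}{dt}\|\nabla Q\|_{L^2}^2$, bounded by $\|\nabla u\|_{L^2}\|\nabla Q\|_{L^4}^2 \lesssim \|\nabla u\|_{L^2}\|\nabla Q\|_{L^2}\|\Delta Q\|_{L^2}$ in $d=2$ (in $d=3$ one uses $\|\nabla Q\|_{L^3}$ and interpolates), absorbing $\|\Delta Q\|_{L^2}^2$ into the dissipation and leaving $\|\nabla u\|_{L^2}^2\|\nabla Q\|_{L^2}^2$; the stretching term $S(\nabla u, Q)$ is the crucial one and is handled exactly as in the Lyapunov computation — the ``extraneous maximal derivative'' in $u$ is controlled by the cancellations $2\mathcal{B}+\mathcal{BB}=2\mathcal{C}+\mathcal{CC}=0$ and $\mathcal{J}_3=\mathcal{JJ}_3$ already exhibited in Proposition~\ref{prop:Lyapunov}, so one does not pay an extra derivative on $u$; what remains are terms multilinear in $Q$, $\nabla Q$, $\nabla u$ that one estimates by Hölder, Gagliardo–Nirenberg and Young, always peeling off $\tfrac{\Gamma L}{2}\|\Delta Q\|_{L^2}^2$ and producing a right-hand side of the form $(\|\nabla u\|_{L^2}^2 + C)(1 + \|Q\|_{H^1}^2)$ plus absolute constants coming from the inhomogeneous ($\delta_{\alpha\beta}/d$) pieces of $Q + \frac1d Id$. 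The smallness hypothesis $|\xi|<\xi_0$ is what guarantees the coefficient of $\|\Delta Q\|_{L^2}^2$ stays positive after all the $\xi$-dependent stretching and stress terms have been absorbed — this is where $\xi_0$ is defined, as the threshold making the quadratic form in the top-order terms positive-definite.

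The conclusion is then an application of Gronwall. One obtains
\begin{equation*}
\frac{d}{dt}\Big(1 + \|Q(t)\|_{H^1}^2\Big) \le \big(\bar C_1\|\nabla u(t)\|_{L^2}^2 + \bar C_1\big)\Big(1 + \|Q(t)\|_{H^1}^2\Big),
\end{equation*}
and since $\int_0^t\|\nabla u\|_{L^2}^2 \le C_1/\nu$ from \eqref{apriorih2}, integrating gives $1 + \|Q(t)\|_{H^1}^2 \le (1+\|\bar Q\|_{H^1}^2)\exp(\bar C_1 t + \bar C_1 C_1/\nu)$, which is of the stated form \eqref{apriorih1} after taking square roots and renaming constants; feeding this back into the energy inequality closes \eqref{apriorih2} with a constant $C_1$ depending only on the data. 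I expect the main obstacle to be the careful bookkeeping of the top-order $Q$–$u$ interaction terms in $\frac{d}{dt}\|\nabla Q\|_{L^2}^2$: one must check that every term carrying $\Delta Q$ or $\nabla^2 u$ either cancels (via the algebraic identities of Proposition~\ref{prop:Lyapunov}) or can be Young-inequality-absorbed into $\Gamma L\|\Delta Q\|_{L^2}^2 + \nu\|\nabla u\|_{L^2}^2$ with a coefficient controlled by $\xi_0$, and that the residual is genuinely of ``Gronwall type'' in $\|Q\|_{H^1}^2$ with an $L^1_t$ coefficient. The high powers of $Q$ (the $cQ\,\textrm{tr}(Q^2)$ nonlinearity and its interaction with $\nabla u$) are comparatively benign here because in $d\le 3$ the $H^1$ bound one is proving already controls the relevant $L^p$ norms; the genuinely delicate harmonic-analysis estimates are deferred to the uniqueness argument.
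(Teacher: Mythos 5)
Your overall skeleton (energy law, an auxiliary evolution of $\|Q\|_{L^2}^2$, then Gronwall) is in the right spirit, but two of your key mechanisms are not the ones that make the proof work, and as stated they fail. First, you misplace the role of the smallness of $\xi$: you claim $\xi_0$ is the threshold that keeps the coefficient of $\|\Delta Q\|_{L^2}^2$ positive after absorbing the $\xi$-dependent stretching and stress terms. In fact those top-order terms cancel \emph{identically, for every} $\xi$ (the identities $\mathcal{J}_1+\mathcal{J}_2=\mathcal{JJ}_1+\mathcal{JJ}_2$, $\mathcal{J}_3=\mathcal{JJ}_3$, $2\mathcal{B}+\mathcal{BB}=2\mathcal{C}+\mathcal{CC}=0$ of Proposition~\ref{prop:Lyapunov}); they could not be handled by absorption anyway, since they carry a full extra derivative. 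The smallness of $\xi$ enters only in the auxiliary $L^2$ estimate \eqref{lphalf2}, obtained by testing the $Q$-equation with $Q$: there the term $2\xi(Q+\tfrac1d \mathrm{Id})\,\trc(Q\nabla u)$ yields $C(d)\varepsilon\|\nabla u\|_{L^2}^2+\tfrac{|\xi|^2}{\varepsilon}\int(|Q|^2+|Q|^6)$, and the $|Q|^6$ contribution can only be absorbed into the dissipative term $\Gamma\lambda c^2\|Q\|_{L^6}^6$ that comes from keeping the full square $\trc(H)^2$ in \eqref{energydecay} (leading to the constraint $\tfrac{M|\xi|^2}{\varepsilon}\le\Gamma\lambda c^2$). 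Your proposal discards $\trc(H)^2$ after extracting $\Gamma\lambda L^2\|\Delta Q\|_{L^2}^2$, so you never have the $\|Q\|_{L^6}^6$ good term; the same cubic-in-$Q$ interaction then gives at best a bound like $\|\nabla u\|_{L^2}\|Q\|_{H^1}^3$, which is superlinear in $\|Q\|_{H^1}^2$ and is not of the Gronwall form $(\|\nabla u\|_{L^2}^2+C)(1+\|Q\|_{H^1}^2)$ you assert.

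Second, the control of the low frequencies of $Q$ is not as you describe. The pointwise domination ``$|\tfrac b3\trc(Q^3)|\le\tfrac c8\trc^2(Q^2)+C$'' is useless on $\mathbb{R}^d$ because the constant is not integrable; the correct bound costs $C\,\trc(Q^2)$, i.e.\ the $L^2$ norm of $Q$, which $E(t)\le E(0)$ does \emph{not} control when $a\le 0$. This is exactly why the paper works with the modified functional $E(t)+M\|Q\|_{L^2}^2$, with $M$ fixed by \eqref{eq:M}, adds $M\times$\eqref{lphalf2} to \eqref{lphalf1}, and why (as remarked after the proof) the whole-space setting is what makes the $\xi$-smallness unavoidable. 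Relatedly, your ``direct'' $\dot H^1$ estimate obtained by testing the $Q$-equation with $-\Delta Q$ cannot invoke the cancellations of Proposition~\ref{prop:Lyapunov}: the partners $\mathcal{BB}$, $\mathcal{CC}$, $\mathcal{JJ}_i$ arise from the $u$-equation tested with $u$. Either you add that equation — and then you recover the energy law, whose potential part is the free energy and not $\|Q\|_{H^1}^2$ — or you must estimate $\int S(\nabla u,Q):\Delta Q$ by hand, and then terms such as $\int\Omega\,Q\,\Delta Q$ and $\int\xi\,Q\,\trc(Q\nabla u)\,\Delta Q$ force Gronwall coefficients like $\|\nabla u\|_{L^2}^4$ (or worse, non-absorbable multiples of $\|\Delta Q\|_{L^2}^2$), which are not known to be in $L^1_t$, in particular in $d=3$. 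The paper's actual route — keep $\trc(H)^2$, use the sign of the cross term \eqref{eq:positivitycrosstermsc}, control $\|Q\|_{L^2}^2$ separately, and only then apply Gronwall to the combined functional — is what closes the argument, and your proposal as written does not reach it.
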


\smallskip     {\bf Proof.} We denote:
\begin{equation}
X_{\alpha\beta}\stackrel{\rm def}{=}L\Delta Q_{\alpha\beta}-cQ_{\alpha\beta}\textrm{tr}(Q^2),\,\alpha,\beta=1,2,3
\end{equation}
\par Then equation (\ref{energydecay}) becomes:
\begin{equation}\label{energydecay+}
\begin{aligned}
	&\frac{d}{dt}E(t)+\nu\|\nabla u\|_{L^2_x}^2	+\Gamma\lambda L^2	\|\Delta Q\|_{L^2_x}^2+\Gamma\lambda c^2\|Q\|_{L^6}^6-2cL\Gamma\lambda\int_{\mathbb{R}^d}\Delta Q_{\alpha\beta}Q_{\alpha\beta}\textrm{tr}(Q^2)\,dx+a^2\Gamma\lambda\|Q\|_{L^2_x}^2+\\&+b^2\Gamma\lambda\int_{\mathbb{R}^d}\textrm{tr}\left(Q^2-\frac{\textrm{tr}(Q^2)}{d}\right)^2\,dx\le 2a\Gamma\lambda\underbrace{\int_{\mathbb{R}^d}\textrm{tr}(XQ)\,dx}_{\stackrel{\rm def}{=}\mathcal{I}}-
2b\Gamma\lambda\underbrace{\int_{\mathbb{R}^d}\textrm{tr}(XQ^2)\,dx}_{\stackrel{\rm def}{=}\mathcal{J}}+2ab\Gamma\lambda\int_{\mathbb{R}^d}\textrm{tr}(Q^3)\,dx	
\end{aligned}
\end{equation}
\par Integrating by parts we have:
\begin{eqnarray}
-2cL\Gamma\lambda\int_{\mathbb{R}^d}\Delta Q_{\alpha\beta}Q_{\alpha\beta}\textrm{tr}(Q^2)dx=2cL\Gamma\lambda\int_{\mathbb{R}^d}Q_{\alpha\beta,k}Q_{\alpha\beta,k}\textrm{tr}(Q^2)dx+2cL\Gamma\lambda\int_{\mathbb{R}^d}Q_{\alpha\beta,k}Q_{\alpha\beta}\partial_k\left(\textrm{tr}(Q^2)\right)dx\nonumber\\
=2cL\Gamma\lambda\int_{\mathbb{R}^d}|\nabla Q|^2\textrm{tr}(Q^2)\,dx+cL\Gamma\lambda\int_{\mathbb{R}^d}|\nabla\left(\textrm{tr}(Q^2)\right)|^2\,dx\ge 0
\label{eq:positivitycrosstermsc}
\end{eqnarray}
(where for the last inequality we used the assumption (\ref{c+}) and $L,\Gamma,\lambda>0$). One can easily see that 
\begin{equation}
\mathcal{I}=-\frac{L}{2}\|\nabla Q\|_{L^2_x}^2-c\|Q\|_{L^4}^4
\end{equation}
\par On the other hand, for any $\varepsilon>0$ and $\tilde C=\tilde C(\varepsilon,c)$ an explicitly computable constant, we have:
\begin{eqnarray}
\mathcal{J}=L\int_{\mathbb{R}^d}Q_{\alpha\beta,kk}Q_{\alpha\gamma}Q_{\gamma\beta}\,dx-c\int_{\mathbb{R}^d}\textrm{tr}(Q^2)\textrm{tr}(Q^3)\,dx\nonumber\\
\le-L\int_{\mathbb{R}^d}Q_{\alpha\beta,k}Q_{\alpha\gamma,k}Q_{\gamma\beta}\,dx-L\int_{\mathbb{R}^d}Q_{\alpha\beta,k}Q_{\alpha\gamma}Q_{\gamma\beta,k}+ \int_{\mathbb{R}^d}\textrm{tr}(Q^2)\left(\frac{\tilde C}\varepsilon\textrm{tr}(Q^2)+\varepsilon\textrm{tr}^2(Q^2)\right)\,dx\nonumber\\
\le L\varepsilon \int_{\mathbb{R}^d}|\nabla Q|^2\textrm{tr}(Q^2)\,dx+\frac{\tilde C}{\varepsilon}\|\nabla Q\|_{L^2_x}^2+\int_{\mathbb{R}^d}\textrm{tr}(Q^2)\left(\frac{\tilde C}\varepsilon\textrm{tr}(Q^2)+\varepsilon\textrm{tr}^2(Q^2)\right)\,dx\non
\end{eqnarray}
\par Using the last three relations in (\ref{energydecay+}) we obtain:
\begin{align*}
\frac{d}{dt}E(t)+\nu \|\nabla u\|_{L^2_x}^2+\Gamma\lambda L^2\|\Delta Q\|_{L^2_x}^2+c^2\Gamma\lambda\|Q\|_{L^6}^6+a^2\Gamma\lambda\|Q\|_{L^2_x}^2
+2cL\Gamma\lambda\int_{\mathbb{R}^d}|\nabla Q|^2\textrm{tr}(Q^2)\,dx\\+cL\Gamma\lambda\int_{\mathbb{R}^d}|\nabla\left(\textrm{tr}(Q^2)\right)|^2\,dx\le 2|a|\Gamma\lambda(\frac{L}{2}\|\nabla Q\|_{L^2_x}^2+c\|Q\|_{L^4}^4)+2|b|\Gamma\lambda L\varepsilon \int_{\mathbb{R}^d}|\nabla Q|^2\textrm{tr}(Q^2)\,dx\nonumber\\+2|b|\Gamma\lambda\frac{\tilde C}{\varepsilon}\|\nabla Q\|_{L^2_x}^2+2|b|\Gamma\lambda\int_{\mathbb{R}^d}\textrm{tr}(Q^2)\left(\frac{\tilde C}\varepsilon\textrm{tr}(Q^2)+\varepsilon\textrm{tr}^2(Q^2)\right)\,dx+2|ab|\Gamma\lambda(\varepsilon\|Q\|_{L^2_x}^2+ \frac{\tilde C}{\varepsilon}\|Q\|_{L^4}^4)	
\end{align*}
\par Taking $\varepsilon$ small enough we can absorb all the terms with an epsilon coefficient on the right into the left hand side, and  we are left with 
\begin{eqnarray}
\frac{d}{dt}E(t)+\nu \|\nabla u\|_{L^2_x}^2+\Gamma\lambda L^2\|\Delta Q\|_{L^2_x}^2+\Gamma\lambda c^2\|Q\|_{L^6}^6+\Gamma\lambda a^2\|Q\|_{L^2_x}^2\nonumber\\+2cL\Gamma\lambda\int_{\mathbb{R}^d}|\nabla Q|^2\textrm{tr}(Q^2)\,dx+cL\Gamma\lambda\int_{\mathbb{R}^d}|\nabla\left(\textrm{tr}(Q^2)\right)|^2\,dx
\le \bar C\left(\|\nabla Q\|_{L^2_x}^2+\|Q\|_{L^4}^4\right)
\label{lphalf1}
\end{eqnarray} with $\bar C=\bar C(a,b,c)$.

\par The last relation is not yet enough because the $Q$ terms  without derivatives in $E(t)$ are not summing to a positive number. However, let us note that, if $a>0$ we obtain the a-priori estimates  by using the inequality
$\trc(Q^3)\leq \frac 38 \trc(Q^2)+\trc(Q^2)^2$.  If $a\leq 0$ we have to estimate separately $\|Q\|_{L^2_x}$ and this ask for a smallness condition for $\xi$.

  We need to control in some sense low frequencies of $Q$. To this end   we multiply the first equation in (\ref{system}) by $Q$, take the trace, integrate over $\mathbb{R}^d$ and by parts and we obtain:

\begin{eqnarray}
\frac{1}{2} \frac{d}{dt}\int_{\mathbb{R}^d} |Q|^2(t,x)\,dx=\Gamma\Big(-L\int_{\mathbb{R}^d} |\nabla Q|^2\,dx-a\int_{\mathbb{R}^d}|Q(x)|^2\,dx+b\int_{\mathbb{R}^d}\textrm{tr}(Q^3)\,dx-c\int_{\mathbb{R}^d}|Q|^4\,dx\Big)\nonumber\\
+\underbrace{\int_{\mathbb{R}^d}\textrm{tr}(\Omega Q^2-Q\Omega Q)\,dx}_{\stackrel{\rm{def}}{=}\mathcal{I}}\nonumber\\
+\underbrace{\xi\int_{\mathbb{R}^d} D_{\alpha\gamma}(Q_{\gamma\beta}+\frac{\delta_{\gamma\beta}}{d})Q_{\alpha\beta}+(Q_{\alpha\gamma}+\frac{\delta_{\alpha\gamma}}{d})D_{\gamma\beta}Q_{\alpha\beta}-2(Q_{\alpha\beta}+\frac{\delta_{\alpha\beta}}{d})Q_{\alpha\beta}\textrm{tr}(Q\nabla u)\,dx}_{\stackrel{\rm{def}}{=}\mathcal{II}}\nonumber
\end{eqnarray}

      Recalling that $Q$ is symmetric we have $\mathcal{I}=0$. Also:

\begin{displaymath}
|\mathcal{II}|=|2\xi||\int_{\mathbb{R}^d} \frac{1}{d}D_{\alpha\beta}Q_{\alpha\beta}+D_{\alpha\gamma}Q_{\gamma\beta}Q_{\beta\alpha}-Q_{\alpha\beta}Q_{\alpha\beta}\textrm{tr}(Q\nabla u)\,dx|\le C(d)\int_{\mathbb{R}^d}\varepsilon |\nabla u|^2+\int_{\mathbb{R}^d}\frac{|\xi|^2}{\varepsilon}(|Q|^2+|Q|^6)\,dx
\end{displaymath} 

\par Thus we get:

\begin{equation}
   \frac{d}{dt} \int_{\mathbb{R}^d}|Q|^2\,dx\le C(d)\varepsilon\int_{\mathbb{R}^d} |\nabla u|^2\,dx+\frac{|\xi|^2}{\varepsilon}\int_{\mathbb{R}^d}|Q|^2+|Q|^6\,dx+\hat C\int_{\mathbb{R}^d}|Q|^2+|Q|^4\,dx
 \label{lphalf2}
\end{equation} with $\hat C=\hat C(a,b)>0$.
\par Let us observe now that there exists $M=M(a,b,c)$ large enough,  so that 
\begin{equation}
 \frac{M}{2} \textrm{tr}(Q^2)+\frac{c}{8}\textrm{tr}^2(Q^2)\le(M+\frac{a}{2})\textrm{tr}(Q^2)-\frac{b}{3}\textrm{tr}(Q^3)+\frac{c}{4}\textrm{tr}^2(Q^2)
 \label{eq:M}
 \end{equation} for any $Q\in S_0$.
 
 \par Multiplying the equation (\ref{lphalf2}) by $M$ and adding to (\ref{lphalf1}) we obtain:
 
 \begin{eqnarray}\frac{d}{dt}(E(t)+M\|Q\|_{L^2_x}^2)+\nu \|\nabla u\|_{L^2_x}^2+\Gamma\lambda L^2\|\Delta Q\|_{L^2_x}^2+\Gamma\lambda c^2\|Q\|_{L^6}^6+a^2\|Q\|_{L^2_x}^2\nonumber\\
 +2cL\Gamma\lambda\int_{\mathbb{R}^d}|\nabla Q|^2\textrm{tr}(Q^2)\,dx+cL\Gamma\lambda\int_{\mathbb{R}^d}|\nabla\left(\textrm{tr}(Q^2)\right)|^2\,dx\nonumber\\
\le \bar C\left(\|\nabla Q\|_{L^2_x}^2+\|Q\|_{L^4}^4\right)+MC(d)\varepsilon\int_{\mathbb{R}^d} |\nabla u|^2\,dx+\frac{M|\xi|^2}{\varepsilon}\int_{\mathbb{R}^d}|Q|^2+|Q|^6\,dx+M\hat C\int_{\mathbb{R}^d}|Q|^2+|Q|^4\,dx\label{estimate_control_H1}
\end{eqnarray}

\par We chose $\varepsilon$ small enough so that $MC(d)\varepsilon<\nu$. Finally we make the assumption that $|\xi|$ is small enough, depending on $a,b,c,d,\nu$ so that 

$$\frac{M|\xi|^2}{\varepsilon}\le \Gamma\lambda c^2$$

\par Then taking into account equation (\ref{eq:M}) we obtain the claimed relation (\ref{apriorih1}).$\Box$

}
 We note that the $\xi$ small hypothesis is necessary because we are in infinite domain, for example, in the periodic domain, we can add a constant to the functional and get the apriori $L^p$ estimates without any smallness condition on $\xi$.

\section{The existence of weak solutions}

The next proposition follows closely the similar result in \cite{pz2} where it was done for $\lambda=1$. The purpose for including it here is to provide an alternative approximation system thus correcting the proof in \cite{pz2} and also to show how the cancellations that appeared previously in the derivation of the energy law still survive at the approximate level but with some differences, phenomenon which will appear in a much more complex setting in the proof of uniqueness in the next section.

\begin{proposition} For $d=2,3$ there exists a weak solution $(Q,u)$ of the system \eqref{system} subject to initial conditions \eqref{initialdata}. The solution $(Q,u)$ is such that $Q\in L^\infty_{loc}(\mathbb{R}_+;H^1)\cap L^2_{loc}(\mathbb{R}_+;H^2)$ and $u\in L^\infty_{loc}(\mathbb{R}_+;L^2)\cap L^2_{loc}(\mathbb{R}_+;H^1)$.
\label{prop:weak}
\end{proposition}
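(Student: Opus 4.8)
The plan is to construct the weak solution via a Galerkin/regularization scheme whose structure mirrors the formal energy identity of Proposition~\ref{prop:Lyapunov} and the apriori estimates of Proposition~\ref{prop:apriorismallstrain}, so that the cancellations displayed there survive at the approximate level and give uniform bounds in $L^\infty_{loc}(\RR_+;H^1)\cap L^2_{loc}(\RR_+;H^2)$ for $Q$ and $L^\infty_{loc}(\RR_+;L^2)\cap L^2_{loc}(\RR_+;H^1)$ for $u$. Concretely, I would mollify the nonlinearities and the transport terms (e.g. replace $u\cdot\nabla Q$ by $(\mathcal{J}_\varepsilon u)\cdot\nabla Q$, similarly in the Navier-Stokes advection and in $S(\nabla u,Q)$, and truncate/mollify the highest-order products $Q_{\alpha\gamma}\Delta Q_{\gamma\beta}$ and $\nabla Q\odot\nabla Q$ consistently), then project onto finite-dimensional spaces. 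The key design requirement — and the reason the proposition explicitly says it ``corrects'' the scheme of \cite{pz2} — is that the regularization must be chosen so that when one tests the $Q$-equation with $-\lambda H_\varepsilon$ and the $u$-equation with $u_\varepsilon$, the terms $\mathcal{B},\mathcal{C},\mathcal{BB},\mathcal{CC}$ and $\mathcal{J}_i,\mathcal{JJ}_i$ still cancel pairwise (or cancel up to terms controllable by the dissipation). Once that is arranged, the approximate solutions $(Q^n,u^n)$ exist globally by ODE theory on each finite-dimensional space, and the computations of Section~2 give, uniformly in $n$, the energy decay and the closed $H^1\times L^2$ estimate \eqref{apriorih1}–\eqref{apriorih2} (here using the smallness of $|\xi|$ exactly as in Proposition~\ref{prop:apriorismallstrain}), and in addition $\Gamma\lambda L^2\int_0^T\|\Delta Q^n\|_{L^2}^2\,dt$ and $\nu\int_0^T\|\nabla u^n\|_{L^2}^2\,dt$ bounded on every $[0,T]$.

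Next I would extract weak-* limits: $Q^n \rightharpoonup Q$ in $L^2_{loc}(\RR_+;H^2)$ and weak-* in $L^\infty_{loc}(\RR_+;H^1)$, $u^n\rightharpoonup u$ in $L^2_{loc}(\RR_+;H^1)$ and weak-* in $L^\infty_{loc}(\RR_+;L^2)$. To pass to the limit in the nonlinear terms I need strong compactness, which comes from Aubin–Lions: the equations give $\partial_t Q^n$ bounded in $L^2_{loc}(\RR_+;L^2)$ (all terms on the right, including $\Delta Q^n$, $u^n\cdot\nabla Q^n$, $S(\nabla u^n,Q^n)$, and the polynomial terms, are bounded in that space once one has the uniform $H^2\times H^1$ bounds and the Ladyzhenskaya/Gagliardo–Nirenberg interpolation in $d=2,3$), and $\partial_t u^n$ bounded in $L^{4/3}_{loc}(\RR_+;H^{-1})$ or similar. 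Hence $Q^n\to Q$ strongly in $L^2_{loc}(\RR_+;H^1_{loc})$ and $u^n\to u$ strongly in $L^2_{loc}(\RR_+;L^2_{loc})$. This upgrades, for instance, $\nabla Q^n\odot\nabla Q^n\to \nabla Q\odot\nabla Q$ and $Q^n\,\trc((Q^n)^2)\to Q\,\trc(Q^2)$ in $L^1_{loc}$, $u^n\otimes u^n\to u\otimes u$ in $L^1_{loc}$, and the mixed terms $Q^n_{\alpha\gamma}\Delta Q^n_{\gamma\beta}$, $Q^n H_\varepsilon(Q^n)$, $\trc(Q^n\nabla u^n)$ converge in $\mathcal{D}'$ (strong-times-weak). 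The cubic term $cQ\trc(Q^2)$ needs the strong $L^2_{loc}(H^1)$ convergence together with the $L^\infty(H^1)\hookrightarrow L^\infty(L^6)$ bound (in $d\le 3$) to identify the limit; I would also remove the mollifier $\mathcal{J}_\varepsilon$ in the same step, using $\mathcal{J}_\varepsilon u^n\to u$ strongly. Passing to the limit in the weak formulations \eqref{weaksol1}–\eqref{weaksol2} is then routine, and the initial data are attained because $Q\in C_{loc}(\RR_+;L^2)$, $u\in C_{loc}(\RR_+;H^{-s})$ for suitable $s$ by the $\partial_t$ bounds.

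The main obstacle is the consistency of the regularization with the cancellation structure: a naive mollification destroys the identities $2\mathcal{B}+\mathcal{BB}=0$ and $\mathcal{J}_1+\mathcal{J}_2=\mathcal{JJ}_1+\mathcal{JJ}_2$, and then the ``extraneous maximal derivatives'' (top derivatives of $u$ in the $Q$-equation, top derivatives of $Q$ in the $u$-equation) are no longer absorbable by dissipation, so one does not even get uniform bounds. I would handle this by choosing the approximate stress tensor and the approximate $S(\nabla u,Q)$ to be \emph{defined} through the same mollified $Q$ (say $\tilde Q^n=\mathcal{J}_\varepsilon Q^n$) symmetrically on both sides of the coupling, so that the discrete analogue of the computation \eqref{Lyapunovcancellation} goes through verbatim with $\mathcal{I}=\mathcal{II}=0$, $\mathcal{A}+\mathcal{AA}=0$, $\mathcal{J}_3=\mathcal{JJ}_3$, and $2\mathcal{B}+\mathcal{BB}=2\mathcal{C}+\mathcal{CC}=0$; only lower-order commutator errors of the form $[\mathcal{J}_\varepsilon,\nabla]$ remain, and those are $o(1)$ and bounded by $\|Q^n\|_{H^2}\|u^n\|_{H^1}$, hence harmless. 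A secondary technical point is that the finite-dimensional projection must be compatible with the divergence-free constraint on $u$ (Leray projector) and with the tracelessness and symmetry of $Q$; choosing the Galerkin bases inside $S_0^{(d)}$-valued and divergence-free function spaces takes care of this. Everything else is a direct transcription of Section~2 plus standard compactness, which is why the proposition is stated as a ``revisit'' of \cite{pz2}.
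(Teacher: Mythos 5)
Your overall skeleton (regularize, solve finite-dimensional approximations, run the energy estimate that mimics Proposition~\ref{prop:Lyapunov}, extract limits by Aubin--Lions, and pass to the limit in the weak formulation with strong-times-weak arguments for $Q\Delta Q-\Delta Q\,Q$) is the same as the paper's. The genuine gap is at the one point you yourself flag as the ``main obstacle'': you assert that a sufficiently symmetric mollification makes the discrete analogue of \eqref{Lyapunovcancellation} hold ``verbatim'' with $\mathcal{I}=\mathcal{II}=0$, leaving only commutator errors $[\mathcal{J}_\varepsilon,\nabla]$ bounded by $\|Q^n\|_{H^2}\|u^n\|_{H^1}$. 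This does not work. The cancellations of the transport and vorticity terms against the \emph{bulk} part of $H$ are chain-rule and trace identities ($\int u\cdot\nabla Q:f'(Q)=\int u\cdot\nabla f(Q)=0$, and the analogous identity for $\Omega Q-Q\Omega$), and they are destroyed not by a mollifier--gradient commutator but by the projection $J_n$ that is indispensable to make the scheme an ODE in a finite-dimensional (or $L^2$-band-limited) space: the error is of the type $\int (R_\varepsilon u^n\cdot\nabla Q^{(n)})\,J_n\big(bQ^2-cQ|Q|^2\big)$ and its $\Omega$-counterpart, exactly the terms kept on the right-hand side of \eqref{est:energyapprox}. These terms are supercritical relative to the available dissipation $\nu\|\nabla u^n\|_{L^2}^2+\Gamma\lambda L^2\|\Delta Q^{(n)}\|_{L^2}^2$: they lead to quantities like $\|R_\varepsilon u^n\cdot\nabla Q^{(n)}\|_{L^2}^2$ and $\int|R_\varepsilon\nabla u^n|^2|Q^{(n)}|^2$, which cannot be absorbed by the energy method. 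Moreover your claimed bound $\|Q^n\|_{H^2}\|u^n\|_{H^1}$ with an unspecified $o(1)$ prefactor is not ``harmless'': it has no smallness uniform in $n$ and $t$, is quadratic in exactly the dissipation norms, and hence cannot be closed by Gronwall.

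This is precisely the issue the paper addresses (and the reason the proposition is presented as a correction of \cite{pz2}): the velocity equation in the approximate system \eqref{approxsystem++} is augmented with two artificial $\varepsilon$-dissipative terms, producing $\varepsilon\int|R_\varepsilon u^n\cdot\nabla Q^{(n)}|^3$ and $\varepsilon\int|\nabla R_\varepsilon u^n|^4$ on the left of the energy inequality; these absorb, via Young's inequality as in \eqref{est_II}, the non-cancelling transport and rotation terms, giving the uniform-in-$n$ bounds \eqref{weaksolapriori}. After $n\to\infty$, at the fixed-$\varepsilon$ level the solutions are smooth, the exact cancellations are restored (no $J_n$ present), so the genuine a priori estimates of Proposition~\ref{prop:apriorismallstrain} hold uniformly in $\varepsilon$, and finally the extra $\varepsilon$-terms are shown to vanish in the weak formulation as $\varepsilon\to0$ using the uniform bound on $\varepsilon\|R_\varepsilon u^\varepsilon\nabla Q^\varepsilon\|_{L^3}^3$. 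Without this (or some equivalent device) your scheme does not yield uniform bounds at the approximate level, so the compactness and limit steps that follow have nothing to stand on; the rest of your argument (interpolation for $\partial_t Q^n$, Aubin--Lions, identification of the cubic and stress terms in the limit) is fine once such bounds are secured.
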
 

{
\smallskip     {\bf Proof.}  As  first step of the construction of weak solutions for the system (\ref{system}) we construct for any fixed $\varepsilon>0$ a global weak solution 
$$Q_\varepsilon\in L^\infty_{loc}(\mathbb{R}_+; H^1)\cap L^2_{loc}(\mathbb{R}_+;H^2) \quad u_\varepsilon\in L^\infty_{loc}(\mathbb{R}_+;L^2)\cap L^2_{loc}(\mathbb{R}_+; H^1)$$
for the modified system obtained by mollifying the coefficients of the equation for the $Q$ tensor  and by adding to the equation of the velocity a regularizing term. This term is needed in order to estimate some "bad" terms which does not disappear in an energy estimate. For the simplicity of the notations, we drop the indices $\varepsilon$ and we denote the solution $(Q_\varepsilon,u_\varepsilon)$ by $(Q,u)$.
\begin{equation}
\left\{\begin{array}{l}
         \partial_t Q+(R_\varepsilon u) \nabla  Q  -\Big(\big(R_\varepsilon(\xi   D+  \Omega)\big)(Q+\frac{1}{d}Id)\Big)\nonumber\\-\Big((Q+\frac{1}{d}Id)R_\varepsilon (\xi  D-  \Omega)\Big)\nonumber\\+2\xi  \Big((Q+\frac{1}{d}Id)\textrm{tr} \big(Q\nabla  R_\varepsilon u\big)\Big)=\Gamma  H\\
 \partial_t u+ (R_\varepsilon u)\nabla u-\nu\Delta u+\nabla p=  -\varepsilon \mathcal{P} R_\varepsilon\left(\sum_{l,m=1}^d \nabla Q_{lm}\left(R_\varepsilon  u\cdot\nabla Q_{lm}\right)|R_\varepsilon u\nabla Q|\right)\nonumber\\
         +\varepsilon\mathcal{P}\nabla\cdot R_\varepsilon\bigg( \nabla R_\varepsilon   u  |\nabla R_\varepsilon   u|^2 \bigg)  -\lambda\xi\nabla\cdot   R_\varepsilon\bigg(\left(Q+\frac{1}{d}Id\right)  H\bigg)-\xi\mathcal{P}\nabla\cdot R_\varepsilon \bigg(H\left(Q+\frac{1}{d}Id\right)\bigg) \nonumber\\
          +2\lambda\xi \nabla\cdot R_\varepsilon  \bigg(\big(Q+\frac{1}{d}Id\big)\big( Q H\big)\bigg)- 
          L\lambda R_\varepsilon(\nabla\cdot\textrm{tr}(\nabla  Q\nabla  Q))\nonumber\\+L\lambda\mathcal{P}\nabla\cdot  R_\varepsilon \left(Q \Delta Q-\Delta Q  Q\right)\\
          (Q,u)|_{t=0}=(R_\varepsilon \overline Q, R_\varepsilon \overline u).
     \end{array}\right.
\end{equation}
where $R_\varepsilon$ is the convolution operator with the kernel $\epsilon^{-d}\chi(\epsilon^{-1}\cdot)$. 

 In order to construct the global weak solution for this system, we use the classical Friedrich's scheme. We define the mollifying operator
\begin{equation*}
	\widehat{J_nf}(\xi)\stackrel{\rm def}{=}1_{\{2^{-n}\leq|\xi|\leq 2^n\}}\hat f(\xi).
\end{equation*}
\par We consider the approximating system:
\begin{equation}
\left\{\begin{array}{l}
         \partial_t Q^{(n)}+ J_n\Big(R_\varepsilon J_n u^{n} \nabla J_n Q^{(n)}\Big)  -J_n\Big((\xi  J_n  R_\varepsilon D^{(n)}+  J_n R_\varepsilon\Omega^{(n)})(J_n Q^{(n)}+\frac{1}{d}Id)\Big)\\-J_n\Big((J_nQ^{(n)}+\frac{1}{d}Id)(\xi   J_n R_\varepsilon D^{(n)}- J_n R_\varepsilon\Omega^{(n)})\Big)\\+2\xi  J_n\Big(( J_n Q^{(n)}+\frac{1}{d}Id)\textrm{tr} \big( J_n Q^{(n)}\nabla   J_n R_\varepsilon u^{(n)}\big)\Big)=\Gamma \tilde H^{(n)}\\
         \partial_t u^n +\mathcal{P}J_n( \mathcal{P}J_n R_\varepsilon u^n\nabla\mathcal{P}J_n u^n)-\nu\Delta \mathcal{P}J_nu^{(n)}=\\
          -\varepsilon\mathcal{P}J_n R_\varepsilon\bigg(\sum_{l,m=1}^d \nabla J_n Q^{(n)}_{lm}\left(R_\varepsilon J_n u^{n}\cdot\nabla J_nQ^{(n)}_{lm}\right)|R_\varepsilon J_n u^n\nabla J_n Q^{(n)}|\bigg)\\
          +\varepsilon\mathcal{P}\nabla\cdot J_n R_\varepsilon\bigg(  \nabla R_\varepsilon J_n u^{(n)} |\nabla R_\varepsilon  J_n u^{(n)}|^2 \bigg) \\
          -\lambda\xi\mathcal{P}\nabla\cdot J_n   \bigg(\left(  J_n Q^{(n)}+\frac{1}{d}Id\right) \tilde H^{(n)}\bigg)-\lambda\xi\mathcal{P}\nabla\cdot J_n   \bigg(  \tilde H^{(n)}\left( J_n Q^{(n)}+\frac{1}{d}Id\right)\bigg) \\+2\lambda\xi \mathcal{P}\nabla\cdot J_n   \bigg(\big(J_n Q^{(n)}+\frac{1}{d}Id\big)\big( J_nQ^{(n)} \tilde H^{(n)}\big)\bigg)- L\lambda\mathcal{P} J_n    (\nabla\cdot\textrm{tr}( J_n Q^{(n)}\nabla J_n Q^{(n)}))\\+L\lambda\mathcal{P}\nabla\cdot J_n   \left(J_n Q^{(n)} \Delta  J_n Q^{(n)}-\Delta  J_n Q^{(n)}  J_n Q^{(n)}\right)\\
     \end{array}\right.
      \label{approxsystem+}
\end{equation} where $\mathcal{P}$ denotes the Leray projector onto divergence-free vector fields, $M$ is a positive constant, and $ \tilde H^{(n)}\stackrel{def}{=}L\Delta   J_n Q^{(n)} -a J_n Q^{(n)}+bJ_n[(J_n Q^{(n)}  J_n Q^{(n)})-\frac{\textrm{tr}(J_n Q^{(n)} J_n Q^{(n)})}{d}Id]-cJ_n\bigg(J_n Q^{(n)}\big|J_n Q^{(n)}\big|^2\bigg)$. We take as initial data $(J_n R_\varepsilon \bar Q, J_n  R_\varepsilon \bar u)$.

      The system above can be regarded as an ordinary differential equation in $L^2$ verifying the conditions of the Cauchy-Lipschitz theorem. Thus it admits a unique maximal solution $(Q^{(n)}, u^{(n)})\in C^1([0,T_n); L^2(\mathbb{R}^d;\mathbb{R}^{d\times d})\times L^2(\mathbb{R}^d,\mathbb{R}^d))$.  
As we have $(\mathcal{P}J_n)^2=\mathcal{P}J_n$ and $J_n^2=J_n$ the pair $(J_n Q^{(n)}, \mathcal{P}J_n u^{(n)})$ is also a solution of (\ref{approxsystem+}). By uniqueness we have $(J_n Q^{(n)}, \mathcal{P}J_n u^{(n)})=(Q^{(n)}, u^{(n)})$ hence $(Q^{(n)}, u^{(n)})\in C^1([0,T_n),H^\infty)$ and $(Q^{(n)}, u^{(n)})$ satisfy the system:
\begin{equation}
\left\{\begin{array}{l}
         \partial_t Q^{(n)}+J_n \Big(   R_\varepsilon u^n \nabla  Q^{(n)}\Big)  -J_n\Big((\xi    R_\varepsilon D^{(n)}+ R_\varepsilon \Omega^{(n)})( Q^{(n)}+\frac{1}{d}Id)\Big)\\-J_n\Big((Q^{(n)}+\frac{1}{d}Id)(\xi   R_\varepsilon D^{(n)}-  R_\varepsilon\Omega^{(n)})\Big)+2\xi J_n \Big(( Q^{(n)}+\frac{1}{d}Id)\textrm{tr} \big( Q^{(n)}\nabla  R_\varepsilon u^n\big)\Big)=\Gamma  \bar H^{(n)}\\
         \partial_t u^n +\mathcal{P}J_n(  R_\varepsilon u^n\nabla u^n)-\nu\Delta u^{(n)}=\\
          -\varepsilon\mathcal{P}J_n\bigg(\sum_{l,m=1}^d \nabla Q^{(n)}_{lm}\left(R_\varepsilon u^{n}\cdot\nabla Q^{(n)}_{lm}\right)|R_\varepsilon u^n\nabla Q^{(n)}|\bigg)\\
           +\varepsilon\mathcal{P}\nabla\cdot J_n R_\varepsilon\bigg( \nabla R_\varepsilon   u^{(n)} | \nabla R_\varepsilon  u^{(n)}|^2 \bigg) \\
         -\lambda\xi\mathcal{P}\nabla\cdot J_n   \bigg(\left( Q^{(n)}+\frac{1}{d}Id\right) \bar H^{(n)}\bigg)-\lambda\xi\mathcal{P} \nabla\cdot J_n   \bigg( \bar H^{(n)}\left( Q^{(n)}+\frac{1}{d}Id\right)\bigg)\\ +2\lambda\xi \mathcal{P}\nabla\cdot J_n   \bigg(\big(Q^{(n)}+\frac{1}{d}Id\big)\big(Q^{(n)}\bar H^{(n)}\big)\bigg)\\- L\lambda\mathcal{P}J_n     (\nabla\cdot\textrm{tr}(\nabla  Q^{(n)}\nabla  Q^{(n)}))+L\lambda\mathcal{P}\nabla\cdot J_n   \left(Q^{(n)} \Delta  Q^{(n)}-\Delta Q^{(n)}  Q^{(n)}\right)\\
     \end{array}\right.
      \label{approxsystem++}
\end{equation}  where $\bar H^{(n)}\stackrel{def}{=}L\Delta Q^{(n)} -a Q^{(n)}+bJ_n [(Q^{(n)}Q^{(n)})-\frac{\textrm{tr}(J_n(Q^{(n)} Q^{(n)}))}{d}Id]-cJ_n(Q^{(n)}|Q^{(n)}|^2)$. The initial data is $(J_n \bar Q, J_n  \bar u)$. We recall now a few properties of $J_n$ :
\begin{lemma}  The operators $\mathcal{P} $ and $J_n$ are selfadjoint in $L^2$. Moreover $J_n$ and $\mathcal{P}J_n$ are  also idempotent and $J_n$ commutes with distributional derivatives.
\end{lemma}
     We proceed in a manner analogous to the proof of Proposition~\ref{prop:Lyapunov} and multiply the first equation in \eqref{approxsystem++} by $-\lambda \bar H^{(n)}$, take the trace, integrate over $\mathbb{R}^d$ and by parts, and add to the second equation multiplied by $u^{(n)}$. Let us observe that almost all the cancellations in the proof of (\ref{prop:Lyapunov}) hold, except for a few terms that need to be estimated separately. We also have some more new terms that we added in the regularization, terms that control the ones which do not cancel. Thus we have:
 \begin{eqnarray}\frac{d}{dt}\int_{\mathbb{R}^d}\frac{1}{2}|u^n|^2+\frac{L\lambda}{2}|\nabla Q^{(n)}|^2+\lambda\big(\frac{a}{2}|Q^{(n)}|^2-\frac{b}{3}\textrm{tr}(Q^{(n)})^3+\frac{c}{4}|Q^{(n)}|^4\big)\,dx\nonumber\\+\nu\int_{\mathbb{R}^d}|\nabla u^n|^2\,dx+\Gamma\lambda\int_{\mathbb{R}^d}\textrm{tr}\bigg[J_n\left(L\Delta Q^{(n)}-aQ^{(n)}+b[(Q^{(n)})^2-\frac{\textrm{tr}((Q^{(n)})^2)}{3}Id]-cQ^{(n)}\big|Q^{(n)}\big|^2\right)\bigg]^2\,dx\nonumber\\ 
 +\varepsilon\int_{\mathbb{R}^d} |R_\varepsilon u\nabla Q^{(n)}|^3\,dx+\varepsilon \int_{\mathbb{R}^d} |R_\varepsilon\nabla u^n|^4\,dx\nonumber\\ \le \lambda\int_{\mathbb{R}^d}J_n\left(  R_\varepsilon u^n\cdot\nabla Q_{\alpha\beta}^{(n)}\right)J_n\left(bQ_{\alpha\gamma}^{(n)}Q_{\gamma\beta}^{(n)}-cQ_{\alpha\beta}^{(n)}\big|Q^{(n)}\big|^2\right)\,dx\nonumber\\
 +\lambda \int_{\mathbb{R}^d}J_n\left(-R_\varepsilon\Omega_{\alpha\gamma}^{(n)} Q_{\gamma\beta}^{(n)}+Q_{\alpha\gamma}^{(n)}R_\varepsilon\Omega_{\gamma\beta}^{(n)}\right)J_n\left(bQ_{\alpha\delta}^{(n)}Q_{\delta\beta}^{(n)}-cQ_{\alpha\beta}^{(n)}\big| Q^{(n)}\big|^2\right)\,dx
\label{est:energyapprox}
  \end{eqnarray} hence :  
  \begin{align*}
  \frac{d}{dt}\int_{\mathbb{R}^d}\frac{1}{2}|u^n|^2+\frac{L\lambda}{2}|\nabla Q^{(n)}|^2+\lambda\big(\frac{a}{2}|Q^{(n)}|^2-\frac{b}{3}\textrm{tr}(Q^{(n)})^3+\frac{c}{4}|Q^{(n)}|^4\big)\,dx+\nu\int_{\mathbb{R}^d}|\nabla u^n|^2\,dx\\
  +\Gamma\lambda\int_{\mathbb{R}^d}L^2|\Delta Q^{(n)}|^2\,dx+\Gamma\lambda a^2\int_{\mathbb{R}^d}|Q^{(n)}|^2\,dx+C(b^2,d,\Gamma,\lambda)\int_{\mathbb{R}^d} |Q^{(n)}|^4\,dx+\Gamma \lambda c^2\int_{\mathbb{R}^d}\big|J_n(Q^{(n)}|Q^{(n)}|^2)\big|^2\,dx\\
+\varepsilon\int_{\mathbb{R}^d} |R_\varepsilon u\nabla Q^{(n)}|^3\,dx+\varepsilon \int_{\mathbb{R}^d} |R_\varepsilon\nabla u^n|^4\,dx\\
  \le  \underbrace{2\Gamma\lambda c\int_{\mathbb{R}^d} L\Delta Q^{(n)}\cdot Q^{(n)}|Q^{(n)}|^2\,dx}_{\stackrel{\rm def}{=}\mathcal{I}}-2\Gamma\lambda \int_{\mathbb{R}^d}L\Delta Q^{(n)}\cdot\big( -a Q^{(n)}+bJ_n\left[(Q^{(n)})^2-\frac{\textrm{tr}(Q^{(n)})^2}{d}Id)\right]\big)\,dx\\
   -2\Gamma\lambda\int_{\mathbb{R}^d} cQ^{(n)}|Q^{(n)}|^2\cdot \left(aQ^{(n)}-bJ_n\left[(Q^{(n)})^2-\frac{\textrm{tr}(Q^{(n)})^2}{d}Id\right]\right)\,dx\\
  +\lambda\underbrace{\int_{\mathbb{R}^d}J_n\left(   R_\varepsilon u^n\cdot\nabla Q_{\alpha\beta}^{(n)}\right)J_n\left(bQ_{\alpha\gamma}^{(n)}Q_{\gamma\beta}^{(n)}-cQ_{\alpha\beta}^{(n)}\big|Q^{(n)}\big|^2\right)\,dx}_{\stackrel{\rm def}{=}\mathcal{II}}\\ 
  +C\int_{\mathbb{R}^d}|R_\varepsilon \nabla u^n|^2|Q^{(n)}|^2\,dx+\frac{\Gamma c^2}{8}\int_{\mathbb{R}^d} |J_n(Q^{(n)}|Q^{(n)}|^2)|^2\,dx +C\int_{\mathbb{R}^d}|Q^{(n)}|^4\,dx
    \end{align*}   
\par We have that 
\begin{equation}\label{est_II}  
   \begin{aligned}
  \mathcal{II}=\int_{\mathbb{R}^d} &\left(R_\varepsilon u^n\cdot\nabla Q_{\alpha\beta}^{(n)}\right)J_n\left(bQ_{\alpha\gamma}^{(n)}Q_{\gamma\beta}^{(n)}-cQ_{\alpha\beta}^{(n)}\big|Q^{(n)}\big|^2\right)\,dx\\
  &\le \left(\frac{4}{\Gamma c^2}+\frac{1}{4C(b^2,d,\Gamma)}\right)\int_{\mathbb{R}^d} |R_\varepsilon u^n\cdot\nabla Q^{(n)}|^2\,dx\\
  &+\frac{C(b^2,d,\Gamma)}{2}\|Q^{(n)}\|_{L^4}^4+\frac{\Gamma c^2}{8}\int_{\mathbb{R}^d} |J_n(Q^{(n)}|Q^{(n)}|^2)|^2\,dx\\
  &\le\frac{\varepsilon}{2}\int_{\mathbb{R}^d}  |R_\varepsilon u^n\cdot\nabla Q^{(n)}|^3+C(\varepsilon,b^2,c^2,d,\Gamma)\int_{\mathbb{R}^d} \sum_{l,m=1}^d |R_\varepsilon u^n\cdot\nabla Q^{(n)}_{lm}|\,dx\\ 
  & +\frac{C(b^2,c^2,d,\Gamma)}{2}\|Q^{(n)}\|_{L^4}^4+\frac{\Gamma c^2}{8}\int_{\mathbb{R}^d} |J_n(Q^{(n)}|Q^{(n)}|^2)|^2\,dx\\
   &\le\frac{\varepsilon}{2}\int_{\mathbb{R}^d}  |R_\varepsilon u^n\cdot\nabla Q^{(n)}|^3\,dx+C_1(\varepsilon,b^2,c^2,d,\Gamma)\int_{\mathbb{R}^d} |u^n|^2\,dx+C_2(\varepsilon,b,c,d^2,\Gamma)\int_{\mathbb{R}^d} |\nabla Q^{(n)}|^2\,dx\\ 
  & +\frac{C(b^2,d,\Gamma)}{2}\|Q^{(n)}\|_{L^4}^4+\frac{\Gamma c^2}{8}\int_{\mathbb{R}^d} |J_n(Q^{(n)}|Q^{(n)}|^2)|^2\,dx
  \end{aligned}
 \end{equation} 
\par Using the fact that $\mathcal{I}\le 0$ and the estimate for $\mathcal{II}$ shown before, we replace in (\ref{est:energyapprox}) and obtain:  
\begin{align*}
  \frac{d}{dt}\int_{\mathbb{R}^d}\frac{1}{2}|u^n|^2+\frac{L\lambda}{2}|\nabla Q^{(n)}|^2+\lambda\big(\frac{a}{2}|Q^{(n)}|^2-\frac{b}{3}\textrm{tr}(Q^{(n)})^3+\frac{c}{4}|Q^{(n)}|^4\big)\,dx\\
  +\nu\int_{\mathbb{R}^d}|\nabla u^n|^2\,dx+\Gamma\lambda\int_{\mathbb{R}^d}L^2|\Delta Q^{(n)}|^2\,dx
  + \frac{\varepsilon}{2}\int_{\mathbb{R}^d}  |R_\varepsilon u^n\cdot\nabla Q^{(n)}|^3\,dx+\frac{\varepsilon}{2}\int_{\mathbb{R}^d} |\nabla R_\varepsilon u^n|^4\,dx\\
  \le \int_{\mathbb{R}^d} |Q^{(n)}|^2+|Q^{(n)}|^4\,dx+C\int_{\mathbb{R}^d} |\nabla Q^{(n)}|^2\,dx+C(\varepsilon)\int_{\mathbb{R}^d}|u^{(n)}|^2\,dx
 \end{align*}     
 \par This  estimate does not readily provide bounds on $Q^{(n)}$ because the term $\frac{a}{2}|Q^{(n)}|^2-\frac{b}{3}\textrm{tr}(Q^{(n)})^3+\frac{c}{4}|Q^{(n)}|^4$ could be negative. In order to obtain $H^1$ estimates we proceed as in the proof of Proposition~\ref{prop:apriorismallstrain}.  We put the proof in the appendix by Proposition \ref{prop:apriorismallstrainn}. We can continue to proceed as in the proof of Proposition \ref{prop:apriorismallstrain} and in fact in this case because of the first two regularizing terms on the right hand side of the $u^n$ equation in (\ref{approxsystem++}) we do not need the $\xi$ small assumption.
    These estimates allow us to conclude that $T_n=\infty$ and we also get the following apriori bounds:
\begin{equation}\label{weaksolapriori}
\begin{aligned}
\sup_{n}\|\nabla R_\varepsilon u^{n}\|_{L^4(0,T;L^4)}, \sup_n  \|R_\varepsilon u^n\cdot\nabla Q^{(n)}\|_{L^3(0,T;L^3)},dx\le C(\varepsilon)\\
\sup_n \|Q^{(n)}\|_{L^2(0,T;H^2)\cap L^\infty(0,T;H^1)}<\infty,\\
\sup_n \|u^n\|_{L^\infty(0,T;L^2)\cap L^2(0,T;H^1)}<\infty,
\end{aligned}
\end{equation} for any $T<\infty$. By the bounds which can be obtained by using the equation on $\partial_t (Q^{(n)}, u^n)$ in some $L^\infty_{loc}(H^{-N})$ for large enough $N$, we get, by classical local compactness Aubin-Lions lemma, on a subsequence, that:

\begin{eqnarray}
Q^{(n)}\rightharpoonup Q\textrm{ in } L^2(0,T;H^2)\,\textrm{ and }Q^{(n)}\to Q\textrm{ in }L^2(0,T;H_{loc}^{2-\delta}),\forall \delta>0\nonumber\\
Q^{(n)}(t)\rightharpoonup Q(t)\textrm{ in }H^1\textrm{ for all }t\in\mathbb{R}_+\nonumber\\
u^n\rightharpoonup u\textrm{ in }L^2(0,T;H^1)\,\textrm{ and }u^n\to u\textrm{ in }L^2(0,T;H_{loc}^{1-\delta}),\forall\delta>0\nonumber\\
u^n(t)\rightharpoonup u(t)\,\textrm{ in }L^2\textrm{ for all }t\in\mathbb{R}_+\nonumber
\end{eqnarray}
Thus we can pass to the limit and obtain a weak solution of the approximating system:
\begin{equation}
\left\{\begin{array}{l}
         \partial_t Q^{(\varepsilon)}+  R_\varepsilon u^\varepsilon \nabla  Q^{(\varepsilon)}  -(\xi    R_\varepsilon D^\varepsilon+  R_\varepsilon \Omega^\varepsilon)( Q^{(\varepsilon)}+\frac{1}{d}Id)\Big)+\Big((Q^{(\varepsilon)}+\frac{1}{d}Id)(\xi  R_\varepsilon  D^\varepsilon-  R_\varepsilon \Omega^\varepsilon)\Big)\\-2\xi \Big(( Q^{(\varepsilon)}+\frac{1}{d}Id)\textrm{tr} \big( Q^{(\varepsilon)}\nabla   u^\varepsilon\big)\Big)=\Gamma H^\varepsilon\\
         \partial_t u^\varepsilon+\mathcal{P}  R_\varepsilon u^\varepsilon\nabla u^\varepsilon= -\varepsilon \mathcal{P} R_\varepsilon\left(\sum_{l,m=1}^d \nabla Q_{lm}\left(R_\varepsilon  u\cdot\nabla Q_{lm}\right)|R_\varepsilon u\nabla Q|\right)\\+\varepsilon\mathcal{P}\nabla\cdot R_\varepsilon\bigg( R_\varepsilon \nabla  u  |R_\varepsilon \nabla  u|^2 \bigg) 
         -\lambda\xi\mathcal{P}\nabla\cdot R_\varepsilon \bigg(\left( Q^{(\varepsilon)}+\frac{1}{d}Id\right)H^\varepsilon\bigg)\\-\lambda\xi\mathcal{P} \nabla\cdot R_\varepsilon \bigg(H^\varepsilon\left( Q^{(\varepsilon)}+\frac{1}{d}Id\right)\bigg)+2\lambda\xi \mathcal{P}\nabla\cdot R_\varepsilon \bigg(\big(Q^{(\varepsilon)}+\frac{1}{d}\big)\big(Q^{(\varepsilon)} H^\varepsilon\big)\bigg)\\- L\lambda\mathcal{P}  (\nabla\cdot R_\varepsilon \textrm{tr}(\nabla  Q^{(\varepsilon)}\odot\nabla  Q^{(\varepsilon)}))+L\lambda\mathcal{P}\nabla\cdot   R_\varepsilon\left(Q^{(\varepsilon)}\Delta  Q^{(\varepsilon)}-\Delta Q^{(\varepsilon)} Q^{(\varepsilon)}\right)+\nu\Delta u^\varepsilon
     \end{array}\right.
\label{approxsystem3+}
\end{equation}  where we recall that  $H=L\Delta Q^{(\varepsilon)} -a Q^{(\varepsilon)}+b[(Q^{(\varepsilon)})^2-\frac{\textrm{tr}((Q^{(\varepsilon)})^2))}{d}Id]-cQ^{(\varepsilon)}\textrm{tr}((Q^{(\varepsilon)})^2)$. The initial data for the limit system is $( R_\varepsilon \bar Q,  R_\varepsilon \bar u)$. 
\par One can easily see that the solutions of (\ref{approxsystem3+}) are smooth, first by obtaining $C^\infty$ regularity for the first $Q$ equations, by bootstrapping the regularity improvement provided by the linear heat equation, and then the regularity for the $u$ equation, by bootstrapping the regularity improvement provided by a linear advection equation.  For this system we can proceed as in the case of apriori estimates and obtain the same estimates, independent of $\varepsilon$ because the solutions are smooth and all the cancellations that were used in the apriori estimates also hold here.  In particular we obtain:
\begin{eqnarray}
\sup_\varepsilon \|Q^{(\varepsilon)}\|_{ L^\infty(0,T;H^1)\cap L^2(0,T;H^2)}<\infty,\nonumber\\
\sup_\varepsilon \|u^\varepsilon\|_{L^\infty(0,T;L^2)\cap L^2(0,T;H^1)}<\infty
\label{weaksolapriori++}
\end{eqnarray} for any $T<\infty$. Taking into account those bounds  and also the bounds which can be obtained by using the equation on $\partial_t (Q^\varepsilon, u^\varepsilon)$ in some $L^p_{loc}(H^{-N})$ for large enough $N$, we get, by classical local compactness Aubin-Lions lemma and by weak convergence arguments, that there exists a $Q\in L^\infty_{loc}(\mathbb{R}_+;H^1)\cap L^2_{loc}(\mathbb{R}_+;H^2)$ and a $u\in L^\infty_{loc}(\mathbb{R}_+;L^2)\cap L^2_{loc}(\mathbb{R}_+;H^1)$ so that, on a subsequence, we have: 
\begin{eqnarray}
Q^{(\varepsilon)}\rightharpoonup Q\textrm{ in } L^2(0,T;H^2)\,\textrm{ and }Q^{(n)}\to Q\textrm{ in }L^2(0,T;H_{loc}^{2-\delta}),\forall \delta>0\nonumber\\
Q^{(\varepsilon)}(t)\rightharpoonup Q(t)\textrm{ in }H^1\textrm{ for all }t\in\mathbb{R}_+\nonumber\\
u^\varepsilon\rightharpoonup u\textrm{ in }L^2(0,T;H^1)\,\textrm{ and }u^n\to u\textrm{ in }L^2(0,T;H_{loc}^{1-\delta}),\forall\delta>0\nonumber\\
u^\varepsilon(t)\rightharpoonup u(t)\,\textrm{ in }L^2\textrm{ for all }t\in\mathbb{R}_+
\label{convergences}
\end{eqnarray}
 These  convergences allow us to the pass to the limit in the weak solutions of the system
(\ref{approxsystem3+}) to obtain a weak solution of (\ref{system}), namely  (\ref{weaksol1}),(\ref{weaksol2}). Of all the terms there is only one type that is slightly  difficult to treat in passing to the limit, namely:
$$L\int_0^\infty\int_{\mathbb{R}^d} \partial_\beta\left(Q^{(\varepsilon)}_{\alpha\gamma}\Delta Q^{(\varepsilon)}_{\gamma\beta}-\Delta Q^{(\varepsilon)}_{\alpha\gamma}Q^{(\varepsilon)}_{\gamma\beta}\right)\psi_{\alpha}\,\,dx\,dt=-L\int_0^\infty\int_{\mathbb{R}^d}\left(Q^{(\varepsilon)}_{\alpha\gamma}\Delta Q^{(\varepsilon)}_{\gamma\beta}-\Delta Q^{(\varepsilon)}_{\alpha\gamma}Q^{(\varepsilon)}_{\gamma\beta}\right)\cdot  \psi_{\alpha,\beta}\,\,dx\,dt.$$
Taking into account that $\psi$ is compactly supported and the convergences (\ref{convergences}) one can easily pass to the limit the terms $     \psi_{\alpha,\beta} Q^{(\varepsilon)}_{\alpha\gamma}$ and $     \psi_{\alpha,\beta} Q^{(\varepsilon)}_{\gamma\beta}$ strongly in $L^2(0,T;L^2)$.  
       Relations (\ref{convergences}) give that   $\Delta Q^{(\varepsilon)}_{\gamma\beta}$, $\Delta Q^{(\varepsilon)}_{\alpha\gamma}$ converges weakly in $L^2(0,T;L^2)$. Thus  we get convergence to the limit term 

 \begin{eqnarray}
 L\int_0^\infty \int_{\mathbb{R}^d} \partial_\beta( Q_{\alpha\gamma}\Delta Q_{\gamma\beta}) \psi_\alpha dxdt-L\int_0^\infty\int_{\mathbb{R}^d}\partial_\beta (\Delta Q_{\alpha\gamma})Q_{\gamma\beta})\psi_\alpha dxdt\nonumber\\
=-L\int_0^T \int_{\mathbb{R}^d} (\Delta Q_{\gamma\beta})(   \partial_\beta \psi_\alpha Q_{\alpha\gamma}) dxdt+L\int_0^T\int_{\mathbb{R}^d}(\Delta Q_{\alpha\gamma})(   \partial_\beta\psi_\alpha Q_{\gamma\beta})dxdt.
 \end{eqnarray}}
  Using also the uniform bound of $\varepsilon\|R_\varepsilon u^\varepsilon\nabla Q^\varepsilon\|^3_{L^3}$ it is easy to check that $\varepsilon\int |R_\varepsilon u^\varepsilon\nabla Q^\varepsilon |^2 \nabla Q^\varepsilon \cdot R_\varepsilon \mathcal{P}\varphi dx dt $ converges to zero.  A similar observation holds for the $\varepsilon$-regularisation term $\varepsilon\mathcal{P}\nabla\cdot \bigg( R_\varepsilon \nabla  u  |R_\varepsilon \nabla  u|^2$\bigg).
  $\Box$
  
  \bigskip

\section{The uniqueness of weak solutions}

We start with a number of technical tools that are crucial for our proof.

\subsection{Littlewood-Paley theory}
We  define
$\mathcal{C}$ to be the ring of center
$0$, of small radius $1/2$ and great radius $2$. There exist two
nonnegative  radial
functions $\chi$ and $\varphi$ belonging respectively to~${\mathcal{D}} 
(B(0,1)) $ and to
${\mathcal{D}} (\mathcal{C}) $ so that
\begin{equation}
\label{lpfond1}
\chi(\xi) + \sum_{q\geq 0} \varphi (2^{-q}\xi) = 1,\forall \xi\in\mathbb{R}^d
\end{equation}
\begin{equation}
\label{lpfond2}
|p-q|\geq 5
\Rightarrow
{\rm Supp}\,\, \varphi(2^{-q}\cdot)\cap {\rm Supp}\,\, \varphi(2^{-p}\cdot)=\emptyset.
\end{equation}
For instance, one can take $\chi \in \mathcal{D} (B(0,1))$ such that $
\chi  \equiv 1 $ on $B(0,1/2)$ and take
$$
\varphi(\xi) = \chi(\xi/2) -
\chi(\xi).
$$
Then, we are able to define the Littlewood-Paley decomposition. Let us denote
by~$\mathcal{F}$ the Fourier transform on~$\mathbb{R}^d$. Let
$h,\
\tilde h,\  \Dd_q, \Sd_q$ ($q \in \mathbb{Z}$) be defined as follows:
$$\displaylines{
\label{defnotationdyadique}h = {\mathcal F}^{-1}\varphi\quad {\rm and}\quad \tilde h =
{\mathcal{F}}^{-1}\chi, \cr
\Dd_q u = \mathcal{F}^{-1}(\varphi(2^{-q}\xi)\mathcal{F} u) = 2^{qd}\int h(2^qy)u(x-y)dy,\cr
\Sd_qu
=\mathcal{F}^{-1}(\chi(2^{-q}\xi)\mathcal{F} u) =2^{qd} \int \tilde h(2^qy)u(x-y)dy.\cr
}
$$
We recall that for two appropriately smooth functions $a$ and $b$ we have the Bony's paraproduct decomposition \cite{Bony81}:
\begin{equation*}
	ab=\dot T_a b+\dot T_b a+\dot R(a,b)
\end{equation*} 
where 
\begin{equation*}
	\dot T_a b=\sum_{q\in\ZZ}\Sd_{q-1} a	\Dd_q b,\quad 
	\dot T_b a=\sum_{q\in\ZZ}\Sd_{q-1} b	\Dd_q a,\quad
	\text{and}\quad
	\dot R(a,b)=\sum_{\substack{q\in\ZZ,\\ i\in\{0,\pm 1\}} }\Dd_{q} a\Dd_{q+i} b.
\end{equation*}
Then we have $$\Dd_q(ab)=\Dd_q \dot T_a b+\Dd_q \dot T_b a+\Dd_q\dot R(a,b)=\Dd_q \dot T_a b+\Dd_q { \tilde R}(a,b),$$ where 
${\tilde{R}}(a,b)=\dot T_b a+\dot R(a,b)=\sum_{q\in\ZZ} \Sd_{q+2}b\Dd_{q}a$. Moreover:
\begin{equation}\label{bonydecomp}
\begin{aligned}
	\Dd_q (ab)	&=	 \sum_{|q'-q|\le 5}\Dd_q (\Sd_{q'-1}a\Dd_{q'}b)	+	\sum_{q'> q-5}\Dd_q(\Sd_{q'+2}b\Dd_{q'}a)							\\
				&=	 \sum_{|q'-q|\le 5}[\Dd_q,\,\Sd_{q'-1}a]\Dd_{q'}b	+	\sum_{|q'-q|\le 5}\Sd_{q'-1}a\Dd_q\Dd_{q'}b
					+\sum_{q'> q-5}\Dd_q(S_{q'+2}b\Dd_{q'}a)																					\\
				&=	 \sum_{|q'-q|\le 5}[\Dd_{q},\,\Sd_{q'-1} a]\Dd_{q'}b+	\sum_{|q'-q|\le 5}(\Sd_{q'-1} a-\Sd_{q-1}a)\Dd_q\Dd_{q'}b				\\
				&\hspace{4.6cm}
					+\sum_{q'> q-5}\Dd_q (\Sd_{q'+2} b\Dd_{q'}a)+
	\underbrace{		
					 \sum_{|q'-q|\le 5}\Sd_{q-1}a\Dd_q\Dd_{q'}b
	}_{=\Sd_{q-1}a\Dd_q b}
\end{aligned}
\end{equation}
In terms of this decomposition we can express the Sobolev norm  of an element $u$ in the  (nonhomogeneous!) space $H^s$ as:

$$
\|u\|_{H^s}=\big(\|\Sd_0 u\|_{L^2}^2+\sum_{q\in\mathbb{N}}2^{2qs}\|\Dd_q u\|_{L^2}^2\big)^{1/2}
$$

These are a particular case of the general nonhomogeneous Besov spaces $B^s_{p,r}$, for $s\in \R, p,r\in [1,\infty]^2$ consisting of all tempered distributions $u$ such that:

$$\|u\|_{B^s_{p,r}}\stackrel{def}{=}\left\{\begin{array}{ll}\| (\|\Sd_0 u\|_{L^p}^r+\sum_{q\in\mathbb{N}} 2^{rqs}\|\Dd_q u\|_{L^p}^r)^{\frac{1}{r}} &\textrm{ if }r<\infty\\
\max(\|\Sd_0 u\|_{L^p}, \sup_{q\in \mathbb{N}} 2^{qs} \|\Dd_q u\|_{L^p}) &\textrm{ if } r=\infty\end{array}\right.$$ which reduces to the nonhomogeneous Sobolev space $H^s$ for $p=r=2$.

Similarly we also have the norm of the {\it homogenous} Sobolev spaces $\dot H^s$:
\begin{equation*}
	\|u\|_{\dot H^s}=\big(\sum_{q\in\mathbb{Z}}2^{2qs}\|\Dd_q u\|_{L^2}^2\big)^{1/2}
\end{equation*}
and the homogenous Besov spaces $\dot B^s_{p,r}$ for $s\in \R, p,r\in [1,\infty]^2$ 
consisting of all the homogeneous tempered distributions $u$ such that:
\begin{equation*}
	\|u\|_{\dot B^s_{p,r}}\stackrel{def}{=}\left\{\begin{array}{ll}\| (\sum_{q\in\mathbb{Z}} 2^{rqs}\|\Dd_q u\|_{L^p}^r)^{\frac{1}{r}} &\textrm{ if }r<\infty\\
 \sup_{q\in \mathbb{Z}} 2^{qs} \|\Dd_q u\|_{L^p} &\textrm{ if } r=\infty\end{array}\right.
\end{equation*}
which reduces to the homogeneous Sobolev space $\dot H^s$ for $p=r=2$.

Let us note that the homogeneous Besov spaces have somewhat better product rules, and this specificity encoded in Theorem~\ref{theorem_product_homogeneous_sobolev_spaces} will be very useful in our subsequent estimates.

Furthermore we will need the following characterisation of the homogeneous norms, in terms of  operators $\Sd_q u$:
\begin{lemma} {\bf [ Prop. 2.33] ,\cite{MR2768550}} 
	Let $s<0$ and $p,r \in [1,\infty]^2$. A tempered distribution  $u$ belongs to $\dot B^s_{p,r}$ if and only if:
	\begin{equation*}
		(2^{qs} \|\dot S_q u\|_{L^p})_{q\in\mathbb{Z}}\in l^r
	\end{equation*}
	 and for some constant $C$ depending only on the dimension $d$ we have:
	\begin{equation*}
		C^{-|s|+1}\| u\|_{\dot B^s_{p,r}}\le \| (2^{qs}\|\dot S_q u\|_{L^p})_q\|_{l^r}\le C(1+\frac{1}{|s|})\|u\|_{\dot B^s_{p,r}}
	\end{equation*}
\end{lemma}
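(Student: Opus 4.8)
The plan is to deduce the two-sided estimate purely from the dyadic structure, with the sign condition $s<0$ doing all the essential work; there is no hard analysis beyond the discrete Young convolution inequality. First I would record the two elementary identities following from the choice of $\chi,\varphi$ in \eqref{lpfond1}. Replacing $\xi$ by $2^{-q}\xi$ in \eqref{lpfond1} and using the homogeneous partition of unity $\sum_{p\in\ZZ}\varphi(2^{-p}\xi)=1$ (for $\xi\neq0$) gives $\chi(2^{-q}\xi)=\sum_{q'\le q-1}\varphi(2^{-q'}\xi)$, hence $\Sd_q u=\sum_{q'\le q-1}\Dd_{q'}u$; and since $\varphi(\xi)=\chi(\xi/2)-\chi(\xi)$ we get $\Dd_q=\Sd_{q+1}-\Sd_q$. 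These are the only structural facts needed.

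For the right-hand inequality, assume $u\in\dot B^s_{p,r}$ and set $c_{q'}:=2^{q's}\|\Dd_{q'}u\|_{L^p}$, so $(c_{q'})_{q'\in\ZZ}\in\ell^r$ with $\|(c_{q'})\|_{\ell^r}=\|u\|_{\dot B^s_{p,r}}$. Because $s<0$ we have $\|\Dd_{q'}u\|_{L^p}=2^{-q's}c_{q'}$ with $2^{-q's}\to0$ as $q'\to-\infty$ and $(c_{q'})$ bounded, so $\sum_{q'\le q-1}\Dd_{q'}u$ converges in $L^p$ and $\Sd_q u$ is a genuine $L^p$ function. From $\Sd_q u=\sum_{q'\le q-1}\Dd_{q'}u$ one gets
$$2^{qs}\|\Sd_q u\|_{L^p}\le\sum_{q'\le q-1}2^{(q-q')s}c_{q'}=\sum_{k\ge1}2^{ks}c_{q-k}=(a\ast c)_q,\qquad a_k:=2^{ks}\,1_{\{k\ge1\}}.$$
Since $s<0$, $\|a\|_{\ell^1}=\sum_{k\ge1}2^{ks}=\frac{2^s}{1-2^s}\le C\big(1+\tfrac1{|s|}\big)$ (bounded for $|s|\ge1$, of order $1/|s|$ as $s\to0^-$ by concavity of $1-2^s$), so discrete Young's inequality yields $\|(2^{qs}\|\Sd_q u\|_{L^p})_q\|_{\ell^r}\le\|a\|_{\ell^1}\|(c_{q'})\|_{\ell^r}\le C\big(1+\tfrac1{|s|}\big)\|u\|_{\dot B^s_{p,r}}$.

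For the reverse inequality, assume $(2^{qs}\|\Sd_q u\|_{L^p})_q\in\ell^r$. As $q\to-\infty$ we have $2^{qs}\to+\infty$ (here again $s<0$ is used), so membership in $\ell^r$ (or $\ell^\infty$) forces $\|\Sd_q u\|_{L^p}\to0$, hence $\Sd_q u\to0$ in $\mathcal S'$; thus $u$ is a homogeneous tempered distribution and $u=\sum_{q\in\ZZ}\Dd_q u$ in $\mathcal S'$, so its homogeneous Besov norm is legitimately computed from the blocks $\Dd_q u$. Using $\Dd_q=\Sd_{q+1}-\Sd_q$,
$$2^{qs}\|\Dd_q u\|_{L^p}\le 2^{-s}\,2^{(q+1)s}\|\Sd_{q+1}u\|_{L^p}+2^{qs}\|\Sd_q u\|_{L^p},$$
and taking $\ell^r$ norms gives $\|u\|_{\dot B^s_{p,r}}\le\big(1+2^{|s|}\big)\|(2^{qs}\|\Sd_q u\|_{L^p})_q\|_{\ell^r}\le C^{|s|-1}\|(2^{qs}\|\Sd_q u\|_{L^p})_q\|_{\ell^r}$ for a suitable universal $C$, which rearranges to the claimed lower bound $C^{-|s|+1}\|u\|_{\dot B^s_{p,r}}\le\|(2^{qs}\|\Sd_q u\|_{L^p})_q\|_{\ell^r}$.

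The main obstacle is not any estimate but the bookkeeping: one must (i) justify that $\Sd_q u$ is meaningful and that $u$ is genuinely recovered as $\sum_q\Dd_q u$ — both rely entirely on $s<0$ controlling low frequencies and on working modulo the usual caveats of homogeneous Littlewood--Paley theory — and (ii) track the precise $|s|$-dependence of the constants, namely the factor $\sum_{k\ge1}2^{ks}\sim 1/|s|$ in one direction and the single factor $2^{-s}=2^{|s|}$ in the other, so that the two-sided bound comes out with exactly the linear/exponential shape stated.
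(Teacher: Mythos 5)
The paper gives no proof of this lemma at all: it is quoted directly from \cite{MR2768550} (Prop.~2.33), so there is no in-paper argument to compare yours against. Your proof is the standard one and its substance is correct: the identity $\Sd_q u=\sum_{q'\le q-1}\Dd_{q'}u$ (valid for homogeneous tempered distributions, which is the setting in which the paper defines $\dot B^s_{p,r}$, and you flag this caveat appropriately), the discrete Young inequality with the kernel $a_k=2^{ks}1_{\{k\ge 1\}}$ whose $\ell^1$-norm $2^s/(1-2^s)\lesssim 1+1/|s|$ produces exactly the stated upper bound, and, for the converse, the key observation that $2^{qs}\to\infty$ as $q\to-\infty$ forces $\|\Sd_q u\|_{L^p}\to 0$, so that $u$ lies in the homogeneous class and the telescoping $\Dd_q=\Sd_{q+1}-\Sd_q$ may be summed in $\ell^r$. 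All of this is sound, and it is useful to have it written out since the paper only cites it.

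The one step that does not work as written is the final constant-matching: there is no universal $C$ with $1+2^{|s|}\le C^{|s|-1}$ for all $s<0$ (letting $s\to 0^-$ forces $C\le 1/2$, which then fails for large $|s|$). What your computation actually yields is $\|u\|_{\dot B^s_{p,r}}\le (1+2^{|s|})\,\|(2^{qs}\|\Sd_q u\|_{L^p})_q\|_{l^r}\le 2^{|s|+1}\|(2^{qs}\|\Sd_q u\|_{L^p})_q\|_{l^r}$, i.e. the lower bound with constant $C^{-(|s|+1)}$ and $C=2$ (up to dimensional factors). That is precisely the form of the constant in \cite{MR2768550}; the exponent $-|s|+1$ in the lemma as stated here is evidently a misprint for $-(|s|+1)$, and as literally written the lower bound cannot hold with an $s$-independent $C$ (test a single dyadic block and let $s\to 0^-$ while comparing with the upper bound for large $|s|$). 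So your argument proves the correct statement; simply do not try to force your (correct) constant $1+2^{|s|}$ into the misprinted exponent.
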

We will use the following well-known estimates:
\begin{lemma}\label{lemma:bernstein&commutator} (\cite{chemin},\cite{chemin&masmoudi})
      
     {\bf (i)} (Bernstein inequalities) $$2^{-q}\|\nabla \Sd_q u\|_{L^p_x}\le C \|u\|_{L^p_x}, \forall 1\le p\le \infty$$
			$$c\| \Dd_q u\|_{L^p_x}\le 2^{-q}\|\Dd_q \nabla u\|_{L^p_x}\le C\|\Dd_q u\|_{L^p_x}, \forall 1\le p\le \infty$$

    	{\bf (ii)} (Bernstein inequalities) $$\|\Dd_q u\|_{L^b_x}\le 2^{d(\frac{1}{a}-\frac{1}{b})q}\|\Dd_q u\|_{L^a_x},\textrm{ for } b\ge a\ge 1$$
			$$\|\Sd_q u\|_{L^b_x}\le 2^{d(\frac{1}{a}-\frac{1}{b})q}\|\Sd_q u\|_{L^a_x},\textrm{ for } b\ge a\ge 1$$

    {\bf (ii)} (commutator estimate) 
	\begin{equation}\label{commutator}
		\|[\Dd_q, u]v\|_{L^p_x}\leq C 2^{-q}\|\nabla u\|_{L^r_x}\|v\|_{L^s_x}
	\end{equation}  
	with $\frac{1}{p}=\frac{1}{r}+\frac{1}{s}$. The constant $C$ depends only on the function $\varphi$ used in defining $\Dd_q$ but not on $p,r,s$.
\end{lemma}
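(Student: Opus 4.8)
The plan is to deduce every inequality in the lemma from a single observation: once a function has been frequency-localised at scale $2^q$, each Littlewood--Paley operator reduces to convolution against an $L^1$-normalised dilate $2^{qd}k(2^q\cdot)$ of a fixed Schwartz function $k$, after which Young's and Minkowski's inequalities close everything. I would first fix a smooth cut-off $\tilde\varphi$, supported in an annulus slightly wider than $\CC$ and identically $1$ on $\mathrm{Supp}\,\varphi$, together with a smooth compactly supported $\tilde\chi$ identically $1$ on $\mathrm{Supp}\,\chi$; set $\theta=\mathcal{F}^{-1}\tilde\varphi$ and $\zeta=\mathcal{F}^{-1}\tilde\chi$, both Schwartz since they are inverse Fourier transforms of compactly supported smooth functions, and $\theta_q=2^{qd}\theta(2^q\cdot)$, $\zeta_q=2^{qd}\zeta(2^q\cdot)$, so that $\|\theta_q\|_{L^1}=\|\theta\|_{L^1}$, $\|\zeta_q\|_{L^1}=\|\zeta\|_{L^1}$ and $\Dd_q u=\theta_q*\Dd_q u$, $\Sd_q u=\zeta_q*\Sd_q u$ identically.

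For the Bernstein inequalities of parts (i)--(ii) I would argue as follows. Writing $\widehat{\Dd_q\nabla u}(\xi)=2^q\big(i2^{-q}\xi\,\tilde\varphi(2^{-q}\xi)\big)\widehat{\Dd_q u}(\xi)$ exhibits $\Dd_q\nabla u$ as $2^q g_q*\Dd_q u$ with $g=\mathcal{F}^{-1}(i\eta\,\tilde\varphi(\eta))$ Schwartz and $g_q=2^{qd}g(2^q\cdot)$, so $\|\Dd_q\nabla u\|_{L^p}\le C2^q\|\Dd_q u\|_{L^p}$ for all $p\in[1,\infty]$ by Young's inequality; an identical argument gives $2^{-q}\|\nabla\Sd_q u\|_{L^p}\le C\|u\|_{L^p}$. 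For the reverse inequality I would use that on $\mathrm{Supp}\,\tilde\varphi$, which avoids the origin, the symbols $\tilde\varphi(\eta)\,\eta_j/|\eta|^2$ are smooth and compactly supported; inverting them produces Schwartz kernels $\rho_j$ and the identity $\Dd_q u=2^{-q}\sum_j\rho_{j,q}*\partial_j\Dd_q u$, whence $\|\Dd_q u\|_{L^p}\le C2^{-q}\|\Dd_q\nabla u\|_{L^p}$. Finally, for $b\ge a\ge 1$, applying Young's inequality to $\Dd_q u=\theta_q*\Dd_q u$ with $1+\tfrac1b=\tfrac1m+\tfrac1a$ and using $\|\theta_q\|_{L^m}=2^{qd(1-1/m)}\|\theta\|_{L^m}=2^{qd(1/a-1/b)}\|\theta\|_{L^m}$ yields the $L^a\to L^b$ estimate, with $\Sd_q$ treated identically via $\zeta$.

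For the commutator estimate I would start from
\[
[\Dd_q,u]v(x)=\Dd_q(uv)(x)-u(x)\Dd_q v(x)=\int_{\mathbb{R}^d}h_q(x-y)\big(u(y)-u(x)\big)v(y)\,dy ,
\]
with $h_q=2^{qd}h(2^q\cdot)$, Taylor-expand $u(y)-u(x)=(y-x)\cdot\int_0^1\nabla u\big(x+\tau(y-x)\big)\,d\tau$, and absorb the factor $(y-x)$ into the kernel: since $H(z):=z\,h(z)$ is again (vector-valued) Schwartz and $(y-x)h_q(x-y)=-2^{-q}H_q(x-y)$ with $H_q=2^{qd}H(2^q\cdot)$, one gets
\[
[\Dd_q,u]v(x)=-2^{-q}\int_0^1\!\!\int_{\mathbb{R}^d}H_q(x-y)\cdot\nabla u\big(x+\tau(y-x)\big)\,v(y)\,dy\,d\tau .
\]
I would then take the $L^p_x$ norm, move the $\tau$- and $y$-integrations outside by Minkowski's integral inequality, substitute $z=x-y$, and apply H\"older's inequality with $\tfrac1p=\tfrac1r+\tfrac1s$ together with the translation invariance of $\|\cdot\|_{L^r_x}$ and $\|\cdot\|_{L^s_x}$; this bounds the whole thing by $2^{-q}\|H\|_{L^1}\|\nabla u\|_{L^r_x}\|v\|_{L^s_x}$ uniformly in $\tau$, which is exactly the assertion, the constant $C=\|H\|_{L^1}$ depending only on $\varphi$.

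I do not expect a genuine obstacle here: the three families of estimates are classical and the arguments amount to harmonic-analytic bookkeeping once frequency localisation is exploited. The only step that needs a moment's attention is the $\tau$-dependent translation $x+\tau(y-x)$ inside $\nabla u$ in the commutator identity, which prevents that expression from being an honest convolution; the remedy, as indicated above, is to invoke Minkowski's integral inequality \emph{before} passing to the $L^p_x$ norm, so that translation invariance can absorb the shift uniformly in $\tau$.
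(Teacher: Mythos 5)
Your argument is correct. For the commutator estimate you start from exactly the same identity as the paper: writing $[\Dd_q,u]v(x)$ as an integral of $u(y)-u(x)$ against the kernel $2^{qd}h(2^q\cdot)$, applying the fundamental theorem of calculus in the parameter $\tau$, and absorbing the factor $(y-x)$ into the Schwartz kernel $\tilde h(y)=y\,h(y)$, which is what produces the factor $2^{-q}$ and a constant depending only on $\varphi$. Where you differ is in how the $L^p_x$ norm is then closed: the paper splits $|\tilde h_{2^q}|$ pointwise with H\"older exponents $r/p$ and $s/p$, takes the $L^p_x$ norm, and finishes with another H\"older step plus Young-type convolution bounds after rescaling the kernel in $\tau$ (the kernels $\tilde h_{2^q\tau^{-1}}$ and the $\tau^{-d}$ bookkeeping); you instead apply Minkowski's integral inequality first and then H\"older in $x$ with $\frac1p=\frac1r+\frac1s$, using translation invariance of the $L^r_x$ and $L^s_x$ norms to absorb the $\tau$-dependent shift uniformly in $\tau$. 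Your route is a little cleaner, avoids the rescaled kernels entirely, and yields the same constant $C=\|\tilde h\|_{L^1}$, manifestly independent of $p,r,s$. As for the Bernstein inequalities, the paper does not prove them (they are quoted from the cited references); your convolution-kernel proof --- Young's inequality for the direct estimates, the dilation identity $\|\theta_q\|_{L^m}=2^{qd(1-1/m)}\|\theta\|_{L^m}$ for the $L^a\to L^b$ bound, and inversion of the symbol $\tilde\varphi(\eta)\eta_j/|\eta|^2$ on the annulus for the reverse inequality (which is indeed only needed, and only valid, for the annular localization $\Dd_q$) --- is the standard argument and is complete as sketched.
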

\begin{proof}
	For the commutator estimate we begin by writing 
	\begin{align*}
		[\Dd_q , u]v(x)	&=\Dd_q(uv)(x)-u(x)\Delta_q v(x)
						 = 2^{qd}	\int 												h(2^q y)		(u(x-y)-u(x))				v(x-y) \dd y						\\
						&=2^{qd}	\int_{\mathbb{R}^d}\int_0^1
											\frac{\partial}{\partial \tau}\Big\{			h(2^q y)		u(x-\tau y)					v(x-y) \dd y		\Big\}\dd\tau	\\
						&=-2^{qd}	\int_0^1\int_{\mathbb{R}^d} 							h(2^q y) 	y\cdot \nabla u(x-\tau y) 	v(x-y) \dd y\dd\tau				\\
						&=-2^{-q}	\int_0^1\int_{\mathbb{R}^d}		\tilde 				h_{2^q}(y)\cdot	 \nabla	u(x-\tau y) 		v(x-y) \dd y\dd\tau,
	\end{align*}
	where $\tilde h(y):=y h(y)\in \mathcal S(R^d)^d$ and $\tilde h_{\lambda}(y):=\lambda^{d}\tilde h(\lambda y)$. 
	Using the Cauchy-Schwartz inequality and a change of variables, we get
	\begin{equation*}
	\begin{aligned}
		|[\Dd_q , u]v(x)|
		&\leq 
			2^{-q}\int_0^1
					\bigg(
						\int_{\mathbb{R}^d} 
							|\tilde h_{2^q}(y)||\nabla u(x-\tau y)|^{\frac{r}{p}}\dd y
					\bigg)^{\frac pr}\dd\tau 
					\bigg(
						\int_{\mathbb{R}^d}
							|\tilde h_{2^q}(y)||v(x-y)|^{\frac sp}\dd y
					\bigg)^{\frac ps}\\
		&=
			2^{-q}\int_0^1 
					\bigg(
						\int_{\mathbb{R}^d} 
							\frac{|\tilde h_{2^q\tau^{-1}}(y)|}{\tau^d}|\nabla u(x- y)|^{\frac{r}{p}}\dd y
					\bigg)^{\frac pr}\dd\tau 
					\bigg(
						\int_{\mathbb{R}^d}
							|\tilde h_{2^q}(y)||v(x-y)|^{\frac sp}\dd y
					\bigg)^{\frac ps}\\								
		&=
			2^{-q}\int_0^1
				\Big(	\frac{|\tilde h_{2^q\tau^{-1}}|}{\tau^d}* |\nabla u|^{\frac{r}{p}}(x) 	\Big)^{\frac pr} \dd \tau\,
				\Big(	|\tilde h_{2^q}|*|v|^{\frac sp} 	(x)										\Big)^{\frac ps}
\end{aligned}
\end{equation*}
Taking the $L^p$ norm in the $x$ variable, using the Cauchy-Schwartz inequality in the $x$ variable and  convolution estimates we obtain
\begin{align*}
		\|[\Dd_q , u] v\|_{L^p_x}
		&\leq  2^{-q}\int_0^1
				\Big\| 
					\Big(	\frac{|\tilde h_{2^q\tau^{-1}}|}{\tau^d}* |\nabla u|^{\frac{r}{p}}(x) 	\Big)^{\frac pr} 
				\Big\|_{L^r_x}\dd \tau\,
				\Big\|
					\Big(	|\tilde h_{2^q}|*|v|^{\frac sp} 	(x)										\Big)^{\frac ps}
				\Big\|_{L^s_x}
		\\
		&\leq 2^{-q}
			\bigg(
				\int_0^1 \|\frac{ |\tilde h_{2^q\tau^{-1}}|}{\tau^d}* |\nabla u|^{\frac rp}\|_{L^p_x}^{\frac pr}\dd\tau
			\bigg) 
			\|
				|\tilde h_{2^q}|* |v|^{\frac sp}
			\|_{L^p_x}^{\frac ps}																									\\
		&
		\leq  2^{-q}\int_0^1 \frac{ \|\tilde h_{2^q\tau^{-1}}\|_{L^1_x}^{\frac{p}{r}}}{\tau^d} \dd \tau \|\nabla u \|_{L^r_x}
				\|\tilde h_{2^q}\|_{L^1_x}^\frac{p}{s}\| v \|_{L^s_x}																\\
		&\leq  2^{-q}\|\tilde h_{2^{-q}} \|_{L^1}^{\frac{p}{r}}\|\tilde h_{2^{-q}} \|_{L^1}^{\frac{p}{s}} \|\nabla u\|_{L^r_x}\|v\|_{L^s_x}.
\end{align*} 
Now, since 
\begin{equation*}
	\| \tilde h_{2^{-q}} \|_{L^1_x} = \int_{\RR^d} 2^{-qd}| \tilde h (2^{-q}x)|\dd x = 
	\int_{\RR^d}| \tilde h(y)|\dd y = \| \tilde h \|_{L^1_x},
\end{equation*}
we finally obtain
\begin{equation*}
	\|[\Dd_q , u] v\|_{L^p_x} \leq  
	2^{-q}\|\tilde h \|_{L^1_x}^{\frac{p}{r}}\|\tilde h \|_{L^1}^{\frac{p}{s}} \|\nabla u\|_{L^r_x}\|v\|_{L^s_x} = 
	\|\tilde h \|_{L^1_x}2^{-q}\|\nabla u\|_{L^r_x}\|v\|_{L^s_x}
\end{equation*}
so the constant in the inequality is $C=\|\tilde h\|_{L^1}$ and it does not depend on $p,r,s$.
\end{proof}
We will also make use of a Bernstein-type inequality evolving the operator $\Sd_q$. 
\begin{lemma}\label{BS2_bernstesin_ineq2}
	there exist two positive constants $\tilde c$ and $\tilde C$ such that 
	\begin{equation*}
		\tilde 
		c\| (\Sd_q -\Sd_{q'}) u\|_{L^p_x}\le 2^{-q}\|(\Sd_q -\Sd_{q'})\nabla u\|_{L^p_x}\le 
		\tilde C\|(\Sd_q -\Sd_{q'}) u\|_{L^p_x}, \forall 1\le p\le \infty,
	\end{equation*}
	for any integers $q$ and $ q'$ with $|q-q'|\leq 5$.
\end{lemma}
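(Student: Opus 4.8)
The plan is to deduce both inequalities from the standard (annular) Bernstein estimates, after first pinning down the Fourier support of $(\Sd_q-\Sd_{q'})u$. First I would dispose of the trivial cases: if $q=q'$ there is nothing to prove, and if $q<q'$ one rewrites $\Sd_q-\Sd_{q'}=-(\Sd_{q'}-\Sd_q)$ and trades the weight $2^{-q}$ for $2^{-q'}$ at the price of the bounded factor $2^{|q-q'|}\le 2^5$, which can be absorbed into the final constants; so I would assume $q>q'$ with $q-q'\le 5$. Next, I would compute the symbol of $\Sd_q-\Sd_{q'}$, namely $\chi(2^{-q}\xi)-\chi(2^{-q'}\xi)$: since $\chi\equiv 1$ on $B(0,1/2)$ it vanishes on $\{|\xi|\le 2^{q'-1}\}$, and since $\mathrm{supp}\,\chi\subset B(0,1)$ it vanishes on $\{|\xi|\ge 2^{q}\}$; using $q'\ge q-5$ this shows that $(\Sd_q-\Sd_{q'})u$ has Fourier transform supported in the fixed dyadic annulus $\{2^{q-6}\le|\xi|\le 2^{q}\}$, whose inner and outer radii are comparable to $2^q$ uniformly in $q,q'$.

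With $v:=(\Sd_q-\Sd_{q'})u$ localized in frequency as above, I would then run the two halves of the annular Bernstein argument, noting that $(\Sd_q-\Sd_{q'})\nabla u=\nabla v$. For the upper bound I would fix a bump $\theta\in\mathcal D(\RR^d)$ equal to $1$ on $\{2^{-6}\le|\xi|\le1\}$ and supported in $\{2^{-7}\le|\xi|\le 2\}$, observe that $v=\Theta_{2^q}*v$ with $\Theta=\mathcal{F}^{-1}\theta$ and $\Theta_{2^q}(x)=2^{qd}\Theta(2^q x)$, hence $\nabla v=(\nabla\Theta_{2^q})*v$, and conclude by Young's inequality and scaling that $\|(\Sd_q-\Sd_{q'})\nabla u\|_{L^p_x}=\|\nabla v\|_{L^p_x}\le 2^q\|\nabla\Theta\|_{L^1}\,\|v\|_{L^p_x}$. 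For the lower bound I would use that on $\mathrm{supp}\,\widehat v$ one has the identity $\widehat v(\xi)=\sum_{j=1}^d m_j(2^{-q}\xi)\,\widehat{\partial_j v}(\xi)$ with $m_j(\eta):=-i\eta_j\theta(\eta)/|\eta|^2$, since $\theta(2^{-q}\cdot)\equiv 1$ there and $|\xi|^2\widehat v=\sum_j\xi_j(\xi_j\widehat v)$; each $m_j$ lies in $\mathcal D(\RR^d)$ because $|\eta|^2$ is bounded away from $0$ on $\mathrm{supp}\,\theta$, so $v=2^{-q}\sum_j(M_j)_{2^q}*\partial_j v$ with $M_j=\mathcal{F}^{-1}m_j$, and Young's inequality together with $|\partial_j v|\le|\nabla v|$ gives $\|v\|_{L^p_x}\le 2^{-q}\big(\sum_j\|M_j\|_{L^1}\big)\|\nabla v\|_{L^p_x}$. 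This yields the claim with $\tilde c,\tilde C$ depending only on the fixed kernels $\Theta$ and $M_j$, hence independent of $p$, $q$ and $q'$.

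I do not expect any genuine obstacle in this lemma; the only delicate point — and the reason the hypothesis $|q-q'|\le 5$ is imposed — is that the annulus containing $\mathrm{supp}\,\widehat v$, and therefore the auxiliary kernels and the constants $\tilde c,\tilde C$, can be chosen once and for all, independently of $q$ and $q'$. An equivalent and perhaps cleaner route, which I might prefer in the write-up, is to introduce a ``fattened'' Littlewood--Paley block $\widetilde{\Dd}_q$ acting as the identity on $\{2^{q-6}\le|\xi|\le 2^q\}$, observe that $(\Sd_q-\Sd_{q'})u=\widetilde{\Dd}_q(\Sd_q-\Sd_{q'})u$, and then simply invoke the annular Bernstein inequalities of Lemma~\ref{lemma:bernstein&commutator}(i) applied to $\widetilde{\Dd}_q$, whose proof is verbatim that of the $\Dd_q$ case.
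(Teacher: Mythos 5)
Your proposal is correct, and your closing remark in fact \emph{is} the paper's proof: the authors define a fattened block $\dot{\tilde\Delta}_q$ built from $\tilde\varphi_q=\tfrac1{10}\sum_{|q-j|\le10}\varphi_j$, observe that $\dot{\tilde\Delta}_q(\Sd_q-\Sd_{q'})u=\tfrac1{10}(\Sd_q-\Sd_{q'})u$ because $(\Sd_q-\Sd_{q'})u$ is frequency-supported in an annulus of radius comparable to $2^q$ (exactly the support computation you carry out), and then invoke Lemma~\ref{lemma:bernstein&commutator}(i) for the fattened block. Your primary argument takes the marginally more self-contained route of re-deriving the annular Bernstein inequality from scratch: upper bound by $\nabla v=(\nabla\Theta_{2^q})*v$ and Young, lower bound by the multipliers $m_j(\eta)=-i\eta_j\theta(\eta)/|\eta|^2$; this buys nothing beyond avoiding the reference to the already-stated Bernstein lemma, but it is perfectly valid and makes the uniformity of $\tilde c,\tilde C$ in $p,q,q'$ explicit. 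Two cosmetic points: your preliminary reduction to $q>q'$ is unnecessary in the fattened-block formulation (the support argument works symmetrically, with the annulus $\{2^{\min(q,q')-1}\le|\xi|\le2^{\max(q,q')}\}$ still comparable to $2^q$ since $|q-q'|\le5$), and your displayed Fourier identity should read $\widehat v(\xi)=2^{-q}\sum_j m_j(2^{-q}\xi)\widehat{\partial_j v}(\xi)$ — the factor $2^{-q}$ is missing there, though it correctly reappears in the spatial formula $v=2^{-q}\sum_j(M_j)_{2^q}*\partial_j v$ that you actually use, so the conclusion is unaffected.
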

\begin{proof} First, we consider new localizer functions as follows:
	\begin{equation*}
		\tilde \varphi_q(\xi) := \frac{1}{10}\sum_{|q-j|\leq 10}\varphi_{j}(\xi) \quad\text{and}\quad 
		\tilde \chi (\xi):=
		\begin{cases}
				\sum_{q\leq -1} \tilde \varphi_{q} (\xi)	&\text{if}\quad \xi\neq 0,\\
				1											&\text{otherwise},
		\end{cases} 
	\end{equation*}
	so that \eqref{lpfond1} and \eqref{lpfond2} are satisfied with $\tilde \varphi$ and $\tilde \chi$ instead of $\varphi$ and $\chi$. Then defining 
	the new homogeneous dyadic block $\dot{\tilde  \Delta}_q$ in the same line of $\Dd_q$, we have
	\begin{equation*}
		\dot{\tilde  \Delta}_q( \Sd_q - \Sd_{q'}) u = 
		 \frac{1}{10}\sum_{|q-j|\leq 10}
		 \Dd_j (\Sd_q - \Sd_{q'})(u)
		 =
		 \frac{1}{10}
		 (\Sd_q - \Sd_{q'})(u).
	\end{equation*}
	Then the inequality turns out from {\bf (i)} of Lemma \ref{lemma:bernstein&commutator}, making use of $ \dot{\tilde  \Delta}_q$ instead of $\Dd_q$. 
\end{proof}
\subsection{The proof of the uniqueness}

\noindent In this section we provide the proof of the uniqueness result for the weak solutions of system \eqref{system}. The main idea is to evaluate the difference 
between two weak solutions in a functional space which is less regular than $L^2_x$ such as $\Hh^{-\frac{1}{2}}$. Such strategy is not new in literature, for 
instance we recall \cite{MR1813331} and \cite{MR2309504}.  We now provide the uniqueness part of the proof of Theorem~\ref{thm: uniqueness}.

\begin{proof}
Let us consider two weak solutions $(u_1,\,Q_1)$ and $(u_2,\,Q_2)$ of system \eqref{system}. We denote  
$\delta u := u_1 - u_2$ and $\delta Q := Q_1 - Q_2$ while $\delta S(Q,\,\nabla u)$ stands for $S(Q_1,\,\nabla u_1) 
-S(Q_2,\,\nabla u_2)$. Similarly, we define  $\delta H(Q)$, $\delta F(Q)$, $\delta \tau$ and $\delta \sigma$. 
Thus $(\delta u,\,\delta Q)$ is a weak solution of
\begin{equation}\label{system_uniqueness}
\begin{cases}
	\,	\partial_t \delta Q - L\Delta \delta Q =  \delta S(Q, \nabla u)	+\Gamma \delta H(Q)	- 
					\delta u\cdot \nabla Q_1 -u_2\cdot\nabla \delta Q 			&\RR_+\times\RR^2,\\
	\,	\partial_t \delta u - \Delta \delta u + \nabla \delta \Pi  = 
			L\,
			\Div\{ 	
					\delta \tau + \delta \sigma 
				\}-
				\delta u \cdot \nabla u_1 -
				u_2\cdot \nabla \delta u										&\RR_+\times\RR^2,\\
	\,	\Div\, \delta u=0														&\RR_+\times\RR^2,\\ 
	\,	(\delta u,\,\delta Q)_{t=0}=(0,\,0)										&\quad\quad\;\;\,\RR^2.
\end{cases}
\end{equation}
First, let us explicitly state $\delta S(Q,\,\nabla u)$, $\delta F(Q)$, $\delta \tau$ and $\delta \sigma$ in terms of
$\delta Q$ and $\delta u$, namely:
\begin{align*}
	\delta S(Q,\nabla u)	=&+	(\xi	\delta	D 	+	\delta 	\Omega		)\delta Q  							+
								(\xi	\delta	D 	+	\delta  \Omega		)(\;	Q_2 + \frac{\Id}{2} \;) 	+
								(\xi			D_2 + 			\Omega_2 	)\delta Q 							+
								\delta	Q 							(\xi	\delta D -	\delta	\Omega 	)		+\\&+	
								(\; 		Q_2 + \frac{\Id}{2} \;)	(\xi	\delta	D 	-	\delta  \Omega )	+
								\delta 	Q							(\xi			D_2 - 			\Omega_2 )	-
		2\xi	\delta 	Q	\,	\trc(	\delta	Q 	\nabla	\delta 	u 	)									- 
		2\xi	\delta 	Q  	\, \trc(	\delta	Q	\nabla  		u_2	) 									+\\&-
		2\xi	\delta 	Q  	\, \trc(	 		Q_2 \nabla	\delta	u 	) 									-
		2\xi			(\,Q_2+\frac{\Id}{2}\,) 	\trc(	\delta	Q	\nabla 	\delta 	u 	) 				-
		2\xi	\delta 	Q  	\, 						\trc(			Q_2	\nabla			u_2 ) 				+\\&-
		2\xi		 	(\,Q_2+\frac{\Id}{2}\,)	 	\trc(	\delta	Q	\nabla			u_2 ) 				-
		2\xi		 	(\,Q_2+\frac{\Id}{2}\,)	 	\trc(			Q_2	\nabla	\delta	u	), 							
\end{align*}
\begin{equation*}
	\delta F(Q)			=		-a\,			\delta Q													
								+b\,		(\,	Q_1	\delta	Q	+	\delta	Q	Q_2	\,)
								-b\,	\trc\{	\delta Q Q_1 + Q_2 \delta Q	\}\frac{\Id}{2}	+			
								-c\,	\big[		\delta	Q	\trc\{			Q_1^2					\}
								+							Q_2	\trc\{	\delta	Q			Q_1			
								+											 	Q_2	\delta	Q			\}
										\big]
\end{equation*}
\begin{equation*}
	\delta H(Q) = \delta	F(Q)	+ L	\Delta \delta	Q.
\end{equation*}
\begin{align*}
	&\delta \tau	=	- \xi			\delta	Q\,										F(	Q_1	)
					- \xi	(\;  			Q_2	+ 	\frac{\Id}{2}	\;)		\delta	F(	Q	)
					-L\xi		 	\delta	Q\,								\Delta	\delta	Q	
					-L\xi			\delta	Q\,								\Delta			Q_2					
					-L\xi	(\;  			Q_2	+ 	\frac{\Id}{2}	\;)		\Delta	\delta	Q					+\\&	
					- \xi			F(	Q_1	)					\delta	Q										
					- \xi	\delta	F(	Q	)	(\; 			Q_2	+ 	\frac{\Id}{2}	\;) 
					-L\xi	\Delta \delta	Q		 			\delta	Q								
					-L\xi	\Delta			Q_2					\delta	Q										
					-L\xi	\Delta	\delta	Q	(\; 			Q_2	+ 	\frac{\Id}{2}	\;) 					+\\&
			+2\xi			\delta	Q								\trc \{		 	Q_1			F(	Q_1	) \}
			+2\xi				 	Q_2								\trc \{	\delta 	Q	 		F(	Q_1	) \}	+\\&
			+2\xi				 	Q_2								\trc \{		 	Q_2	\delta 	F(	Q	) \}			
	+2L\xi			\delta	Q								\trc \{	\delta 	Q	\Delta	\delta	Q	 \}			
	+2L\xi			\delta 	Q								\trc \{	\delta 	Q	\Delta			Q_2	 \}	
	+2L\xi			\delta 	Q								\trc \{		 	Q_2	\Delta	\delta 	Q	 \}			+\\&
	+2L\xi	(\; 			Q_2	+	\frac{\Id}{2}	\;)		\trc \{	\delta 	Q	\Delta	\delta 	Q	 \}			
	+2L\xi			\delta 	Q								\trc \{		 	Q_2	\Delta			Q_2	 \}			
	+2L\xi	(\; 			Q_2	+	\frac{\Id}{2}	\;)		\trc \{	\delta 	Q	\Delta			Q_2	 \}			+\\&
	+2L\xi	(\; 			Q_2	+	\frac{\Id}{2}	\;)		\trc \{		 	Q_2	\Delta	\delta 	Q	 \}			
	-L		\nabla	\delta	Q	\odot \nabla 		Q_1 	  
	-L		\nabla			Q_2	\odot \nabla \delta Q 	 
	-L		\frac{\Id}{2} \trc\{\delta Q 			Q_1 \}	
	-L		\frac{\Id}{2} \trc\{	   Q_2 \delta	Q	\}
\end{align*}

\begin{align*}
	\delta \sigma =				\delta	Q	F(	Q_1	)	+	Q_2	\delta	F(	Q	)
						-		F(	Q_1	)	\delta	Q	-	\delta	F(	Q	)Q_2								  &			
						+L		\delta	Q	\Delta	\delta	Q
						+L				Q_2	\Delta	\delta	Q
						+L		\delta	Q	\Delta			Q_2													\\&
						-L		\Delta	\delta	Q			\delta	Q
						-L		\Delta			Q_2			\delta	Q
						-L		\Delta	\delta	Q					Q_2
\end{align*}

\noindent Taking the inner product in $\Hh^{-1/2}$ of the first equation with  $-L\lambda \Delta \delta Q$ and adding to it  the scalar 
product in $\Hh^{-1/2}$ of the second one by $\frac{1}{\lambda}\delta u$ we get:
\begin{align}
	&\frac{\dd}{\dd t}
	\Big[
		\frac{1}{2\lambda}	\|			\delta u	\|_{\dot{H}^{-\frac{1}{2}}}^2 + 
		L					\|	\nabla	\delta Q	\|_{\dot{H}^{-\frac{1}{2}}}^2
	\Big]+
	\frac{\nu}{\lambda}		\|	\nabla 	\delta u	\|_{\dot{H}^{-\frac{1}{2}}}^2 +
	\Gamma L^2				\|	\Delta	\delta Q	\|_{\dot{H}^{-\frac{1}{2}}}^2
	=																														\nonumber\\&
	-L		\langle	(	\xi	\delta	D 	+	\delta 	\Omega		)\delta Q					,	\Delta	\delta	Q	
			\rangle
\underbracket[0.5pt][1pt]{ 	
	-L\xi	\langle		\delta	D \,	Q_2													,	\Delta	\delta	Q	
			\rangle}_{ \Aa_1}
\underbracket[0.5pt][1pt]{ 	
	-L	\langle		\delta  \Omega\,	Q_2													,	\Delta	\delta	Q	
			\rangle}_{ \Bb_1}
\underbracket[0.5pt][1pt]{ 
	-L\xi	\langle		\frac{\delta	D}{2} 												,	\Delta	\delta	Q
			\rangle}_{ \CC_1}			
\underbracket[0.5pt][1pt]{ 
	-L	\langle		\frac{\delta	\Omega}{2}												,	\Delta	\delta	Q	
			\rangle}_{ \mathcal{D}_1}																						\nonumber\\&
	-L		\langle	(\xi			D_2 + 			\Omega_2 	)\delta Q					,	\Delta	\delta	Q	
			\rangle																	 								
	-L		\langle		\delta	Q 	(\xi	\delta D -	\delta	\Omega 	)					,	\Delta	\delta	Q
			\rangle																								
\underbracket[0.5pt][1pt]{ 	
	-L\xi	\langle		Q_2 \delta	D 														,	\Delta	\delta	Q
			\rangle}_{ \Aa_2}
\underbracket[0.5pt][1pt]{ 	
	+L		\langle		Q_2	\delta  \Omega													,	\Delta	\delta	Q
			\rangle}_{ \Bb_2}																								\nonumber\\&
\underbracket[0.5pt][1pt]{ 
	-L\xi	\langle		\frac{\delta	D}{2} 												,	\Delta	\delta	Q
			\rangle}_{ \CC_2}
\underbracket[0.5pt][1pt]{ 	
	+L		\langle		\frac{\delta  \Omega}{2}											,	\Delta	\delta	Q
			\rangle}_{ \mathcal{D}_2}			
	-L		\langle		\delta 	Q	(\xi			D_2 - 			\Omega_2 )				,	\Delta	\delta	Q
			\rangle																	 							
	+2L\xi	\langle			\delta 	Q	\,	\trc(	\delta	Q 	\nabla	\delta 	u 	)		,	\Delta	\delta	Q
			\rangle						 																					\nonumber\\&
	+2L\xi	\langle			\delta 	Q  	\, \trc(	\delta	Q	\nabla  		u_2	)		,	\Delta	\delta	Q
			\rangle
	+2L\xi	\langle			\delta 	Q  	\, \trc(	 		Q_2 \nabla	\delta	u 	)		,	\Delta	\delta	Q
			\rangle																								
	+2L\xi	\langle					Q_2	\, \trc(	\delta	Q	\nabla 	\delta 	u 	)		,	\Delta	\delta	Q
			\rangle																											\nonumber\\&
\underbracket[0.5pt][1pt]{ 	
	+2L\xi	\langle			\frac{\Id}{2}  \trc(	\delta	Q	\nabla 	\delta 	u 	)		,	\Delta	\delta	Q
			\rangle}_{ =0}		
	+2L\xi	\langle			\delta 	Q  	\, \trc(			Q_2	\nabla			u_2 )		,	\Delta	\delta	Q
			\rangle																								
	+2L\xi	\langle				 	Q_2	\, \trc(	\delta	Q	\nabla			u_2 )		,	\Delta	\delta	Q
			\rangle																											\nonumber\\&
\underbracket[0.5pt][1pt]{ 	
	+2L\xi	\langle			\frac{\Id}{2}  \trc(	\delta	Q	\nabla			u_2 )		,	\Delta	\delta	Q
			\rangle}_{ =0}		
\underbracket[0.5pt][1pt]{ 	
	+2L\xi	\langle				 	Q_2	\, \trc(			Q_2	\nabla	\delta	u	)		,	\Delta	\delta	Q
			\rangle}_{ \EE_1}
\underbracket[0.5pt][1pt]{ 
	+2L\xi	\langle			\frac{\Id}{2}  \trc(			Q_2	\nabla	\delta	u	)		,	\Delta	\delta	Q
			\rangle}_{ =0}																									\nonumber\\&
	+La  \Gamma	\langle	\delta Q															,	\Delta	\delta	Q
				\rangle
	-Lb  \Gamma	\langle	Q_1	\delta	Q	+	\delta	Q	Q_2									,	\Delta	\delta	Q
				\rangle																						
\underbracket[0.5pt][1pt]{ 		
	+Lb  \Gamma	\langle	\trc\{	\delta Q Q_1 + Q_2 \delta Q	\}\frac{\Id}{2}					,	\Delta	\delta	Q
				\rangle}_{=0}																								\nonumber\\&
	+Lc  \Gamma	\langle	\delta	Q	\trc\{			Q_1^2					\}				,	\Delta	\delta	Q
				\rangle
	+Lc  \Gamma	\langle	Q_2	\trc\{	\delta	Q	Q_1 + Q_2	\delta	Q 	    \}				,	\Delta	\delta	Q	
				\rangle																									
	+L	\langle	\delta	u	\cdot \nabla		Q_1											,	\Delta	\delta	Q
		\rangle																												\nonumber\\&
	+L	\langle			u_2	\cdot \nabla \delta Q											,	\Delta	\delta	Q
		\rangle																									
	-a\xi		\langle	\delta Q	Q_1														,	\nabla \delta u
				\rangle																							
	+b\xi		\langle	\delta Q	Q^2_1													,	\nabla \delta u
				\rangle
	-b\xi		\langle	\delta Q	\trc (Q^2_1)\frac{\Id}{2}								,	\nabla \delta u
				\rangle																										\label{energy_uniq} \\&
	-c\xi		\langle	\delta Q	\trc(Q^2_1)Q_1											,	\nabla \delta u
				\rangle																							
	-a\xi	\langle	(\,Q_2 + \frac{\Id}{2}\,)\delta	Q										,	\nabla \delta u
			\rangle																								
	+b\xi	\langle	(\,Q_2 + \frac{\Id}{2}\,)(Q_1	\delta Q + \delta Q	Q_2	)				,	\nabla \delta u
			\rangle																											\nonumber\\&
	-b\xi	\langle	\frac{Q_2}{2}\trc\{	\delta Q Q_1 + Q_2 \delta Q	\}						,	\nabla \delta u
			\rangle																								
\underbracket[0.5pt][1pt]{ 
	-b\xi	\langle	\trc\{	\delta Q Q_1 + Q_2 \delta Q	\}\frac{\Id}{9}						,	\nabla \delta u
			\rangle}_{ =0}
	-c\xi	\langle	(\,Q_2 + \frac{\Id}{2}\,)\delta	Q	\trc\{	Q_1^2	\}					,	\nabla \delta u
			\rangle																											\nonumber\\&			
	-c\xi	\langle	(\,Q_2 + \frac{\Id}{2}\,)Q_2\trc\{\delta Q  Q_1 + Q_2	\delta Q\}		,	\nabla \delta u
			\rangle																								
	+L\xi	\langle			\delta	Q								\Delta \delta	Q		,	\nabla \delta u
			\rangle																						
	+L\xi	\langle			\delta	Q								\Delta 			Q_2		,	\nabla \delta u
			\rangle																											\nonumber\\&
\underbracket[0.5pt][1pt]{ 	
	+L\xi	\langle					Q_2 							\Delta 	\delta	Q		,	\nabla \delta u
			\rangle}_{\Aa_3}
\underbracket[0.5pt][1pt]{ 
	+L\xi	\langle						 					  \frac{\Delta 	\delta	Q}{2}	,	\nabla \delta u
			\rangle}_{\CC_3}
	-a\xi	\langle	Q_1												\delta	Q				,	\nabla \delta u
			\rangle
	+b\xi	\langle	(\,Q_1^2 - \trc\{\,Q^2_1\,\}\frac{\Id}{2}\,)	\delta	Q				,	\nabla \delta u
			\rangle																											\nonumber\\&
	-c\xi	\langle	Q_1^2\trc\{\,Q_1^2\,\}							\delta	Q				,	\nabla \delta u
			\rangle																								
	-a\xi	\langle	\delta Q									  (\,Q_2 + \frac{\Id}{2}\,)	,	\nabla \delta u
			\rangle
	+b\xi	\langle	(\,Q_1	\delta	Q	+	\delta	Q	Q_2	\,)	  (\,Q_2 + \frac{\Id}{2}\,)	,	\nabla \delta u
			\rangle																											\nonumber\\&
	-b\xi	\langle	\trc\{\delta Q Q_1 + Q_2 \delta Q\}\frac{Q_2}{2}						,	\nabla \delta u
			\rangle
\underbracket[0.5pt][1pt]{ 
	-b\xi	\langle	\trc\{\delta Q Q_1 + Q_2 \delta Q\} \frac{\Id}{9}						,	\nabla \delta u
			\rangle}_{ =0}																				
	-c\xi	\langle	\delta	Q	\trc\{Q_1^2\}					  (\,Q_2 + \frac{\Id}{2}\,)	,	\nabla \delta u
			\rangle																											\nonumber\\&
	-c\xi	\langle	Q_2	\trc\{\delta Q\delta Q_1	+Q_2\delta Q\}(\,Q_2 + \frac{\Id}{2}\,)	,	\nabla \delta u
			\rangle
	+L\xi	\langle			\Delta	\delta	Q	\,	\delta	Q								,	\nabla \delta u
			\rangle
	+L\xi	\langle			\Delta			Q_2		\delta	Q								,	\nabla \delta u
			\rangle																											\nonumber\\&
\underbracket[0.5pt][1pt]{ 			
	+L\xi	\langle			\Delta	\delta	Q	Q_2											,	\nabla \delta u
			\rangle}_{ \Aa_4}
\underbracket[0.5pt][1pt]{ 	
	+L\xi	\langle			\frac{\Delta	\delta	Q}{2}							,	\nabla \delta u
			\rangle}_{ \CC_4}																					
	+2a\xi		\langle	\delta Q			\trc\{\,	Q_1^2	\}							,	\nabla \delta u
				\rangle	
	-2b\xi		\langle	\delta Q			\trc\{\,	Q_1^3	\}							,	\nabla \delta u
				\rangle																										\nonumber\\&
\underbracket[0.5pt][1pt]{ 					
	+2b\xi		\langle	\frac{\delta Q}{2}	\trc\{\,	Q_1		\}	\trc\{	Q_1^2	\}		,	\nabla \delta u
				\rangle}_{=0}																							
	+2c\xi		\langle	\delta Q			\trc\{\,	Q_1^2\}^2							,	\nabla \delta u
				\rangle		
	+2a\xi		\langle			  Q_2		\trc\{\delta	Q Q_1	\}						,	\nabla \delta u
				\rangle																										\nonumber\\&
	-2b\xi		\langle			  Q_2		\trc\{\delta	Q Q_1^2	\}						,	\nabla \delta u
				\rangle																							
\underbracket[0.5pt][1pt]{ 	
	+2b\xi		\langle		\frac{Q_2}{2}	\trc\{\delta	Q	 \}\trc\{	Q_1^2	\}		,	\nabla \delta u
				\rangle}_{=0}																					
	+2c\xi		\langle			  Q_2		\trc\{\delta	Q Q_1\}\trc\{	Q_1^2	\}		,	\nabla \delta u
				\rangle																										\nonumber\\&
	+2a\xi		\langle	Q_2\trc\{Q_2	\delta	Q	\}										,	\nabla \delta u
				\rangle																					
	-2b\xi		\langle	Q_2\trc\{Q_2	(\,	Q_1	\delta	Q	+	\delta	Q	Q_2	\,)\}		,	\nabla \delta u
				\rangle																										\nonumber\\&
\underbracket[0.5pt][1pt]{ 
	+2b\xi		\langle	Q_2\trc\{\frac{Q_2}{2}\}\trc\{\delta Q Q_1 + Q_2 \delta Q\}			,	\nabla \delta u
				\rangle}_{=0}																					
	+2c\xi		\langle	Q_2\trc\{Q_2\delta	Q\}\trc\{Q_1^2\}								,	\nabla \delta u
				\rangle																										\nonumber\\&
	+2c\xi		\langle	Q_2\trc\{Q_2^2\}\trc\{	\delta	Q		Q_1	+Q_2\delta	Q\}			,	\nabla \delta u
				\rangle																							
	-2L\xi		\langle	\delta Q				   \trc\{\delta Q	\Delta	\delta Q	\}	,	\nabla \delta u
				\rangle																										\nonumber\\&	
	-2L\xi		\langle	\delta Q				   \trc\{\delta Q	\Delta		   Q_2	\}	,	\nabla \delta u
				\rangle
	-2L\xi		\langle	\delta Q				   \trc\{	    Q_2	\Delta	\delta Q	\}	,	\nabla \delta u
				\rangle																										\nonumber\\&
	-2L\xi		\langle		   Q_2				   \trc\{\delta Q	\Delta	\delta Q	\}	,	\nabla \delta u
				\rangle
\underbracket[0.5pt][1pt]{ 				
	-2L\xi		\langle	\frac{\Id}{2}			   \trc\{\delta Q	\Delta	\delta Q	\}	,	\nabla \delta u
				\rangle}_{ =0}
	-2L\xi		\langle	\delta Q				   \trc\{		Q_2	\Delta		   Q_2	\}	,	\nabla \delta u
				\rangle																										\nonumber\\&
	-2L\xi		\langle		   Q_2				   \trc\{\delta Q	\Delta		   Q_2	\}	,	\nabla \delta u
				\rangle
\underbracket[0.5pt][1pt]{ 
	-2L\xi		\langle	\frac{\Id}{2}			   \trc\{\delta Q	\Delta		   Q_2	\}	,	\nabla \delta u
				\rangle}_{ =0}
\underbracket[0.5pt][1pt]{ 
	-2L\xi		\langle		   Q_2				   \trc\{		Q_2	\Delta	\delta Q	\}	,	\nabla \delta u
				\rangle}_{ \EE_2}																							\nonumber\\&
\underbracket[0.5pt][1pt]{ 
	-2L\xi		\langle	\frac{\Id}{2}			   \trc\{		Q_2	\Delta	\delta Q	\}	,	\nabla \delta u
				\rangle}_{ =0}
	+L			\langle	\nabla	\delta	Q	\odot \nabla 		Q_1 						,	\nabla \delta u
				\rangle		
	+L			\langle	\nabla			Q_2	\odot \nabla \delta Q_1							,	\nabla \delta u
				\rangle																							
\underbracket[0.5pt][1pt]{ 	
	+L			\langle	\frac{\Id}{2} \trc\{\delta Q 			Q_1 \}						,	\nabla \delta u
				\rangle}_{ =0}																								\nonumber\\&
\underbracket[0.5pt][1pt]{ 
	+L			\langle	\frac{\Id}{2} \trc\{	   Q_2 \delta	Q	\}						,	\nabla \delta u
				\rangle}_{ =0}
	+La		\langle	\delta	Q	Q_1															,	\nabla \delta u
			\rangle
	-Lb		\langle	\delta	Q	(\,	Q_1^2	-	\trc\{Q_1^2\}\frac{\Id}{2}	\,)				,	\nabla \delta u
			\rangle
	+Lc		\langle	\delta	Q	Q_1\trc\{Q_1^2\}											,	\nabla \delta u
			\rangle																											\nonumber\\&
	+a		\langle	Q_2	\delta Q															,	\nabla \delta u
			\rangle
	-b		\langle	Q_2	(\,	Q_1	\delta	Q	+	\delta	Q	Q_2	\,)							,	\nabla \delta u
			\rangle																						
	+b		\langle	Q_2 \trc\{	\delta Q Q_1 + Q_2 \delta Q	\}\frac{\Id}{2}					,	\nabla \delta u
			\rangle																											\nonumber\\&
	+c		\langle	Q_2 \delta	Q	\trc\{			Q_1^2						\}			,	\nabla \delta u
			\rangle																								
\underbracket[0.5pt][1pt]{	
	+c		\langle	Q_2^2 			\trc\{	\delta	Q	Q_1 +Q_2	\delta	Q	\}			,	\nabla \delta u
			\rangle}_{\FF_1}																								\nonumber\\&
	-a		\langle		Q_1														\delta	Q	,	\nabla \delta u
			\rangle
	+b		\langle		(\,	Q_1^2	-	\trc\{Q_1^2\}\frac{\Id}{2}	\,)			\delta	Q	,	\nabla \delta u
			\rangle
	-c		\langle		Q_1\trc\{Q_1^2\}										\delta	Q	,	\nabla \delta u
			\rangle																								
	-a		\langle		\delta Q													Q_2		,	\nabla \delta u
			\rangle																											\nonumber\\&
	+b		\langle		(\,	Q_1	\delta	Q	+	\delta	Q	Q_2	\,)					Q_2		,	\nabla \delta u
			\rangle																						
	-b		\langle	 	\trc\{	\delta Q Q_1 + Q_2 \delta Q	\}\frac{\Id}{2}			Q_2		,	\nabla \delta u
			\rangle																								
	-c		\langle		 \delta	Q	\trc\{			Q_1^2						\}	Q_2		,	\nabla \delta u
			\rangle																											\nonumber\\&
\underbracket[0.5pt][1pt]{	
	-c		\langle		 		Q_2	\trc\{	\delta	Q	Q_1 +Q_2	\delta	Q	\}	Q_2		,	\nabla \delta u
			\rangle}_{\FF_2}																								
	-L		\langle	\delta	Q	\Delta	\delta	Q											,	\nabla \delta u
			\rangle																								
\underbracket[0.5pt][1pt]{ 	
	-L		\langle			Q_2	\Delta	\delta	Q											,	\nabla \delta u
			\rangle}_{ \Bb_3}
	-L		\langle	\delta	Q	\Delta			Q_2											,	\nabla \delta u
			\rangle																											\nonumber\\&
	+L		\langle	\Delta	\delta	Q	\delta	Q											,	\nabla \delta u
			\rangle																								
\underbracket[0.5pt][1pt]{ 	
	+L		\langle	\Delta	\delta	Q			Q_2											,	\nabla \delta u
			\rangle}_{ \Bb_4}
	+L		\langle	\Delta			Q_2	\delta	Q											,	\nabla \delta u
			\rangle																								
	-		\langle			u_2	\cdot	\nabla	\delta	u									,			\delta u
			\rangle
	-		\langle	\delta	u	\cdot	\nabla			u_1									,			\delta u			\nonumber
			\rangle.
\end{align}

\noindent
Denoting  $$\Phi(t):= 1/(2\lambda) \| \delta u(t) \|_{ \Hh^{-1/2}}^2 + L  \|\nabla \delta Q(t) \|_{\Hh^{-1/2}}^2$$ we  will aim to show that $\Phi$ satisfies the inequality
\begin{equation}\label{uniqueness_inequality}
	\Phi'(t) \leq \chi(t)\mu( \Phi(t) ), 
\end{equation}
where $\mu$ is an Osgood modulus of continuity (see \cite{MR2768550}, Definition $3.1$), given by

\begin{equation}\label{Osgood_mc}
	\mu(r) := r + r \ln \Big( 1+e+ \frac{1}{r}\Big) + r \ln\Big( 1+e+  \frac{1}{r}\Big) \ln  \ln \Big(1+e + \frac{1}{r}\Big).
\end{equation} with $\chi\in L^1_{loc}$ apriori.
We are going to find a double-logarithmic estimate, hence thanks to the Osgood Lemma (see \cite{MR2768550}, Lemma 3.4) and since $\Phi(0)$ is null, we get that $\Phi\equiv 0$, which yields the uniqueness of the solution for system \eqref{system}. 

\vspace{0.2cm}
\noindent
First, let us observe following simplifications of \eqref{energy_uniq}:
\begin{equation*}
	0=\CC_1 + \CC_2 + \CC_3 + \CC_4 = \mathcal{D}_1 + \mathcal{D}_2 = \FF_1 +\FF_2.
\end{equation*}
The key method we use to obtain the desired estimates  is the para-differential calculus decomposition summarized in the following:
\begin{remark}
	Let $q$ be an integer, and $A$, $B$ be $d\times d$ matrices whose components are homogeneous temperate distributions. We are going to use the following 
	notation:
	\begin{equation*}
	\begin{array}{ll}
		 \J_q^1(A,B)	:=\sum_{|q-q'|\leq 5	}	[\Dd_q, \,		\Sd_{q'-1}A]			\Dd_{q'}	B,	
		&\J_q^3(A,B)	:=							\Sd_{q -1}	A							\Dd_{q}		B,\\
		\\
		 \J_q^2(A,B)	:=\sum_{|q-q'|\leq 5	}	(	\Sd_{q'-1}A	-\Sd_{q-1}A)	\Dd_{q}	\Dd_{q'}	B,		
		&\J_q^4(A,B)	:=\sum_{ q'   \geq q- 5 }	\Dd_q(\Dd_{q'}A\,						\Sd_{q'+2}	B).
	\end{array}
	\end{equation*}
	Than we can decompose the product $AB$ as follows
	\begin{equation}\label{product_decomposition}
	\Dd_q(AB)	=		\J_q^1(A,B)
					+	\J_q^2(A,B)
					+	\J_q^3(A,B)
					+	\J_q^4(A,B)
	\end{equation}
	for any integer $q$, thanks to \eqref{bonydecomp}.
\end{remark}
\noindent Moreover  from now on we will use the notation $\lesssim$ as follows: for any non-negative real numbers $a$ and $b$, we denote $a\lesssim b$ if and only if there exists a positive constant $C$ (independent of $a$ and $b$) such that $a\leq C b $. 
\subsubsection{Estimate of \texorpdfstring{$\Aa_1+\Aa_2+\Aa_3+\Aa_4$}{A1+A2+A3+A4}} Let us begin analyzing the terms $\Aa_1,$ $\Aa_2$, $\Aa_3$ and $\Aa_4$ of \eqref{energy_uniq}. 
First, we observe that
\begin{equation*}
	\Aa_2 	= 	-L\xi	\sum_{q\in\ZZ}	2^{-q}	\langle 
													\Dd_q(Q_2 \delta	D),\Dd_q \Delta \delta Q 
												\rangle_{	L^2_x	}
			=	-L\xi	\sum_{q\in\ZZ}2^{-q}\sum_{i=1}^4		
									\langle	\J_{q}^i(Q_2,\,\delta D),	\Dd_q \Delta \delta Q\rangle_{L^2_x}
\end{equation*}
Now, when $i=1$, we have
\begin{equation}\label{ev1}
\begin{aligned}
	2^{-q}\langle	\J_{q}^1(Q_2,\,\delta D),	\Dd_q \Delta \delta Q\rangle_{L^2_x}& = \sum_{|q-q'|\leq 5}
	2^{-q}
	\langle
		[\Dd_q,\, \Sd_{q'-1}Q_2 ]\Dd_{q'}\delta D, \Dd_q \Delta \delta Q
	\rangle_{L^2_x}\\
	&\lesssim	\sum_{|q-q'|\leq 5}
	2^{-q}
	\|	[\Dd_q,\, \Sd_{q'-1}Q_2 ]\Dd_{q'}\delta D		\|_{L^2_x		}
	\|	\Dd_q \Delta \delta Q							\|_{L^2_x		}\\
	&\lesssim	\sum_{|q-q'|\leq 5}
	2^{-2q}
	\|	\Sd_{q'-1}\nabla Q_2							\|_{L^4_x		}
	\|	\Dd_{q'}\delta D								\|_{L^4_x		}
	\|	\Dd_q \Delta \delta Q							\|_{L^2_x		}\\
	&\lesssim	\sum_{|q-q'|\leq 5}
	2^{-q}
	\|	\Sd_{q'-1}\nabla 	Q_2							\|_{L^2_x		}^{\frac{1}{2}}
	\|	\Sd_{q'-1}\Delta 	Q_2							\|_{L^2_x		}^{\frac{1}{2}}
	\|	\Dd_{q'}\delta 	  	u							\|_{L^4_x		}
	\|	\Dd_q \Delta \delta	Q							\|_{L^2_x		}\\
	&\lesssim	\sum_{|q-q'|\leq 5}
	\|	\nabla 				Q_2							\|_{L^2_x		}^{\frac{1}{2}}
	\|	\Delta 				Q_2							\|_{L^2_x		}^{\frac{1}{2}}
	\|	\Dd_{q'}\delta 	  	u							\|_{L^2_x		}
	2^{-\frac{q}{2}}
	\|	\Dd_q \Delta \delta	Q							\|_{L^2_x		},
\end{aligned}
\end{equation}
for every $q\in \ZZ$. Hence,  we get
\begin{equation}\label{ev1b}
\begin{aligned}
	-L\xi
	\sum_{q\in \ZZ}2^{-q}	\langle	\J_{q}^1(Q_2,\,\delta D),	&\Dd_q \Delta \delta Q\rangle_{L^2_x}	
	\lesssim
	\|	\nabla		Q_2			\|_{			L^2_x			  }^\frac{1}{2}
	\|	\Delta		Q_2			\|_{			L^2_x			  }^\frac{1}{2}
	\|			\delta	u		\|_{			L^2_x			  }
	\|	\Delta	\delta	Q		\|_{	\Hh^{	-\frac{1}{2}	} }\\
	&\lesssim
	\|	\nabla		Q_2			\|_{			L^2_x			  }^\frac{1}{2}
	\|	\Delta		Q_2			\|_{			L^2_x			  }^\frac{1}{2}
	\|			\delta	u		\|_{	\Hh^{	-\frac{1}{2}	}  }^\frac{1}{2}
	\|	\nabla	\delta	u		\|_{	\Hh^{	-\frac{1}{2}	}  }^\frac{1}{2}
	\|	\Delta	\delta	Q		\|_{	\Hh^{	-\frac{1}{2}	} }\\
	&\lesssim
	\|	\nabla			Q_2		\|_{	L^2_x					  }^2
	\|	\Delta			Q_2		\|_{	L^2_x					  }^2		
	\|			\delta	u		\|_{	\Hh^{	-\frac{1}{2}	} }^2+
	C_\nu 
	\|	\nabla	\delta	u		\|_{	\Hh^{	-\frac{1}{2}	} }^2+
	C_{\Gamma, L}
	\|	\Delta	\delta	Q		\|_{	\Hh^{	-\frac{1}{2} 	} }^2,
\end{aligned}
\end{equation}
where we have used the following interpolation inequality:
\begin{equation*}
	\| \delta u \|_{L^2_x} 	\leq  \| \delta u \|_{\Hh^{-\frac{1}{2}}}^\frac{1}{2}\| 			\delta u \|_{\Hh^{ \frac{1}{2}}}^\frac{1}{2}
							=	  \| \delta u \|_{\Hh^{-\frac{1}{2}}}^\frac{1}{2}\| \nabla 	\delta u \|_{\Hh^{-\frac{1}{2}}}^\frac{1}{2}.
\end{equation*}
When $i=2$, the following inequalities are fulfilled:
\begin{equation}\label{ev2}
\begin{aligned}
	2^{-q}\langle	\J_{q}^2(Q_2,\,\delta D),	\Dd_q \Delta \delta Q\rangle_{L^2_x} 
	&= \sum_{|q-q'|\leq 5}
	2^{-q}
	\langle
		(\Sd_{q'-1}Q_2 - \Sd_{q-1}Q_2)\Dd_q \Dd_{q'}\delta D, \Dd_q \Delta \delta Q
	\rangle_{L^2_x}\\
	&\lesssim	\sum_{|q-q'|\leq 5}
	2^{-q}
	\|	(\Sd_{q'-1}Q_2 - \Sd_{q-1}Q_2)					\|_{L^\infty_x}
	\|	\Dd_q \Dd_{q'}\delta D							\|_{L^2_x}
	\|	\Dd_q \Delta \delta Q							\|_{L^2_x}\\
	&\lesssim	\sum_{|q-q'|\leq 5}
	2^{-2q}
	\|	(\Sd_{q'-1}\Delta Q_2 - \Sd_{q-1}\Delta Q_2)	\|_{L^2_x}
	\|	\Dd_q \Dd_{q'}\delta D							\|_{L^2_x}
	\|	\Dd_q \Delta \delta Q							\|_{L^2_x}\\
	&\lesssim	\sum_{|q-q'|\leq 5}
	2^{-2q}
	\|			\Delta Q_2							\|_{L^2_x}
	\|	\Dd_{q'}\delta D								\|_{L^2_x}
	\|	\Dd_q \Delta \delta Q							\|_{L^2_x}\\
	&\lesssim	\sum_{|q-q'|\leq 5}
	2^{-\frac{q'}{2}}
	\|	\Dd_{q'}		\delta u						\|_{L^2_x}
	2^{-\frac{q}{2}}
	\|	\Dd_q \Delta 	\delta Q						\|_{L^2_x}
	\|	\Delta Q_2										\|_{L^2_x},
\end{aligned}
\end{equation}
for any $q\in \ZZ$. Thus, it turns out that
\begin{equation}\label{ev2b}
	-L\xi
	\sum_{q\in \ZZ}2^{-q}\langle	\J_{q}^2(Q_2,\,\delta D),	\Dd_q \Delta \delta Q\rangle_{L^2_x} 
	\lesssim
	\|		\Delta			Q_2			\|_{	L^2_x				}^2
	\|				\delta	u			\|_{	\Hh^{-\frac{1}{2} } }^2+
	C_{\Gamma, L}
	\|		\Delta	\delta	Q			\|_{	\Hh^{-\frac{1}{2} } }^2
\end{equation}
The term corresponding to $i=3$ cannot be estimated as before. We will see that this challenging term will be simplified. Finally,  
when $i=4$, we have
\begin{equation}\label{ev3}
\begin{aligned}
	2^{-q}\langle	\J_{q}^4(Q_2,\,\delta D),	\Dd_q \Delta \delta Q\rangle_{L^2_x} 
	&= L
		2^{-q}
		\sum_{q-q'\leq 5}
		\langle
			\Dd_q\big[\Dd_{q'} Q_2 \Sd_{q'+2}\delta D\big],\, \Dd_q \Delta \delta Q
		\rangle_{L^2_x}\\
	&\lesssim
		2^{-q}
		\sum_{q-q'\leq 5}
		\|	\Dd_{q'} Q_2			\|_{L^\infty_x	}
		\|	\Sd_{q'+2}\delta D		\|_{L^2_x		}
		\|	\Dd_q \Delta \delta Q	\|_{L^2_x		}\\
	&\lesssim
		2^{-q}
		\sum_{q-q'\leq 5}
		2^{-q'}
		\|	\Dd_{q'}\Delta Q_2			\|_{L^2_x	}
		\|	\Sd_{q'+2}\delta D			\|_{L^2_x		}
		2^{q}
		\|	\Dd_q   \nabla \delta Q		\|_{L^2_x		}\\
		&\lesssim
		\sum_{q-q'\leq 5}
		2^{\frac{q-q'}{2}}
		\|	\Dd_{q'}\Delta Q_2			\|_{	L^2_x				}
		2^{-\frac{q'+2}{2}}
		\|	\Sd_{q'+2}\delta D			\|_{	L^2_x				}
		2^{-\frac{q}{2}}
		\|	\Dd_q   \nabla \delta Q		\|_{	L^2_x				}\\
		&\lesssim
		\|	\Delta Q_2					\|_{	L^2_x				}
		2^{-\frac{q}{2}}
		\|	\Dd_q\nabla \delta Q		\|_{	L^2_x }
		\sum_{q-q'\leq 5}
		2^{\frac{q-q'}{2}}
		2^{-\frac{q'+2}{2}}
		\|	\Sd_{q'+2}\delta D			\|_{	L^2_x				},
\end{aligned}
\end{equation}
for any $q\in \ZZ$. Hence, 
\begin{align*}
	-L\xi&\sum_{q\in \ZZ}2^{-q}\langle	\J_{q}^4(Q_2,\,\delta D),	\Dd_q \Delta \delta Q\rangle_{L^2_x}\\
	&\lesssim
	\|	\Delta Q_2									\|_{	L^2_x				}
	\sum_{q\in \ZZ}
	2^{-\frac{q}{2}}\|	\Dd_q\nabla \delta Q				\|_{	L^2_x				}
	\sum_{q'\in\ZZ}
		2^{\frac{q-q'}{2}}1_{(-\infty,5]}(q-q')
		2^{-\frac{q'+2}{2}}
		\|	\Sd_{q'+2}\delta D						\|_{	L^2_x				}\\
	&\lesssim
	\|	\Delta Q_2									\|_{	L^2_x				}
	\| 	\nabla \delta Q								\|_{	\Hh^{-\frac{1}{2} } }
	\Big(
		\sum_{q\in\ZZ}\big|\sum_{q'\in\ZZ}
		2^{q-q'}1_{(-\infty,5]}(q-q')
		2^{-\frac{q'+2}{2}}
		\|	\Sd_{q'+2}\delta D						\|_{	L^2_x				}
	\big|^2
	\Big)^{\frac{1}{2}},
\end{align*}  and by convolution
\begin{align*}
	\Big(
		\sum_{q\in\ZZ}\big|\sum_{q'\in\ZZ}
		&2^{q-q'}1_{(-\infty,5]}(q-q')
		2^{-\frac{q'+2}{2}}
		\|	\Sd_{q'+2}\delta D						\|_{	L^2_x				}
	\big|^2
	\Big)^{\frac{1}{2}}\\
	&\lesssim
	(\sum_{q\leq 5}2^q)\big(
	\sum_{q\in\ZZ}
		2^{-q}
		\|	\Sd_{q}\delta D						\|_{	L^2_x				}^2
	\Big)^{\frac{1}{2}}\big)
	\lesssim \| \nabla \delta u \|_{\Hh^{-\frac{1}{2}}},
\end{align*}
so that
\begin{equation}\label{ev3b}
\begin{aligned}
	-L\xi\sum_{q\in \ZZ}2^{-q}\langle	\J_{q}^4(Q_2,\,\delta D),	\Dd_q \Delta \delta Q\rangle_{L^2_x}
	&\lesssim
	\|	\Delta Q_2					\|_{	L^2_x				}
	\|	\nabla \delta Q				\|_{	\Hh^{-\frac{1}{2} } }
	\|	\nabla \delta u				\|_{	\Hh^{-\frac{1}{2} } }\\
	&\lesssim
	\|	\Delta Q_2					\|_{	L^2_x				}^2
	\|	\nabla \delta Q				\|_{	\Hh^{-\frac{1}{2} } }^2+
	C_\nu 
	\|	\nabla \delta u				\|_{	\Hh^{-\frac{1}{2} } }^2.
\end{aligned}
\end{equation}
Summarizing, it remains to control
\begin{equation*}
	\Aa_1 +\Aa_3 + \Aa_4-L\xi \sum_{q\in\ZZ}2^{-q}\langle	\J_{q}^3(Q_2,\,\delta D),	\Dd_q \Delta \delta Q\rangle_{L^2_x} .
\end{equation*}
Now, observing that 
\begin{align*}
	\Aa_1 	= 	-L\xi	\sum_{q\in\ZZ}	2^{-q}	\langle 
													\Dd_q(\delta	D Q_2 ),\Dd_q \Delta \delta Q 
												\rangle_{	L^2_x	}
			&=	-L\xi	\sum_{q\in\ZZ}	2^{-q}	\langle 
													\Dd_q\tr(\delta	D Q_2 ),\Dd_q \tr\Delta \delta Q 
												\rangle_{	L^2_x	}\\
			&=	-L\xi	\sum_{q\in\ZZ}	2^{-q}	\langle 
													\Dd_q	( Q_2 \delta D ),\Dd_q \Delta \delta Q 
												\rangle_{	L^2_x	}
			= \Aa_2,
\end{align*}
 we estimate $\Aa_1$ with the previous inequalities, so that it remains to control
\begin{align*}
	\Aa_3 + \Aa_4-2L\xi \sum_{q\in\ZZ}2^{-q}\J_{q}^3(Q_2,\,\delta D)
	=
	\Aa_3 + \Aa_4-2L\xi \sum_{q\in\ZZ}2^{-q}
			\int_{\RR^2} 
			\trc
			\big\{
				\Sd_{q-1}Q_2\,\Dd_q \delta D 
			\Dd_q \Delta \delta Q	
			\big\}.	 
\end{align*}

\vspace{0.2cm}
\noindent Now, let us consider 
$\Aa_3=L\xi\langle Q_2 \Delta \delta Q,\nabla\delta u\rangle$. We proceed along the lines used before, namely we use the decomposition given by 
\eqref{product_decomposition}:
\begin{equation*}
	\Aa_3 	= 	L\xi	\sum_{q\in\ZZ}	2^{-q}	\langle 
													\Dd_q( Q_2 \Delta \delta Q),\Dd_q \nabla\delta u
												\rangle_{	L^2_x	}
			=	L\xi	\sum_{q\in\ZZ}	2^{-q}	\sum_{i=1}^4 
												\langle 
													\J_q^i( Q_2,\, \Delta \delta Q),\Dd_q \nabla\delta u
												\rangle_{	L^2_x	}.
\end{equation*}
When $i=1$, proceeding as for \eqref{ev1}, we have
\begin{equation*}
	2^{-q}	\langle 
				\J_q^1( Q_2,\, \Delta \delta Q),\Dd_q \nabla\delta u
			\rangle_{	L^2_x	} 
	\lesssim
	\|				\nabla 	Q_2								\|_{L^2_x	}^\frac{1}{2}
	\|				\Delta 	Q_2								\|_{L^2_x	}^\frac{1}{2}
	\sum_{|q-q'|\leq 5}
	\|	\Dd_{q'}\nabla	\delta Q							\|_{L^2_x	}
	2^{-\frac{q}{2}}
	\|	\Dd_q \nabla \delta u								\|_{L^2_x	},
\end{equation*}
thus, considering the sum over $q\in\ZZ$ as in \eqref{ev1b}, we deduce that
\begin{equation}\label{ev4}
	L\xi
	\sum_{q\in\ZZ}2^{-q}
	\langle 
				\J_q^1( Q_2,\, \Delta \delta Q),\Dd_q \nabla\delta u
	\rangle_{	L^2_x	}
	\lesssim
	\|	\nabla		Q_2				\|_{	L^2_x					  }^2	
	\|	\Delta		Q_2				\|_{	L^2_x					  }^2		
	\|	\nabla	\delta	Q			\|_{	\Hh^{-\frac{1}{2}		} }^2+
	C_\nu 	
	\|	\nabla	\delta	u			\|_{	\Hh^{	-\frac{1}{2}	} }^2+
	C_{\Gamma, L}
	\|	\Delta	\delta	Q			\|_{	\Hh^{	-\frac{1}{2}	} }^2.
\end{equation}
Proceeding as for proving \eqref{ev2}, when $i=2$, we get
\begin{equation*}
	2^{-q} \langle 
				\J_q^2( Q_2,\, \Delta \delta Q),\Dd_q \nabla\delta u
			\rangle_{	L^2_x	}
	\lesssim	\sum_{|q-q'|\leq 5}	
	2^{-\frac{q}{2}}
	\|	\Dd_{q'}		\delta u						\|_{L^2_x}
	2^{-\frac{q}{2}}
	\|	\Dd_q \Delta 	\delta Q						\|_{L^2_x}
	\|	\Delta Q_2										\|_{L^2_x}
\end{equation*}	
for every $q\in \ZZ$. Thus, as in \eqref{ev2b}, it turns out that
\begin{equation}\label{ev5}
	L\xi
	\sum_{q\in \ZZ}2^{-q}
	\langle 
			\J_q^2( Q_2,\, \Delta \delta Q),\Dd_q \nabla\delta u
	\rangle_{	L^2_x	}
	\lesssim
	\|		\Delta			Q_2			\|_{	L^2_x				}^2
	\|				\delta	u			\|_{	\Hh^{-\frac{1}{2} } }^2+
	C_{\Gamma, L}
	\|		\Delta	\delta	Q			\|_{	\Hh^{-\frac{1}{2} } }^2
\end{equation}
Finally, with the same strategy as for  \eqref{ev3},  we observe that
\begin{equation*}
	2^{-q}
	\langle 
		\J_q^4( Q_2,\, \Delta \delta Q),\Dd_q \nabla\delta u
	\rangle_{	L^2_x	}
	\lesssim
		\|	\Delta Q_2					\|_{	L^2_x				}
		2^{-q}
		\|		\Dd_q		\delta	u		\|_{	L^2_x 				}^2
		\sum_{q-q'\leq 5}
		2^{\frac{q-q'}{2}}
		2^{-\frac{q'+2}{2}}
		\|	\Sd_{q'+2}\Delta \delta Q	\|_{	L^2_x				},	
\end{equation*}
hence, as for \eqref{ev3b}, we obtain
\begin{equation}\label{ev6}
\begin{aligned}
	L\xi
	\sum_{q\in \ZZ}2^{-q}
	\langle 
		\J_q^4( Q_2,\, \Delta \delta Q),\Dd_q \nabla\delta u
	\rangle_{	L^2_x	}
	&\lesssim
	\|	\Delta Q_2					\|_{	L^2_x				}
	\|	 \delta u					\|_{	\Hh^{-\frac{1}{2} } }
	\|	\Delta \delta Q				\|_{	\Hh^{-\frac{1}{2} } }\\
	&\lesssim
	\|	\Delta Q_2					\|_{	L^2_x				}^2
	\|	 \delta u					\|_{	\Hh^{-\frac{1}{2} } }^2+
	C_{\Gamma, L}
	\|	\Delta \delta Q				\|_{	\Hh^{-\frac{1}{2} } }^2.
\end{aligned}
\end{equation}
Summarizing all the previous considerations, we note that it remains to control
\begin{align*}
	& \Aa_4+L\xi \sum_{q\in\ZZ}2^{-q}
	\Big[
			\langle 
				\J_q^3( Q_2,\, \Delta \delta Q),\Dd_q \nabla\delta u
			\rangle_{L^2_x}
			-2\int_{\RR^2} 
			\trc
			\big\{
				\Sd_{q-1}Q_2\,\Dd_q \delta D 
			\Dd_q \Delta \delta Q	
			\big\}
	\Big] =\\&	
	=
	\Aa_4 +			
	L\xi \sum_{q\in\ZZ}2^{-q}
			\int_{\RR^2} 
			\Big[
			\trc\{	\Sd_{q-1}Q_2\,\Dd_q \Delta \delta Q \Dd_q\nabla \delta u	\}-
			2\trc
			\big\{
				\Sd_{q-1}Q_2\,\Dd_q \delta D 
			\Dd_q \Delta \delta Q	
			\big\}
			\Big].								 
\end{align*}
We handle the last term $\Aa_4$ arguing as for $\Aa_3$, since $\Aa_4$ is given by
\begin{align*}
	L\xi\langle \Dd_q(\Delta \delta Q Q_2), 		\Dd_q\nabla \delta u\rangle_{\Hh^{-\frac{1}{2}}}
	=L\xi\langle \Dd_q(Q_2\Delta \delta Q) , \tr	\Dd_q\nabla \delta u\rangle_{\Hh^{-\frac{1}{2}}}
	=L\xi\sum_{q\in\ZZ}2^{-q}\sum_{i=1}^4 \langle \J_q^i(Q_2,\,\Delta \delta Q),\, \Dd_q\tr\nabla \delta u\rangle_{L^2_x}.
\end{align*}
The terms related to $i=1,2,4$ are estimated similarily as $\Aa_3$. Hence it remains to evaluate
\begin{align*}
	L\xi \sum_{q\in\ZZ}2^{-q}
			\Big\{
			\langle \J_q^3(Q_2,\,\Delta \delta Q),\, \Dd_q\tr\nabla \delta u\rangle_{L^2_x} +
			\int_{\RR^2} 
			\big[
			\trc\{	\Sd_{q-1}Q_2\,\Dd_q \Delta \delta Q \Dd_q\nabla \delta u	\}-
			2\trc
			\big\{
				\Sd_{q-1}Q_2\,\Dd_q \delta D 
			\Dd_q \Delta \delta Q	
			\big\}
			\big]
			\Big\} = \\=
	2L\xi \sum_{q\in\ZZ}2^{-q}			
			\int_{\RR^2} 
			\Big[
			\trc\{	\Sd_{q-1}Q_2\,\Dd_q \Delta \delta Q \Dd_q \delta D	\}-
			\trc
			\big\{
				\Sd_{q-1}Q_2\,\Dd_q \delta D 
			\Dd_q \Delta \delta Q	
			\big\}
			\Big]=0,
\end{align*}
which is a null series since the trace acts on symmetric matrices, so that we can permute their order.

\vspace{0.2cm}
\subsubsection{Estimate of \texorpdfstring{$\Bb_1+\Bb_2+\Bb_3+\Bb_4$}{TEXT}}

\vspace{0.1cm}	
\noindent Now we want to estimate $\Bb_1+\Bb_2+\Bb_3+\Bb_4$, namely
\begin{equation*}
-L\langle		\delta \Omega Q_2 - Q_2 \delta \Omega      ,\Delta\delta Q	\rangle_{\dot{H}^{-\frac{1}{2}}} -
	L		\langle		Q_2 \Delta \delta Q	-		 
						\Delta \delta Q  Q_2 		 ,	\nabla	\delta	u	\rangle_{\dot{H}^{-\frac{1}{2}}}.
\end{equation*}
First let us consider
\begin{equation*}
	\Bb_2 = 
	L\langle Q_2 \delta \Omega  ,\Delta\delta Q	\rangle_{\dot{H}^{-\frac{1}{2}}} = 
	L\sum_{q\in \ZZ} 2^{-q} \langle \Dd_q ( \delta \Omega Q_2), \Dd_q \Delta \delta Q\rangle_{L^2_x}
	=	L	\sum_{q\in\ZZ}	2^{-q}	\sum_{i=1}^4\langle \J_{q}^i(Q_2,\,\delta \Omega),\,\Dd_q \Delta \delta Q\rangle_{L^2_x}.
\end{equation*}
Proceeding exactly as for proving \eqref{ev1b}, \eqref{ev2b} and \eqref{ev3b}, with $\delta \Omega$ instead of $\delta D$, the following estimates are obtained:
\begin{equation*}
	L\sum_{q\in \ZZ}2^{-q}\langle \J_{q}^1(Q_2,\,\delta \Omega),\,\Dd_q \Delta \delta Q\rangle_{L^2_x}	
	\lesssim
	\|	\nabla			Q_2		\|_{	L^2_x					  }^2
	\|	\Delta			Q_2		\|_{	L^2_x					  }^2		
	\|			\delta	u		\|_{	\Hh^{-\frac{1}{2}		} }^2+
	C_\nu 
	\|	\nabla	\delta	u		\|_{	\Hh^{-\frac{1}{2}		} }^2+
	C_{\Gamma, L}
	\|	\Delta	\delta	Q		\|_{	\Hh^{-\frac{1}{2}		} }^2,
\end{equation*}
\begin{equation*}
	L\sum_{q\in \ZZ}2^{-q}\langle \J_{q}^2(Q_2,\,\delta \Omega),\,\Dd_q \Delta \delta Q\rangle_{L^2_x}
	\lesssim
	\|		\Delta			Q_2			\|_{	L^2_x				}^2
	\|				\delta	u			\|_{	\Hh^{-\frac{1}{2} } }^2+
	C_{\Gamma, L}
	\|		\Delta	\delta	Q			\|_{	\Hh^{-\frac{1}{2} } }^2,
\end{equation*}
and
\begin{equation*}
	L\sum_{q\in \ZZ}2^{-q}\langle \J_{q}^4(Q_2,\,\delta \Omega),\,\Dd_q \Delta \delta Q\rangle_{L^2_x}
	\lesssim
	\|	\Delta Q_2					\|_{	L^2_x				}^2
	\|	\nabla \delta Q				\|_{	\Hh^{-\frac{1}{2} } }^2+
	C_\nu 
	\|	\nabla \delta u				\|_{	\Hh^{-\frac{1}{2} } }^2.
\end{equation*}
Now observing that
\begin{equation*}
	\Bb_1
	= - L\langle  \delta\Omega\, Q_2, \Delta \delta Q\rangle_{\Hh^{-\frac{1}{2}}}
	= - L\langle  \tr(\delta\Omega\, Q_2), \tr\Delta \delta Q\rangle_{\Hh^{-\frac{1}{2}}}
	= 	L\langle  Q_2 \delta\Omega, \Delta \delta Q\rangle_{\Hh^{-\frac{1}{2}}} = \Bb_2,
\end{equation*}
it remains to control
\begin{equation*}
	\Bb_3 + \Bb_4 + 2L\sum_{q_\in \ZZ}2^{-q} \langle \J_q^3( Q_2, \delta \Omega),\, \Dd_q \Delta\delta Q\rangle_{L^2_x}
	=
	\Bb_3 + \Bb_4 + 2L\sum_{q_\in \ZZ}2^{-q}
	\int_{\RR^2}\trc\{ \Sd_{q-1} Q_2 \Dd_q \delta \Omega  \Dd_q \Delta\delta Q \}.
\end{equation*}

\vspace{0.2cm}
\noindent Now, we turn to  $\Bb_3$:
\begin{align*}
	-\Bb_3 	= L\langle	Q_2 \Delta \delta Q, \nabla\delta u		\rangle_{\dot{H}^{-\frac{1}{2}}}
			= L\sum_{q\in\ZZ}2^{-q}
				\langle \Dd_q(Q_2 \Delta \delta Q), \Dd_q	\nabla	\delta	u
				\rangle_{L^2_x}
			= L\sum_{q\in\ZZ}2^{-q}\sum_{i=1}^4 
				\langle\J_q^i(Q_2,\,\Delta\delta Q),\,\Dd_q\nabla \delta u \rangle_{L^2_x}.
\end{align*}
We remark that $\Bb_3 = -\Aa_3/\xi$, hence the terms related to $i=1,2,4$ are estimated as in \eqref{ev4}, \eqref{ev5} and \eqref{ev6}.
Thus it remains to control
\begin{align*}
	\Bb_4  + L\sum_{q\in \ZZ}2^{-q}
	&\big[ 
		\langle\J_q^3(Q_2,\,\Delta\delta Q),\,\Dd_q\nabla \delta u \rangle_{L^2_x} +
		2\int_{\RR^2}\trc\{ \Sd_{q-1} Q_2 \Dd_q \delta \Omega  \Dd_q \Delta\delta Q \}
	\big]= \\
	&=
	\Bb_4  + L\sum_{q\in \ZZ}2^{-q}
	\int_{\RR^2}
	 \big[
 \trc\{ \Sd_{q-1} Q_2 \Dd_q\Delta\delta Q \Dd_q \nabla\delta u\} + 
	  2\trc\{ \Sd_{q-1} Q_2 \Dd_q \delta \Omega  \Dd_q \Delta\delta Q \}
	 \big].
\end{align*}
Observing that $\Bb_4 = -\Aa_4/\xi$ we argue as for $\Bb_3$, hence it remains to evaluate
\begin{align*}
	L \sum_{q_\in \ZZ}2^{-q}
	&\Big\{ 
		\langle\J_q^3(Q_2,\,\Delta\delta Q) \Dd_q\tr\nabla \delta u \rangle_{L^2_x} +
		\int_{\RR^2}
	 	\big[
	  		\trc\{ \Sd_{q-1} Q_2 \Dd_q\Delta\delta Q \Dd_q\nabla \delta u\} + 
	  		2\trc\{ \Sd_{q-1} Q_2 \Dd_q \delta \Omega  \Dd_q \Delta\delta Q \}
		\big]
	\Big\} =\\&=
	2L \sum_{q_\in \ZZ}2^{-q}
	\int_{\RR^2} 
	\big[
	  	\trc\{ \Sd_{q-1} Q_2 \Dd_q\Delta\delta Q \Dd_q \delta \Omega\} + 
	  	\trc\{ \Sd_{q-1} Q_2 \Dd_q \delta \Omega  \Dd_q \Delta\delta Q \}
	\big]=0,
\end{align*}
where for the cancellation we used that $\Sd_{q-1} Q_2$ and $\Dd_q \Delta \delta Q$ are symmetric while $\Dd_q \delta \Omega$ is skew-symmetric.
\subsubsection{One-logarithmic Estimates} In this subsection, we evaluate the terms of \eqref{energy_uniq} which are related to
the single-logarithmic term of the equality \eqref{uniqueness_inequality}.

\vspace{0.2cm}
\noindent
\underline{Estimate of $\langle \delta Q \trc\{ Q_2 \nabla u_2 \}, \Delta \delta Q\rangle$}. 
Let us fix a positive real number $N>0$ and  split the considered term into two parts, the high and the low frequencies:
\begin{equation*}
	\langle \delta Q \trc\{ Q_2 \nabla u_2 \}, \Delta \delta Q\rangle_{\Hh^{-\frac{1}{2} }} 
	=
	\langle \delta Q \trc\{(\Sd_N Q_2) \nabla u_2 \}, \Delta \delta Q\rangle_{\Hh^{-\frac{1}{2} }} + 
	\langle \delta Q \trc\{(\sum_{q\geq N}\Dd_q Q_2) \nabla u_2 \}, \Delta \delta Q\rangle_{\Hh^{-\frac{1}{2} }}.
\end{equation*}
At first we deal with the low frequencies, observing that
\begin{align*}
	\langle \delta Q \trc\{(\Sd_N Q_2) &\nabla u_2 \}, \Delta \delta Q\rangle_{\Hh^{-\frac{1}{2} }}
	\lesssim
	\|			\delta 	Q \trc\{(\Sd_N Q_2) \nabla u_2 \} 	\|_{	\Hh^{-\frac{1}{2}}	}
	\|	\Delta 	\delta 	Q									\|_{	\Hh^{-\frac{1}{2} }	}\\
	&\lesssim
	\|			\delta Q									\|_{	\Hh^{\frac{1}{2}}	}
	\|			(\Sd_N 	Q_2)				\nabla u_2		\|_{	L^2_x				}
	\|	\Delta 	\delta Q									\|_{	\Hh^{-\frac{1}{2} }	}
	\lesssim
	\|	\nabla \delta Q										\|_{	\Hh^{-\frac{1}{2}}	}
	\|			\Sd_N Q_2									\|_{	L^\infty_x			}
	\| 										\nabla u_2		\|_{	L^2_x				}
	\|	\Delta \delta Q										\|_{	\Hh^{-\frac{1}{2} }	},
\end{align*}
hence, by Theorem \ref{Thm_sqrt_N}, we get
\begin{equation*}
\begin{aligned}
	\langle 	\delta Q \trc\{(\Sd_N Q_2) \nabla u_2 \}, \Delta \delta Q	\rangle_{\Hh^{-\frac{1}{2} }}
	&\lesssim
	\|		\nabla \delta	Q		\|_{	\Hh^{-\frac{1}{2}}	}
	(	\|	 Q_2	 \|_{L^2_x} + \sqrt{N}\| 	\nabla Q_2		\|_{L^2_x}	)
	\| 		\nabla 			u_2		\|_{		L^2_x			}
	\|		\Delta 	\delta 	Q		\|_{	\Hh^{-\frac{1}{2} }	}\\
	&\lesssim
	(1+N)
	\|		\nabla \delta 	Q		\|_{	\Hh^{-\frac{1}{2}}	}^2
	\|		(Q_2,\,\nabla Q_2)		\|_{		L^2_x			}^2
	\| 		\nabla 			u_2		\|_{		L^2_x			}^2
	+
	C_\Gamma
	\|		\Delta \delta 	Q		\|_{	\Hh^{-\frac{1}{2} }	}^2.
\end{aligned}
\end{equation*}
For the high frequencies, we proceed as follows:
\begin{align*}
	\langle \delta Q &\trc\{(\sum_{q\geq N}\Dd_q Q_2) \nabla u_2 \}, \Delta \delta Q\rangle_{\Hh^{-\frac{1}{2} }}
	\lesssim
	\|			\delta Q \trc\{(\sum_{q\geq N}\Dd_q Q_2) \nabla u_2 \} 	\|_{	\Hh^{-\frac{1}{2}}	}
	\|	\Delta 	\delta Q												\|_{	\Hh^{-\frac{1}{2} }	}\\
	&\lesssim
	\|			\delta Q												\|_{	\Hh^{\frac{3}{4}}	}
	\|			(\sum_{q\geq N}\Dd_q Q_2) 				\nabla 	u_2		\|_{	\Hh^{-\frac{1}{4}}	}
	\|	\Delta 	\delta Q												\|_{	\Hh^{-\frac{1}{2} }	}\\
	&\lesssim
	\|				(Q_1,\,Q_2)											\|_{	L^2_x				}^{\frac{1}{4}}
	\|	\nabla 		(Q_1,\,Q_2)											\|_{	L^2_x				}^{\frac{3}{4}}
	\|			\sum_{q\geq N}\Dd_q Q_2									\|_{	\Hh^{\frac{3}{4}}	}
	\| 													\nabla 	u_2		\|_{	L^2_x				}
	\|	\Delta 	\delta Q												\|_{	\Hh^{-\frac{1}{2} }	}\\
	&\lesssim
	\|			  	(Q_1,\,Q_2)											\|_{	L^2_x				}^{\frac{1}{4}}
	\|	\nabla 		(Q_1,\,Q_2)											\|_{	L^2_x				}^{\frac{3}{4}}
	(	
	\sum_{q\geq N}2^{\frac{3}{4}q}
	\|				\Dd_q Q_2											\|_{	L^2_x				}
	)
	\| 													\nabla u_2		\|_{	L^2_x				}
	\|	\Delta \delta Q													\|_{	\Hh^{-\frac{1}{2} }	}\\
	&\lesssim
	\|				(Q_1,\,Q_2)											\|_{	L^2_x				}^{\frac{1}{4}}
	\|	\nabla 		(Q_1,\,Q_2)											\|_{	L^2_x				}^{\frac{3}{4}}	
	(
	\sum_{q\geq N}2^{-\frac{q}{4}}
	\|				\Dd_q \nabla Q_2									\|_{	L^2_x				}
	)
	\| 													\nabla u_2		\|_{	L^2_x				}
	\|	\Delta \delta Q													\|_{	\Hh^{-\frac{1}{2} }	}\\
	&\lesssim
	\|				(Q_1,\,Q_2)											\|_{	L^2_x				}^{\frac{1}{4}}
	\|		\nabla 	(Q_1,\,Q_2)											\|_{	L^2_x				}^{\frac{3}{4}}
	(		\sum_{q\geq N}2^{-\frac{q}{4}}	)
	\|		\nabla 	Q_2													\|_{	L^2_x				}
	\| 													\nabla u_2		\|_{	L^2_x				}
	\|		\Delta \delta Q												\|_{	\Hh^{-\frac{1}{2} }	}\\
	&\lesssim
	2^{-\frac{N}{4}}
	\|				(Q_1,\,Q_2)											\|_{	L^2_x				}^{\frac{1}{4}}
	\|		\nabla 	(Q_1,\,Q_2)											\|_{	L^2_x				}^{\frac{3}{4}}
	\|		\nabla 	Q_2													\|_{	L^2_x				}
	\| 													\nabla u_2		\|_{	L^2_x				}
	\|		\Delta \delta Q												\|_{	\Hh^{-\frac{1}{2} }	}.	
\end{align*}

\bigskip
\noindent
Now, fixing $t>0$ arbitrary, and taking $N=N(t) := \lceil \ln (1+e+1/\Phi(t) )\rceil >0$, where $\lceil \cdot \rceil$ is the ceiling function, we get
\begin{align*}
	\langle \delta Q(t) &\trc\{ Q_2(t) \nabla u_2(t) \}, \Delta \delta Q(t)\rangle_{\Hh^{-\frac{1}{2} }}
	\lesssim
	\|		(Q_2,\,\nabla Q_2)	(t)	\|_{		L^2_x			}^2
	\| 		\nabla 			u_2	(t)	\|_{		L^2_x			}^2	
	\Phi(t)\ln\Big( 1+e + \frac{1}{\Phi(t)}\Big)	+\\ &+
	\|				(Q_1,\,Q_2)	(t)										\|_{	L^2_x				}^{\frac{1}{2}}
	\|		\nabla 	(Q_1,\,Q_2)	(t)										\|_{	L^2_x				}^{\frac{3}{2}}
	\|		\nabla 	Q_2			(t)										\|_{	L^2_x				}^2
	\| 													\nabla u_2	(t)	\|_{	L^2_x				}^2
	\Phi(t)
	+
	C_\Gamma
	\|		\Delta \delta 	Q	(t)	\|_{	\Hh^{-\frac{1}{2} }	}^2.
\end{align*}
Thus we have obtained a one-logarithmic term of \eqref{uniqueness_inequality}.
Similarly, we can handle the estimate of the following elements:
\begin{align*}
	+2L\xi	\langle			\delta 	Q	\,	\trc(	\delta	Q 	\nabla	\delta 	u 	)		,	\Delta	\delta	Q
			\rangle_{	\Hh^{	-\frac{1}{2}	}	}					 										
	+2L\xi	\langle			\delta 	Q  	\, \trc(	\delta	Q	\nabla  		u_2	)		,	\Delta	\delta	Q
			\rangle_{	\Hh^{	-\frac{1}{2}	}	}
	+2L\xi	\langle			\delta 	Q  	\, \trc(	 		Q_2 \nabla	\delta	u 	)		,	\Delta	\delta	Q
			\rangle_{	\Hh^{	-\frac{1}{2}	}	}																	+\\
	+2L\xi	\langle					Q_2	\, \trc(	\delta	Q	\nabla 	\delta 	u 	)		,	\Delta	\delta	Q
			\rangle_{	\Hh^{	-\frac{1}{2}	}	}
	+2L\xi	\langle				 	Q_2	\, \trc(	\delta	Q	\nabla			u_2 )		,	\Delta	\delta	Q
			\rangle_{	\Hh^{	-\frac{1}{2}	}	}																
	-2L\xi		\langle	\delta Q				   \trc\{\delta Q	\Delta	\delta Q	\}	,	\nabla \delta u
				\rangle_{	\Hh^{	-\frac{1}{2}	}	}																-\\	
	-2L\xi		\langle	\delta Q				   \trc\{\delta Q	\Delta		   Q_2	\}	,	\nabla \delta u
				\rangle_{	\Hh^{	-\frac{1}{2}	}	}
	-2L\xi		\langle	\delta Q				   \trc\{	    Q_2	\Delta	\delta Q	\}	,	\nabla \delta u
				\rangle_{	\Hh^{	-\frac{1}{2}	}	}														
	-2L\xi		\langle		   Q_2				   \trc\{\delta Q	\Delta	\delta Q	\}	,	\nabla \delta u
				\rangle_{	\Hh^{	-\frac{1}{2}	}	}																-\\
	-2L\xi		\langle	\delta Q				   \trc\{		Q_2	\Delta		   Q_2	\}	,	\nabla \delta u
				\rangle_{	\Hh^{	-\frac{1}{2}	}	}																					
	-2L\xi		\langle		   Q_2				   \trc\{\delta Q	\Delta		   Q_2	\}	,	\nabla \delta u
				\rangle_{	\Hh^{	-\frac{1}{2}	}	}.
\end{align*}


\subsubsection{Double-Logarithmic Estimates} 
In this subsection, we perform the most challenging estimate. Now, we want to control $\EE_1+\EE_2$, namely
\begin{equation}\label{E1+E2}
\begin{aligned}
	\EE_1 + \EE_2 
	&= 2L\xi 
	\big(\,
		\langle	Q_2	\trc\{	Q_2	\nabla	\delta	u	\},\,	\Delta	\delta Q	\rangle_{\Hh^{-\frac{1}{2}}} -
		\langle	Q_2	\trc\{	Q_2	\Delta	\delta	Q	\},\,	\nabla	\delta u	\rangle_{\Hh^{-\frac{1}{2}}}
	\big)\\
	&=	2L\xi
	\sum_{q\in\ZZ}2^{-q}
	\int_{\RR^2}
	\trc
	\big\{\,	
			\Dd_q(Q_2\trc\{Q_2\nabla \delta	u\})\,	\Dd_q \Delta \delta Q	-
			\Dd_q(Q_2\trc\{Q_2\Delta \delta Q\})\,	\Dd_q \nabla \delta u	\,
	\big\}\\
	&=	2L\xi
	\sum_{i=1}^4
	\sum_{q\in\ZZ}2^{-q}
	\int_{\RR^2}
	\trc
	\big\{\,	
			\J_q^i(Q_2,\,\trc\{Q_2\nabla \delta	u\}\Id)\,	\Dd_q \Delta \delta Q	-
			\J_q^i(Q_2,\,\trc\{Q_2\Delta \delta Q\}\Id)\,	\Dd_q \nabla \delta u	\,
	\big\}.
\end{aligned}
\end{equation}
We we will see that there are elements inside this decomposition that generate the double-logarithmic term in \eqref{uniqueness_inequality}.
We proceed by considering the indexes $i=1,2,3,4$, step by step.

\vspace{0.1cm}
\noindent
\underline{Estimate of $\J_q^1$.} 
We start with the term of \eqref{E1+E2} related to $i=1$, passing trough the following decomposition:
\begin{equation}\label{J1q}
\begin{aligned}
	\sum_{j=1}^4\sum_{|q-q'|\leq 5}
	\int_{\RR_2}
	\trc
	\big\{
	\big(
		[\Dd_q,\,\Sd_{q'-1} Q_2]\trc\{\J^j_{q'}(Q_2,\,\nabla \delta u)\}&\Id
	\big) \,\Dd_q \Delta \delta Q 	- \\&- 
	\big(	 	
		[\Dd_q,\,\Sd_{q'-1} Q_2]\trc\{\J^j_{q'}(Q_2,\,\Delta \delta Q)\}\Id
	\big)\,\Dd_q \nabla \delta u	\,
	\big\}.
\end{aligned}
\end{equation}
When $j=1$, we have
\begin{align*}
	\I^1_1(q,q',q'')&:=\int_{\RR_2}
		\Big\{\Big(\,
		[\Dd_q,\,\Sd_{q'-1} Q_2]\trc\{[\Dd_{q'},\Sd_{q''-1}Q_2]\Dd_{q''}\nabla \delta u)\}\Id \Big)\,\Dd_q \Delta \delta Q	+\\&	 
		\quad\quad\quad\quad\quad\quad\quad\quad\quad\quad\quad-
		\Big([\Dd_q,\,\Sd_{q'-1} Q_2]\trc\{[\Dd_{q'},\Sd_{q''-1}Q_2]\Dd_{q''}\Delta \delta Q)\}\Id\Big)\Dd_q \nabla \delta u	
	\Big\}\\
	&\lesssim
	2^{-q}
	\|	\Sd_{q' -1}	\nabla	Q_2									\|_{L^\infty_x}
	2^{-q'}
	\|	\Sd_{q''-1} \nabla	Q_2									\|_{L^\infty_x}
	\|	\Dd_{q''  }	(	\nabla \delta u,\, \Delta \delta Q	)	\|_{L^2_x}
	\|	\Dd_{q }	(	\nabla \delta u,\, \Delta \delta Q	)	\|_{L^2_x}\\
	&\lesssim
	2^{-q-q'}
	2^{\frac{q'}{2}}
	\|	\Sd_{q' -1}	\nabla	Q_2									\|_{L^4_x}
	2^{\frac{q''}{2}}
	\|	\Sd_{q''-1}	\nabla	Q_2									\|_{L^4_x}
	2^{q''}
	\|	\Dd_{q'' }	(		 	\delta u,\, \nabla \delta Q	)	\|_{L^2_x}
	2^q
	\|	\Dd_{q   }	(		 	\delta u,\, \nabla \delta Q	)	\|_{L^2_x},\\
\end{align*}
which yields
\begin{equation}\label{I11(q,q',q'')est1}
	\I^1_1(q,q',q'')
	\lesssim
	2^{\frac{3q''}{2}-\frac{q'}{2}}
	\|				\nabla	Q_2									\|_{L^2_x}
	\|				\Delta	Q_2									\|_{L^2_x}
	\|	\Dd_{q'' }	(			\delta u,\, \nabla \delta Q	)	\|_{L^2_x}
	\|	\Dd_{q   }	(		 	\delta u,\, \nabla \delta Q	)	\|_{L^2_x}.
\end{equation}
Hence, taking the sum, we deduce that
\begin{equation}\label{I11(q,q',q'')est1b}
\begin{aligned}
	2L\xi\sum_{q\in\ZZ  }&\sum_{|q-q'|\leq 5}\sum_{|q'-q''|\leq 5}2^{-q}\I^1_1(q,q',q'')\lesssim\\
	&\lesssim
	\sum_{q\in\ZZ  }\sum_{|q-q'|\leq 5}\sum_{|q'-q''|\leq 5}2^{-q}2^{\frac{3q''}{2}-\frac{q'}{2}}
	\|				\nabla	Q_2									\|_{L^2_x}
	\|				\Delta	Q_2									\|_{L^2_x}
	\|	\Dd_{q''}	(			\delta u,\, \nabla \delta Q	)	\|_{L^2_x}
	\|	\Dd_{q  }	(		 	\delta u,\, \nabla \delta Q	)	\|_{L^2_x}\\
	&\lesssim
	\|				\nabla	Q_2									\|_{L^2_x}
	\|				\Delta	Q_2									\|_{L^2_x}
	\sum_{q\in\ZZ  }
	\sum_{|q-q''|\leq 10}
	\|	\Dd_{q''}	(			\delta u,\, \nabla \delta Q	)	\|_{L^2_x}
	\|	\Dd_{q  }	(		 	\delta u,\, \nabla \delta Q	)	\|_{L^2_x}\\
	&\lesssim
	\|				\nabla	Q_2									\|_{L^2_x}
	\|				\Delta	Q_2									\|_{L^2_x}
	\|				(			\delta u,\, \nabla \delta Q	)	\|_{L^2_x}^2\\
	&\lesssim
	\|				\nabla	Q_2									\|_{L^2_x}
	\|				\Delta	Q_2									\|_{L^2_x}
	\|				(			\delta u,\, \nabla \delta Q	)	\|_{\Hh^{-\frac{1}{2}}}
	\|				(	\nabla	\delta u,\, \Delta \delta Q	)	\|_{\Hh^{-\frac{1}{2}}}\\
	&\lesssim
	\|				\nabla	Q_2									\|_{L^2_x}^2
	\|				\Delta	Q_2									\|_{L^2_x}^2
	\|				(			\delta u,\, \nabla \delta Q	)	\|_{\Hh^{-\frac{1}{2}}}^2 + 
	C_\nu		
	\|					\nabla	\delta u						\|_{\Hh^{-\frac{1}{2}}}^2 +
	C_{\Gamma, L}
	\|					\Delta	\delta Q						\|_{\Hh^{-\frac{1}{2}}}^2.
\end{aligned}
\end{equation}
Now, when $j=2$ in \eqref{J1q}, we remark that 
\begin{align*}
	\I^1_2(q,q',q'')&:=\int_{\RR_2}
	\trc
	\Big\{\Big(
	[\Dd_q,\,\Sd_{{q'}-1} Q_2]\trc\{(\Sd_{q''-1}Q_2-\Sd_{q'-1}Q_2)\,\Dd_{q'}\Dd_{q''}\nabla\delta u)\}\Id\Big)\,\Dd_q \Delta\delta Q 
		+\\&	
		\quad\quad\quad\quad\quad\quad\quad\quad
	\Big([\Dd_q,\,\Sd_{{q'}-1} Q_2]\trc\{(\Sd_{q''-1}Q_2-\Sd_{q'-1}Q_2)\,\Dd_{q'}\Dd_{q''}\Delta\delta Q)\}\Id\Big)\,\Dd_q \nabla\delta u	
	\Big\}\\
	&\lesssim
	2^{-q}
	\|			\Sd_{q' -1}	\nabla	Q_2									\|_{L^\infty_x}
	\|			\Sd_{q''-1}			Q_2	- \Sd_{q'-1} 			Q_2		\|_{L^\infty_x}
	\|	\Dd_q	\Dd_{q''  }	(	\nabla \delta u,\, \Delta \delta Q	)	\|_{L^2_x}
	\|			\Dd_{q    }	(	\nabla \delta u,\, \Delta \delta Q	)	\|_{L^2_x}\\
	&\lesssim
	2^{-q}
	\|			\Sd_{q' -1}	\nabla	Q_2									\|_{L^\infty_x}
	2^{-q'}
	\|			\Sd_{q''-1}	\nabla	Q_2	- \Sd_{q'-1} 	\nabla	Q_2		\|_{L^\infty_x}
	\|			\Dd_{q''  }	(	\nabla \delta u,\, \Delta \delta Q	)	\|_{L^2_x}
	\|			\Dd_{q    }	(	\nabla \delta u,\, \Delta \delta Q	)	\|_{L^2_x}\\
	&\lesssim
	2^{-q-q'}
	2^{\frac{q'}{2}}
	\|			\Sd_{q' -1}	\nabla	Q_2									\|_{L^4_x}
	2^{\frac{q''}{2}}
	\|			\Sd_{q''-1} \nabla	Q_2									\|_{L^4_x}
	2^{q''}
	\|			\Dd_{q''  }	(		 	\delta u,\, \nabla \delta Q	)	\|_{L^2_x}
	2^{q}
	\|			\Dd_{q    }	(		 	\delta u,\, \nabla \delta Q	)	\|_{L^2_x}\\
	&\lesssim
	2^{\frac{3q''}{2}-\frac{q'}{2}}
	\|				\nabla	Q_2									\|_{L^2_x}
	\|				\Delta	Q_2									\|_{L^2_x}
	\|	\Dd_{q''}	(			\delta u,\, \nabla \delta Q	)	\|_{L^2_x}
	\|	\Dd_{q }	(		 	\delta u,\, \nabla \delta Q	)	\|_{L^2_x},
\end{align*}
which is equivalent to \eqref{I11(q,q',q'')est1}. Hence, proceeding as in \eqref{I11(q,q',q'')est1b}, we get
\begin{equation*}
	2L\xi\sum_{q\in\ZZ  }\sum_{\substack{|q-q'|\leq 5\\|q'-q''|\leq 5} }2^{-q}\I^1_2(q,q',q'')
	\lesssim
	\|				\nabla	Q_2									\|_{L^2_x}^2
	\|				\Delta	Q_2									\|_{L^2_x}^2
	\|				(			\delta u,\, \nabla \delta Q	)	\|_{\Hh^{-\frac{1}{2}}}^2 + 
	C_\nu		
	\|					\nabla	\delta u						\|_{\Hh^{-\frac{1}{2}}}^2 +
	C_{\Gamma, L}
	\|					\Delta	\delta Q						\|_{\Hh^{-\frac{1}{2}}}^2.
\end{equation*}
Concerning the term of \eqref{J1q} related to $j=4$, we have
\begin{equation}\label{ineq3}
\begin{aligned}
	\I^1_4(q,q',q'')&:=\int_{\RR_2}
	\trc
	\Big\{\Big(
	[\Dd_q,\,\Sd_{q'-1} Q_2]\trc\{\Dd_{q'}( \,\Dd_{q''}Q_2 \Sd_{q''+2}\nabla\delta u\,)\}\Id\Big)\Dd_q \Delta\delta Q 
		-\\&	
		\quad\quad\quad\quad\quad\quad\quad\quad\quad\quad\quad-
	\Big([\Dd_q,\,\Sd_{q'-1} Q_2]\trc\{\Dd_{q'}(\,\Dd_{q''}Q_2 \Sd_{q''+2}\Delta\delta  Q\,)\}\Id\Big)\Dd_q \nabla\delta u	
	\Big\}\\
	&\lesssim
	2^{-q}
	\|			\Sd_{q' -1}	\nabla	Q_2									\|_{L^\infty_x}
	\|	\Dd_q	(\, \Dd_{q''  }Q_2	\Sd_{q''+2}	\nabla \delta u,\;
					\Dd_{q''  }Q_2	\Sd_{q''+2} \Delta \delta Q\,)		\|_{L^2_x}
	\|			\Dd_{q    }	(	\nabla \delta u,\, \Delta \delta Q	)	\|_{L^2_x}\\
	&\lesssim
	2^{-q}
	\|			\Sd_{q' -1}	\nabla	Q_2									\|_{L^\infty_x}
	2^{q}
	\|	\Dd_q	(\, \Dd_{q''  }Q_2	\Sd_{q''+2}	\nabla \delta u,\;
					\Dd_{q''  }Q_2	\Sd_{q''+2} \Delta \delta Q\,)		\|_{L^1_x}
	\|			\Dd_{q    }	(	\nabla \delta u,\, \Delta \delta Q	)	\|_{L^2_x}\\	
	&\lesssim
	\|			\Sd_{q' -1}	\nabla	Q_2									\|_{L^\infty_x}
	\|			\Dd_{q''  }			Q_2									\|_{L^2_x}
	\|			\Sd_{q''+2}	(	\nabla \delta u,\, \Delta \delta Q	)	\|_{L^2_x}
	\|			\Dd_{q    }	(	\nabla \delta u,\, \Delta \delta Q	)	\|_{L^2_x}\\
	&\lesssim
	\|			\Sd_{q' -1}	\nabla	Q_2									\|_{L^\infty_x}
	2^{-2q''}
	\|			\Dd_{q''  }	\Delta	Q_2									\|_{L^2_x}
	\|			\Sd_{q''+2}	(	\nabla \delta u,\, \Delta \delta Q	)	\|_{L^2_x}
	\|			\Dd_{q    }	(	\nabla \delta u,\, \Delta \delta Q	)	\|_{L^2_x}\\
	&\lesssim
	2^{-2q''}
	2^{q'}
	\|			\Sd_{q' -1}	\nabla	Q_2									\|_{L^2_x}
	\|			\Dd_{q''  }	\Delta	Q_2									\|_{L^2_x}
	2^{q''}
	\|			\Sd_{q''+2}	(		 	\delta u,\, \nabla \delta Q	)	\|_{L^2_x}
	\|			\Dd_{q    }	(	\nabla 	\delta u,\, \Delta \delta Q	)	\|_{L^2_x}\\
	&\lesssim
	2^{q'-q''}
	\|				\nabla	Q_2									\|_{L^2_x}
	\|				\Delta	Q_2									\|_{L^2_x}
	\|			\Sd_{q''+2}	(		   \delta u,\, \nabla \delta Q	)	\|_{L^2_x}
	\|			\Dd_{q    }	(	\nabla \delta u,\, \Delta \delta Q	)	\|_{L^2_x}\\
\end{aligned}
\end{equation}
Hence
\begin{align}
	2&L\xi\sum_{q\in\ZZ}\;\sum_{|q-q'|\leq 5}\; \sum_{q''\geq q'-5}				2^{-q}\I^1_4(q,q',q'') \lesssim				\nonumber\\
	&\lesssim
	\|				\nabla	Q_2									\|_{L^2_x}
	\|				\Delta	Q_2									\|_{L^2_x}
	\sum_{q\in\ZZ}2^{-\frac{q}{2}} 
	\|			\Dd_{q    }	(	\nabla \delta u,\, \Delta \delta Q	)	\|_{L^2_x}\;
	\sum_{[q-q'|\leq 5}
	2^{\frac{q'-q}{2}}
	\sum_{q''\geq q' -5 }2^{\frac{q'-q''}{2}}
	2^{-\frac{q''}{2}}
	\|			\Sd_{q''+2}	(		   \delta u,\, \nabla \delta Q	)	\|_{L^2_x}											\nonumber\\
	&\lesssim
	\|				\nabla	Q_2									\|_{L^2_x}
	\|				\Delta	Q_2									\|_{L^2_x}
	\sum_{q\in\ZZ}2^{-\frac{q}{2}} 
	\|			\Dd_{q    }	(	\nabla \delta u,\, \Delta \delta Q	)	\|_{L^2_x}\;
	\sum_{q''\geq q -10 }2^{\frac{q-q''}{2}}
	2^{-\frac{q''}{2}}
	\|			\Sd_{q''+2}	(		   \delta u,\, \nabla \delta Q	)	\|_{L^2_x}											\label{ineq3b}\\
	&\lesssim
	\|				\nabla	Q_2									\|_{L^2_x}
	\|				\Delta	Q_2									\|_{L^2_x}
	\|			(	\nabla \delta u,\, \Delta \delta Q	)		\|_{\Hh^{-\frac{1}{2}}}
	\Big[
	\sum_{q\in\ZZ}\;
		\Big|
			\sum_{q-q''\leq 10 }2^{\frac{q-q''}{2}}
			2^{-\frac{q''+2}{2}}
			\|	\Sd_{q''+2}	(		   \delta u,\, \nabla \delta Q	)	\|_{L^2_x}
		\Big|^2
	\Big]^{\frac{1}{2}}																										\nonumber\\
	&\lesssim
	\|				\nabla	Q_2									\|_{L^2_x}
	\|				\Delta	Q_2									\|_{L^2_x}
	\|			(	\nabla \delta u,\, \Delta \delta Q	)		\|_{\Hh^{-\frac{1}{2}}}
	\Big(
		\sum_{q\leq 10 }
		2^{\frac{q}{2}}
	\Big)
	\Big(
		\sum_{q\in\ZZ}
		2^{-q}
		\|	\Sd_{q}	(		   \delta u,\, \nabla \delta Q	)	\|_{L^2_x}^2
	\Big)^{\frac{1}{2}}																										\nonumber\\
	&\lesssim
	\|				\nabla	Q_2									\|_{L^2_x}
	\|				\Delta	Q_2									\|_{L^2_x}
	\|			(	\nabla \delta u,\, \Delta \delta Q	)		\|_{\Hh^{-\frac{1}{2}}}
	\|			(		   \delta u,\, \nabla \delta Q	)		\|_{\Hh^{-\frac{1}{2}}}										\nonumber\\
	&\lesssim
	\|				\nabla	Q_2									\|_{L^2_x}^2
	\|				\Delta	Q_2									\|_{L^2_x}^2
	\|				(			\delta u,\, \nabla \delta Q	)	\|_{\Hh^{-\frac{1}{2}}}^2 + 
	C_\nu		
	\|					\nabla	\delta u						\|_{\Hh^{-\frac{1}{2}}}^2 +
	C_{\Gamma, L}
	\|					\Delta	\delta Q						\|_{\Hh^{-\frac{1}{2}}}^2.									\nonumber
\end{align}
\noindent Concerning \eqref{J1q}, it remains to control the term related to $j=3$. 
We fix $0<\ee \leq 5/6$ and we consider the low frequencies $q\leq N$, for some suitable positive $N\geq 1$ (so that $1+\sqrt{N}< 2\sqrt{N}$):
\begin{align*}
	\I^1_3(q,q'):&=\int_{\RR_2}
	\trc
	\Big\{\Big(
	[\Dd_q,\,\Sd_{q'-1} Q_2]\trc\{\Sd_{q'-1}Q_2\,\Dd_{q'}\nabla\delta u\}\Id\Big)\,\Dd_q \Delta\delta Q 
		- \\&\quad\quad\quad\quad\quad\quad\quad\quad\quad-
	\Big([\Dd_q,\,\Sd_{q'-1} Q_2]\trc\{\Sd_{q'-1}Q_2\,\Dd_{q'}\Delta\delta Q\}\Id\Big)\,\Dd_q \nabla\delta u	
	\Big\}\\
	&\lesssim
	2^{-q}
	\|\Sd_{q'-1} \nabla Q_2																\|_{L^{\frac{2}{\ee}}_x		}
	\|\Sd_{q'-1}Q_2\,\Dd_{q'}\nabla\delta u, \,\Sd_{q'-1}Q_2\,\Dd_{q'}\Delta\delta Q	\|_{L^{\frac{2}{1-\ee}}_x	}
	\|\Dd_q(\,\nabla \delta u,\,\Delta \delta Q)										\|_{L^2_x					}\\
	&\lesssim
	\|	\Sd_{q'-1} \nabla Q_2										\|_{	L^{\frac{2}{\ee}}_x		}
	\|	\Sd_{q'-1}		  Q_2										\|_{	L^\infty_x				}	
	2^{-q}
	\|	\Dd_{q'}(\,	\nabla	\delta u,\,	\Delta	\delta Q	\,)		\|_{	L^{\frac{2}{1-\ee}}_x	}
	\|	\Dd_q	(\,	\nabla	\delta u,\,	\Delta	\delta Q	\,)		\|_{	L^2_x					}.
\end{align*}

Thanks to Theorem \ref{Thm_sqrt_N}, we get
\begin{equation*}
	\|	\Sd_{q'-1}		  Q_2										\|_{	L^\infty_x				}
	\lesssim
	(1+ \sqrt{q'-1})\|(Q_2,\,\nabla Q_2)\|_{L^2_x}
	\lesssim
	(1+ \sqrt{N})\|(Q_2,\,\nabla Q_2)\|_{L^2_x}
	\lesssim
	\sqrt{N}\|(Q_2,\,\nabla Q_2)\|_{L^2_x},
\end{equation*}
hence $\I^1_3(q,q')$ is bounded by
\begin{equation}\label{ineq2db}
\begin{aligned}
	\I^1_3(q,q')
	\lesssim
	\sqrt{N}
	\|	(Q_2,\,	\nabla Q_2)										\|_{	L^{\frac{2}{\ee}}_x		}
	\|	(Q_2,\,	\nabla Q_2)										\|_{	L^2_x					}
	\|	\Dd_{q'}(\,			\delta u,\,	\nabla	\delta Q	\,)		\|_{	L^{\frac{2}{1-\ee}}_x	}
	\|	\Dd_q	(\,	\nabla 	\delta u,\,	\Delta	\delta Q	\,)		\|_{	L^2_x					}.
\end{aligned}
\end{equation}
Now, we will need the following inequality, which will finally lead to the delicate double-logarithmic estimate:
\begin{equation*}
	\|	(Q_2,\,	\nabla Q_2)								\|_{	L^{\frac{2}{\ee}}_x		}
	\leq
	\frac{C}{\sqrt{\ee}}
	\|	(Q_2,\,	\nabla Q_2)								\|_{	L^2_x				}^{\ee}
	\|	(\nabla Q_2,\,	\Delta Q_2)						\|_{	L^2_x				}^{1-\ee},
\end{equation*}
This is a consequence of Lemma \ref{lemmaappx}, imposing $p=1/\ee$, where $C$ is a positive constant independent of $\ee$ and $Q_2$. We will see that the double-logarithmic term comes out of a suitable choice of $\ee$  in terms of $N$.
Again, using Lemma \ref{lemmaappx} we have
\begin{align*}
	\|	\Dd_{q'}(\,			\delta u,\,	\nabla	\delta Q	\,)		\|_{	L^{\frac{2}{1-\ee}}_x	}
	&\leq
	\frac{C}{1-\ee}
	\|	\Dd_{q'}(\,			\delta u,\,	\nabla	\delta Q	\,)		\|_{	L^2_x				}^{1-\ee}
	\|	\Dd_{q'}(\,	\nabla 	\delta u,\,	\Delta	\delta Q	\,)		\|_{	L^2_x				}^{\ee}\\
	&\leq
	6 C
	\|	\Dd_{q'}(\,			\delta u,\,	\nabla	\delta Q	\,)		\|_{	L^2_x				}^{1-\ee}
	\|	\Dd_{q'}(\,	\nabla 	\delta u,\,	\Delta	\delta Q	\,)		\|_{	L^2_x				}^{\ee},
\end{align*}
since $\ee\leq 5/6$. Hence \eqref{ineq2db} becomes

\begin{equation}\label{ineq2d}
\begin{aligned}
	\I^1_3(q,q')
	&\lesssim
	\sqrt{\frac{N}{\ee}}
	\|	(Q_2,\,	\nabla Q_2)								\|_{	L^2_x				}^{1+\ee}
	\|	(\nabla Q_2,\,	\Delta Q_2)						\|_{	L^2_x				}^{1-\ee}
	{\scriptstyle \times}\\ &\quad\quad\quad\quad\quad\quad\quad\quad\quad\quad\quad\quad	{\scriptstyle \times}
	\|	\Dd_{q'}(\,			\delta u,\,	\nabla	\delta Q	\,)		\|_{	L^2_x				}^{1-\ee}
	\|	\Dd_{q'}(\,	\nabla 	\delta u,\,	\Delta	\delta Q	\,)		\|_{	L^2_x				}^{\ee}
	\|	\Dd_q	(\,	\nabla 	\delta u,\,	\Delta	\delta Q	\,)		\|_{	L^2_x				},
\end{aligned}
\end{equation}
thus, since $ab\leq a^{2/(1-\ee)} + b^{2/(1+\ee)}$, we deduce

\begin{align*}
	\I^1_3(q,q')
	&\lesssim
	\Big(\frac{N}{\ee}\Big)^{\frac{1}{1-\ee}}
	\|	(Q_2			,\,\nabla Q_2)					\|_{	L^2_x				}^{\frac{2(1+\ee)}{1-\ee}}
	\|	(\nabla Q_2	,\,\Delta Q_2)					\|_{	L^2_x				}^2
	\|	\Dd_{q'}(\,			\delta u,\,	\nabla	\delta Q	\,)		\|_{	L^2_x				}^2 + \\
	&\quad\quad\quad\quad\quad\quad\quad\quad\quad\quad\quad\quad+
	\min\{C_\nu,\,C_\Gamma \}
	\|	\Dd_{q'}(\,	\nabla 	\delta u,\,	\Delta	\delta Q	\,)		\|_{	L^2_x				}^{\frac{2\ee}{1+\ee}}
	\|	\Dd_q	(\,	\nabla 	\delta u,\,	\Delta	\delta Q	\,)		\|_{	L^2_x				}^{\frac{2}{1+\ee}}\\
	&\lesssim
	\Big(\frac{N}{\ee}\Big)^{\frac{1}{1-\ee}}
	\|	 	(Q_2			,\,\nabla Q_2)										\|_{	L^2_x				}^{\frac{2(1+\ee)}{1-\ee}}
	\|	 	(\nabla Q_2	,\,\Delta Q_2)										\|_{	L^2_x				}^2
	\|	\Dd_{q'}(\,			\delta u,\,	\nabla	\delta Q	\,)		\|_{	L^2_x				}^2 + \\
	&\quad\quad\quad\quad\quad\quad\quad\quad\quad\quad\quad\quad+
	\min\{C_\nu,\,C_\Gamma \}\Big(
	\|	\Dd_{q'}(\,	\nabla 	\delta u,\,	\Delta	\delta Q	\,)		\|_{	L^2_x				}^2+
	\|	\Dd_q	(\,	\nabla 	\delta u,\,	\Delta	\delta Q	\,)		\|_{	L^2_x				}^2\Big).
\end{align*}
Imposing $\ee = (1+\ln N)^{-1}$ and observing that $\frac{1}{1-\ee}=1+1/\ln N$
\begin{equation*}
	N^{\frac{1}{1-\ee}} = N\, N^{\frac{1}{\ln N}} = eN, \quad
	\ee^{-\frac{1}{1-\ee}} = \ee^{-1}\ee^{-\frac{\ee}{1-\ee}} = 
	(1+\ln N)e^{\frac{\ee}{1-\ee}\ln \frac{1}{\ee}}\lesssim (1+\ln N),
\end{equation*}
we obtain:
\begin{align*}
	\I^1_3(q,q')
	&\lesssim
	N(1+\ln N)
	\max\big\{
	\|	(Q_2			,\,\nabla Q_2)								\|_{	L^2_x				}^{6}, 1\big\}
	\|	(\nabla Q_2	,\,\Delta Q_2)								\|_{	L^2_x				}^2
	\|	\Dd_{q'}(\,			\delta u,\,	\nabla	\delta Q	\,)		\|_{	L^2_x				}^2 + \\
	&\quad\quad\quad\quad\quad\quad\quad\quad\quad\quad\quad\quad+
	\min\Big\{C_\nu,\,C_{\Gamma,L}\Big\}\Big(
	\|	\Dd_{q'}(\,	\nabla 	\delta u,\,	\Delta	\delta Q	\,)		\|_{	L^2_x				}^2+
	\|	\Dd_q	(\,	\nabla 	\delta u,\,	\Delta	\delta Q	\,)		\|_{	L^2_x				}^2\Big),
\end{align*}	
which yields
\begin{align*}
	\sum_{q\leq N}\sum_{|q-q'|\leq 5}2^{-q}\I^1_3(q,q')
	\lesssim
	N(1+\ln N)
	\max\big\{
	\|			 	(Q_2			,\,\nabla Q_2)					\|_{	L^2_x				}^{6}, 1\big\}&
	\|			 	(\nabla Q_2	,\,\Delta Q_2)					\|_{	L^2_x				}^2
	\|			(\,			\delta u,\,	\nabla	\delta Q	\,)		\|_{\Hh^{-\frac{1}{2}}		}^2 + \\&+
	C_\nu		
	\|					\nabla	\delta u						\|_{\Hh^{-\frac{1}{2}}}^2 +
	C_{\Gamma, L}
	\|					\Delta	\delta Q						\|_{\Hh^{-\frac{1}{2}}}^2.
\end{align*}
For the high frequencies, namely for $q>N\geq 1$, we proceed as follows:
\begin{align*}
	\I^1_3(q,q')
	&\lesssim 
	2^{-q}
	\|\Sd_{q'-1} \nabla Q_2																\|_{L^{\infty}_x			}
	\|\Sd_{q'-1}Q_2\,\Dd_{q'}\nabla\delta u, \,\Sd_{q'-1}Q_2\,\Dd_{q'}\Delta\delta Q	\|_{L^{2}_x					}
	\|\Dd_q(\,\nabla \delta u,\,\Delta \delta Q)										\|_{L^2_x					}\\
	&\lesssim
	2^{-q}(1+\sqrt{q'})
	\|	(\nabla Q_2,\,  \Delta Q_2)													\|_{L^{2}_x					}
	\|	\Sd_{q'-1} 		   Q_2														\|_{L^\infty_x				}
	\|	\Dd_{q'}(\,	\nabla 	\delta u,\,	\Delta	\delta Q	\,)							\|_{	L^2_x				}
	\|	\Dd_q	(\,	\nabla 	\delta u,\,	\Delta	\delta Q	\,)							\|_{	L^2_x				}\\
	&\lesssim
	2^{-q}(1+\sqrt{q'})^2
	\|	(\nabla Q_2,\,  \Delta Q_2)													\|_{L^2_x			}
	\|	(Q_2,\,		\nabla  Q_2	)													\|_{L^2_x			}
	\|	\Dd_{q'}(\,	\nabla 	\delta u,\,	\Delta	\delta Q	\,)							\|_{	L^2_x				}
	\|	\Dd_q	(\,	\nabla 	\delta u,\,	\Delta	\delta Q	\,)							\|_{	L^2_x				}\\
	&\lesssim
	q'
	\|	(\nabla Q_2,\,  \Delta Q_2)													\|_{L^2_x			}
	\|	(Q_2,\,		\nabla  Q_2	)													\|_{L^2_x			}
	\|	\Dd_{q'}(\,	\nabla 	\delta u,\,	\Delta	\delta Q	\,)							\|_{	L^2_x				}
	\|	\Dd_q	(\,		 	\delta u,\,	\nabla	\delta Q	\,)							\|_{	L^2_x				},
\end{align*}
which implies
\begin{align*}
	\sum_{q>N}\sum_{|q-q'|\leq 5}&2^{-q}\I^1_3(q,q')\\
	&\lesssim
	\sum_{q>N}\sum_{|q-q'|\leq 5}2^{-2q}
	q'
	\|		(\nabla Q_2,\,  \Delta Q_2)												\|_{L^2_x				}
	\|		(Q_2,\,		\nabla  Q_2	)												\|_{L^2_x				}
	\|	\Dd_{q'}(\,	\nabla 	\delta u,\,	\Delta	\delta Q	\,)							\|_{	L^2_x			}
	\|	\Dd_q	(\,		 	\delta u,\,	\nabla	\delta Q	\,)							\|_{	L^2_x			}\\
	&\lesssim
	\|			(\nabla Q_2,\,  \Delta Q_2)											\|_{L^2_x				}
	\|			(Q_2,\,		\nabla  Q_2	)											\|_{L^2_x				}
	\|			(\,		 	\delta u,\,	\nabla	\delta Q	\,)							\|_{L^2_x	}
	\|			(\,	\nabla 	\delta u,\,	\Delta	\delta Q	\,)							\|_{\Hh^{-\frac{1}{2}}	}
	\sum_{q>N}\sum_{|q-q'|\leq 5}2^{-\frac{3}{2}q+\frac{1}{2}q'}q'\\
	&\lesssim
	2^{-\frac{N}{2}}
	\|			(\nabla Q_2,\,  \Delta Q_2)											\|_{L^2_x				}
	\|			(Q_2,\,		\nabla  Q_2	)											\|_{L^2_x				}
	\|			(\,		 	\delta u,\,	\nabla	\delta Q	\,)							\|_{L^2_x				}
	\|			(\,	\nabla 	\delta u,\,	\Delta	\delta Q	\,)							\|_{\Hh^{-\frac{1}{2}}	}.
\end{align*}
Summarizing, we get
\begin{equation}\label{ineq2}
\begin{aligned}
	\sum_{q\in\ZZ}&\sum_{|q-q'|\leq 5}2^{-q}\I^1_3(q,q')\lesssim
	N(1+\ln N)
	\max\big\{
	\|			 (Q_2,\,	\nabla Q_2)									\|_{	L^2_x				}^{6}, 1\big\}
	\|			 (\nabla Q_2,\,	\Delta Q_2	)						\|_{	L^2_x				}^2
	\|			(\,			\delta u,\,	\nabla	\delta Q	\,)		\|_{\Hh^{-\frac{1}{2}}		}^2 + \\&+
	2^{-N}
	\|			(\nabla Q_2,\,  \Delta Q_2)												\|_{L^2_x				}^2
	\|			(Q_2,\,		\nabla  Q_2	)												\|_{L^2_x				}^2
	\|			(		 	 u_1,\,u_2,\,	\nabla	Q_1,\, \nabla Q_2	)				\|_{L^2_x				}^2
	+
	C_\nu		
	\|					\nabla	\delta u						\|_{\Hh^{-\frac{1}{2}}}^2 +
	C_{\Gamma, L}
	\|					\Delta	\delta Q						\|_{\Hh^{-\frac{1}{2}}}^2.
\end{aligned}
\end{equation}

Choosing $N=N(t):=\lceil \ln (1+e+1/\Phi(t))\rceil $ (thus $\ee<1/(1+\ln \ln\{1+e\})<5/6$) where with $\lceil \cdot \rceil$ we denote the ceiling function, relation \eqref{ineq2} implies
\begin{equation}\label{ineq2b}
\begin{aligned}
	&\sum_{q\in\ZZ}\sum_{|q-q'|\leq 5}2^{-q}\I^3_1(q,q')\\
	&\lesssim
	\max\big\{
	\|			 	(Q_2,\,\nabla Q_2)								\|_{	L^2_x				}^{6}, 1\big\}
	\|			 	(\nabla Q_2,\,\Delta Q_2	)						\|_{	L^2_x				}^2
	\|			(\,			\delta u,\,	\nabla	\delta Q	\,)		\|_{\Hh^{-\frac{1}{2}}		}^2
	\ln\Big( e +  \frac{1}{\Phi(t)}\Big) \Big(1 + \ln \ln\Big(e + \frac{1}{\Phi(t)}\Big)\Big)
	 + \\&+
	\|			(\nabla Q_2,\,  \Delta Q_2)												\|_{L^2_x				}^2
	\|			(Q_2,\,		\nabla  Q_2	)												\|_{L^2_x				}^2
	\|			(	 u_1,\,u_2,\,\nabla	Q_1,\, \nabla Q_2	)							\|_{L^2_x				}^2
	\Phi(t)+
	C_\nu		
	\|					\nabla	\delta u						\|_{\Hh^{-\frac{1}{2}}}^2 +
	C_{\Gamma, L}
	\|					\Delta	\delta Q						\|_{\Hh^{-\frac{1}{2}}}^2.
\end{aligned}
\end{equation}
\noindent
\underline{Estimate of $\J_q^2$} Now, we handle the term of \eqref{E1+E2} related to $i=2$, namely
\begin{equation}\label{J2q}
\begin{aligned}
	\sum_{j=1}^4\sum_{|q-q'|\leq 5}
	\int_{\RR_2}
	\trc
	\big\{\,
		(\,\Sd_{q'-1} Q_2-\Sd_{q-1} Q_2\,)&\trc\{\Dd_{q}\J^j_{q'}(Q_2,\,\nabla \delta u)\}\Dd_q \Delta \delta Q 	+\\&
		(\,\Sd_{q'-1} Q_2-\Sd_{q-1} Q_2\,) \trc\{\Dd_{q}\J^j_{q'}(Q_2,\,\Delta \delta Q)\}\Dd_q \nabla \delta u	\,
	\big\}.
\end{aligned}
\end{equation}
When $j=1$, we have 
\begin{align*}
	\I_1^2&(q,q',q''):=
	\int_{\RR_2}
	\trc
	\Big\{
		(\,\Sd_{q'-1} Q_2-\Sd_{q-1} Q_2\,)
		\trc\{\Dd_{q}\big(\,[\Dd_{q'},\Sd_{q''-1}Q_2]\Dd_{q''}\nabla \delta u\,\big)\}\Dd_q \Delta \delta Q	+\\&	 
		\quad\quad\quad\quad\quad\quad\quad\quad\quad\quad-
		(\,\Sd_{q'-1} Q_2-\Sd_{q-1} Q_2\,)
		\trc\{\Dd_{q}\big(\,[\Dd_{q'},\Sd_{q''-1}Q_2]\Dd_{q''}\Delta \delta Q\,\big)\}\Dd_q \nabla \delta u	
	\Big\}\\
	&\lesssim
	\|	\Sd_{q'-1} Q_2-\Sd_{q-1} Q_2																  \|_{L^\infty_x}
	\|	\Dd_{q}\big(\,[\Dd_{q'},\Sd_{q''-1}Q_2]\Dd_{q''}( \nabla \delta u,\, \Delta \delta Q )\,\big) \|_{L^2_x}
	\|	\Dd_{q }	(	\nabla \delta u,\, \Delta \delta Q	)										  \|_{L^2_x}\\
	&\lesssim
	2^{-\frac{q}{2}}
	\|	\Sd_{q'-1}\nabla Q_2-\Sd_{q-1}\nabla Q_2													  \|_{L^4_x}
	2^{\frac{q}{2}}
	\|	\Dd_{q}\big(\,[\Dd_{q'},\Sd_{q''-1}Q_2]\Dd_{q''}( \nabla \delta u,\, \Delta \delta Q )\,\big) \|_{L^\frac{4}{3}_x}
	\|	\Dd_{q }	(	\nabla \delta u,\, \Delta \delta Q	)										  \|_{L^2_x}\\	
	&\lesssim
	\|	\nabla	Q_2												\|_{L^4_x}
	2^{-q'}
	\|	\Sd_{q''-1}	\nabla	Q_2									\|_{L^4_x}
	\|	\Dd_{q'' }	(	\nabla 	\delta u,\, \Delta \delta Q	)	\|_{L^2_x}
	\|	\Dd_{q   }	(	\nabla	\delta u,\, \Delta \delta Q	)	\|_{L^2_x}\\
	&\lesssim
	2^{-q'+q''+q}
	\|				\nabla	Q_2									\|_{L^2_x}
	\|				\Delta	Q_2									\|_{L^2_x}
	\|	\Dd_{q'' }	(			\delta u,\, \nabla \delta Q	)	\|_{L^2_x}
	\|	\Dd_{q   }	(			\delta u,\, \nabla \delta Q	)	\|_{L^2_x}.
\end{align*}
Since $|q-q'|\leq 5$ and $|q'-q''|\leq 5$ then $-q'+q''+q \simeq 3q''/2 -q'/2$, so that the last inequality is equivalent to \eqref{I11(q,q',q'')est1}. 
Hence, proceeding as in \eqref{I11(q,q',q'')est1b}, we get
\begin{equation*}
	2L\xi\sum_{q\in\ZZ  }\sum_{\substack{|q-q'|\leq 5\\|q'-q''|\leq 5} }2^{-q}\I_1^2(q,q',q'')
	\lesssim
	\|				\nabla	Q_2									\|_{L^2_x}^2
	\|				\Delta	Q_2									\|_{L^2_x}^2
	\|				(			\delta u,\, \nabla \delta Q	)	\|_{\Hh^{-\frac{1}{2}}}^2 + 
	C_\nu		
	\|					\nabla	\delta u						\|_{\Hh^{-\frac{1}{2}}}^2 +
	C_{\Gamma, L}
	\|					\Delta	\delta Q						\|_{\Hh^{-\frac{1}{2}}}^2.
\end{equation*}	
When $j=2$, we observe that
\begin{align*}
	\I_2^2&(q,q',q''):=
	\int_{\RR_2}
	\trc
	\Big\{
		(\,\Sd_{q'-1} Q_2-\Sd_{q-1} Q_2\,)
		\trc\{\,(\Sd_{q''-1}Q_2-\Sd_{q'-1}Q_2) \Dd_{q'}\Dd_{q''}\nabla \delta u\,\}\Dd_q \Delta \delta Q	+\\&	 
		\quad\quad\quad\quad\quad\quad\quad\quad\quad\quad-
		(\,\Sd_{q'-1} Q_2-\Sd_{q-1} Q_2\,)
		\trc\{(\Sd_{q''-1}Q_2-\Sd_{q'-1}Q_2)\Dd_{q'}\Dd_{q''}\Delta \delta Q\,\}\Dd_q \nabla \delta u	
	\Big\}\\
	&\lesssim
	\|	\Sd_{q'-1} Q_2-\Sd_{q-1} Q_2															\|_{L^\infty_x}
	\|	\Sd_{q''-1}Q_2-\Sd_{q'-1}Q_2															\|_{L^\infty_x}	
	\|	\Dd_{q'}\Dd_{q''}( \nabla \delta u,\, \Delta \delta Q ) \|_{L^2_x}						\|_{L^2_x}
	\|	\Dd_{q }	(	\nabla \delta u,\, \Delta \delta Q	)									\|_{L^2_x}\\
	&\lesssim
	2^{-\frac{q+q'}{2}}
	\|	\Sd_{q' -1}\nabla Q_2-\Sd_{q -1}\nabla Q_2												\|_{L^4_x}
	\|	\Sd_{q''-1}\nabla Q_2-\Sd_{q'-1}\nabla Q_2												\|_{L^4_x}
	\|	\Dd_{q''}    ( 	\nabla \delta u,\, \Delta \delta Q  )									\|_{L^2_x}
	\|	\Dd_{q }	(	\nabla \delta u,\, \Delta \delta Q	)									\|_{L^2_x}\\	
	&\lesssim
	2^{\frac{q'}{2}+\frac{q}{2}}
	\|	\nabla	Q_2												\|_{L^4_x}^2
	\|	\Dd_{q'' }	(		 	\delta u,\, \nabla \delta Q	)	\|_{L^2_x}
	\|	\Dd_{q   }	(			\delta u,\, \Delta \delta Q	)	\|_{L^2_x}\\
	&\lesssim
	2^{\frac{q'}{2}+\frac{q}{2}}
	\|				\nabla	Q_2									\|_{L^2_x}
	\|				\Delta	Q_2									\|_{L^2_x}
	\|	\Dd_{q'  }	(			\delta u,\, \nabla \delta Q	)	\|_{L^2_x}
	\|	\Dd_{q   }	(			\delta u,\, \nabla \delta Q	)	\|_{L^2_x}.\\
\end{align*}
Since $|q-q'|\leq 5$ and $|q'-q''|\leq 5$ then $q'/2+q/2 \simeq 3q''/2 -q'/2$, so that the last inequality is equivalent to \eqref{I11(q,q',q'')est1}. 
Hence, proceeding as in \eqref{I11(q,q',q'')est1b}, we get
\begin{equation*}
	2L\xi\sum_{q\in\ZZ  }\sum_{\substack{|q-q'|\leq 5\\|q'-q''|\leq 5} }2^{-q}\I_2^2(q,q',q'')
	\lesssim
	\|				\nabla	Q_2									\|_{L^2_x}^2
	\|				\Delta	Q_2									\|_{L^2_x}^2
	\|				(			\delta u,\, \nabla \delta Q	)	\|_{\Hh^{-\frac{1}{2}}}^2 + 
	C_\nu		
	\|					\nabla	\delta u						\|_{\Hh^{-\frac{1}{2}}}^2 +
	C_{\Gamma, L}
	\|					\Delta	\delta Q						\|_{\Hh^{-\frac{1}{2}}}^2.
\end{equation*}	
When $j=4$:
\begin{align*}
	\I_4^2&(q,q',q''):=\int_{\RR_2}
		\Big\{
		(\,\Sd_{q'-1} Q_2-\Sd_{q-1} Q_2\,)
		\trc\{\,\Dd_{q'}(\,\Dd_{q''}Q_2\Sd_{q''+2}\nabla \delta u\,)\,\}\Dd_q \Delta \delta Q	+\\&	 
		\quad\quad\quad\quad\quad\quad\quad\quad\quad\quad-
		(\,\Sd_{q'-1} Q_2-\Sd_{q-1} Q_2\,)
		\trc\{\,\Dd_{q'}(\,\Dd_{q''}Q_2\Sd_{q''+2}\Delta \delta Q\,)\,\}\Dd_q \nabla \delta u	
	\Big\}\\
	&\lesssim
	\|	\Sd_{q'-1} Q_2-\Sd_{q-1} Q_2															\|_{L^\infty_x}
	\|	\Dd_{q'}(\,\Dd_{q''}Q_2\Sd_{q''+2}(\nabla \delta u,\,\Delta \delta Q\,) \,)				\|_{L^2_x}
	\|	\Dd_{q }	(	\nabla \delta u,\, \Delta \delta Q	)									\|_{L^2_x}\\
	&\lesssim
	2^{q'}
	\|	\Sd_{q'-1} \nabla Q_2-\Sd_{q-1} \nabla Q_2												\|_{L^2_x}
	\|	\Dd_{q'}(\,\Dd_{q''}Q_2\Sd_{q''+2}(\nabla \delta u,\,\Delta \delta Q\,) \,)				\|_{L^1_x}
	\|	\Dd_{q }	(	\nabla \delta u,\, \Delta \delta Q	)									\|_{L^2_x}\\
	&\lesssim
	2^{q'-q''}
	\|	\nabla Q_2																				\|_{L^2_x}
	\|	\Dd_{q''  } \Delta  Q_2																	\|_{L^2_x}
	\|	\Sd_{q''+2} ( 		   \delta u,\, \nabla \delta Q  )									\|_{L^2_x}
	\|	\Dd_{q    }	(	\nabla \delta u,\, \Delta \delta Q	)									\|_{L^2_x}\\
	&\lesssim
	2^{q'-q''}
	\|	\nabla Q_2																				\|_{L^2_x}
	\|	\Delta Q_2																				\|_{L^2_x}
	\|	\Sd_{q''+2} ( 		   \delta u,\, \nabla \delta Q  )									\|_{L^2_x}
	\|	\Dd_{q    }	(	\nabla \delta u,\, \Delta \delta Q	)									\|_{L^2_x},	
\end{align*}
which is equivalent to the last inequality of \eqref{ineq3}. Thus, arguing as in \eqref{ineq3b}, we deduce
\begin{equation*}
	2L\xi\sum_{q\in\ZZ  }\sum_{\substack{|q-q'|\leq 5\\q''\geq q'-5}}2^{-q}\I_4^2(q,q',q'')
	\lesssim
	\|				\nabla	Q_2									\|_{L^2_x}^2
	\|				\Delta	Q_2									\|_{L^2_x}^2
	\|				(			\delta u,\, \nabla \delta Q	)	\|_{\Hh^{-\frac{1}{2}}}^2 + 
	C_\nu		
	\|					\nabla	\delta u						\|_{\Hh^{-\frac{1}{2}}}^2 +
	C_{\Gamma, L}
	\|					\Delta	\delta Q						\|_{\Hh^{-\frac{1}{2}}}^2.
\end{equation*}
\noindent When $j=3$ we fix a real number $N>1$ and we consider the low frequencies $q'\leq N$ as follows
\begin{equation}\label{ineq1}
\begin{aligned}
	\I^2_3(q,q'):&=
	\int_{\RR_2}
	\trc
	\Big\{
	(\,\Sd_{q'-1} Q_2-\Sd_{q-1} Q_2\,)\trc\{\,\Dd_q(\,\Sd_{q'-1}Q_2\,\Dd_{q'}\nabla\delta u\,)\,\}\Dd_q \Delta\delta Q +\\&	 	\quad\quad\quad\quad\quad\quad\quad\quad\quad\quad-
	(\,\Sd_{q'-1} Q_2-\Sd_{q-1} Q_2\,)\trc\{\,\Dd_q(\,\Sd_{q'-1}Q_2\,\Dd_{q'}\Delta\delta Q\,)\}\Dd_q \nabla\delta u	
	\Big\}\\
	&\lesssim
	\|\Sd_{q'-1} Q_2-\Sd_{q-1} Q_2													\|_{L^{\infty}_x			}
	\|\Dd_q(\,\Sd_{q'-1}Q_2\,\Dd_{q'}(\nabla\delta u,\,\Delta \delta Q)\,)\,\}		\|_{L^{2}_x					}
	\|\Dd_q(\,\nabla \delta u,\,\Delta \delta Q)									\|_{L^2_x					}\\
	&\lesssim
	2^{-q}
	\|	\Sd_{q'-1}\Delta Q_2-\Sd_{q-1}\Delta Q_2					\|_{	L^2_x					}
	\|	\Sd_{q'-1}		  Q_2										\|_{	L^\infty_x				}	
	\|	\Dd_{q'}(\,	\nabla	\delta u,\,	\Delta	\delta Q	\,)		\|_{	L^2_x					}
	\|	\Dd_q	(\,	\nabla	\delta u,\,	\Delta	\delta Q	\,)		\|_{	L^2_x					}\\
	&\lesssim
	\|	\Sd_{q'-1}\Delta Q_2-\Sd_{q-1}\Delta Q_2					\|_{	L^2_x					}
	\|	\Sd_{q'-1}		  Q_2										\|_{	L^\infty_x				}	
	\|	\Dd_{q'}(\,	\nabla	\delta u,\,	\Delta	\delta Q	\,)		\|_{	L^2_x					}
	\|	\Dd_q	(\,			\delta u,\,	\nabla	\delta Q	\,)		\|_{	L^2_x					}.
\end{aligned}
\end{equation}
If $q'\leq 1$ then $\|\Sd_{q'-1} Q_2\|_{L^\infty_x}\lesssim 2^{\frac{q'}{2}}\|\Sd_{q'-1} Q_2 \|_{L^2_x}\leq \| Q_2 \|_{L^2_x}$, while if 
$1<q'\leq N$ we have
\begin{equation*}
	\|\Sd_{q'-1} Q_2\|_{L^\infty_x}
	\lesssim 
	(\| Q_2\|_{L^2_x}+\sqrt{q'-1}\| \nabla Q_2\|_{L^2_x})
	\lesssim
	(\|Q_2\|_{L^2_x}+\sqrt{N}\|\nabla Q_2\|_{L^2_x}),
\end{equation*} 
thanks to Theorem \ref{Thm_sqrt_N}. Therefore, we deduce that
\begin{align*}
	\I^2_3(q,q')
	&\lesssim
	\|	\Delta Q_2													\|_{	L^2_x					}
	(
	\|							Q_2									\|_{	L^2_x					}+
	\sqrt{N}
	\|					\nabla  Q_2									\|_{	L^2_x					}
	)	
	\|	\Dd_{q'}(\,	\nabla	\delta u,\,	\Delta	\delta Q	\,)		\|_{	L^2_x					}
	\|	\Dd_q	(\,			\delta u,\,	\nabla	\delta Q	\,)		\|_{	L^2_x					}\\
	&\lesssim
	(1+N)
	\|	\Delta 	Q_2													\|_{	L^2_x					}^2
	\|	(Q_2,\,\nabla  Q_2)											\|_{	L^2_x					}^2	
	\|	\Dd_q	(\,			\delta u,\,	\nabla	\delta Q	\,)		\|_{	L^2_x					}^2+
	C_\nu		
	\|	\Dd_{q'}		\nabla	\delta u							\|_{	L^2_x					}^2 +
	C_{\Gamma, L}	
	\|	\Dd_{q'}		\Delta	\delta Q							\|_{	L^2_x					}^2.
\end{align*}
Hence
\begin{equation}\label{I23q<N}
\begin{aligned}
	\sum_{q'\leq N}\sum_{| q'-q|\leq 5}2^{-q}\I^2_3(q,q')
	\lesssim
	(1+N)
	\|	\Delta 	Q_2													\|_{	L^2_x					}^2
	\|	(Q_2,\,\nabla  Q_2)											\|_{	L^2_x					}^2	&
	\|			(\,			\delta u,\,	\nabla	\delta Q	\,)		\|_{	\Hh^{-\frac{1}{2}}		}^2+ \\ &+
	C_\nu		
	\|					\nabla	\delta u							\|_{	\Hh^{-\frac{1}{2}}		}^2 +
	C_{\Gamma, L}	
	\|					\Delta	\delta Q							\|_{	\Hh^{-\frac{1}{2}}		}^2.
\end{aligned}
\end{equation}
For the high frequencies $q'>N$ we get,
\begin{equation}\label{ineq1b}
\begin{aligned}
	\I_3^2(q,q')
	&\lesssim
	\|\Sd_{q'-1} Q_2-\Sd_{q-1} Q_2													\|_{L^{\infty}_x			}
	\|\Dd_q(\,\Sd_{q'-1}Q_2\,\Dd_{q'}(\nabla\delta u,\,\Delta \delta Q)\,)\,\}		\|_{L^{2}_x					}
	\|\Dd_q(\,\nabla \delta u,\,\Delta \delta Q)									\|_{L^2_x					}\\
	&\lesssim
	2^{-q}
	\|	\Sd_{q'-1}\Delta Q_2-\Sd_{q-1}\Delta Q_2					\|_{	L^2_x					}
	\|	\Sd_{q'-1}		  Q_2										\|_{	L^\infty_x				}	
	\|	\Dd_{q'}(\,	\nabla	\delta u,\,	\Delta	\delta Q	\,)		\|_{	L^2_x					}
	\|	\Dd_q	(\,	\nabla	\delta u,\,	\Delta	\delta Q	\,)		\|_{	L^2_x					}\\
	&\lesssim
	2^{\frac{q'-q}{2}}
	\|	\Delta Q_2														\|_{	L^2_x					}
	( 1 + \sqrt{q'-1} )
	\|		(Q_2,\,	\nabla  Q_2)								\|_{	L^2_x					}
	\|	\Dd_{q'}(\,		\delta u,\,	\nabla	\delta Q\,)					\|_{	L^2_x					}
	\|	(\,	\nabla	\delta u,\,	\Delta	\delta Q	\,)					\|_{	\Hh^{-\frac{1}{2}}		}\\
	&\lesssim
	( 1 + \sqrt{q'-1} )
	\|	\Delta 	Q_2														\|_{	L^2_x					}
	\|	(Q_2,\, \nabla  Q_2	)											\|_{	L^2_x					}
	\|	(\,		 	\delta u,\,	\		\delta Q	\,)					\|_{	L^2_x					}
	\|	(\,	\nabla	\delta u,\,	\Delta	\delta Q	\,)					\|_{	\Hh^{-\frac{1}{2}}		},
\end{aligned}
\end{equation}
therefore
\begin{equation}\label{I23q>N}
\begin{aligned}
	\sum_{q'>N}\sum_{|q-q'|\leq 5}2^{-q}\I_3^2&(q,q')
	\lesssim
	2^{-N}
	\|	\Delta 	Q_2														\|_{	L^2_x					}
	\|	\nabla  Q_2														\|_{	L^2_x					}
	\|	(\,		 	\delta u,\,	\		\delta Q	\,)					\|_{	L^2_x					}
	\|	(\,	\nabla	\delta u,\,	\Delta	\delta Q	\,)					\|_{	\Hh^{-\frac{1}{2}}		}\\
	&\lesssim
	2^{-2N}
	\|	\Delta 	Q_2														\|_{	L^2_x					}^2
	\|	\nabla  Q_2														\|_{	L^2_x					}^2
	\|	(\,		 	\delta u,\,	\		\delta Q	\,)					\|_{	L^2_x					}^2	+
	C_\nu		
	\|					\nabla	\delta u							\|_{	\Hh^{-\frac{1}{2}}		}^2 +
	C_{\Gamma, L}	
	\|					\Delta	\delta Q							\|_{	\Hh^{-\frac{1}{2}}		}^2.
\end{aligned}
\end{equation}
Summarizing \eqref{I23q<N} and \eqref{I23q>N}, we get
\begin{equation}\label{ineq4c}
\begin{aligned}
	\sum_{q'\in\ZZ}\sum_{| q'-q|\leq 5}2^{-q} \I^2_3(q,q')
	&\lesssim
	(1+N)
	\|	\Delta 	Q_2													\|_{	L^2_x					}^2
	\|	(Q_2,\,\nabla  Q_2)											\|_{	L^2_x					}^2	
	\|			(\,			\delta u,\,	\nabla	\delta Q	\,)			\|_{	\Hh^{-\frac{1}{2}}		}^2 +\\ &+
	2^{-2N}
	\|	\Delta 	Q_2													\|_{	L^2_x					}^2
	\|	\nabla  Q_2													\|_{	L^2_x					}^2
	\|	(\,		 	\delta u,\,	\		\delta Q	\,)					\|_{	L^2_x					}^2	+
	C_\nu		
	\|					\nabla	\delta u								\|_{	\Hh^{-\frac{1}{2}}		}^2 +
	C_{\Gamma, L}	
	\|					\Delta	\delta Q								\|_{	\Hh^{-\frac{1}{2}}		}^2.
\end{aligned}
\end{equation}
Now we define $N:=\lceil\ln\{e + 1/\Phi(t)\}/2\rceil$, obtaining
\begin{equation}\label{ineq4b}
\begin{aligned}
	\sum_{q\in\ZZ}&\sum_{| q'-q|\leq 5}2^{-q}\I^2_3(q,q')
	\lesssim
	\|	\Delta 	Q_2(t)												\|_{	L^2_x					}^2
	\|	(Q_2,\,\nabla  Q_2)	(t)										\|_{	L^2_x					}^2	
	\|			(\,			\delta u,\,	\nabla	\delta Q(t)	\,)		\|_{	\Hh^{-\frac{1}{2}}		}^2+
	C_\nu		
	\|					\nabla	\delta u(t)							\|_{	\Hh^{-\frac{1}{2}}		}^2 +\\ &+
	C_{\Gamma, L}	
	\|					\Delta	\delta Q(t)							\|_{	\Hh^{-\frac{1}{2}}		}^2+
	\|	\Delta 	Q_2(t)												\|_{	L^2_x					}^2
	\|	\nabla  Q_2(t)												\|_{	L^2_x					}^2
	\|	(\,		 	\delta u,\,	\		\delta Q	\,)(t)				\|_{	L^2_x					}^2
	\Big( 1+ \ln\Big(e +\frac{1}{\Phi(t)}\Big) \Big).
\end{aligned}
\end{equation}

\noindent
\underline{Estimate of $\J_q^3$} Now, let us deal with the term of \eqref{E1+E2} related to $i=3$, namely
\begin{equation}\label{Jq3}
\begin{aligned}
	\int_{\RR_2}
	\trc
	&\big\{
		\Sd_{q-1} Q_2\trc\{\Dd_{q}(Q_2\nabla \delta u)\}\Dd_q \Delta \delta Q 	-	 
		\Sd_{q-1} Q_2]\trc\{\Dd_{q}(Q_2\Delta \delta Q)\}\Dd_q \nabla \delta u	\,
	\big\}
	=\\&=
	\sum_{j=1}^4
	\int_{\RR_2}
	\trc
	\big\{\,
		\Sd_{q-1} Q_2\trc\{\J^j_{q'}(Q_2,\,\nabla \delta u)\}\Dd_q \Delta \delta Q 	-	 
		\Sd_{q-1} Q_2\trc\{\J^j_{q'}(Q_2,\,\Delta \delta Q)\}\Dd_q \nabla \delta u	\,
	\big\}.
\end{aligned}
\end{equation}
Let us consider $j=1$ and define
\begin{align*}
	\I_1^3(q,q')&:=\int_{\RR_2}
	\trc
	\Big\{
		\Sd_{q-1} Q_2\trc\{[\Dd_{q},\Sd_{q'-1}Q_2]\Dd_{q'}\nabla \delta u)\}\Dd_q \Delta \delta Q-	
		\Sd_{q-1} Q_2\trc\{[\Dd_{q},\Sd_{q'-1}Q_2]\Dd_{q'}\Delta \delta Q)\}\Dd_q \nabla \delta u	
	\Big\}.
\end{align*}
 We proceed as for proving \eqref{ineq2d}: we fix a positive real $\ee\in (0,5/6]$ and  we consider the low frequencies $q\leq N$, for a suitable positive $N\geq 1$.
\begin{align*}
	\I^3_1(q,q')&=\int_{\RR_2}
	\trc
	\Big\{
	\Sd_{q-1}Q_2\trc\{[\Dd_q,\,\Sd_{q'-1} Q_2]\,\Dd_{q'}\nabla\delta u\}\Dd_q \Delta\delta Q 
		-
	\Sd_{q-1}Q_2\trc\{[\Dd_q,\,\Sd_{q'-1} Q_2]\,\Dd_{q'}\Delta\delta Q\}\Dd_q \nabla\delta u	
	\Big\}\\
	&\lesssim
	2^{-q'}
	\|	\Sd_{q-1}		  Q_2										\|_{	L^\infty_x				}	
	\|	\Sd_{q'-1}  \nabla Q_2										\|_{	L^{\frac{2}{\ee}}_x		}
	\|	\Dd_{q'}(\,	\nabla	\delta u,\,	\Delta	\delta Q	\,)		\|_{	L^{\frac{2}{1-\ee}}_x	}
	\|	\Dd_q	(\,	\nabla	\delta u,\,	\Delta	\delta Q	\,)		\|_{	L^2_x					}\\
	&\lesssim
	(1+\sqrt{N})
	2^{q-q'}
	\|				(Q_2,\,\nabla Q_2)							\|_{	L^2_x					}
	\|	\Sd_{q'-1} 	\nabla Q_2									\|_{	L^{\frac{2}{\ee}}_x		}
	\|	\Dd_{q'}(\,			\delta u,\,	\nabla	\delta Q	\,)		\|_{	L^{\frac{2}{1-\ee}}_x	}
	\|	\Dd_q	(\,	\nabla 	\delta u,\,	\Delta	\delta Q	\,)		\|_{	L^2_x					}\\
	&\lesssim
	\sqrt{\frac{N}{\ee}}
	\|				(Q_2,\,\nabla Q_2)								\|_{	L^2_x				}
	\|	\Sd_{q'-1} 	\nabla Q_2										\|_{	L^2_x				}^\ee
	\|	\Sd_{q'-1} 	\Delta Q_2										\|_{	L^2_x				}^{1-\ee}
	{\scriptstyle \times}\\ &\quad\quad\quad\quad\quad\quad\quad\quad\quad\quad\quad\quad	{\scriptstyle \times}
	\|	\Dd_{q'}(\,			\delta u,\,	\nabla	\delta Q	\,)		\|_{	L^2_x				}^{1-\ee}
	\|	\Dd_{q'}(\,	\nabla 	\delta u,\,	\Delta	\delta Q	\,)		\|_{	L^2_x				}^{\ee}
	\|	\Dd_q	(\,	\nabla 	\delta u,\,	\Delta	\delta Q	\,)		\|_{	L^2_x				},
\end{align*}
which is equivalent to the last inequality of \eqref{ineq2d}. Hence, arguing as for proving \eqref{ineq2b}, we get

\begin{align*}
	&\sum_{q\in\ZZ}\sum_{|q-q'|\leq 5}2^{-q}\I^3_1(q,q')\\
	&\lesssim
	\max\big\{
	\|			 	(Q_2,\,\nabla Q_2)								\|_{	L^2_x				}^{6}, 1\big\}
	\|			 	(\nabla Q_2,\, \Delta Q_2)						\|_{	L^2_x				}^2
	\|			(\,			\delta u,\,	\nabla	\delta Q	\,)		\|_{\Hh^{-\frac{1}{2}}		}^2
	\ln\Big( 1+e + \frac{1}{\Phi(t)}\Big)\big(1 + \ln \ln\Big(1+e + \frac{1}{\Phi(t)}\Big)\Big)
	 + \\&+
	\|			(\nabla Q_2,\,\Delta Q_2)												\|_{L^2_x				}^2
	\|			(Q_2,\,	\nabla  Q_2)														\|_{L^2_x				}^2
	\|			(	 u_1,\,u_2,\,\nabla	Q_1,\, \nabla Q_2	)							\|_{L^2_x				}^2
	\Phi(t)+
	C_\nu		
	\|					\nabla	\delta u						\|_{\Hh^{-\frac{1}{2}}}^2 +
	C_{\Gamma, L}
	\|					\Delta	\delta Q						\|_{\Hh^{-\frac{1}{2}}}^2,
\end{align*}

Further on, when $j=2$ in \eqref{Jq3}, let us consider the low frequencies $q\leq N$:
\begin{align*}
	\I^3_2(q,q')&:=\int_{\RR_2}
	\trc
	\Big\{
		\Sd_{q-1} Q_2\trc\{(\Sd_{q'-1}Q_2-\Sd_{q-1}Q_2)\,\Dd_{q}\Dd_{q'}\nabla\delta u)\}\Dd_q \Delta \delta Q+\\&
		\quad\quad\quad\quad\quad\quad\quad\quad\quad\quad\quad-	
		\Sd_{q-1} Q_2\trc\{(\Sd_{q'-1}Q_2-\Sd_{q-1}Q_2)\,\Dd_{q'}\Dd_{q'}\Delta\delta Q)\}\Dd_q \nabla \delta u	
	\Big\}\\
	&\lesssim
	\|	\Sd_{q-1}		  Q_2												\|_{	L^\infty_x				}
	\|	\Sd_{q'-1} Q_2-\Sd_{q-1} Q_2										\|_{	L^\infty_x				}		
	\|	\Dd_{q}\Dd_{q'}(\,	\nabla	\delta u,\,	\Delta	\delta Q	\,)		\|_{	L^2_x					}
	\|	\Dd_q	(\,	\nabla	\delta u,\,	\Delta	\delta Q	\,)				\|_{	L^2_x					}\\
	&\lesssim
	\|	\Sd_{q-1}		  Q_2												\|_{	L^\infty_x				}
	\|	\Sd_{q'-1}\Delta Q_2-\Sd_{q-1}\Delta Q_2							\|_{	L^2_x					}		
	\|	\Dd_{q'}		(\,	\nabla	\delta u,\,	\Delta	\delta Q	\,)		\|_{	L^2_x					}
	\|	\Dd_q			(\,			\delta u,\,	\nabla	\delta Q	\,)		\|_{	L^2_x					},
\end{align*}
which is as the last inequalities of \eqref{ineq1} (recalling that $q\sim q'$). Moreover for the high frequencies $q>N$
\begin{align*}
	\I^3_2(q,q')
	&\lesssim
	\|	\Sd_{q-1}		  Q_2												\|_{	L^\infty_x				}
	\|	\Sd_{q'-1} Q_2-\Sd_{q-1} Q_2										\|_{	L^\infty_x				}		
	\|	\Dd_{q}\Dd_{q'}(\,	\nabla	\delta u,\,	\Delta	\delta Q	\,)		\|_{	L^2_x					}
	\|	\Dd_q	(\,	\nabla	\delta u,\,	\Delta	\delta Q	\,)				\|_{	L^2_x					}\\
	&\lesssim
	(1+\sqrt{q-1})
	\|	(Q_2,\,\nabla  Q_2)												\|_{	L^2_x					}
	2^{-q}
	\|	\Sd_{q'-1} \Delta Q_2-\Sd_{q-1}\Delta Q_2							\|_{	L^2_x					}{\scriptstyle \times} \\ &
	\quad\quad\quad\quad\quad\quad\quad\quad\quad\quad\quad\quad\quad\quad\quad\quad\quad\quad {\scriptstyle \times}		
	\|	\Dd_q	(\,	\nabla	\delta u,\,	\Delta	\delta Q	\,)				\|_{	L^2_x					}
	\|	\Dd_q	(\,	\nabla	\delta u,\,	\Delta	\delta Q	\,)				\|_{	L^2_x					}\\
	&\lesssim
	(1+\sqrt{q-1})
	\|	\Delta 	Q_2														\|_{	L^2_x					}
	\|	(Q_2,\,\nabla  Q_2)												\|_{	L^2_x					}
	\|	(\,		 	\delta u,\,	\nabla	\delta Q	\,)					\|_{	L^2_x					}
	\|	(\,	\nabla	\delta u,\,	\Delta	\delta Q	\,)					\|_{	\Hh^{-\frac{1}{2}}		},
\end{align*}

which is the equivalent to the last inequality \eqref{ineq1b}. Hence, arguing as for proving \eqref{ineq4b}, we get
\begin{align*}
	\sum_{q\in\ZZ}&\sum_{| q'-q|\leq 5}2^{-q}\I^3_2(q,q')
	\lesssim
	\|	\Delta 	Q_2(t)												\|_{	L^2_x					}^2
	\|	(Q_2,\,\nabla  Q_2)	(t)										\|_{	L^2_x					}^2	
	\|			(\,			\delta u,\,	\nabla	\delta Q(t)	\,)		\|_{	\Hh^{-\frac{1}{2}}		}^2+
	C_\nu		
	\|					\nabla	\delta u(t)							\|_{	\Hh^{-\frac{1}{2}}		}^2 +\\ &+
	C_{\Gamma, L}	
	\|					\Delta	\delta Q(t)							\|_{	\Hh^{-\frac{1}{2}}		}^2+
	\|	\Delta 	Q_2(t)												\|_{	L^2_x					}^2
	\|	\nabla  Q_2(t)												\|_{	L^2_x					}^2
	\|	(\,		 	\delta u,\,	\		\delta Q	\,)(t)			\|_{	L^2_x					}^2\Phi(t)
	\big( 1+ \ln\Big(1+e + \frac{1}{\Phi(t)}\Big) \big).
\end{align*}

Now, when $j=3$ in \eqref{Jq3}, we observe that
\begin{equation*}
	\I^3_3(q):=\int_{\RR_2}
		\Big\{
		\trc\{\,\Sd_{q-1}Q_2\Dd_q\nabla \delta u\,\}\trc\{\,\Sd_{q-1} Q_2\Dd_q \Delta \delta Q\,\}-	
		\trc\{\,\Sd_{q-1}Q_2\Dd_q\Delta \delta Q\,\}\trc\{\,\Sd_{q-1} Q_2\Dd_q \nabla \delta u\,\}	
	\Big\}=0,
\end{equation*}
for any $q\in \ZZ$.
\par Thus it remains to control the $j=4$ term, namely
\begin{align*}
	\I^3_4(q,q')&:=\int_{\RR_2}
		\Big\{
		\Sd_{q-1} Q_2\trc\{(\Dd_q(\,\Dd_{q'}Q_2\Sd_{q'+2}\nabla\delta u\,)\,\}\Dd_q \Delta \delta Q-
		\Sd_{q-1} Q_2\trc\{(\Dd_q(\,\Dd_{q'}Q_2\Sd_{q'+2}\Delta\delta Q\,)\,\}\Dd_q \nabla \delta u	
	\Big\}\\
	&\lesssim
	\|	\Sd_{q-1} Q_2															\|_{L^\infty_x}
	\|	\Dd_q(\,\Dd_{q'}Q_2\Sd_{q'+2}(\nabla\delta u,\,\Delta \delta Q)\,)		\|_{L^2_x}
	\|	\Dd_q (\nabla \delta u	,\,\Delta \delta Q)								\|_{L^2_x}.
\end{align*}
At first let us consider the low frequencies $q\leq N$, with $N>1$:
\begin{align*}
	\I^3_4(q,q')
	&\lesssim
	(1+\sqrt{N})
	\|	(Q_2,\, \nabla Q_2)														\|_{L^2_x}
	\|	\Dd_{q'}Q_2																\|_{L^\infty_x}
	\|	\Sd_{q'+2}(\nabla\delta u,\, \Delta \delta Q)							\|_{L^2_x}
	\|	\Dd_q (\nabla \delta u	,\,\Delta \delta Q)								\|_{L^2_x}\\
	&\lesssim
	(1+\sqrt{N})
	\|			(Q_2,\,\nabla Q_2)												\|_{L^2_x}
	2^{-q'}
	\|	\Dd_{q'}	\Delta Q_2													\|_{L^2_x}
	\|	\Sd_{q'+2}(\nabla\delta u,\, \Delta \delta Q)							\|_{L^2_x}
	\|	\Dd_q (\nabla \delta u	,\,\Delta \delta Q)								\|_{L^2_x}\\
	&\lesssim
	(1+\sqrt{N})
	\|				\nabla Q_2													\|_{L^2_x}
	\|	\Dd_{q'}	\Delta Q_2													\|_{L^2_x}
	\|	\Sd_{q'+2}(\delta u,\, \nabla \delta Q)									\|_{L^2_x}
	\|	\Dd_q (\nabla \delta u	,\,\Delta \delta Q)								\|_{L^2_x}\\
	&\lesssim
	(1+\sqrt{N})
	2^{\frac{\,q'}{2}}
	\|				\nabla Q_2													\|_{L^2_x}
	\|	\Dd_{q'}	\Delta Q_2													\|_{L^2_x}
	\|	(\delta u,\, \nabla \delta Q)											\|_{\Hh^{-\frac{1}{2}}}
	\|	\Dd_q (\nabla \delta u	,\,\Delta \delta Q)								\|_{L^2_x},
\end{align*}
which yields
\begin{align*}
	\sum_{q\leq N}&\sum_{q'\geq q-5}2^{-q}\I^3_4(q,q')\\
	&\lesssim 
	(1+\sqrt{N})
	\|			(Q_2,\,\nabla Q_2)												\|_{L^2_x}
	\|	(\delta u,\, \nabla \delta Q)											\|_{\Hh^{-\frac{1}{2}}}
	\sum_{q\leq N}\sum_{q'\geq q-5}2^{\frac{\,q'}{2}-q}
	\|	\Dd_{q'}	\Delta Q_2													\|_{L^2_x}
	\|	\Dd_q (\nabla \delta u	,\,\Delta \delta Q)								\|_{L^2_x}\\
	&\lesssim 
	(1+\sqrt{N})
	\|			(Q_2,\,\nabla Q_2)												\|_{L^2_x}
	\|	(		\delta u,\, \nabla \delta Q)									\|_{\Hh^{-\frac{1}{2}}}
	\sum_{q\in  \ZZ}
	2^{-\frac{q}{2}}
	\|	\Dd_q(\nabla \delta u,\,	\Delta \delta Q)								\|_{L^2_x}
	\sum_{q'\geq q-5}2^{\frac{\,q'-q}{2}}
	\|	\Dd_{q'}	\Delta Q_2													\|_{L^2_x}\\
	&\lesssim
	(1+\sqrt{N})
	\|			(Q_2,\,\nabla Q_2)												\|_{L^2_x}
	\|	(		\delta u,\, \nabla \delta Q)									\|_{\Hh^{-\frac{1}{2}}}{\scriptstyle{\times}}\\
	&\hspace{5cm}{\scriptstyle{\times}}
	\Big(
	\sum_{q'\in\ZZ}
	\Big|
		\sum_{q\in\ZZ}
		2^{\frac{q-q'}{2}}1_{(-\infty, 5]}(q-q')
		\|		\Dd_q		\Delta Q_2										\|_{L^2_x}
	\Big|^2
	\Big)^\frac{1}{2}
	\|	(\nabla \delta u,\,	\Delta \delta Q)									\|_{\Hh^{-\frac{1}{2}}},	\\
\end{align*}
thus by convolution
\begin{align*}
	\sum_{q\leq N}&\sum_{q'\geq q-5}2^{-q}\I^3_4(q,q')
	(1+\sqrt{N})
	\|			(Q_2,\,\nabla Q_2)												\|_{L^2_x}
	\|				\Delta Q_2													\|_{L^2_x}
	\|	(		\delta u,\, \nabla \delta Q)									\|_{\Hh^{-\frac{1}{2}}}
	\|	(\nabla	\delta u,\, \Delta \delta Q)									\|_{\Hh^{-\frac{1}{2}}}
	\\
	&\lesssim
	(1+N)
	\|			(Q_2,\,\nabla Q_2)												\|_{L^2_x}^2
	\|				\Delta Q_2													\|_{L^2_x}^2
	\|	(		\delta u,\, \nabla \delta Q)									\|_{\Hh^{-\frac{1}{2}}}^2+
	C_\nu		
	\|					\nabla	\delta u							\|_{	\Hh^{-\frac{1}{2}}		}^2 +
	C_{\Gamma, L}	
	\|					\Delta	\delta Q							\|_{	\Hh^{-\frac{1}{2}}		}^2.
\end{align*}
For the high frequencies, $q>N$, 
\begin{align*}
	&\sum_{q\geq N}\sum_{q'\geq q-5}2^{-q}\I^3_2(q,q')\lesssim\\
	&\lesssim 
	\sum_{q\geq N}\sum_{q'\geq q-5}
	2^{-q}
	(1+\sqrt{q-1})
	\|		\Sd_{q-1}(Q_2,\,	\nabla Q_2)										\|_{L^2_x}
	\|			\Dd_{q'}		 Q_2											\|_{L^\infty_x}
	\|	\Sd_{q'+2}(\nabla \delta u,\, \Delta \delta Q)							\|_{L^2_x}
	\|	\Dd_q (\nabla \delta u	,\,\Delta \delta Q)								\|_{L^2_x}\\
&\lesssim 
	\|		(Q_2,\,	\nabla Q_2)													\|_{L^2_x}
	\|	 (\nabla \delta u	,\,\Delta \delta Q)									\|_{L^2_x}
	\sum_{q\geq N}
	2^{-\frac{q}{2}}
	(1+\sqrt{q})
	\sum_{q'\geq q-5}
	2^{\frac{q'-q}{2}}	
	\|			\Dd_{q'} \nabla		 Q_2										\|_{L^2_x}
	2^{-\frac{q'}{2}}
	\|	\Sd_{q'+2}(\nabla \delta u,\, \Delta \delta Q)							\|_{L^2_x}\\
	&\lesssim 
	\|		(Q_2,\,	\nabla Q_2)													\|_{L^2_x}
	\|	 (\nabla \delta u	,\,\Delta \delta Q)									\|_{L^2_x}
	\|		(\nabla \delta u,\, \Delta \delta Q)								\|_{\Hh^{-\frac{1}{2}}}
	2^{-\frac{N}{2}}
	\big(
	\sum_{q\in\ZZ}
	\big|
	\sum_{q'\geq q-5}
	2^{\frac{q'-q}{2}}	
	\|			\Dd_{q'} \nabla		 Q_2										\|_{L^2_x}
	\big|^2
	\big)^{\frac{1}{2}},
\end{align*}
so that, by convolution
\begin{equation*}
	\sum_{q\geq N}\sum_{q'\geq q-5}2^{-q}\I^3_2(q,q')\lesssim
	2^{-N}
	\|		(Q_2,\,	\nabla Q_2)													\|_{L^2_x}
	\|					 \nabla		 Q_2										\|_{L^2_x}
	\|	 (\nabla \delta u	,\,\Delta \delta Q)									\|_{L^2_x}
	\|		(\nabla \delta u,\, \Delta \delta Q)								\|_{\Hh^{-\frac{1}{2}}}.
\end{equation*}
Summarizing, we get
\begin{align*}
	\sum_{q\in\ZZ}\sum_{q'\geq q-5}&2^{-q}\I^3_2(q,q')\lesssim 
	(1+N)
	\|			(Q_2,\,\nabla Q_2)												\|_{L^2_x}^2
	\|				\Delta Q_2													\|_{L^2_x}^2
	\|	(		\delta u,\, \nabla \delta Q)									\|_{\Hh^{-\frac{1}{2}}}^2+
	C_\nu		
	\|					\nabla	\delta u							\|_{	\Hh^{-\frac{1}{2}}		}^2 +\\&+
	C_{\Gamma, L}	
	\|					\Delta	\delta Q							\|_{	\Hh^{-\frac{1}{2}}		}^2 +
	2^{-2N}
	\|		(Q_2,\,	\nabla Q_2)													\|_{L^2_x}^2
	\|					 \nabla		 Q_2										\|_{L^2_x}^2
	\|	 (\nabla \delta u	,\,\Delta \delta Q)									\|_{L^2_x}^2,
\end{align*}
which is similar to \eqref{ineq4c}, hence we can conclude as in \eqref{ineq4b}.

\noindent
\underline{Estimate of $\J_q^4$} 
Now, we handle the last term of \eqref{E1+E2}, which is related to $i=4$, namely
\begin{equation}\label{Jq4}
\begin{aligned}
	&\sum_{q'\geq q- 5}
	\int_{\RR_2}
	\trc
	\big\{
		\Dd_q\big[\,\Dd_{q'} Q_2\trc\{\Sd_{q'+2}(Q_2\nabla \delta u)\}\,\big]\Dd_q \Delta \delta Q  -
		 \Dd_q\big[\,\Dd_{q'} Q_2\trc\{\Sd_{q'+2}(Q_2\Delta \delta Q)\}\,\big]\Dd_q \nabla \delta u 
	\big\}
	=\\ &=
	\sum_{q'\leq q-5}\;\sum_{q''\leq q'+1}
	\int_{\RR_2}
	\trc
	\big\{
		\Dd_q\big[\,\Dd_{q'} Q_2\trc\{\Dd_{q'}(Q_2\nabla \delta u)\}\,\big]\Dd_q \Delta \delta Q -
		\Dd_q\big[\,\Dd_{q'} Q_2\trc\{\Dd_{q'}(Q_2\Delta \delta Q)\}\,\big]\Dd_q \nabla \delta u
	\big\}\\
	&=	
	\sum_{j=1}^4\sum_{q'\leq q-5}\;\sum_{q''\leq q'+1}
	\int_{\RR_2}
	\trc
	\big\{
		\Dd_q\big[\,\Dd_{q'} Q_2\trc\{\J_{q''}^j(Q_2,\,\nabla \delta u)\}\,\big]\Dd_q \Delta \delta Q - \\&
	\quad\quad\quad\quad\quad\quad\quad\quad\quad\quad\quad\quad\quad\quad\quad\quad\quad\quad\quad\quad\quad\quad\quad\quad-	
		\Dd_q\big[\,\Dd_{q'} Q_2\trc\{\J_{q''}^j(Q_2,\,\Delta \delta Q)\}\,\big]\Dd_q \nabla \delta u 
	\big\}.
\end{aligned}
\end{equation}

\noindent First, we consider the term related to $j=1$, that is
\begin{equation}\label{ineq5}
\begin{aligned}
	\I_{1}^4(q, q', q'', q''') 
	:&= 
	\int_{\RR_2}
	\trc
	\Big\{\;
	\Dd_q\big[\,\Dd_{q'} Q_2\trc\{[\Dd_{q''},\,\Sd_{q'''-1}Q_2]\,\Dd_{q'''}\nabla \delta u\}\,\big]\Dd_q \Delta \delta Q\, +\\&
	\quad\quad\quad\quad\quad\quad\quad\quad\quad\quad\quad\quad-
	\Dd_q\big[\,\Dd_{q'} Q_2\trc\{[\Dd_{q''},\,\Sd_{q'''-1}Q_2]\,\Dd_{q''}\Delta \delta Q\}\,\big]\Dd_q \nabla \delta u\;
	\Big\}\\
	&\lesssim
	\|		\Dd_q	\big[\,
						\Dd_{q'} Q_2
						\trc\{	[\Dd_{q''},\,S_{q'''-1}Q_2]\,\Dd_{q'''}(\nabla \delta u,\,\Delta \delta Q)	\,\}\,
					\big]
	\|_{L^2_x}
	\|	\Dd_q	(\nabla	\delta	u,\,	\Delta	\delta	Q	)		\|_{L^2_x}\\
	&\lesssim
	2^q
	\|		\Dd_q	\big[\,
						\Dd_{q'} Q_2
						\trc\{	[\Dd_{q''},\,S_{q'''-1}Q_2]\,\Dd_{q'''}(\nabla \delta u,\,\Delta \delta Q)	\,\}\,
					\big]
	\|_{L^1_x}
	\|	\Dd_q	(\nabla	\delta	u,\,	\Delta	\delta	Q	)		\|_{L^2_x		}\\
	&\lesssim
	2^{q-q''}
	\|		\Dd_{q'} 				Q_2								\|_{L^\infty_x	}
	\|		\Sd_{q'''-1}	\nabla	Q_2								\|_{L^2_x		}
	\|		\Dd_{q'''}(\nabla \delta u,\,\Delta \delta Q)			\|_{L^2_x		}
	\|	\Dd_q	(\nabla	\delta	u,\,	\Delta	\delta	Q	)		\|_{L^2_x		}\\
	&\lesssim
	2^{q-q'-q''}
	\|		\Dd_{q'} 		\Delta	Q_2								\|_{L^2_x		}
	\|		\Sd_{q'''-1}	\nabla	Q_2								\|_{L^2_x		}
	\|		\Dd_{q'''}(\nabla \delta u,\,\Delta \delta Q)			\|_{L^2_x		}
	\|	\Dd_q	(\nabla	\delta	u,\,	\Delta	\delta	Q	)		\|_{L^2_x		}\\
	&\lesssim
	2^{q-q'-q''+q'''}
	\|						\Delta	Q_2								\|_{L^2_x		}
	\|						\nabla	Q_2								\|_{L^2_x		}
	\|		\Dd_{q'''}(		 \delta u,\,\nabla \delta Q)			\|_{L^2_x		}
	\|	\Dd_q	(\nabla	\delta	u,\,	\Delta	\delta	Q	)		\|_{L^2_x		}.
\end{aligned}
\end{equation}
Hence, taking the sum in $q$, $q'$, $q''$ and $q'''$ (and observing that $|q''-q'''|\leq 5$), we get
\begin{equation}\label{ineq5b}
\begin{aligned}
	&\sum_{q\in\ZZ}\,\sum_{q'\geq q-5}\,\sum_{q''\leq q'+1}\,\sum_{|q'''-q''|\leq 5}	2^{-q}\I_{1}^4(q, q', q'', q''') 
	\lesssim\\
	&\lesssim
	\|						\nabla	Q_2								\|_{L^2_x				}
	\|				(\nabla	 \delta	u,\,\Delta	\delta	Q	)		\|_{\Hh^{-\frac{1}{2}}	}
	\sum_{q,\,q',\,q''}
	2^{\frac{q}{2}-q'}
	\|		\Dd_{q'} 		\Delta	Q_2								\|_{L^2_x		}
	\|		\Dd_{q''}(		 \delta u,\,\nabla \delta Q)			\|_{L^2_x		}\\
	&\lesssim
	\|						\nabla	Q_2								\|_{L^2_x				}
	\|				(\nabla	 \delta	u,\,\Delta	\delta	Q	)		\|_{\Hh^{-\frac{1}{2}}	}
	\sum_{q\in\ZZ}
	2^{\frac{q}{2}}
	\sum_{q'\geq q-5}
	2^{-q'}
	\sum_{q''\leq q'+1}
	2^{\frac{q''}{2}}
	\|		\Dd_{q'} 		\Delta	Q_2								\|_{L^2_x		}
	2^{-\frac{q''}{2}}
	\|		\Dd_{q''}(		 \delta u,\,\nabla \delta Q)			\|_{L^2_x		}\\
	&\lesssim
	\|						\nabla	Q_2								\|_{L^2_x				}
	\|				(\nabla	 \delta	u,\,\Delta	\delta	Q	)		\|_{\Hh^{-\frac{1}{2}}	}
	\sum_{q''\in\ZZ}
	2^{-\frac{q''}{2}}
	\|		\Dd_{q''}(		 \delta u,\,\nabla \delta Q)			\|_{L^2_x		}
	\sum_{q'\geq q''+1}
	2^{\frac{q''}{2}-q'}
	\|		\Dd_{q'} 		\Delta	Q_2								\|_{L^2_x		}
	\sum_{q\leq q'+5}
	2^{\frac{q}{2}}\\
	&\lesssim
	\|						\nabla	Q_2								\|_{L^2_x				}
	\|				(\nabla	 \delta	u,\,\Delta	\delta	Q	)		\|_{\Hh^{-\frac{1}{2}}	}
	\sum_{q''\in\ZZ}
	2^{-\frac{q''}{2}}
	\|		\Dd_{q''}(		 \delta u,\,\nabla \delta Q)			\|_{L^2_x		}
	\sum_{q'\geq q''+1}
	2^{\frac{q''}{2}-\frac{q'}{2}}
	\|		\Dd_{q'} 		\Delta	Q_2								\|_{L^2_x		}\\
	&\lesssim
	\|						\nabla	Q_2								\|_{L^2_x				}
	\|				(\nabla	 \delta	u,\,\Delta	\delta	Q	)		\|_{\Hh^{-\frac{1}{2}}	}
	\|				(		 \delta	u,\,\nabla	\delta	Q	)		\|_{\Hh^{-\frac{1}{2}}	}
	\Big(
		\sum_{q''\in\ZZ}
		\Big|
			\sum_{q'\geq q''+1}
			2^{\frac{q''}{2}-\frac{q'}{2}}
			\|		\Dd_{q'} 		\Delta	Q_2								\|_{L^2_x		}
		\Big|^2\;
	\Big)^{\frac{1}{2}}\\
	&\lesssim
	\|						\nabla	Q_2								\|_{L^2_x				}
	\|						\Delta	Q_2								\|_{L^2_x				}	
	\|				(\nabla	 \delta	u,\,\Delta	\delta	Q	)		\|_{\Hh^{-\frac{1}{2}}	}
	\|				(		 \delta	u,\,\nabla	\delta	Q	)		\|_{\Hh^{-\frac{1}{2}}	}\\
	&\lesssim
	\|						\nabla	Q_2								\|_{L^2_x				}^2
	\|						\Delta	Q_2								\|_{L^2_x				}^2
	\|				(		 \delta	u,\,\nabla	\delta	Q	)		\|_{\Hh^{-\frac{1}{2}}	}^2+
	C_\nu
	\|				\nabla	\delta	u								\|_{\Hh^{-\frac{1}{2}}	}^2+
	C_{\Gamma, L}
	\|				\Delta	\delta	Q								\|_{\Hh^{-\frac{1}{2}}	}^2.
\end{aligned}
\end{equation}

\noindent When $j=2$ in \eqref{Jq4}, we observe that
\begin{align*}
	\I_{2}^4(q, q', &q'', q''') 
	:= 
	\int_{\RR_2}
	\trc
	\Big\{\;
	\Dd_q	\big[\,\Dd_{q'} Q_2
				\trc\{(\Sd_{q'''-1}Q_2-\Sd_{q''-1}Q_2)\,\Dd_{q''}\Dd_{q'''}\nabla \delta u\}\,
			\big]\Dd_q \Delta \delta Q\, -\\&
	\quad\quad\quad\quad\quad\quad\quad\quad\quad\quad-
	\Dd_q	\big[\,\Dd_{q'} Q_2
				\trc\{(\Sd_{q'''-1}Q_2-\Sd_{q''-1}Q_2)\,\Dd_{q''}\Dd_{q'''}\Delta \delta Q\}\,
			\big]\Dd_q \nabla \delta u\;
	\Big\}\\
	&\lesssim
	\|	\Dd_q	\big[\,
					\Dd_{q'} Q_2
					\trc\{(\Sd_{q'''-1}Q_2-\Sd_{q''-1}Q_2)\,\Dd_{q''}\Dd_{q'''}(\nabla \delta u,\,\Delta \delta Q)	\,\}\,
				\big]
	\|_{L^2_x}
	\|	\Dd_q	(\nabla	\delta	u,\,	\Delta	\delta	Q	)		\|_{L^2_x}\\
	&\lesssim
	2^q
	\|	\Dd_q	\big[\,
					\Dd_{q'} Q_2
					\trc\{(\Sd_{q'''-1}Q_2-\Sd_{q''-1}Q_2)\,\Dd_{q''}\Dd_{q'''}(\nabla \delta u,\,\Delta \delta Q)	\,\}\,
				\big]
	\|_{L^1_x}
	\|	\Dd_q	(\nabla	\delta	u,\,	\Delta	\delta	Q	)		\|_{L^2_x		}\\
	&\lesssim
	2^{q}
	\|		\Dd_{q'} 				Q_2								\|_{L^\infty_x	}
	\|		(\Sd_{q'''-1}Q_2-\Sd_{q''-1}Q_2)						\|_{L^2_x		}
	\|	\Dd_{q''}\Dd_{q'''}(\nabla \delta u,\,\Delta \delta Q)		\|_{L^2_x		}
	\|	\Dd_q	(\nabla	\delta	u,\,	\Delta	\delta	Q	)		\|_{L^2_x		}\\
	&\lesssim
	2^{q-q'-q''}
	\|		\Dd_{q'} 		\Delta	Q_2								\|_{L^2_x		}
	\|		(\Sd_{q'''-1}\nabla Q_2-\Sd_{q''-1}\nabla Q_2)			\|_{L^2_x		}
	\|		\Dd_{q''}(\nabla \delta u,\,\Delta \delta Q)			\|_{L^2_x		}
	\|	\Dd_q	(\nabla	\delta	u,\,	\Delta	\delta	Q	)		\|_{L^2_x		}\\
	&\lesssim
	2^{q-q'}
	\|		\Dd_{q'} 		\Delta	Q_2								\|_{L^2_x		}
	\|						\nabla	Q_2								\|_{L^2_x		}
	\|		\Dd_{q''}	( 		 \delta u,\,\nabla \delta Q )		\|_{L^2_x		}
	\|		\Dd_q	  (\nabla \delta	u,\,\Delta \delta Q	)		\|_{L^2_x		},
\end{align*}
which is equivalent to the last inequality of \eqref{ineq5} (since $|q''-q'''|\leq 5$). Hence, arguing as for proving \eqref{ineq5b}, the 
following estimate holds:
\begin{align*}
	\sum_{q\in\ZZ}\,\sum_{q'\geq q-5}\,\sum_{q''\leq q'+1}\,&\sum_{|q'''-q''|\leq 5}	2^{-q}\I_{2}^4(q, q', q'', q''') 
	\lesssim\\
	&\lesssim
	\|						\nabla	Q_2								\|_{L^2_x				}^2
	\|						\Delta	Q_2								\|_{L^2_x				}^2
	\|				(		 \delta	u,\,\nabla	\delta	Q	)		\|_{\Hh^{-\frac{1}{2}}	}^2+
	C_\nu
	\|				\nabla	\delta	u								\|_{\Hh^{-\frac{1}{2}}	}^2+
	C_{\Gamma, L}
	\|				\Delta	\delta	Q								\|_{\Hh^{-\frac{1}{2}}	}^2.
\end{align*}

\noindent Now, let us analyze the term in \eqref{Jq4} related to $j=3$. Assuming $q''\leq N$ for a suitable positive $N$, we get  
\begin{align*}
	\I_{3}^4&(q, q', q'') 
	:= \int_{\RR_2}
	\trc
	\big\{
	\Dd_q\big[\,\Dd_{q'} Q_2\trc\{\Sd_{q''-1}Q_2\Dd_{q''}\nabla \delta u\}\big]\Dd_q \Delta \delta Q -
	\Dd_q\big[\,\Dd_{q'} Q_2\trc\{\Sd_{q''-1}Q_2\Dd_{q''}\Delta \delta Q\}\big]\Dd_q \nabla \delta u 
	\big\}\\
	&\lesssim	
	\|\Dd_q\big[\,\Dd_{q'} Q_2\trc\{\Sd_{q''-1}Q_2\,\Dd_{q''}(\nabla \delta u,\, \Delta \delta Q	)\}\,\big]			\|_{L^2_x}
	\|								\Dd_q (\nabla \delta u,\, \Delta \delta Q	)									\|_{L^2_x}\\
	&\lesssim	
	2^q
	\|\Dd_q\big[\,\Dd_{q'} Q_2\trc\{\Sd_{q''-1}Q_2\,\Dd_{q''}(\nabla \delta u,\, \Delta \delta Q)\}\,\big]			\|_{L^1_x}
	\|								\Dd_q (\nabla \delta u,\, \Delta \delta Q	)									\|_{L^2_x}\\
	&\lesssim	
	2^{q}
	\|		\Dd_{q'} Q_2																							\|_{L^2_x}
	\|		\Sd_{q''-1}Q_2																							\|_{L^\infty_x}
	\|		\Dd_{q''}(\nabla \delta u,\, \Delta \delta Q )															\|_{L^2_x}
	\|		\Dd_q	(\nabla \delta u,\, \Delta \delta Q	)															\|_{L^2_x}\\
	&\lesssim
	2^{q}
	\|		\Dd_{q'} Q_2																							\|_{L^2_x}
	(1+\sqrt{N})
	\|		(Q_2,\,	\nabla Q_2)																						\|_{L^2_x}
	\|		\Dd_{q''}(\nabla \delta u,\, \Delta \delta Q	)														\|_{L^2_x}
	\|		\Dd_q	(\nabla \delta u,\, \Delta \delta Q	)															\|_{L^2_x}\\
	&\lesssim
	(1+\sqrt{N})
	2^{\frac{3q}{2}+\frac{3q''}{2}-2q'}
	\|		\Dd_{q'} \Delta  Q_2																					\|_{L^2_x		}
	\|		(Q_2,\,	\nabla Q_2)																						\|_{L^2_x	}
	2^{-\frac{\;q''}{2}}
	\|				\Dd_{q''}(\nabla \delta u,\, \Delta \delta Q	)												\|_{L^2_x}
	\|						 (\nabla \delta u,\, \Delta \delta Q	)												\|_{\Hh^{-\frac{1}{2}}}
\end{align*}
Hence
\begin{align*}
	&\sum_{q''\leq N}\;\sum_{q'\geq q''-1} \;\sum_{q\leq q'+5}2^{-q} \I_{3}^4(q, q', q'') \lesssim
	(1+\sqrt{N})
	\|		(Q_2,\,	\nabla Q_2)																\|_{L^2_x	}
	\|				(\nabla \delta u, \Delta \delta Q	)									\|_{\Hh^{-\frac{1}{2}}}	{\scriptstyle\times}\\&
	\quad\quad\quad\quad\quad\quad\quad\quad\quad\quad\quad\quad\quad\quad{\scriptstyle\times}
	\sum_{q''\leq N}
	2^{-\frac{q''}{2}}
	\|		\Dd_{q''}(		\delta u,	\nabla \delta Q )									\|_{L^2_x}	
	\sum_{q'\geq q''-1}
	2^{\frac{3q''}{2}-2q'}
	\|		\Dd_{q'} \Delta Q_2																\|_{L^2_x		}	
	\sum_{q\leq q'+5}
	2^{\frac{q}{2}}\\
	&\lesssim
	(1+\sqrt{N})
	\|		(Q_2,\,	\nabla Q_2)																\|_{L^2_x	}
	\|				(\nabla \delta u,\, \Delta \delta Q	)									\|_{\Hh^{-\frac{1}{2}}}
	\sum_{q''\leq N}\;
	2^{-\frac{\;q''}{2}}
	\|		\Dd_{q''}(		\delta u,\,	\nabla \delta Q )									\|_{L^2_x}
	\sum_{q'\geq q''-1}
	2^{\frac{3q''}{2}-\frac{3q'}{2}}
	\|		\Dd_{q'} \Delta Q_2																\|_{L^2_x		}\\
	&\lesssim
	(1+\sqrt{N})
	\|		(Q_2,\,	\nabla Q_2)																\|_{L^2_x	}
	\|			(\nabla \delta u,\, \Delta \delta Q	)										\|_{\Hh^{-\frac{1}{2}}}
	\|			(		\delta u,\,	\nabla \delta Q )										\|_{\Hh^{-\frac{1}{2}}}
	\Big(
	\sum_{q''\in\ZZ }\;
	\Big|\sum_{q'\geq q''-1}
	2^{\frac{3}{2}q''-\frac{3}{2}q'}
	\|		\Dd_{q'} \Delta Q_2																\|_{L^2_x		}
	\Big|^2
	\Big)^{\frac{1}{2}}\\
	&\lesssim
	(1+\sqrt{N})
	\|		(Q_2,\,	\nabla Q_2)																\|_{L^2_x	}
	\|		\Delta Q_2																		\|_{L^2_x		}
	\|			(\nabla \delta u,\, \Delta \delta Q	)										\|_{\Hh^{-\frac{1}{2}}}
	\|			(		\delta u,\,	\nabla \delta Q )										\|_{\Hh^{-\frac{1}{2}}}.
\end{align*}
Considering the high frequencies $q''> N$
\begin{align*}
	\I_{3}^4(q, q', q'') 
	&\lesssim	
	\|\Dd_q\big[\,\Dd_{q'} Q_2\trc\{\Sd_{q''}Q_2\,\Dd_{q''}(\nabla \delta u,\, \Delta \delta Q	)\}\,\big]			\|_{L^2_x}
	\|		\Dd_q (\nabla \delta u,\, \Delta \delta Q	) 															\|_{L^2_x}\\
	&\lesssim	2^q
	\|\Dd_q\big[\,\Dd_{q'} Q_2\trc\{\Sd_{q''}Q_2\,\Dd_{q''}(\nabla \delta u,\, \Delta \delta Q	)\}\,\big]			\|_{L^1_x}
	\|	\Dd_q 	(		\delta u,\,	\nabla \delta Q )																\|_{L^2_x}\\
	&\lesssim
	2^{q}
	\|		\Dd_{q'} Q_2																							\|_{L^2_x}
	\|		\Sd_{q''}Q_2																							\|_{L^\infty_x}
	\|		\Dd_{q''}(\nabla \delta u,\, \Delta \delta Q	)														\|_{L^2_x}
	\|		\Dd_q	(\nabla \delta u,\, \Delta \delta Q	)															\|_{L^2_x}\\
	&\lesssim
	2^{\frac{3q}{2}-2q'}
	\|		\Dd_{q'}	\Delta Q_2																					\|_{L^2_x}
	(1+\sqrt{q''})
	\|		(Q_2,\,\nabla Q_2)																						\|_{L^2_x}
	2^{q''}
	\|		\Dd_{q''}(		 \delta u,\, \nabla \delta Q	)														\|_{L^2_x}
	2^{-\frac{q}{2}}
	\|		\Dd_q	 (\nabla \delta u,\, \Delta \delta Q	)														\|_{L^2_x}\\
	&\lesssim
	(1+\sqrt{q''})
	2^{\frac{3q}{2}+q''-2q'}
	\|				 \Delta  Q_2																					\|_{L^2_x		}
	\|				(Q_2,\,\nabla Q_2)																						\|_{L^2_x	}
	\|				(		\delta u,\,	\nabla \delta Q )															\|_{L^2_x}
	\|				(\nabla \delta u,\, \Delta \delta Q	)															\|_{\Hh^{-\frac{1}{2}}},
\end{align*}
which implies
\begin{align*}
	&\sum_{q''> N}\;\sum_{q'\geq q''-1} \;\sum_{q\leq q'+5}2^{-q} \I_{3}^4(q, q', q'')\lesssim\\
	&\lesssim
	\|				\Delta Q_2																\|_{L^2_x		}	
	\|				(Q_2,\,\nabla Q_2)														\|_{L^2_x	}
	\|				(\nabla \delta u,\, \Delta \delta Q	)									\|_{\Hh^{-\frac{1}{2}}}	
	\|				(		\delta u,\,	\nabla \delta Q )									\|_{L^2_x}
	\sum_{q''> N}
	(1+\sqrt{q''})
	2^{q''}
	\sum_{q'\geq q''-1}
	2^{-2q'}
	\sum_{q\leq q'+5}
	2^{\frac{q}{2}}\\
	&\lesssim
	\|				\Delta Q_2																\|_{L^2_x		}	
	\|				(Q_2,\,\nabla Q_2)														\|_{L^2_x	}
	\|				\Delta \delta Q															\|_{\Hh^{-\frac{1}{2}}}	
	\|				(		\delta u,\,	\nabla \delta Q )									\|_{L^2_x}
	\sum_{q''> N}
	(1+\sqrt{q''})
	2^{q''}
	\sum_{q'\geq q''-1}
	2^{-2q'+\frac{q'}{2}}\\
	&\lesssim
	\|				\Delta Q_2																\|_{L^2_x		}	
	\|				(Q_2,\,\nabla Q_2)														\|_{L^2_x	}
	\|				(\nabla \delta u,\, \Delta \delta Q	)									\|_{\Hh^{-\frac{1}{2}}}	
	\|				(		\delta u,\,	\nabla \delta Q )									\|_{L^2_x}
	\sum_{q''> N}
	(1+\sqrt{q''})
	2^{-\frac{\;q''}{2}}\\
	&\lesssim
	\|				\Delta Q_2																\|_{L^2_x		}	
	\|				(Q_2,\,\nabla Q_2)														\|_{L^2_x	}
	\|				(\nabla \delta u,\, \Delta \delta Q	)									\|_{\Hh^{-\frac{1}{2}}}	
	\|				(		\delta u,\,	\nabla \delta Q )									\|_{L^2_x}
	2^{-\frac{N}{2}},
\end{align*}
Summarizing the last inequalities we obtain an estimate similar to \eqref{ineq4c}, so that we can conclude arguing as in \eqref{ineq4b}. 
Finally, it remains to examine when $j=4$, as last term. Let us define
\begin{align*}
	\I_{4}^4(q, q', &q'', q''') 
	:= 
	\int_{\RR_2}
	\trc
	\Big\{\;
	\Dd_q	\big[\,\Dd_{q'} Q_2
				\trc\{\Dd_{q''}(\,\Dd_{q'''}Q_2\,\Sd_{q'''+2}\nabla \delta u\,)\}\,
			\big]\Dd_q \Delta \delta Q\, +\\&
	\quad\quad\quad\quad\quad\quad\quad\quad\quad\quad-
	\Dd_q	\big[\,\Dd_{q'} Q_2
				\trc\{\Dd_{q''}(\,\Dd_{q'''}Q_2\,\Sd_{q'''+2}\Delta \delta Q\,)\}\,
			\big]\Dd_q \nabla \delta u\;
	\Big\}\\
	&\lesssim
	\|	\Dd_q	\big[\,
					\Dd_{q'} Q_2
					\trc\{\Dd_{q''}[\,\Dd_{q'''}Q_2\,\Sd_{q'''+2}(\nabla \delta u,\,\Delta \delta Q)\, ]	\,\}\,
				\big]
	\|_{L^2_x}
	\|	\Dd_q	(\nabla	\delta	u,\,	\Delta	\delta	Q	)		\|_{L^2_x}\\
	&\lesssim
	2^q
	\|	\Dd_q	\big[\,
					\Dd_{q'} Q_2
					\trc\{\Dd_{q''}[\,\Dd_{q'''}Q_2\,\Sd_{q'''+2}(\nabla \delta u,\,\Delta \delta Q)\, ]	\,\}\,
				\big]
	\|_{L^1_x}
	\|	\Dd_q	(\nabla	\delta	u,\,	\Delta	\delta	Q	)		\|_{L^2_x		}\\
	&\lesssim
	2^{q}
	\|		\Dd_{q'} 				Q_2														\|_{L^2_x		}
	\|		\Dd_{q''}[\,\Dd_{q'''}Q_2\,\Sd_{q'''+2}(\nabla \delta u,\,\Delta \delta Q)	\,]	\|_{L^2_x		}
	\|		\Dd_q		(\nabla	\delta	u,\,	\Delta	\delta	Q	)						\|_{L^2_x		}\\
	&\lesssim
	2^{q+q''}
	\|		\Dd_{q'} 				Q_2														\|_{L^2_x		}
	\|		\Dd_{q''}[\,\Dd_{q'''}Q_2\,\Sd_{q'''+2}(\nabla \delta u,\,\Delta \delta Q)	\,]	\|_{L^1_x		}
	\|		\Dd_q		(\nabla	\delta	u,\,	\Delta	\delta	Q	)						\|_{L^2_x		}\\
	&\lesssim
	2^{q+q''}
	\|		\Dd_{q'} 		\Delta	Q_2								\|_{L^2_x		}
	\|		\Dd_{q'''}				Q_2								\|_{L^2_x		}
	\|		\Sd_{q'''+2}(\nabla \delta u,\,\Delta \delta Q)			\|_{L^2_x		}
	\|	\Dd_q	(\nabla	\delta	u,\,	\Delta	\delta	Q	)		\|_{L^2_x		}\\
	&\lesssim
	2^{q-q'+q''-q'''}
	\|		\Dd_{q'} 		\nabla	Q_2								\|_{L^2_x		}
	\|		\Dd_{q'''}		\Delta	Q_2								\|_{L^2_x		}
	\|		\Sd_{q'''+2}(	 \delta u,\,\nabla \delta Q)			\|_{L^2_x		}
	\|	\Dd_q	(\nabla	\delta	u,\,	\Delta	\delta	Q	)		\|_{L^2_x		}.
\end{align*}
Hence, taking the sum in $q$, $q'$, $q''$ and $q'''$, we get
\begin{align*}
	&\sum_{q\in\ZZ}\,\sum_{q'\geq q-5}\,\sum_{q''\leq q'-1}\,\sum_{q'''\geq q''+5}	2^{-q}\I_{4}^4(q, q', q'', q''') 
	\lesssim\\
	&\lesssim
	\|			(\nabla	\delta	u,\,	\Delta	\delta	Q	)		\|_{\Hh^{-\frac{1}{2}}}
	\|						\nabla	Q_2								\|_{L^2_x		}
	\sum_{q\in\ZZ}\,\sum_{q'\geq q-5}\,\sum_{q''\leq q'-1}\,\sum_{q'''\geq q''+5}
	2^{\frac{q}{2}-q'+q''-q'''}
	\|		\Dd_{q'''}		\Delta	Q_2								\|_{L^2_x		}
	\|			\Sd_{q'''+2}(	 \delta u,\,\nabla \delta Q)		\|_{L^2_x}\\
	&\lesssim
	\|			(\nabla	\delta	u,\,	\Delta	\delta	Q	)		\|_{\Hh^{-\frac{1}{2}}}
	\|						\nabla	Q_2								\|_{L^2_x		}
	\sum_{q'''\in\ZZ}
	\sum_{q''\leq q'''{-}5}
	2^{q''-q'''}
	\|		\Dd_{q'''}		\Delta	Q_2								\|_{L^2_x		}
	\|			\Sd_{q'''+2}(	 \delta u,\,\nabla \delta Q)		\|_{L^2_x}
	\sum_{q'\geq q''+1}2^{-q'}
	\sum_{q \leq q'+5}
	2^{\frac{q}{2}}\\
	&\lesssim
	\|			(\nabla	\delta	u,\,	\Delta	\delta	Q	)		\|_{\Hh^{-\frac{1}{2}}}
	\|						\nabla	Q_2								\|_{L^2_x		}
	\sum_{q'''\in\ZZ}
	\sum_{q''\leq q'''{-}5}
	2^{q''-q'''}
	\|		\Dd_{q'''}		\Delta	Q_2								\|_{L^2_x		}
	\|			\Sd_{q'''+2}(	 \delta u,\,\nabla \delta Q)		\|_{L^2_x}
	\sum_{q'\geq q''+1}
	2^{-\frac{q'}{2}}\\
	&\lesssim
	\|			(\nabla	\delta	u,\,	\Delta	\delta	Q	)		\|_{\Hh^{-\frac{1}{2}}}
	\|						\nabla	Q_2								\|_{L^2_x		}
	\sum_{q'''\in\ZZ}
	\sum_{q''\leq q'''-5}
	2^{\frac{q''-q'''}{2}}
	\|		\Dd_{q'''}		\Delta	Q_2								\|_{L^2_x		}
	2^{-\frac{q'''}{2}}
	\|			\Sd_{q'''+2}(	 \delta u,\,\nabla \delta Q)		\|_{L^2_x}\\
	&\lesssim	
	\|			(\nabla	\delta	u,\,	\Delta	\delta	Q	)		\|_{\Hh^{-\frac{1}{2}}}
	\|						\nabla	Q_2								\|_{L^2_x		}
	\sum_{q'''\in\ZZ}
	\|		\Dd_{q'''}		\Delta	Q_2								\|_{L^2_x		}
	2^{-\frac{q'''}{2}}
	\|			\Sd_{q'''+2}(	 \delta u,\,\nabla \delta Q)		\|_{L^2_x}\\
	&\lesssim
	\|			(\nabla	\delta	u,\,	\Delta	\delta	Q	)		\|_{\Hh^{-\frac{1}{2}}}
	\|						\nabla	Q_2								\|_{L^2_x		}
	\|						\Delta	Q_2								\|_{L^2_x		}
	\|			(	 \delta u,\,\nabla \delta Q)					\|_{\Hh^{-\frac{1}{2}}}\\
	&\lesssim
	\|						\nabla	Q_2								\|_{L^2_x				}^2
	\|						\Delta	Q_2								\|_{L^2_x				}^2
	\|				(		 \delta	u,\,\nabla	\delta	Q	)		\|_{\Hh^{-\frac{1}{2}}	}^2+
	C_\nu
	\|				\nabla	\delta	u								\|_{\Hh^{-\frac{1}{2}}	}^2+
	C_{\Gamma, L}
	\|				\Delta	\delta	Q								\|_{\Hh^{-\frac{1}{2}}	}^2
\end{align*}
and this concludes the estimates of the term $\EE_1+\EE_2$.


\subsubsection{Remaining Terms}

For the sake of completeness, now we analyze all the remaining terms. However we point out that they are going to be estimates using simply just Theorem 
\ref{theorem_product_homogeneous_sobolev_spaces}, hence they are not a challenging drawback. For instance, let us observe that
\begin{align*}
	L	\langle			 (\xi	\delta D	+	\delta \Omega	)	\delta Q	,	\Delta \delta Q	\rangle_{\Hh^{	-\frac{1}{2}	}}  +
	L	\langle			 (\xi		   D_2	+		   \Omega_2	)	\delta Q	,	\Delta \delta Q	\rangle_{\Hh^{	-\frac{1}{2}	}}  +
	L	\langle	\delta Q (\xi	\delta D	+	\delta \Omega	)				,	\Delta \delta Q	\rangle_{\Hh^{	-\frac{1}{2}	}}  +\\ +
	L	\langle	\delta Q (\xi		   D_2	+		   \Omega_2	)				,	\Delta \delta Q	\rangle_{\Hh^{	-\frac{1}{2}	}}
	\lesssim
	\|			\delta Q		\|_{\Hh^{	 \frac{1}{2}	}}
	\|	\nabla ( u_1,\, u_2)	\|_{		 L^2_x			 }
	\|	\Delta	\delta Q		\|_{\Hh^{	-\frac{1}{2}	}}
	\lesssim
	\|	\nabla	\delta Q		\|_{\Hh^{	-\frac{1}{2}	}}
	{\scriptstyle \times}\\{\scriptstyle \times}
	\|	\nabla ( u_1,\, u_2)	\|_{		 L^2_x			 }
	\|	\Delta	\delta Q		\|_{\Hh^{	-\frac{1}{2}	}}	
	\lesssim
	\|	\nabla ( u_1,\, u_2)	\|_{		 L^2_x			 }^2
	\|	\nabla	\delta Q		\|_{\Hh^{	-\frac{1}{2}	}}^2
	+C_{\Gamma, L}
	\|	\Delta	\delta Q		\|_{\Hh^{	-\frac{1}{2}	}}^2.	
\end{align*}
Moreover $La\Gamma \langle \delta Q,\,\Delta \delta Q \rangle_{\Hh^{-1/2}} \lesssim \| \delta Q \|_{\Hh^{-1/2}}^2 + C_{\Gamma, L}\|  \Delta \delta Q \|_{\Hh^{-1/2}}^2 $ and
\begin{align*}
	Lb\Gamma \langle Q_1 \delta Q + \delta Q Q_2,\,\Delta \delta Q \rangle_{	\Hh^{ -\frac{1}{2} }	} 
	\lesssim
	\| 	Q_1 \delta Q + \delta Q Q_2 \|_{\Hh^{-\frac{1}{2}}}
	\|	\Delta Q					\|_{\Hh^{-\frac{1}{2}}}
	\lesssim	
	\|	(Q_1,\,Q_2)					\|_{	L^2_x		  }
	\|			\delta Q			\|_{\Hh^{ \frac{1}{2}}}
	\|	\Delta 	\delta Q			\|_{\Hh^{-\frac{1}{2}}}\\
	\lesssim
	\|	(Q_1,\,Q_2)					\|_{	L^2_x		  }
	\|	\nabla \delta Q				\|_{\Hh^{-\frac{1}{2}}}
	\|	\Delta \delta Q				\|_{\Hh^{-\frac{1}{2}}}
	\lesssim
	\|	(Q_1,\,Q_2)					\|_{	L^2_x		  }^2
	\|	\nabla	\delta Q			\|_{\Hh^{-\frac{1}{2}}}^2
	+C_{\Gamma, L}
	\|	\Delta	\delta Q			\|_{\Hh^{-\frac{1}{2}}}^2.
\end{align*}
Furthermore, by a direct computation, we get
\begin{align*}
	Lc\Gamma \langle 	\delta Q 	\trc \{ 	Q_1^2  					 \}, 	\Delta \delta Q 	\rangle_{\Hh^{-\frac{1}{2}}} +
	Lc\Gamma \langle 		   Q_2 	\trc \{ Q_1\delta Q + \delta Q Q_2 	 \}, 	\Delta \delta Q 	\rangle_{\Hh^{-\frac{1}{2}}} 
	\lesssim
	\|	(Q_1^2,\,Q_2^2) 			\|_{L^2_x			  }
	\|			\delta Q			\|_{\Hh^{ \frac{1}{2}}}
	\|	\Delta 	\delta Q			\|_{\Hh^{-\frac{1}{2}}}\\
	\lesssim
	\|	(Q_1  ,\,Q_2  ) 			\|_{L^4_x			  }^2
	\|	\nabla	\delta Q			\|_{\Hh^{-\frac{1}{2}}}
	\|	\Delta 	\delta Q			\|_{\Hh^{-\frac{1}{2}}}
	\lesssim
	\|			(Q_1,\,Q_2)			\|_{L^2_x			  }^2
	\|	\nabla	(Q_1,\,Q_2)			\|_{L^2_x			  }^2
	\|	\nabla	\delta Q			\|_{\Hh^{-\frac{1}{2}}}^2
	+C_{\Gamma, L}
	\|	\Delta	\delta Q			\|_{\Hh^{-\frac{1}{2}}}^2	
\end{align*}
and
\begin{align*}
	L \langle 	\delta	 u		 \cdot \nabla 		 	 Q_1	,	\,&\Delta \delta Q 	\rangle_{\Hh^{-\frac{1}{2}}} +
	L \langle 			 u_2	 \cdot \nabla 	\delta	 Q 		,	\, \Delta \delta Q 	\rangle_{\Hh^{-\frac{1}{2}}} 
	\lesssim
	\|		(u_2, \nabla Q_1)					\|_{\Hh^{ \frac{3}{4}}}	
	\|		(\delta u,\, \nabla \delta Q )		\|_{\Hh^{-\frac{1}{4}}}
	\|					 \Delta \delta Q		\|_{\Hh^{-\frac{1}{2}}}\\
	&\lesssim
	\|		(u_2, \nabla Q_1)							\|_{	L^2_x		  }^{\frac{1}{4}}
	\|		(\nabla 	   u_2,	  	\Delta 	      Q_1)	\|_{	L^2_x		  }^{\frac{3}{4}}	
	\|		(		  \delta u,\,	\nabla \delta Q  )	\|_{\Hh^{-\frac{1}{2}}}^{\frac{3}{4}}
	\|		( \Delta  \delta u,\,	\Delta \delta Q  )	\|_{\Hh^{-\frac{1}{2}}}^{\frac{1}{4}}
	\|					 \Delta \delta Q				\|_{\Hh^{-\frac{1}{2}}}	\\
	&\lesssim
	\|		(u_2, \nabla Q_1)							\|_{	L^2_x		  }^{\frac{2}{3}}
	\|		(\nabla 	   u_2,	  	\Delta 	      Q_1)	\|_{	L^2_x		  }^2	
	\|		(		  \delta u,\,	\nabla \delta Q  )	\|_{\Hh^{-\frac{1}{2}}}^2	
	+ C_\nu
	\|	\nabla \delta u									\|_{\Hh^{-\frac{1}{2}}}^2
	+C_{\Gamma, L}
	\|	\Delta	\delta Q								\|_{\Hh^{-\frac{1}{2}}}^2.	
\end{align*}	
Moreover $a\xi \langle \delta Q Q_1,\,\nabla \delta u\rangle_{\Hh^{-1/2}} \lesssim \| \delta Q \|_{\Hh^{1/2}} \|Q_1 \|_{L^2} \|\nabla \delta u \|_{\Hh^{-1/2}} 
\lesssim \|Q_1\|_{L^2}^2 \| \nabla \delta Q\|_{\Hh^{-1/2}}^2 + C_{\nu} \|\nabla \delta u \|_{\Hh^{-1/2}}^2$,
\begin{align*}
	b\xi \langle \delta Q (Q_1^2 - &\trc\{ Q_1^2\}\frac{\Id}{2} ) , \, \nabla \delta u \rangle_{\Hh^{-\frac{1}{2}}} 
	\lesssim  
	\|			\delta 	Q 		\|_{	\Hh^{ \frac{1}{2}}	}
	\|					Q_1^2	\|_{		L^2_x			}
	\|	\nabla	\delta	u		\|_{	\Hh^{-\frac{1}{2}}	}
	=  
	\|	\nabla	\delta 	Q 		\|_{	\Hh^{-\frac{1}{2}}	}
	\|					Q_1		\|_{		L^4_x			}^{ 2	}
	\|	\nabla	\delta	u		\|_{	\Hh^{-\frac{1}{2}}	}\\	
	&\lesssim
	\|	\nabla	\delta 	Q 		\|_{	\Hh^{-\frac{1}{2}}	}
	\|					Q_1		\|_{		L^2_x			}
	\|	\nabla			Q_1		\|_{		L^2_x			}
	\|	\nabla	\delta	u		\|_{	\Hh^{-\frac{1}{2}}	}
	\lesssim
	\|					Q_1		\|_{		L^2_x			}^2
	\|	\nabla			Q_1		\|_{		L^2_x			}^2
	\|	\nabla	\delta 	Q 		\|_{	\Hh^{-\frac{1}{2}}	}^2
	+C_\nu
	\|	\nabla	\delta	u		\|_{	\Hh^{-\frac{1}{2}}	}^2
\end{align*}
and
\begin{align*}
	c\xi \langle \delta Q \trc (Q_1^2) Q_1,\,\nabla \delta u \rangle_{\Hh^{-\frac{1}{2}}}
	&\lesssim  
	\|			\delta 	Q 		\|_{	\Hh^{ \frac{1}{2}}	}
	\|					Q_1^2	\|_{		L^2_x			}
	\|					Q_1		\|_{		L^\infty		}
	\|	\nabla	\delta	u		\|_{	\Hh^{-\frac{1}{2}}	}\\
	&\lesssim
	\|					Q_1		\|_{		L^2_x			}^2
	\|	\nabla			Q_1		\|_{		L^2_x			}^2
	\|					Q_1		\|_{		H^2				}^2
	\|	\nabla	\delta 	Q 		\|_{	\Hh^{-\frac{1}{2}}	}^2
	+C_\nu
	\|	\nabla	\delta	u		\|_{	\Hh^{-\frac{1}{2}}	}^2	.
\end{align*}
Now, $a\xi \langle (Q_2 + \Id/2 )\delta Q, \nabla\delta u \rangle_{\Hh^{-1/2}} \lesssim (\|Q_2\|_{L^2_x}^2+1) \|\nabla \delta Q \|_{\Hh^{-1/2}}^2 + 
C_\nu \|\nabla \delta u\|_{\Hh^{-1/2}}^2  $ and
\begin{align*}
	b\xi &	\langle ( 	Q_2 + \frac{\Id}{2})(Q_1 \delta Q + \delta Q Q_2 )	,\, \nabla \delta u\rangle_{\Hh^{-\frac{1}{2}}}-
	b\xi 	\langle		Q_2 \trc\{Q_1 \delta Q + \delta Q Q_2 \}			,\, \nabla \delta u\rangle_{\Hh^{-\frac{1}{2}}}
	\lesssim\\
	&\lesssim
	(\|			 Q_2		\|_{		L^\infty_x		} + 1)
	 \|			(Q_1,\,Q_2)	\|_{		L^2_x			}
	 \|			\delta	Q	\|_{	\Hh^{ \frac{1}{2}}	}
	 \|	\nabla	\delta	u	\|_{	\Hh^{-\frac{1}{2}}	}
	 \lesssim
	 (\|			 Q_2	\|_{		H^2				} +  1)
	 \|			(Q_1,\,Q_2)	\|_{		L^2_x			}{\scriptstyle \times}\\&{\scriptstyle \times}
	 \|	\nabla	\delta	Q	\|_{	\Hh^{-\frac{1}{2}}	}
	 \|	\nabla	\delta	u	\|_{	\Hh^{-\frac{1}{2}}	}
	 \lesssim
	 (\|			 Q_2	\|_{		H^2				} + 1)^2
	 \|			(Q_1,\,Q_2)	\|_{		L^2_x			}^2
	 \|	\nabla	\delta	Q	\|_{	\Hh^{-\frac{1}{2}}	}^2
	 +C_\nu
	\|	\nabla	\delta	u		\|_{	\Hh^{-\frac{1}{2}}	}^2.
\end{align*}
Equivalently, we get
\begin{align*}
	c\xi &\langle ( Q_2 + \frac{\Id}{2} )	\delta 	Q	\trc\{ 			Q_1^2			   \}	,\,	\nabla \delta u	\rangle_{	\Hh^{-\frac{1}{2}}	} +
	c\xi \langle ( Q_2 + \frac{\Id}{2} )			Q_2	\trc\{ \delta Q Q_1 + Q_2 \delta Q \}	,\,	\nabla \delta u \rangle_{	\Hh^{-\frac{1}{2}}	}
	\lesssim		\\
	&\lesssim
	\|			Q_2				\|_{	L^\infty			}
	\|		  (	Q_1^2,\,Q_2^2)	\|_{	L^2_x				}
	\|	\delta 	Q				\|_{	\Hh^{ \frac{1}{2}}	}
	\|	\nabla	\delta	u		\|_{	\Hh^{-\frac{1}{2}}	}
	\lesssim
	\|			Q_2				\|_{	H^2					}
	\|		  (	Q_1,\,Q_2)		\|_{	L^4_x				}^2
	\|	\nabla	\delta 	Q		\|_{	\Hh^{ \frac{1}{2}}	}
	\|	\nabla	\delta	u		\|_{	\Hh^{-\frac{1}{2}}	}\\
	&\lesssim
	\|			Q_2				\|_{	H^2					}^2
	\|		   (Q_1,\,Q_2)		\|_{	L^2_x				}^2
	\|	\nabla (Q_1,\,Q_2)		\|_{	L^2_x				}^2
	\|	\nabla	\delta 	Q		\|_{	\Hh^{-\frac{1}{2}}	}^2
	+C_\nu
	\|	\nabla	\delta	u		\|_{	\Hh^{-\frac{1}{2}}	}^2		
\end{align*}
and moreover
\begin{align*}
	L\xi \langle	\delta Q	\Delta \delta 	Q	,\,  \nabla	\delta u \rangle_{	\Hh^{-\frac{1}{2}}	}+
	L\xi \langle	\delta Q	\Delta			Q_2	,\, &\nabla	\delta u \rangle_{	\Hh^{-\frac{1}{2}}	}
	\lesssim
	\|				\delta  Q	\|_{	\Hh^{ \frac{1}{2}}	}
	\|		\Delta	(Q_1,\,Q_2)	\|_{	L^2_x				}
	\|		\nabla	\delta  u	\|_{	\Hh^{-\frac{1}{2}}	}\\
	&\lesssim
	\|		\Delta	(Q_1,\,Q_2)	\|_{	L^2_x				}^2
	\|		\nabla	\delta  Q	\|_{	\Hh^{-\frac{1}{2}}	}^2
	+C_\nu
	\|	\nabla	\delta	u		\|_{	\Hh^{-\frac{1}{2}}	}^2.	
\end{align*}
We can similarly control the terms from  $-a\xi\langle Q_1 \delta Q, \nabla \delta u \rangle_{\Hh^{-1/2}}$ to $L\xi \langle \Delta Q_2, \delta Q,\,
\nabla \delta \rangle_{\Hh^{-1/2}}$ in \eqref{energy_uniq}, proceeding as in the previous estimates. Furthermore
\begin{align*}
	2a\xi \langle  &\delta Q   \trc\{ 			Q_1^2			   \} ,\,	\nabla \delta u	\rangle_{	\Hh^{-\frac{1}{2}}	} +
	2a\xi \langle		   Q_2 \trc\{ 		\delta Q Q_1		   \} ,\,	\nabla \delta u	\rangle_{	\Hh^{-\frac{1}{2}}	} +
	2a\xi \langle		   Q_2 \trc\{ 		Q_2 \delta Q		   \} ,\,	\nabla \delta u	\rangle_{	\Hh^{-\frac{1}{2}}	} 
	\lesssim \\
	&\lesssim 
	\|	\delta Q				\|_{	\Hh^{ \frac{1}{2}}	}
	\|  (Q_1^2,\,Q_2^2)			\|_{	L^2_x				}
	\|	\nabla	\delta u		\|_{	\Hh^{-\frac{1}{2}}	}
	\lesssim 
	\|  		(Q_1,\,Q_2)		\|_{	L^2_x				}^2
	\|  \nabla 	(Q_1,\,Q_2)		\|_{	L^2_x				}^2
	\|	\nabla	\delta Q		\|_{	\Hh^{-\frac{1}{2}}	}^2
	+C_\nu
	\|	\nabla	\delta u		\|_{	\Hh^{-\frac{1}{2}}	}^2,
\end{align*} 
\begin{align*}
	2b\xi   \langle	\delta	Q	\trc\{			Q_1^3								\},\,	\nabla \delta u	\rangle_{	\Hh^{-\frac{1}{2}}	} +
	2b\xi  &\langle			Q_2	\trc\{	\delta	Q			Q_2^2					\},\,	\nabla \delta u	\rangle_{	\Hh^{-\frac{1}{2}}	} +
	2b\xi	\langle			Q_2	\trc\{			Q_2	(\delta	Q Q_1 + Q_2 \delta Q )	\},\,	\nabla \delta u	\rangle_{	\Hh^{-\frac{1}{2}}	}  \\
	&\lesssim 
	\|  		(Q_1,\,Q_2)		\|_{	L^2_x					}^2
	\|  \nabla 	(Q_1,\,Q_2)		\|_{	L^2_x					}^2
	\|  		(Q_1,\,Q_2)		\|_{	H^2						}^2
	\|	\nabla	\delta Q		\|_{	\Hh^{-\frac{1}{2}}		}^2
	+C_\nu
	\|	\nabla	\delta u		\|_{	\Hh^{-\frac{1}{2}}		}^2
\end{align*}
and also
\begin{align*}
	2c\xi \langle  	\delta	Q  &\trc\{		   Q_1^2				\}^2										,\,	\nabla \delta u	\rangle_{	\Hh^{-\frac{1}{2}}	}+
	2c\xi \langle 			Q_2	\trc\{	\delta Q Q_1 +Q_2 \delta Q	\} 	\trc\{   	Q_1^2 \} 					,\,	\nabla \delta u	\rangle_{	\Hh^{-\frac{1}{2}}	}+\\&+
	2c\xi \langle 			Q_2	\trc\{	       Q_2^2				\}	\trc\{	\delta Q Q_1 + Q_2 \delta Q \}	,\,	\nabla \delta u	\rangle_{	\Hh^{-\frac{1}{2}}	}
	\lesssim
	\| 		(Q_1^4, \,Q_2^4) 	\|_{	L^2_x				}
	\|			\delta Q		\|_{	\Hh^{ \frac{1}{2}}	}
	\|	\nabla 	\delta u		\|_{	\Hh^{-\frac{1}{2}}	}\\
	&\lesssim
	\| 		(Q_1 , \,Q_2) 		\|_{	L^8_x				}^4
	\|	\nabla	\delta Q		\|_{	\Hh^{-\frac{1}{2}}	}
	\|	\nabla 	\delta u		\|_{	\Hh^{-\frac{1}{2}}	}
	\lesssim
	\| 			(Q_1 , \,Q_2) 		\|_{	L^2_x				}
	\| 	\nabla	(Q_1 , \,Q_2) 		\|_{	L^2_x				}^3
	\|	\nabla		\delta Q		\|_{	\Hh^{-\frac{1}{2}}	}
	\|	\nabla 		\delta u		\|_{	\Hh^{-\frac{1}{2}}	}\\
	&\lesssim
	\| 			(Q_1 , \,Q_2) 		\|_{	L^2_x				}^2
	\| 	\nabla	(Q_1 , \,Q_2) 		\|_{	L^2_x				}^6
	\|	\nabla		\delta Q		\|_{	\Hh^{-\frac{1}{2}}	}^2
	+C_\nu
	\|	\nabla 		\delta u		\|_{	\Hh^{-\frac{1}{2}}	}^2.
\end{align*}
Furthermore, we observe that
\begin{align*}
	2L\xi	\langle	\delta	Q	\trc\{	\delta	Q	\Delta	\delta	Q	\},\,  	\nabla \delta u	\rangle_{	\Hh^{-\frac{1}{2}}	}+
	2L\xi	\langle	\delta	Q	\trc\{	\delta	Q	&\Delta		  	Q_2	\},\,  \nabla \delta u	\rangle_{	\Hh^{-\frac{1}{2}}	}+
	2L\xi	\langle	\delta	Q	\trc\{			Q_2 \Delta	\delta	Q	\},\,	\nabla \delta u	\rangle_{	\Hh^{-\frac{1}{2}}	}+\\+
	2L\xi	\langle			Q_2	\trc\{	\delta	Q	\Delta	\delta	Q	\},\,	\nabla \delta u	\rangle_{	\Hh^{-\frac{1}{2}}	}+
	2L\xi	\langle	\delta	Q	\trc\{			&Q_2 \Delta			Q_2	\},\,	\nabla \delta u	\rangle_{	\Hh^{-\frac{1}{2}}	}+
	2L\xi	\langle			Q_2	\trc\{	\delta	Q	\Delta			Q_2	\},\,	\nabla \delta u	\rangle_{	\Hh^{-\frac{1}{2}}	}\\
	&\lesssim
	\|	\delta	Q			\|_{	\Hh^{ \frac{1}{2}}	}
	\|	\Delta 	(Q_1,\,Q_2)	\|_{	L^2_x				}
	\|			(Q_1,\,Q_2)	\|_{	L^\infty_x			}
	\|	\nabla	\delta	u	\|_{	\Hh^{-\frac{1}{2}}	}\\
	&\lesssim
	\|	\Delta 	(Q_1,\,Q_2)	\|_{	L^2_x				}^2
	\|			(Q_1,\,Q_2)	\|_{	H^2					}^2
	\|	\nabla	\delta	Q	\|_{	\Hh^{-\frac{1}{2}}	}^2
	+C_\nu
	\|	\nabla	\delta	u	\|_{	\Hh^{-\frac{1}{2}}	}^2
\end{align*}
and
\begin{align*}
	L	\langle	\nabla	\delta Q	\odot	\nabla		   Q_1,\,  	\nabla \delta u	\rangle_{	\Hh^{-\frac{1}{2}}	}&+
	L	\langle	\nabla		   Q_2	\odot	\nabla	\delta Q  ,\,  	\nabla \delta u	\rangle_{	\Hh^{-\frac{1}{2}}	}
	\lesssim
	\|	\nabla	\delta Q	\|_{	\Hh^{-\frac{1}{4}}	}
	\|	\nabla	(Q_1,\,Q_2)	\|_{	\Hh^{ \frac{3}{4}}	}
	\|	\nabla	\delta u	\|_{	\Hh^{-\frac{1}{2}}	}\\
	&\lesssim
	\|	\nabla	\delta Q	\|_{	\Hh^{-\frac{1}{2}}	}^{\frac{3}{4}}
	\|	\nabla	\delta Q	\|_{	\Hh^{ \frac{1}{2}}	}^{\frac{1}{4}}
	\|	\nabla	(Q_1,\,Q_2)	\|_{	L^2					}^{\frac{1}{4}}
	\|	\Delta	(Q_1,\,Q_2)	\|_{	L^2					}^{\frac{3}{4}}
	\|	\nabla	\delta u	\|_{	\Hh^{-\frac{1}{2}}	}\\
	&\lesssim
	\|	\nabla	\delta Q	\|_{	\Hh^{-\frac{1}{2}}	}^{\frac{3}{4}}
	\|	\Delta	\delta Q	\|_{	\Hh^{-\frac{1}{2}}	}^{\frac{1}{4}}
	\|	\nabla	(Q_1,\,Q_2)	\|_{	L^2					}^{\frac{1}{4}}
	\|	\Delta	(Q_1,\,Q_2)	\|_{	L^2					}^{\frac{3}{4}}
	\|	\nabla	\delta u	\|_{	\Hh^{-\frac{1}{2}}	}\\	
	&\lesssim
	\|	\nabla	(Q_1,\,Q_2)	\|_{	L^2					}^{\frac{2}{3}}
	\|	\Delta	(Q_1,\,Q_2)	\|_{	L^2					}^2
	\|	\nabla	\delta Q	\|_{	\Hh^{-\frac{1}{2}}	}^2
	+C_{\Gamma,L}
	\|	\Delta	\delta Q	\|_{	\Hh^{-\frac{1}{2}}	}^2
	+C_\nu
	\|	\nabla	\delta u	\|_{	\Hh^{-\frac{1}{2}}	}^2.	
\end{align*}
Moreover
\begin{align*}
	La \langle \delta Q 	 	   Q_1, \,	\nabla \delta u	\rangle_{	\Hh^{ -\frac{1}{2} } } + 
	 a \langle 		  Q_2 	\delta Q  , \,	\nabla \delta u	\rangle_{	\Hh^{ -\frac{1}{2} } } -
	 a \langle   	  Q_1  	\delta Q  , \,	\nabla \delta u	\rangle_{	\Hh^{ -\frac{1}{2} } } 
	\lesssim
	\|			\delta Q	\|_{	\Hh^{  \frac{1}{2} }	}
	\|	(Q_1,\,Q_2)			\|_{	L^2						}
	\|	\nabla	\delta u	\|_{	\Hh^{ -\frac{1}{2} }	}\\
	\lesssim
	\|	\nabla	\delta Q	\|_{	\Hh^{ -\frac{1}{2} }	}
	\|	(Q_1,\,Q_2)			\|_{	L^2						}
	\|	\nabla	\delta u	\|_{	\Hh^{ -\frac{1}{2} }	}
	\lesssim
	\|	\nabla	\delta Q	\|_{	\Hh^{ -\frac{1}{2} }	}^2
	\|	(Q_1,\,Q_2)			\|_{	L^2						}^2
	+C_\nu
	\|	\nabla	\delta u	\|_{	\Hh^{ -\frac{1}{2} }	}^2,
\end{align*}
\begin{align*}
 -Lb\langle \delta Q   (Q_1^2 					    - \trc\{ Q_1^2  					\}\frac{\Id}{3}),\,\nabla\delta u\rangle_{\Hh^{-\frac{1}{2}}}
 - b\langle		   Q_2 (Q_1\delta Q + \delta Q Q_2  - \trc\{ Q_1\delta Q + \delta Q Q_2 \}\frac{\Id}{3}),\,\nabla\delta u\rangle_{\Hh^{-\frac{1}{2}}}
 +\\
 + b\langle	(Q_1^2 					    - \trc\{ Q_1^2  					 \}\frac{\Id}{3})\delta	Q	,\,\nabla\delta u\rangle_{\Hh^{-\frac{1}{2}}}
 - b\langle (Q_1\delta Q + \delta Q Q_2  - \trc\{ Q_1\delta Q + \delta Q Q_2 \}\frac{\Id}{3})\delta	Q	,\,\nabla\delta u\rangle_{\Hh^{-\frac{1}{2}}}
 \lesssim\\
 \lesssim
 \|	 \delta	Q			\|_{	\Hh^{-\frac{1}{2}}	}
 \|	 (Q_1^2,\,Q_2^2)	\|_{	L^2					}
 \|	 \nabla \delta u	\|_{	\Hh^{-\frac{1}{2}}	}
 \lesssim
 \|	 \delta	Q			\|_{	\Hh^{ \frac{1}{2}}	}
 \|	 (Q_1,\,Q_2)		\|_{	L^4					}^2
 \|	 \nabla \delta u	\|_{	\Hh^{-\frac{1}{2}}	}
 \lesssim
 \|	 \nabla \delta	Q	\|_{	\Hh^{-\frac{1}{2}}	}
 \|	 (Q_1,\,Q_2)		\|_{	L^2					}{\scriptstyle \times}\\{\scriptstyle \times}
 \|	 \nabla (Q_1,\,Q_2)	\|_{	L^	2				}
 \|	 \nabla \delta 	u	\|_{	\Hh^{-\frac{1}{2}}	} 
 \lesssim
 \|	 \nabla \delta	Q	\|_{	\Hh^{-\frac{1}{2}}	}^2
 \|	 (Q_1,\,Q_2)		\|_{	L^2					}^2
 \|	 \nabla (Q_1,\,Q_2)	\|_{	L^	2				}^2
 +C_\nu
 \|	 \nabla \delta 	u	\|_{	\Hh^{-\frac{1}{2}}	}^2,
\end{align*}
and
\begin{align*}
	Lc \langle	\delta	Q			Q_1 \trc\{	Q_1^2	\}							,\,\nabla \delta u	\rangle_{	\Hh^{-\frac{1}{2}}	} +
	 c \langle	  		Q_2	\delta 	Q 	\trc\{	Q_1^2	\}							,\,\nabla \delta u	\rangle_{	\Hh^{-\frac{1}{2}}	} -
	 c \langle	  		Q_1	\delta 	Q 	\trc\{	Q_1^2	\}							,\,\nabla \delta u	\rangle_{	\Hh^{-\frac{1}{2}}	} -\\-
	 c \langle	\delta 	Q 			Q_2	\trc\{	Q_1^2	\}							,\,\nabla \delta u	\rangle_{	\Hh^{-\frac{1}{2}}	}	
	\lesssim
	\|	\delta Q		\|_{	\Hh^{ \frac{1}{2}}	}
	\|	(Q_1,\,Q_2)		\|_{	L^\infty_x			}
	\|	(Q_1^2,\,Q_2^2)	\|_{	L^2_x				}
	\|	\nabla \delta u	\|_{	\Hh^{-\frac{1}{2}}	}
	\lesssim
	\|	\nabla \delta Q	\|_{	\Hh^{-\frac{1}{2}}	}
	\|	(Q_1,\,Q_2)		\|_{	H^2					}{\scriptstyle \times}\\{\scriptstyle \times}
	\|	(Q_1,\,Q_2)		\|_{	L^4_x				}^2
	\|	\nabla \delta u	\|_{	\Hh^{-\frac{1}{2}}	}
	\lesssim
	\|	\nabla \delta Q		\|_{	\Hh^{-\frac{1}{2}}	}^2
	\|	(Q_1,\,Q_2)			\|_{	H^2					}^2
	\|	(Q_1,\,Q_2)			\|_{	L^2					}^2
	\|	\nabla(Q_1,\,Q_2)	\|_{	L^2					}^2
	+C_\nu
	\|	\nabla \delta u		\|_{	\Hh^{-\frac{1}{2}}	}^2.	
\end{align*}
Finally
\begin{align*}
	 \langle	u_2 &\cdot \nabla \delta u, 			\delta u \rangle_{	\Hh^{-\frac{1}{2}}	}
	=
	-\langle	u_2  \otimes 	 \delta u, \nabla		\delta u \rangle_{	\Hh^{-\frac{1}{2}}	}
	\lesssim
	\| 					u_2	\|_{	\Hh^{ \frac{1}{2}}	}
	\| 			\delta 	u	\|_{	L^2					}
	\| \nabla 	\delta 	u	\|_{	\Hh^{-\frac{1}{2}}	}\\
	&\lesssim
	\| 					u_2	\|_{	L^2					}^{\frac{1}{2}}
	\| 	\nabla			u_2	\|_{	L^2					}^{\frac{1}{2}}
	\| 			\delta 	u	\|_{	\Hh^{-\frac{1}{2}}	}^{\frac{1}{2}}
	\| \nabla 	\delta 	u	\|_{	\Hh^{-\frac{1}{2}}	}^{\frac{3}{2}}	
	\lesssim
	\| 					u_2	\|_{	L^2					}^2
	\| 	\nabla			u_2	\|_{	L^2					}^2
	\| 			\delta 	u	\|_{	\Hh^{-\frac{1}{2}}	}^2
	+C_\nu
	\| \nabla 	\delta 	u	\|_{	\Hh^{-\frac{1}{2}}	}^2		
\end{align*}
and $\langle \delta u \cdot \nabla u_1,\,\delta u\rangle_{\Hh^{-1/2}} \lesssim \| \delta u \|_{\Hh^{1/2}} \| \nabla u_1 \|_{L^2} 
\| \delta u \|_{\Hh^{-1/2}} \lesssim C_\nu\| \nabla \delta u \|_{\Hh^{-1/2}}^2 +  \| \nabla u_1 \|_{L^2}^2 \| \delta u \|_{\Hh^{-1/2}}^2$.

\subsubsection{Conclusion}
Recalling \eqref{energy_uniq} and summarizing all the the previous estimates, we conclude that there exists a function $\chi$ which belongs to 
$L^1_{loc}(\RR_+)$ such that
\begin{align*}
	\frac{\dd}{\dd t}\Phi(t)
	&+
	\nu \| \nabla  \delta u\|_{\Hh^{-\frac{1}{2}}}^2 + \Gamma L^2 \| \Delta Q \|_{\Hh^{-\frac{1}{2}}}^2
	\lesssim
	\chi (t) \mu (\Phi(t)) + 
	c_\nu\| \nabla  \delta u\|_{\Hh^{-\frac{1}{2}}}^2 
	+
	C_{\Gamma, L}
	\| \Delta Q \|_{\Hh^{-\frac{1}{2}}}^2
\end{align*}
where $\mu$ is the Osgood modulus of continuity defined in \eqref{Osgood_mc}. Hence, choosing $C_{\Gamma, L}$ and $C_\nu$ small enough from the beginning, 
we can absorb the last two terms on the right-hand side by the left-hand side, obtaining \eqref{uniqueness_inequality}. We deduce that $\Phi\equiv 0$, 
thanks to the Osgood Lemma and the null initial data $\Phi(0)=0$. Thus, $(\delta u, \nabla \delta Q)$ is identically zero and $\delta Q$ as well, since 
$\delta Q(t)$ decades to $0$ at infinity for almost every $t$.

\end{proof}

\appendix
\section{}

\begin{theorem}\label{theorem_product_homogeneous_sobolev_spaces}
	Let $s$ and $t$ be two real numbers such that $ |s|$ and $|t|$ belong to $[0,d/2)$. Let us assume that $s+t$ is positive, then for every 
	$a\in \Hh^s(\RR^d)$ and for every $b\in \Hh^t(\RR^d)$, the product $ab$ belongs to $\Hh^{s+t-d/2}$ and there exists a positive 
	constant (not dependent by $a$ and $b$) such that
	\begin{equation*}
		\|a b\|_{\Hh^{s+t-d/2}}\leq C\|a\|_{\Hh^{s}}\|b\|_{\Hh^t}
	\end{equation*}
\end{theorem}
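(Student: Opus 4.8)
The plan is to prove the product law $\|ab\|_{\Hh^{s+t-d/2}}\lesssim \|a\|_{\Hh^s}\|b\|_{\Hh^t}$ by means of the Bony paraproduct decomposition $ab=\dot T_a b+\dot T_b a+\dot R(a,b)$ introduced earlier in the paper, treating each of the three pieces separately. By symmetry of the statement in $a$ and $b$ it suffices to bound the paraproduct term $\dot T_a b$ and the remainder term $\dot R(a,b)$; the term $\dot T_b a$ is handled exactly as $\dot T_a b$ with the roles exchanged.

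First I would estimate the paraproduct $\dot T_a b=\sum_{q}\Sd_{q-1}a\,\Dd_q b$. Each summand is spectrally localized in an annulus of size $\sim 2^q$, so $\|\Dd_j \dot T_a b\|_{L^2}\lesssim \sum_{|j-q|\le N_0}\|\Sd_{q-1}a\|_{L^\infty}\|\Dd_q b\|_{L^2}$ for a fixed integer $N_0$. The key is to bound $\|\Sd_{q-1}a\|_{L^\infty}$: using Bernstein's inequality from Lemma~\ref{lemma:bernstein&commutator}, $\|\Dd_k a\|_{L^\infty}\lesssim 2^{kd/2}\|\Dd_k a\|_{L^2}=2^{k(d/2-s)}\,2^{ks}\|\Dd_k a\|_{L^2}$, and since $s<d/2$ the factor $2^{k(d/2-s)}$ grows, so summing over $k\le q-2$ gives $\|\Sd_{q-1}a\|_{L^\infty}\lesssim 2^{q(d/2-s)}\|a\|_{\Hh^s}$ (here one also needs $s>-d/2$, which is part of the hypothesis $|s|<d/2$, to control low frequencies; more precisely one uses $s+t>0$ together with $|t|<d/2$ to close the homogeneous low-frequency sum — see the obstacle paragraph). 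Plugging this in, $2^{j(s+t-d/2)}\|\Dd_j \dot T_a b\|_{L^2}\lesssim \|a\|_{\Hh^s}\sum_{|j-q|\le N_0}2^{(j-q)(s+t-d/2)}2^{qt}\|\Dd_q b\|_{L^2}$, which is a convolution of an $\ell^1$ sequence with the $\ell^2$ sequence $(2^{qt}\|\Dd_q b\|_{L^2})_q$, hence in $\ell^2$ with norm $\lesssim \|a\|_{\Hh^s}\|b\|_{\Hh^t}$ by Young's inequality. This yields $\dot T_a b\in\Hh^{s+t-d/2}$.

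Next I would estimate the remainder $\dot R(a,b)=\sum_{q,\,|i|\le 1}\Dd_q a\,\Dd_{q+i}b$. Here the two factors have comparable frequencies, so $\Dd_q a\,\Dd_{q+i}b$ is spectrally supported in a ball of radius $\sim 2^q$ rather than an annulus; thus $\Dd_j$ applied to it is nonzero only for $j\le q+N_1$. Using Bernstein to pass from $L^1$ to $L^2$, $\|\Dd_j(\Dd_q a\,\Dd_{q+i}b)\|_{L^2}\lesssim 2^{jd/2}\|\Dd_q a\|_{L^2}\|\Dd_{q+i}b\|_{L^2}$, and hence $\|\Dd_j\dot R(a,b)\|_{L^2}\lesssim 2^{jd/2}\sum_{q\ge j-N_1}\|\Dd_q a\|_{L^2}\|\Dd_{q}b\|_{L^2}$ (absorbing the $i$-sum). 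Therefore $2^{j(s+t-d/2)}\|\Dd_j\dot R(a,b)\|_{L^2}\lesssim \sum_{q\ge j-N_1}2^{(j-q)(s+t)}\big(2^{qs}\|\Dd_q a\|_{L^2}\big)\big(2^{qt}\|\Dd_q b\|_{L^2}\big)$. Since $s+t>0$ the kernel $2^{(j-q)(s+t)}1_{\{q\ge j-N_1\}}$ is summable in $q$ uniformly in $j$ (and in $j$ uniformly in $q$), and the product sequence $\big(2^{qs}\|\Dd_q a\|_{L^2}2^{qt}\|\Dd_q b\|_{L^2}\big)_q$ lies in $\ell^1$ with norm $\lesssim\|a\|_{\Hh^s}\|b\|_{\Hh^t}$ by Cauchy–Schwarz; a final application of Young's inequality for the convolution over $j$ gives $\dot R(a,b)\in\Hh^{s+t-d/2}$ with the desired bound.

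The main obstacle is the bookkeeping of the \emph{low frequencies} in the homogeneous setting: unlike the nonhomogeneous case, the sums run over all $q\in\ZZ$, and the paraproduct estimate for $\dot T_a b$ requires $\|\Sd_{q-1}a\|_{L^\infty}$ to be finite, which forces one to sum $\|\Dd_k a\|_{L^\infty}$ over $k\to-\infty$; this converges only because of the combined constraints and the fact that in the end one pairs against $2^{qt}\|\Dd_q b\|_{L^2}$ with the precise exponent $s+t-d/2$. Making this rigorous cleanly means either invoking the characterization of homogeneous Besov norms via $\Sd_q$ (the lemma quoted as Prop. 2.33 of \cite{MR2768550}) to legitimize $\Sd_{q-1}a$ as a bona fide function, or restricting first to Schwartz functions and passing to the limit by density. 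The conditions $|s|,|t|<d/2$ and $s+t>0$ are exactly what is needed to make all three convolution kernels above belong to $\ell^1(\ZZ)$ simultaneously, so the proof is essentially a careful verification that each geometric series has the right sign in its exponent.
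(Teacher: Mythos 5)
Your argument is correct and follows essentially the same route as the paper: Bony's decomposition $ab=\dot T_a b+\dot T_b a+\dot R(a,b)$, with the paraproducts controlled through an $L^\infty$ bound on $\Sd_{q-1}a$ (using $s<d/2$, resp. $t<d/2$) and the remainder through a Bernstein $L^1\to L^2$ step plus the summability coming from $s+t>0$. The only difference is presentational: the paper packages your dyadic Bernstein estimates as Besov embeddings $\BB^{\sigma}_{2,2}\hookrightarrow\BB^{\sigma-d/2}_{\infty,2}$, $\BB^{s+t}_{1,1}\hookrightarrow\BB^{s+t-d/2}_{2,2}$ together with the $\Sd_q$-characterization of negative-index norms, which is exactly the device you invoke to legitimize $\Sd_{q-1}a$ in the homogeneous setting.
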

\begin{proof}
	At first we identify the Sobolev Spaces $\Hh^s$ and $\Hh^t$ with the Besov Spaces $\BB_{2,2}^s$ and $\BB_{2,2}^t$ respectively. 
	We claim that $ab$ belongs to $\BB_{2,2}^{s+t-d/2}$ and 
	\begin{equation*}
		\|a b\|_{\BB_{2,2}^{s+t-\frac{d}{2}}}\leq C\|a\|_{\BB_{2,2}^{s}}\|b\|_{\BB_{2,2}^t},
	\end{equation*}
	for a suitable positive constant. 
	
	\noindent We decompose the product $ab$ through the Bony decomposition, namely $ab = \dot{T}_{a}b + \dot{T}_{b}a + R(a,b)$, where
	\begin{equation*}
		\dot{T}_{a}b	:= \sum_{q\in\ZZ}							\Dd_q 		a\, \Sd_{q-1}	b,\quad\quad
		\dot{T}_{b}a	:= \sum_{q\in\ZZ}							\Sd_{q-1}	a\,	\Dd_q 		b,\quad\quad
		\Rd(a,b)		:= \sum_{\substack{q\in\ZZ\\|\nu|\leq 1}}	\Dd_{q}		a\,	\Dd_{q+\nu} b.
	\end{equation*}
	For any $q\in \ZZ$, we have
	\begin{align*}
		2^{q(s+t-\frac{d}{2})}
		&\| (\Dd_q \dot{T}_ab ,\, \Dd_q \dot{T}_b a) \|_{L^2}
		\lesssim\\	&\lesssim
		\sum_{|q-q'|\leq 5}		2^{q's} 				\|\Dd_q		a \|_{L^2_x		} 	2^{q'(t-\frac{d}{2})} 	\|	\Sd_{q-1} b	\|_{L^\infty} +
		\sum_{|q-q'|\leq 5}		2^{q'(s-\frac{d}{2})} 	\|\Sd_{q-1} a \|_{L^\infty_x}	2^{q't} 				\| 	\Dd_q b 	\|_{L^2		},
	\end{align*}
	hence
	\begin{equation*}
	 	\| (\dot{T}_{a}b,\, \dot{T}_b a) \|_{\BB_{2,2}^{s+t-\frac{d}{2}}}\leq
	 	\| (\dot{T}_{a}b,\, \dot{T}_b a) \|_{\BB_{2,1}^{s+t-\frac{d}{2}}}\lesssim
	 	\| a \|_{\BB_{2, 2}^s}							\| b \|_{\BB_{\infty, 2}^{t-\frac{d}{2}}} + 
	 	\| a \|_{\BB_{\infty, 2}^{s-\frac{d}{2}}}		\| b \|_{\BB_{2, 2}^t}\lesssim
	 	\| a \|_{\BB_{2, 2}^s}							\| b \|_{\BB_{2, 2}^t},
	\end{equation*}	
	where we have used the embedding $\BB_{2,2}^{\sigma}\hookrightarrow \BB_{\infty, 2}^{\sigma -d/2}$, for any $\sigma\in\RR$ and moreover 
	the following norm-equivalence
	\begin{equation*}
		\| u \|_{\BB_{p,r}^{\tilde{\sigma}}} \approx \|(2^{\tilde{\sigma}}\|S_q u\|_{L^p_x})_{q\in\ZZ}\|_{l^r(\ZZ)},\quad\quad 
		u\in \BB_{p,r}^{\tilde{\sigma}},
	\end{equation*}	  
	for any $1\leq p,r\leq \infty$ and $\tilde{\sigma}<0$.
	
	\noindent In order to conclude the proof, we have to handle the rest $\Rd(a,b)$. By a direct computation, for any $q\in \ZZ$, 
	\begin{equation*}
		2^{(t+s)q} \| \Dd_q \Rd(a,b) \|_{L^1_x} \leq 
		\sum_{\substack{q'\geq q-5\\|\nu|\leq 1}} 2^{(q-q')(s+t)} 2^{q's} \|\Dd_{q'} a\|_{L^2_x} 2^{(q'+\nu)t}\|\Dd_{q'+\nu} a\|_{L^2_x}, 
	\end{equation*}
	so that, thanks to the Young inequality, we deduce
	\begin{equation*}
		\| \Rd(a,b) \|_{\BB_{2,2}^{s+t-\frac{d}{2}}} \lesssim
		\| \Rd(a,b) \|_{\BB_{1,1}^{s+t}} \lesssim 
		\| 	a		\|_{\BB_{2,2}^s}
		\| 	b		\|_{\BB_{2,2}^t},
	\end{equation*}
	where we have used the embedding $\BB_{1,1}^{s+t}\hookrightarrow \BB_{2,2}^{s+t-d/2}$ and moreover $\sum_{q\leq 5} 2^{q(s+t)}<\infty $  
	since $s+t$ is positive.	 
\end{proof}
\begin{theorem}\label{Thm_sqrt_N}
	Let $N$ be a positive real number and $f$ a function in $H^1$. Then $\Sd_N f$ belongs to $L^\infty_x$ and
	\begin{equation*}
		\|	\Sd_N f \|_{L^\infty_x} \lesssim \| f \|_{L^2_x} + \sqrt{N} \| \nabla f \|_{L^2_x}\lesssim (1+\sqrt{N})\|(f,\,\nabla f)\|_{L^2_x}.
	\end{equation*}
\end{theorem}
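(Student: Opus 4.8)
The plan is to prove the $L^\infty$ bound on $\Sd_N f$ by combining the Bernstein inequalities from Lemma~\ref{lemma:bernstein&commutator} with a dyadic splitting of $\Sd_N f$ into its low and high dyadic pieces. First I would write $\Sd_N f = \Sd_0 f + \sum_{0 \le q < N} \Dd_q f$ (up to harmless index shifts inherent in the definition of $\Sd_N$), so that it suffices to control each term in $L^\infty_x$ and sum. For the low piece $\Sd_0 f$, Bernstein part \textbf{(ii)} with $a=2$, $b=\infty$ gives $\|\Sd_0 f\|_{L^\infty_x} \lesssim \|\Sd_0 f\|_{L^2_x} \lesssim \|f\|_{L^2_x}$, which is the first term on the right-hand side.

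For the high pieces, the point is to gain the factor $2^{q d/2} = 2^q$ (in $d=2$) from Bernstein when passing from $L^2_x$ to $L^\infty_x$, and to pay for it using one derivative: by Bernstein \textbf{(ii)} and then \textbf{(i)} of Lemma~\ref{lemma:bernstein&commutator}, $\|\Dd_q f\|_{L^\infty_x} \lesssim 2^{q} \|\Dd_q f\|_{L^2_x} \lesssim \|\Dd_q \nabla f\|_{L^2_x}$. Hence
\begin{equation*}
	\|\Sd_N f\|_{L^\infty_x} \lesssim \|f\|_{L^2_x} + \sum_{0 \le q < N} \|\Dd_q \nabla f\|_{L^2_x}.
\end{equation*}
Now I would apply the Cauchy--Schwarz inequality to the finite sum over the $N$ indices $0 \le q < N$:
\begin{equation*}
	\sum_{0 \le q < N} \|\Dd_q \nabla f\|_{L^2_x} \le \sqrt{N}\,\Big( \sum_{q \in \ZZ} \|\Dd_q \nabla f\|_{L^2_x}^2 \Big)^{1/2} \lesssim \sqrt{N}\,\|\nabla f\|_{L^2_x},
\end{equation*}
where the last step uses the almost-orthogonality of the Littlewood--Paley blocks, i.e.\ the equivalence $\|\nabla f\|_{L^2_x} \approx (\sum_q \|\Dd_q \nabla f\|_{L^2_x}^2)^{1/2}$ (which for $L^2$ is just Plancherel together with the bounded overlap property \eqref{lpfond2}). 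Combining the two displays yields $\|\Sd_N f\|_{L^\infty_x} \lesssim \|f\|_{L^2_x} + \sqrt{N}\|\nabla f\|_{L^2_x}$, and the final inequality $\lesssim (1+\sqrt{N})\|(f,\nabla f)\|_{L^2_x}$ follows by bounding each summand by the larger coefficient.

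I do not expect any serious obstacle here; the only mild subtlety is bookkeeping the exact relation between $\Sd_N$ and the blocks $\Dd_q$ for $q < N$ (and the role of the auxiliary $\tilde h = \mathcal F^{-1}\chi$), but this is purely notational and absorbed into the $\lesssim$ constants. One should also note that $N$ is allowed to be a positive \emph{real} number, so the sum runs over integers $q$ with $0 \le q < N$, of which there are at most $\lceil N \rceil \lesssim 1+N$; replacing $N$ by $1+N$ in the counting step is harmless and is precisely why the statement is phrased with $1+\sqrt N$ in the second inequality.
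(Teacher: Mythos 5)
Your proof is correct and follows essentially the same route as the paper: split $\Sd_N f$ into the frequencies below $1$ and the finitely many blocks $\Dd_q f$ with $0\le q<N$, use Bernstein to pass to $L^\infty_x$ (paying $2^q\simeq$ one derivative in $d=2$ on the high blocks), and then apply Cauchy--Schwarz over the at most $\lceil N\rceil$ indices to produce the $\sqrt N$ factor. The only cosmetic difference is that the paper bounds the low part by summing $2^q\|\Dd_q f\|_{L^2}$ over $q<0$ rather than invoking Bernstein for $\Sd_0 f$ directly, which is immaterial.
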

\begin{proof}
	We split $\Sd_N f$ into two parts, namely $\Sd_N f = \sum_{q<0} \Dd_q f + \sum_{0\leq q< N} \Dd_q f$. First we observe that
	\begin{equation*}
					\|	\sum_{q<0}					\Dd_q 	f \|_{L^\infty_x}	
		\leq			\sum_{q<0} 				\|	\Dd_q 	f \|_{L^\infty_x}	
		\lesssim 		\sum_{q<0} 	2^q 		\| 	\Dd_q 	f \|_{	L^2_x	}
		\lesssim \big(	\sum_{q<0}	2^q\,\big)	\| 			f \|_{	L^2_x	}.		
	\end{equation*}
	Similarly, considering the second term, we get
	\begin{align*}
					\|	\sum_{0<q\leq N}					\Dd_q 	f \|_{L^\infty_x}	
		&\leq				\sum_{0<q\leq N} 				\|	\Dd_q 	f \|_{L^\infty_x}	
		\lesssim 		\sum_{0<q\leq N} 	2^q 		\| 	\Dd_q 	f \|_{	L^2_x	}\\
		&\lesssim 		\sum_{0<q\leq N} 			\| 	\Dd_q \nabla 	f \|_{	L^2_x	}
		\lesssim
		\Big(
			\sum_{0<q\leq N}1
		\Big)^\frac{1}{2}
		\Big(
			\sum_{0<q\leq N}	\| 	\Dd_q \nabla 	f \|_{	L^2_x	}^2
		\Big)^\frac{1}{2}
		\lesssim 							\sqrt{N}	\| 			f \|_{	\Hh^1	},	
	\end{align*}
	which concludes the proof of the Theorem.
\end{proof}

The following Lemma plays a main role in the uniqueness result of Theorem \ref{thm: uniqueness}, more precisely inequality \eqref{appx_ineq1} is the key for the double-logarithmic estimate. 
\begin{lemma}\label{lemmaappx}
	There exist a positive constant $C$ such that for any $p\in [1,\infty)$ the following inequality is satisfied:
	\begin{equation}\label{appx_ineq1}
		\| f \|_{L^{2p}(\RR^2)} \leq C\sqrt{p} \| f \|_{L^2(\RR^2)}^\frac{1}{p} \| \nabla f \|_{L^2(\RR^2)}^{1-\frac{1}{p}}
	\end{equation}
\end{lemma}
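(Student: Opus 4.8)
The statement is the Gagliardo--Nirenberg--Ladyzhenskaya inequality in $\RR^2$ with an explicit $\sqrt{p}$ dependence on the constant, and the plan is to prove it by the classical interpolation argument while tracking the growth of the constant in $p$. The starting point is the two-dimensional Ladyzhenskaya inequality $\| g \|_{L^4(\RR^2)}^2 \leq C_0 \| g \|_{L^2(\RR^2)} \| \nabla g \|_{L^2(\RR^2)}$, valid for $g\in H^1(\RR^2)$ with a universal constant $C_0$; this itself follows from writing $g(x_1,x_2)^2$ as an integral of $\partial_1(g^2)$ in $x_1$ and of $\partial_2(g^2)$ in $x_2$, applying Cauchy--Schwarz, and then Fubini. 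I would state this as a known fact.

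The main step is to apply Ladyzhenskaya to $g = |f|^p$ (after a routine approximation/truncation so that $g\in H^1$, since $|f|^p$ need not be smooth; one mollifies or uses that $f$ may be assumed in $C_c^\infty$ by density and the final inequality passes to the limit). This gives
\begin{equation*}
	\| f \|_{L^{2p}}^{2p} = \| |f|^p \|_{L^2}^2 \leq C_0 \| |f|^p \|_{L^2} \| \nabla |f|^p \|_{L^2}
	= C_0 \| f \|_{L^{2p}}^{p} \cdot p \, \big\| |f|^{p-1}\nabla f \big\|_{L^2},
\end{equation*}
where I have used $|\nabla |f|^p| \le p |f|^{p-1}|\nabla f|$ a.e. Dividing by $\| f \|_{L^{2p}}^p$ (harmless after the truncation, or trivial if $f\equiv 0$) yields
\begin{equation*}
	\| f \|_{L^{2p}}^{p} \leq C_0\, p \, \big\| |f|^{p-1}\nabla f \big\|_{L^2}
	\leq C_0\, p \, \| f \|_{L^{2(p-1)}}^{p-1} \, \| \nabla f \|_{L^2},
\end{equation*}
the last step by Hölder with exponents $p/(p-1)$ and $p$ applied to $|f|^{p-1}$ and $|\nabla f|$. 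Then I would bound $\| f \|_{L^{2(p-1)}}$ by interpolation between $L^2$ and $L^{2p}$: since $2(p-1)$ lies between $2$ and $2p$, writing $\frac{1}{2(p-1)} = \frac{\theta}{2} + \frac{1-\theta}{2p}$ gives $\theta = \frac{1}{p-1}$, so $\| f \|_{L^{2(p-1)}} \leq \| f \|_{L^2}^{1/(p-1)} \| f \|_{L^{2p}}^{1 - 1/(p-1)}$. Substituting and collecting the powers of $\| f \|_{L^{2p}}$ on the left, a short bookkeeping computation yields $\| f \|_{L^{2p}} \leq (C_0 p)^{?} \| f \|_{L^2}^{1/p}\| \nabla f \|_{L^2}^{1-1/p}$ with the exponent of $(C_0p)$ equal to $1/p$ (one checks: the powers of $\|f\|_{L^{2p}}$ on the right sum to $1-1/p$, so after division the inequality closes with $(C_0p)^{1/p}$ in front), and since $(C_0 p)^{1/p}\leq C\sqrt{p}$ for a universal $C$ (indeed $(C_0p)^{1/p}$ is bounded for $p$ large and $\leq C_0 p \le C\sqrt p$ only fails for small $p$ — one instead just notes $(C_0p)^{1/p}$ is uniformly bounded in $p\ge 1$, which already gives the stronger claim with $\sqrt p$ replaced by a constant; to match the stated form one keeps the harmless factor $\sqrt p$).

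The only genuine subtlety — and the step I would be most careful about — is the case of small $p$ (near $1$), where the interpolation exponent $\theta = 1/(p-1)$ degenerates; for $p$ in a fixed neighborhood of $1$ one avoids the recursion and argues directly, e.g. for $1\le p\le 2$ bound $\|f\|_{L^{2p}}$ by interpolation between $L^2$ and $L^4$ and use Ladyzhenskaya once, absorbing everything into the constant and the factor $\sqrt p$. Apart from that, the proof is entirely routine: the substantive content is just Ladyzhenskaya plus the chain-rule bound for $\nabla|f|^p$, and the $\sqrt{p}$ (in fact $O(1)$) constant emerges automatically from the single factor of $p$ produced by differentiating $|f|^p$, raised to the power $1/p$ after closing the inequality.
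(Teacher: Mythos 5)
Your argument has two incorrect steps in the core computation, and even after they are repaired the method cannot reach the constant $C\sqrt{p}$. First, the opening display is not the Ladyzhenskaya inequality: with $g=|f|^p$, Ladyzhenskaya controls $\||f|^p\|_{L^4}^2=\|f\|_{L^{4p}}^{2p}$, whereas you wrote $\||f|^p\|_{L^2}^2\le C_0\||f|^p\|_{L^2}\|\nabla|f|^p\|_{L^2}$, which after cancellation asserts $\|g\|_{L^2(\RR^2)}\le C_0\|\nabla g\|_{L^2(\RR^2)}$ — false on the whole plane (the two sides scale differently under $g\mapsto g(\lambda\cdot)$; there is no Poincar\'e inequality on $\RR^2$). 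Second, the subsequent step is not H\"older: with exponents $p/(p-1)$ and $p$ one gets $\||f|^{p-1}\nabla f\|_{L^2}\le\|f\|_{L^{2p}}^{p-1}\|\nabla f\|_{L^{2p}}$, not $\|f\|_{L^{2(p-1)}}^{p-1}\|\nabla f\|_{L^2}$; keeping $\nabla f$ in $L^2$ would force the other factor into $L^\infty$.

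The intermediate inequality $\|f\|_{L^{2p}}^{p}\le C\,p\,\|f\|_{L^{2(p-1)}}^{p-1}\|\nabla f\|_{L^2}$ is in fact true — it follows from the embedding $\|g\|_{L^2(\RR^2)}\le C\|\nabla g\|_{L^1(\RR^2)}$ applied to $g=|f|^p$ together with Cauchy--Schwarz — but your bookkeeping after it is also off, and the repaired computation does not give the lemma. The interpolation exponent solving $\tfrac1{2(p-1)}=\tfrac\theta2+\tfrac{1-\theta}{2p}$ is $\theta=1/(p-1)^2$, not $1/(p-1)$; inserting it, the inequality closes to $\|f\|_{L^{2p}}\le (Cp)^{1-1/p}\|f\|_{L^2}^{1/p}\|\nabla f\|_{L^2}^{1-1/p}$, i.e.\ a constant of order $p$, not $(C_0p)^{1/p}$ (with your $\theta=1/(p-1)$ the exponents would instead close to $\|f\|_{L^2}^{1/2}\|\nabla f\|_{L^2}^{1/2}$, which is not the stated form). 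Iterating the intermediate inequality down to $p=1$ gives $(p!)^{1/p}\sim p/e$, again linear in $p$. Your stronger claim that the constant is bounded in $p$ is impossible: a $p$-independent constant would yield $\int_{\RR^2}(e^{\alpha|f|^2}-1)\,dx\le C_\alpha$ for every $\alpha>0$ whenever $\|f\|_{L^2},\|\nabla f\|_{L^2}\le1$ (expand the exponential in powers of $|f|^2$), contradicting the sharpness of the Moser--Trudinger inequality; the $\sqrt p$ growth is genuine and cannot be dismissed as a harmless factor. The paper's proof gets $\sqrt p$ by a different, sharper route: the Sobolev embedding $\dot H^{1-1/p}(\RR^2)\hookrightarrow L^{2p}(\RR^2)$ holds with operator norm $C\sqrt p$ (a Littlewood--Paley fact, quoted from \cite{MR2155019}), and then $\|f\|_{\dot H^{1-1/p}}\le\|f\|_{L^2}^{1/p}\|\nabla f\|_{L^2}^{1-1/p}$ by interpolation. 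Some argument at this level of precision (frequency splitting, or a rearrangement proof) is needed; the elementary real-variable iteration you propose loses a factor of order $\sqrt p$.
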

\begin{proof}
	The proof of this lemma was presented in \cite{MR2155019} (lemma $4.3$) and we report it here, for the sake of simplicity. thanks to Sobolev embeddings, we have
	\begin{equation}\label{appx_ineq2}
		\| f \|_{L^{2p}(\RR^2)} \leq C\sqrt{p} \| f \|_{\dot H^{1-\frac{1}{p}}(\RR^2)}.
	\end{equation}
	Moreover, since $\dot H^{1-1/p}(\RR^2)$ is an interpolation space between $L^2(\RR^2)$ and $\dot H^1(\RR^2)$, the following inequality is satisfied:
	\begin{equation*}
		\| f \|_{\dot H^{1-\frac{1}{p}}(\RR^2)} \leq \| f \|_{L^2(\RR^2)}^\frac{1}{p}\| \nabla f \|_{L^2(\RR^2)}^{1-\frac{1}{p}},
	\end{equation*}
	which leads to \eqref{appx_ineq1}, together with \eqref{appx_ineq2}.
\end{proof}

\section{}
\begin{proposition}\label{prop:apriorismallstrainn} Let $(Q^{(n)},u^n)$ be a smooth solution of \eqref{approxsystem++} in dimension $d=2$ or $d=3$, with restriction \eqref{c+}, and smooth initial data $(\bar Q(x),\bar u(x))$, that decays fast enough at infinity so that we can integrate by parts in space (for any $t\ge 0$) without boundary terms. We assume that $|\xi|<\xi_0$ where $\xi_0$  is an explicitly computable constant, scale invariant, depending on $a,b,c,d,\Gamma,\nu,\lambda$.

\par    For $(\bar Q,\bar u)\in H^1\times L^2$,we have
\begin{equation}\label{apriorih1n}
	\|Q^{(n)}(t,\cdot)\|_{H^1}\le C_1+\bar C_1 e^{\bar C_1t}\|\bar Q\|_{H^1}, \forall t\ge 0
\end{equation} with $C_1,\bar C_1$ depending on $(a,b,c,d,\Gamma,L, \nu,\bar Q,\bar u)$. Moreover

\begin{equation}\label{apriorih2n}
	\|u^n(t,\cdot)\|_{L^2}^2+\nu\int_0^t\|\nabla u^n\|_{L^2}^2\le C_1
\end{equation}
\end{proposition}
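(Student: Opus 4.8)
The plan is to reproduce verbatim the argument of Proposition~\ref{prop:apriorismallstrain}, now applied to the regularized system \eqref{approxsystem++}, starting from the energy identity already established in the proof of Proposition~\ref{prop:weak}. Concretely, after discarding the manifestly nonnegative dissipative contributions, using that $\mathcal I\le 0$ and invoking the bound \eqref{est_II} for $\mathcal{II}$, the computation performed in the proof of Proposition~\ref{prop:weak} gives the exact analogue of \eqref{lphalf1}:
\begin{align*}
&\frac{\dd}{\dd t}\int_{\RR^d}\Big(\tfrac12|u^n|^2+\tfrac{L\lambda}{2}|\nabla Q^{(n)}|^2+\lambda\big(\tfrac a2|Q^{(n)}|^2-\tfrac b3\trc(Q^{(n)})^3+\tfrac c4|Q^{(n)}|^4\big)\Big)\,\dd x+\nu\|\nabla u^n\|_{L^2}^2+\Gamma\lambda L^2\|\Delta Q^{(n)}\|_{L^2}^2\\
&\qquad\le C\int_{\RR^d}\big(|Q^{(n)}|^2+|Q^{(n)}|^4+|\nabla Q^{(n)}|^2\big)\,\dd x+C(\varepsilon)\|u^n\|_{L^2}^2 .
\end{align*}
The only subtlety here is that the identity of Proposition~\ref{prop:Lyapunov} must be re-examined with the Friedrichs projectors $J_n$ and the Leray projector $\mathcal P$ inserted: all the algebraic cancellations ($2\mathcal B+\mathcal{BB}=2\mathcal C+\mathcal{CC}=0$, $\mathcal A+\mathcal{AA}=0$, $\mathcal J_1+\mathcal J_2=\mathcal{JJ}_1+\mathcal{JJ}_2$, $\mathcal J_3=\mathcal{JJ}_3$) survive because $J_n$ and $\mathcal P J_n$ are self-adjoint, idempotent and commute with derivatives, while the handful of terms that fail to cancel — of the type $\int_{\RR^d} J_n(R_\varepsilon u^n\cdot\nabla Q^{(n)})\,J_n(b(Q^{(n)})^2-cQ^{(n)}\trc((Q^{(n)})^2))$ — are precisely the ones dominated by the two $\varepsilon$-regularizations of the velocity equation in \eqref{approxsystem++}, exactly as carried out in \eqref{est_II}. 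In particular no smallness of $\xi$ intervenes at this step.

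Next, as in the derivation of \eqref{lphalf2}, I would obtain an independent control of the low frequencies of $Q^{(n)}$. Multiplying the first equation of \eqref{approxsystem++} by $Q^{(n)}$, taking the trace and integrating over $\RR^d$ and by parts — using that $Q^{(n)}$ is symmetric so the $\Omega^{(n)}$-type contribution vanishes, that $J_n$ is self-adjoint and idempotent with $J_nQ^{(n)}=Q^{(n)}$, and the same Young-type estimates on the $S$-contributions used for \eqref{lphalf2} — one gets
\begin{equation*}
\frac{\dd}{\dd t}\|Q^{(n)}\|_{L^2}^2\le C(d)\,\varepsilon\,\|\nabla u^n\|_{L^2}^2+\frac{|\xi|^2}{\varepsilon}\big(\|Q^{(n)}\|_{L^2}^2+\|Q^{(n)}\|_{L^6}^6\big)+\hat C\big(\|Q^{(n)}\|_{L^2}^2+\|Q^{(n)}\|_{L^4}^4\big),
\end{equation*}
with $\hat C=\hat C(a,b)>0$, which is \eqref{lphalf2} verbatim (the $\|Q^{(n)}\|_{L^6}^6$ term could alternatively be matched against $\|J_n(Q^{(n)}|Q^{(n)}|^2)\|_{L^2}^2$ and then the $\xi$-smallness dropped, but we keep the hypothesis $|\xi|<\xi_0$ to stay literally within the argument of Proposition~\ref{prop:apriorismallstrain}).

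It then remains to combine the two inequalities precisely as in the passage from \eqref{lphalf1}--\eqref{lphalf2} to \eqref{estimate_control_H1}: pick $M=M(a,b,c)$ so that \eqref{eq:M} holds, multiply the $\|Q^{(n)}\|_{L^2}^2$ inequality by $M$, add it to the energy inequality of the first step, and use \eqref{eq:M} to see that
\begin{equation*}
\mathcal E^{(n)}(t):=\int_{\RR^d}\Big(\tfrac12|u^n|^2+\tfrac{L\lambda}{2}|\nabla Q^{(n)}|^2+\lambda\big(\tfrac a2|Q^{(n)}|^2-\tfrac b3\trc(Q^{(n)})^3+\tfrac c4|Q^{(n)}|^4\big)\Big)\,\dd x+M\|Q^{(n)}\|_{L^2}^2
\end{equation*}
bounds $\|u^n\|_{L^2}^2+\|Q^{(n)}\|_{H^1}^2$ from below, up to an additive constant depending only on $a,b,c,d$. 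Choosing $\varepsilon$ small so that $MC(d)\varepsilon<\nu$ and then $|\xi|<\xi_0$ small so that $M|\xi|^2/\varepsilon\le\Gamma\lambda c^2$, every right-hand-side term carrying an $\varepsilon$ or $|\xi|^2$ factor is absorbed by the left-hand dissipation, leaving a differential inequality of the form $\frac{\dd}{\dd t}\mathcal E^{(n)}(t)\le C\big(1+\mathcal E^{(n)}(t)\big)$. Since the initial data is $(J_n\bar Q,J_n\bar u)$ and $J_n$ is bounded on $H^1$ and on $L^2$, $\mathcal E^{(n)}(0)$ is bounded uniformly in $n$ by a constant depending on $\|\bar Q\|_{H^1}$ and $\|\bar u\|_{L^2}$; Gronwall's lemma then yields \eqref{apriorih1n}, and integrating the full inequality in time (retaining the $\nu\|\nabla u^n\|_{L^2}^2$ term) gives \eqref{apriorih2n}.

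The main obstacle is the first step: one must verify carefully that inserting $J_n$ and $\mathcal P$ does not destroy the cancellation structure of Proposition~\ref{prop:Lyapunov}. The delicate point is that $J_n$ does not commute with products, so the terms that cancelled each other in the smooth case must now be tracked individually; the assertion is that the residual, non-cancelling terms are exactly the ones that motivated the two $\varepsilon$-regularizations in \eqref{approxsystem++}, and that they are controlled by $\varepsilon\|R_\varepsilon u^n\cdot\nabla Q^{(n)}\|_{L^3}^3$ and $\varepsilon\|\nabla R_\varepsilon u^n\|_{L^4}^4$ together with the already-available $L^2\cap H^1$ dissipation, following the estimate of $\mathcal{II}$ in \eqref{est_II}. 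Once this bookkeeping is settled, the remaining steps are a routine repetition of the proof of Proposition~\ref{prop:apriorismallstrain}.
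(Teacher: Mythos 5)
Your overall route is exactly the paper's: the appendix proof of this proposition redoes the $-\lambda\bar H^{(n)}$/$u^n$ energy estimate for \eqref{approxsystem++} (the surviving cancellations plus the non-cancelling terms handled through \eqref{est_II} and the two $\varepsilon$-regularisations), then performs the separate $L^2$-estimate of $Q^{(n)}$, combines the two with the weight $M$ of \eqref{eq:M}, chooses $MC(d)\varepsilon<\nu$ and $M|\xi|^2/\varepsilon\le\Gamma\lambda c^2$, and concludes by Gronwall from the data $(J_n\bar Q,J_n\bar u)$. So the architecture is right; the problem is in the absorption step, which does not close as you wrote it.

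There are two related slips. First, your ``exact analogue of \eqref{lphalf1}'' is the \emph{final} display of the proof of Proposition~\ref{prop:weak}, which has already discarded the dissipative terms $\tfrac{\Gamma\lambda c^2}{2}\|J_n(Q^{(n)}|Q^{(n)}|^2)\|_{L^2}^2$ and $a^2\Gamma\lambda\|Q^{(n)}\|_{L^2}^2$; but the former is precisely what must later absorb the $\tfrac{M|\xi|^2}{\varepsilon}$-contribution produced by your version of \eqref{lphalf2}, so you must retain it on the left, as \eqref{lphalf1} and \eqref{energydecay+n} do. Second, even after retaining it, \eqref{lphalf2} cannot be repeated ``verbatim'': the only cubic dissipation available in the approximate system is $\|J_n(Q^{(n)}|Q^{(n)}|^2)\|_{L^2}^2$, and since $\|J_nf\|_{L^2}\le\|f\|_{L^2}$ this is dominated by, but does not dominate, $\|Q^{(n)}\|_{L^6}^6$; your parenthetical ``matching'' goes in the wrong direction. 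The correct bookkeeping is to note that $u^n=\mathcal{P}J_nu^n$ is frequency-localised, so in the offending term one may reinsert $J_n$ by Parseval,
\begin{equation*}
2\xi\int_{\mathbb{R}^d}|Q^{(n)}|^2\,\trc\big(Q^{(n)}\nabla R_\varepsilon u^n\big)\,dx
=2\xi\int_{\mathbb{R}^d}\nabla R_\varepsilon u^n : J_n\big(Q^{(n)}|Q^{(n)}|^2\big)\,dx
\le \varepsilon\|\nabla u^n\|_{L^2}^2+\frac{C|\xi|^2}{\varepsilon}\,\big\|J_n\big(Q^{(n)}|Q^{(n)}|^2\big)\big\|_{L^2}^2,
\end{equation*}
so that the $|\xi|^2$-term is of the same form as the retained dissipation and is absorbed once $|\xi|<\xi_0$; this is how the combined estimate in the appendix is actually written. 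Alternatively, H\"older with exponents $(4,4/3)$ against the regularisation $\tfrac{\varepsilon}{2}\|\nabla R_\varepsilon u^n\|_{L^4}^4$ gives $C(\varepsilon,\xi)\|Q^{(n)}\|_{L^4}^4$, which Gronwall handles and which removes the need for $\xi$-smallness at this stage, as remarked in the proof of Proposition~\ref{prop:weak}. With either repair, the remainder of your argument (choice of $M$, smallness of $\varepsilon$, uniform bound on $\mathcal{E}^{(n)}(0)$, Gronwall) goes through and reproduces \eqref{apriorih1n}--\eqref{apriorih2n}.
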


\begin{proof}
We denote:
\begin{equation}
X^n_{\alpha\beta}\stackrel{\rm def}{=}L\Delta Q^{(n)}_{\alpha\beta}-cQ^{(n)}_{\alpha\beta}\textrm{tr}((Q^{(n)})^2),\,\alpha,\beta=1,2,3.
\end{equation}
\par Multiplying the first equation in \eqref{approxsystem++} by $-\lambda \bar{H}^n$ and the second one by $u^n$,taking the trace and integrating over $\mathbb{R}^d$, we get
\begin{equation}\label{energydecay+n}
\begin{aligned}
	&\frac{d}{dt}\int_{\mathbb{R}^d}\frac{1}{2}|u^n|^2+\frac{L\lambda}{2}|\nabla Q^{(n)}|^2+\lambda\big(\frac{a}{2}|Q^{(n)}|^2-
	\frac{b}{3}\textrm{tr}(Q^{(n)})^3+\frac{c}{4}|Q^{(n)}|^4\big)\,dx
	+\nu\|\nabla u^n\|_{L^2}^2	+\Gamma\lambda L^2	\|\Delta Q^{(n)}\|_{L^2}^2\\&+
	\Gamma\lambda c^2\|J_n(Q^{(n)}\trc\{Q^{(n)})\})\|_{L^2}^2 
	-2cL\Gamma\lambda\int_{\mathbb{R}^d}\Delta Q^{(n)}_{\alpha\beta}Q^{(n)}_{\alpha\beta}\trc \{(Q^{(n)})^2\}\,dx+
	a^2\Gamma\lambda\|Q^{(n)}\|_{L^2}^2\\&+b^2\Gamma\lambda\int_{\mathbb{R}^d}\trc\big\{ J_n\big((Q^{(n)})^2-
	\frac{\trc\{(Q^{(n)})^2\}}{d}\big)^2\big\}\,dx
 +\varepsilon\int_{\mathbb{R}^d} |R_\varepsilon u\nabla Q^{(n)}|^3\,dx+\varepsilon \int_{\mathbb{R}^d} |R_\varepsilon\nabla u^n|^4\,dx\\&
	\leq 
	2a\Gamma\lambda\underbrace{\int_{\mathbb{R}^d}\textrm{tr}\{X^nQ^{(n)}\}\,dx}_{\stackrel{\rm def}{=}\mathcal{I}_n}-
	2b\Gamma\lambda\underbrace{\int_{\mathbb{R}^d}\textrm{tr}\{X^n(Q^{(n)})^2\}\,dx}_{\stackrel{\rm def}
	{=}\mathcal{J}_n}\\&+2ab\Gamma\lambda\int_{\mathbb{R}^d}\textrm{tr}\{(Q^{(n)})^3\}\,dx
	 +\lambda\underbrace{\int_{\mathbb{R}^d}J_n\left(   R_\varepsilon u^n\cdot\nabla 
	  Q_{\alpha\beta}^{(n)}\right)J_n\left(bQ_{\alpha\gamma}^{(n)}Q_{\gamma\beta}^{(n)}-cQ_{\alpha\beta}^{(n)}
	  \big|Q^{(n)}\big|^2\right)\,dx}_{\stackrel{\rm def}{=}\mathcal{II}}\\
	 & +\lambda \int_{\mathbb{R}^d}J_n\left(-R_\varepsilon\Omega_{\alpha\gamma}^{(n)} Q_{\gamma\beta}^{(n)}+Q_{\alpha\gamma}^{(n)}R_\varepsilon\Omega_{\gamma\beta}^{(n)}\right)J_n\left(bQ_{\alpha\delta}^{(n)}Q_{\delta\beta}^{(n)}-cQ_{\alpha\beta}^{(n)}\big| Q^{(n)}\big|^2\right)\,dx 	
\end{aligned}
\end{equation}
\par Integrating by parts we have:
\begin{eqnarray}\label{eq:positivitycrosstermscn}
	-2cL\Gamma\lambda\int_{\mathbb{R}^d}\Delta	Q^{(n)}_{\alpha\beta}Q^{(n)}_{\alpha\beta}\textrm{tr}\{(Q^{(n)})^2\}dx=
	2cL\Gamma\lambda\int_{\mathbb{R}^d}Q^{(n)}_{\alpha\beta,k}
	Q^{(n)}_{\alpha\beta,k}\textrm{tr}\{(Q^{(n)})^2\}dx	\nonumber\\
	+2cL\Gamma\lambda\int_{\mathbb{R}^d}Q^{(n)}_{\alpha\beta,k}Q^{(n)}_{\alpha\beta}
	\partial_k\left(\textrm{tr}\{(Q^{(n)})^2\}\right)dx
	=2cL\Gamma\lambda\int_{\mathbb{R}^d}|\nabla Q^{(n)}|^2\textrm{tr}\{(Q^{(n)})^2\})\,dx\nonumber\\
	+cL\Gamma\lambda\int_{\mathbb{R}^d}|\nabla\left(\trc\{(Q^{(n)})^2\}\right)|^2\,dx\ge 0
\end{eqnarray}
(where for the last inequality we used the assumption (\ref{c+}) and $L,\Gamma,\lambda>0$). One can easily see that 
\begin{equation}
\mathcal{I}_n=-\frac{L}{2}\|\nabla Q^{(n)}\|_{L^2}^2-c\|Q^{(n)}\|_{L^4}^4
\end{equation}
and moreover
\begin{align*}
	\lambda \int_{\mathbb{R}^d}J_n\left(-R_\varepsilon\Omega_{\alpha\gamma}^{(n)} 
	Q_{\gamma\beta}^{(n)}+Q_{\alpha\gamma}^{(n)}R_\varepsilon\Omega_{\gamma\beta}^{(n)}\right)
	J_n\left(bQ_{\alpha\delta}^{(n)}Q_{\delta\beta}^{(n)}-cQ_{\alpha\beta}^{(n)}\big| Q^{(n)}\big|^2\right)\,dx \leq\\	
	\leq 
	\frac{\ee}{2}\int_{\mathbb{R}^d}|R_\varepsilon \nabla u^n|^4\,dx
	+C(\ee)\int_{\mathbb{R}^d}|Q^{(n)}|^4\,dx	
	+\frac{\Gamma c^2}{2}\int_{\mathbb{R}^d} |J_n(Q^{(n)}|Q^{(n)}|^2)|^2\,dx 	
\end{align*}
\par On the other hand, for any $\varepsilon>0$ and $\tilde C=\tilde C(\varepsilon,c)$ an explicitly computable constant, we have:
\begin{eqnarray}
\mathcal{J}_n=L\int_{\mathbb{R}^d}Q^{(n)}_{\alpha\beta,kk}Q^{(n)}_{\alpha\gamma}Q^{(n)}_{\gamma\beta}\,dx-c\int_{\mathbb{R}^d}\textrm{tr}\{(Q^{(n)})^2\}\textrm{tr}\{(Q)^{(n)})^3\}\,dx
\le-L\int_{\mathbb{R}^d}Q^{(n)}_{\alpha\beta,k}Q^{(n)}_{\alpha\gamma,k}Q^{(n)}_{\gamma\beta}\,dx\nonumber\\-L\int_{\mathbb{R}^d}Q^{(n)}_{\alpha\beta,k}Q^{(n)}_{\alpha\gamma}Q^{(n)}_{\gamma\beta,k}+ \int_{\mathbb{R}^d}\textrm{tr}\{(Q^{(n)})^2\}\left(\frac{\tilde C}\varepsilon\textrm{tr}\{(Q^{(n)})^2\}+\varepsilon\textrm{tr}^2\{(Q^{(n)})^2\}\right)\,dx\nonumber\\
\le L\varepsilon \int_{\mathbb{R}^d}|\nabla Q^{(n)}|^2\textrm{tr}\{(Q^{(n)})^2\}\,dx+\frac{\tilde C}{\varepsilon}\|\nabla Q^{(n)}\|_{L^2}^2+\int_{\mathbb{R}^d}\textrm{tr}\{(Q^{(n)})^2\}\left(\frac{\tilde C}\varepsilon\textrm{tr}\{(Q^{(n)})^2\}+\varepsilon\textrm{tr}^2\{(Q^{(n)})^2\}\right)\,dx
\end{eqnarray}
\par Using the last four relations in (\ref{energydecay+n}) and considering \eqref{est_II} we obtain:
\begin{align*}
\frac{d}{dt}\int_{\mathbb{R}^d}\frac{1}{2}|u^n|^2+\frac{L\lambda}{2}|\nabla Q^{(n)}|^2+\lambda\big(\frac{a}{2}|Q^{(n)}|^2-
	\frac{b}{3}\textrm{tr}(Q^{(n)})^3+\frac{c}{4}|Q^{(n)}|^4\big)\,dx
	+\nu\|\nabla u^n\|_{L^2}^2	+\Gamma\lambda L^2	\|\Delta Q^{(n)}\|_{L^2}^2\\+
	\frac{\Gamma\lambda c^2}{2}\|J_n(Q^{(n)}\trc\{Q^{(n)})\})\|_{L^2}^2
+a^2\Gamma\lambda\|Q^{(n)}\|_{L^2}^2
+2cL\Gamma\lambda\int_{\mathbb{R}^d}|\nabla Q^{(n)}|^2\textrm{tr}\{(Q^{(n)})^2\}\,dx\\+cL\Gamma\lambda\int_{\mathbb{R}^d}|\nabla\left(\textrm{tr}\{(Q^{(n)})^2\}\right)|^2\,dx
 + \frac{\varepsilon}{2}\int_{\mathbb{R}^d}  |R_\varepsilon u^n\cdot\nabla Q^{(n)}|^3\,dx+\frac{\varepsilon}{2}\int_{\mathbb{R}^d} |\nabla R_\varepsilon u^n|^4\,dx\\
\le 2|a|\Gamma\lambda(\frac{L}{2}\|\nabla Q^{(n)}\|_{L^2}^2+c\|Q^{(n)}\|_{L^4}^4)+2|b|\Gamma\lambda L\varepsilon \int_{\mathbb{R}^d}|\nabla Q^{(n)}|^2\textrm{tr}\{(Q^{(n)})^2\}\,dx+2|b|\Gamma\lambda\frac{\tilde C}{\varepsilon}
\|\nabla Q^{(n)}\|_{L^2}^2\\
+2|b|\Gamma\lambda\int_{\mathbb{R}^d}\textrm{tr}\{(Q^{(n)})^2\}\left(\frac{\tilde C}\varepsilon\textrm{tr}\{(Q^{(n)})^2\}+\varepsilon\textrm{tr}^2\{(Q^{(n)})^2\}\right)\,dx+2|ab|\Gamma\lambda(\varepsilon\|Q^{(n)}\|_{L^2}^2+ (C(\ee)+\frac{\tilde C}{\varepsilon})\|Q^{(n)}\|_{L^4}^4)	
\end{align*}
\par Taking $\varepsilon$ small enough we can absorb all the terms with an $\epsilon$ coefficient on the right into the left hand side, and  we are left with 
\begin{eqnarray}
\frac{d}{dt}\int_{\mathbb{R}^d}\frac{1}{2}|u^n|^2+\frac{L\lambda}{2}|\nabla Q^{(n)}|^2+\lambda\big(\frac{a}{2}|Q^{(n)}|^2-
	\frac{b}{3}\textrm{tr}(Q^{(n)})^3+\frac{c}{4}|Q^{(n)}|^4\big)\,dx\nonumber\\
+\nu \|\nabla u^n\|_{L^2}^2+\Gamma\lambda L^2\|\Delta Q^{(n)}\|_{L^2}^2+\frac{\Gamma\lambda c^2}{2}\|J_n(Q^{(n)}\trc\{Q^{(n)})\})\|_{L^2}^2+\Gamma\lambda a^2\|Q^{(n)}\|_{L^2}^2\nonumber\\+2cL\Gamma\lambda\int_{\mathbb{R}^d}|\nabla Q^{(n)}|^2\textrm{tr}\{(Q^{(n)})^2\}\,dx+cL\Gamma\lambda\int_{\mathbb{R}^d}|\nabla\left(\textrm{tr}\{(Q^{(n)})^2\}\right)|^2\,dx
\le \bar C\left(\|\nabla Q^{(n)}\|_{L^2}^2+\|Q^{(n)}\|_{L^4}^4\right)\nonumber
\end{eqnarray} with $\bar C=\bar C(a,b,c)$.

\par The last relation is not yet enough because there are no positive terms. However, let us note that, if $a>0$ we obtain the a-priori estimates  by using the inequality
$tr\{(Q^{(n)})^3\}\leq \frac 38 tr\{(Q^{(n)})^2)\}+tr\{(Q^{(n)})^2\}^2$.  If $a\leq 0$ we have to estimate separately $\|Q^{(n)}\|_{L^2}$ and this ask for a smallness condition for $\xi$. Indeed, proceeding as for proving \eqref{estimate_control_H1}, we get
 
 \begin{eqnarray}
 \frac{d}{dt}\Big[\int_{\mathbb{R}^d}\frac{1}{2}|u^n|^2+\frac{L\lambda}{2}|\nabla Q^{(n)}|^2+\lambda\big(\frac{a}{2}|Q^{(n)}|^2-
	\frac{b}{3}\textrm{tr}(Q^{(n)})^3+\frac{c}{4}|Q^{(n)}|^4\big)\,dx+M\|Q\|_{L^2}^2\big]\nonumber\\+\nu \|\nabla u^n\|_{L^2}^2+\Gamma\lambda L^2\|\Delta Q^{(n)}\|_{L^2}^2+\frac{\Gamma\lambda c^2}{2}\|J_n(Q^{(n)}\trc\{Q^{(n)})\})\|_{L^2}^2+a^2\|Q^{(n)}\|_{L^2}^2\nonumber\\
 +2cL\Gamma\lambda\int_{\mathbb{R}^d}|\nabla Q^{(n)}|^2\textrm{tr}\{(Q^{(n)})^2\}\,dx+cL\Gamma\lambda\int_{\mathbb{R}^d}|\nabla\left(\textrm{tr}\{(Q^{(n)})^2\}\right)|^2\,dx\nonumber\\+\frac{\varepsilon}{2}\int_{\mathbb{R}^d}  |R_\varepsilon u^n\cdot\nabla Q^{(n)}|^3\,dx+\frac{\varepsilon}{2}\int_{\mathbb{R}^d} |\nabla R_\varepsilon u^n|^4\,dx
\le \bar C\left(\|\nabla Q^{(n)}\|_{L^2}^2+\|Q^{(n)}\|_{L^4}^4\right)+\nonumber\\MC(d)\varepsilon\int_{\mathbb{R}^d} |\nabla u^n|^2\,dx+\frac{M|\xi|^2}{\varepsilon}\int_{\mathbb{R}^d}|Q^{(n)}|^2+|J_n(Q^{(n)}\trc\{(Q^{(n)})^2\}|^2\,dx+M\hat C\int_{\mathbb{R}^d}|Q^{(n)}|^2+|Q^{(n)}|^4\,dx
\end{eqnarray}

\par We chose $\varepsilon$ small enough so that $MC(d)\varepsilon<\nu$. Finally we make the assumption that $|\xi|$ is small enough, depending on $a,b,c,d,\nu$ so that 

$$\frac{M|\xi|^2}{\varepsilon}\le \Gamma\lambda c^2$$

\par Then taking into account that
\begin{equation*}
	 \frac{M}{2} \textrm{tr}\{(Q^{(n)})^2\}+\frac{c}{8}\textrm{tr}^2\{(Q^{(n)})^2\}\le(M+\frac{a}{2})
	 \textrm{tr}\{(Q^{(n)})^2\}-\frac{b}{3}\textrm{tr}\{(Q^{(n)})^3\}+\frac{c}{4}\textrm{tr}^2\{(Q^{(n)})^2\}
\end{equation*}
we obtain the claimed relation (\ref{apriorih1n}).

\end{proof}

\end{document}